\newtheorem{theorem}{Theorem}[section]
\newtheorem{definition}[theorem]{Definition}
\newtheorem{proposition}[theorem]{Proposition}
\newtheorem{conjecture}[theorem]{Conjecture}
\newtheorem{corollary}[theorem]{Corollary}
\newtheorem{lemma}[theorem]{Lemma}
\newtheorem{remark}[theorem]{Remark}
\numberwithin{equation}{section}
\author{Luis Alberto Lomel\'i}
\address{Luis Alberto Lomel\'i \\ Instituto de Matem\'aticas \\ Pontificia Universidad Cat\'olica de Valpara\'iso \\  Blanco Viel 596 \\ Cerro Bar\'on \\ Valpara\'iso \\ Chile}
\email{luis.lomeli@pucv.cl}
\dedicatory{Dedicated to Freydoon Shahidi}
\begin{document}

\title[The Langlands-Shahidi method over function fields]{The $\mathcal{LS}$ method over function fields: \\ Ramanujan conjecture and Riemann Hypothesis \\ for the Unitary groups}

\begin{abstract}
We establish the Langlands-Shahidi method over a global field of characteristic $p$. We then focus on the unitary groups and prove global and local Langlands functoriality to general linear groups for generic representations. Main applications are to the Ramanujan Conjecture and Riemann Hypothesis.
\end{abstract}

\maketitle

\section*{Introduction}

The $\mathcal{LS}$ method offers a local-global procedure to construct, in a number of situations, a system of $\gamma$-factors, $L$-functions and $\varepsilon$-factors. Let $k$ be a global field with ring of ad\`eles $\mathbb{A}_k$. Consider:
\begin{enumerate}
   \item a pair $({\bf G},{\bf M})$ of quasi-split connected reductive groups such that $\bf M$ is a Levi component for a parabolic subgroup $\bf P$ of $\bf G$ over $k$;
   \item an irreducible constituent $r_i$ of the adjoint action $r$ of ${}^LM$ on the Lie algebra of the unipotent radical of ${}^LP$;
   \item a cuspidal automorphic representation $\pi$ of ${\bf M}(\mathbb{A}_k)$ which is globally generic with respect to a character $\psi$.
\end{enumerate}
We then have global $L$-functions and root numbers $L(s,\pi,r_i) \text{ and } \varepsilon(s,\pi,r_i), \ s \in \mathbb{C}$, which satisfy a functional equation
\begin{equation*}
   L(s,\pi,r_i) = \varepsilon(s,\pi,r_i) L(1-s,\tilde{\pi},r_i),
\end{equation*}
where $\tilde{\pi}$ denotes the contragredient representation.

The Langlands-Shahidi method was developed by Shahidi when $k$ is a number field in a series of papers with breakthrough work done in \cite{Sh1990}. Shahidi also treated the case of Archimedean fields \cite{Sh1980, Sh1985}. When $k$ is a function field, the author performed a study of the method for split classical groups under the assumption ${\rm char}(k) \neq 2$ \cite{Lo2009,Lo2015}. The first part of the present article establishes the Langlands-Shahidi method in general for any global field of characteristic $p$.

Locally, we obtain a system of $\gamma$-factors, $L$-functions and $\varepsilon$-factors at every place $v$ of $k$. Let $\psi : k \backslash \mathbb{A}_k \rightarrow \mathbb{C}^\times$ be a non-trivial character. Then we have
\begin{equation*}
   \gamma(s,\pi_v,r_{i.v},\psi_v), \ L(s,\pi_v,r_{i,v}) 
   \text{ and } \varepsilon(s,\pi_v,r_{i,v},\psi_v).
\end{equation*}
The definition is made with the aid of the Langlands-Shahidi local coefficient. For tempered representations, local factors satisfy the following relation
\begin{equation*}
   \varepsilon(s,\pi_v,r_{i,v},\psi_v) = \gamma(s,\pi_v,r_{i,v},\psi_v) \dfrac{L(s,\pi_v,r_{i,v})}{L(1-s,\tilde{\pi}_v,r_{i,v})}.
\end{equation*}

We begin in a purely local setting in sections 1 and 2, where we work over any non-archimedean local field $F$ of characteristic $p$. We take $\pi$ to be any generic representation of ${\bf M}(F)$ and $\psi$ a non-trivial character of $F$. The local coefficient
\begin{equation*}
   C_\psi(s,\pi,\tilde{w}_0)
\end{equation*}
is obtained via intertwining operators and the multiplicity one property of Whittaker models. Rank one cases are addressed in Proposition~\ref{rankoneprop}, which shows compatibility of the Langlands-Shahidi local coefficient with the abelian $\gamma$-factors of Tate's thesis \cite{T}. Then, Proposition~\ref{vary} determines the behavior of the local coefficient as one varies the system of Weyl group element representatives, Haar measures and character $\psi$. When $\pi$ is an unramified principal series representation, the local coefficient decomposes into a product of rank one cases.

A very useful local to global result, Lemma~\ref{hllemma}, allows us to lift any supercuspidal representation $\pi_0$ to a cuspidal automorphic representation $\pi$ with controlled ramification at all other places; if $\pi_0$ is generic, then $\pi$ is globally generic. Globally, there is a connection to Langlands' theory of Eisenstein series over function fields \cite{Ha1974,Mo1982}. This enables us to prove the crude functional equation involving the local coefficient and partial $L$-functions, Theorem~\ref{crudeFE}.

We then turn towards the main result of the $\mathcal{LS}$ method. Theorem~\ref{mainthm} establishes the existence and uniqueness of a system of $\gamma$-factors, $L$-functions and root numbers. Local $\gamma$-factors are defined recursively by means of the local coefficient and they connect to the global theory via the functional equation
\begin{equation*}
   L^S(s,\pi,r_i) = \prod_{v \in S} \gamma(s,\pi_v,r_{i,v},\psi_v) L^S(1-s,\tilde{\pi},r_i).
\end{equation*}
Partial $L$-functions being defined by
\begin{equation*}
   L^S(s,\pi,r_i) = \prod_{v \notin S} L(s,\pi_v,r_{i,v}).
\end{equation*}
An inspiring list of axioms for $\gamma$-factors that uniquely characterize them can be found in \cite{LaRa2005}. Work on the uniqueness of Rankin-Selberg $L$-functions for general linear groups \cite{HeLo2013a}, led us to extend the characterization in a natural way to include $L$-functions and $\varepsilon$-factors beginning with the classical groups in \cite{Lo2015}. Lemma~\ref{hllemma} allows us to reduce the number of required axioms.

A very interesting result is obtained when we incorporate the resutls of L. Lafforgue for ${\rm GL}_n$ \cite{LaL} with those of V. Lafforgue for a connected reductive group scheme \cite{LaV}. Indeed, in \S~5.7 we show that the Ramanujan Conjecture implies the Riemann Hypothesis for $\mathcal{LS}$ automorphic $L$-functions. We observe that we have the Ramanujan Conjecture for ${\rm GL}_n$ \cite{LaL}, for the classical groups \cite{Lo2009,LoRationality}, and for the unitary groups in \S~10 of this article.

In the second part of the article, what we establish is local and global Langlands functoriality for generic representations of the unitary groups, namely, stable Base Change. In our approach to Base Change, we work solely with techniques from Automorphic Forms and Representation Theory of $\mathfrak{p}$-adic groups.

Globally, we base ourselves on prior work on the classical groups over function fields \cite{Lo2009}, and we guide ourselves with the work of Kim and Krishnamurthy for the unitary groups in the case of number fields \cite{KiKr2004,KiKr2005}. Our approach is possible by combining the $\mathcal{LS}$ method with the Converse Theorem of Cogdell and Piatetski-Shapiro \cite{CoPS1994}. Over number fields, functoriality for the classical groups was established for globally generic representations by Cogdell, Kim, Piatetski-Shapiro and Shahidi \cite{CoKiPSSh2004}; the work of Arthur establishes the general case for not necessarily generic representations in \cite{Ar2013}; and, Mok addresses the endoscopic classification for the unitary groups in \cite{Mo2015}.


Locally, we prove the characteristic $p$ Langlands correspondence from generic representations of ${\rm U}_N$ to admissible representations of a general linear group in \S~9 (see \cite{Lo2009,LoRationality} for the split classical groups). For the general case, we are able to reduce to the generic case via Shahidi's tempered $L$-packet conjecture. We observe that this is already a theorem for the split classical groups in characteristic $p$, thanks to the work of Ganapathy-Varma \cite{GaVaJIMJ}. For two alternative approaches to the local Langlands correspondence for admissible representations of the quasi-split classical groups, see \S\S~7 and 8 of \cite{GaLoJEMS}.

Let us now describe our study of $L$-functions and Langlands functoriality for the unitary groups. We begin by recalling the induction step of the $\mathcal{LS}$ method for the unitary groups in \S~\ref{lsU}. Namely, the case of Asai and twisted Asai $L$-functions studied in \cite{HeLo2013b,Lo2016}. We also retrieve from Theorem~\ref{mainthm} the Rankin-Selberg product $L$-function of a unitary group and a general linear group. In Theorem~\ref{mainthmU}, we state our main result concerning the existence and uniqueness of extended $\gamma$-factors, $L$-functions and root numbers for the unitary groups. Locally, we work with irreducible admissible representations in general. However, the proof is complete for generic representations. The general case in \S~\ref{temperedLpacket} is complete under the assumption that the tempered $L$-packet conjecture holds to be true. In addition to the list of axioms that uniquely characterize extended local factors, we list three important properties: the local functional equation; the global functional equation for completed $L$-functions; and, stability of $\gamma$-factors after twits by highly ramified characters. A proof of the latter property for all $\mathcal{LS}$ $\gamma$-factors in positive characteristic can be found in \cite{GaLoJEMS}; we use this to obtain a very useful stable form of local factors for the unitary groups after highly ramified twists in \S~7.7.

In \S~8, we establish a ``weak'' stable Base Change (agreeing with the local Base Change lift at every unramified place), for globally generic representations over function fields. We then prove it is a ``strong" Base Change (agreeing at every place) in \S\S~\ref{llcU} and \ref{RHRC}. Let $K/k$ be a separable quadratic extension of function fields. Given a cuspidal automorphic representation $\pi$ of a unitary group ${\rm U}_N$, we construct a candidate admissible representation $\Pi$ for the Base Change to ${\rm Res}\, {\rm GL}_N$. Namely, at every unramified place, we use the Satake correspondence, where there is a natural bijection between conjugacy classes of ${}^L{\rm Res}_{K_v/k_v}{\rm GL}_N = {\rm GL}_N(\mathbb{C}) \times {\rm GL}_N(\mathbb{C}) \rtimes \mathcal{W}_{k_v}$ and conjugacy classes of ${}^L{\rm GL}_N = {\rm GL}_N(\mathbb{C}) \rtimes \mathcal{W}_{K_v}$ as in \cite{Mi2011}. At ramified places we can basically take an arbitrary representation $\Pi_v$ with the same central character as $\pi_v$, since we can locally incorporate stability under highly ramified twists.

For suitable twists by cuspidal automorphic representations $\tau$ of ${\rm GL}_m$, we have that $L(s,\Pi \times \tau) = L(s,\pi \times \tau)$. We prove that all $\mathcal{LS}$ automorphic $L$-functions over function fields are \emph{nice} in \cite{LoRationality}, a condition required by the Converse Theorem. To summarize, we are then able to apply the Converse Theorem and establish the existence of a weak Base Change to ${\rm Res}\,{\rm GL}_N$, Theorem~\ref{weakBC}.

In \S~9 we turn towards the local Langlands conjecture for the unitary groups, that is, local Base Change. Let ${\rm U}_N$ be a unitary group defined over a non-archimedean local field $F$ of characteristic $p$ with underlying separable quadratic extension $E/F$. Then, Theorem~\ref{genericBC} establishes the local transfer
\begin{equation*}
   \left\{ \begin{array}{c} \text{generic representations} \\
  				     \pi \text{ of } {\rm U}_N(F)
					\end{array} \right\}
   \xrightarrow{\text{\rm BC}} 
   \left\{ \begin{array}{c} \text{generic representations} \\
   				     \Pi \text{ of } {\rm GL}_N(E)
					\end{array} \right\}
\end{equation*}
The local Base Change $\Pi = {\rm BC}(\pi)$ is known as stable base change. It is uniquely characterized by the property that it preserves local $L$-functions, $\gamma$-factors and root numbers, as in the case of ${\rm GL}_N$ \cite{He1993}. The proof is global in nature and we use the weak global Base Change of Theorem~\ref{weakBC} to deduce existence for generic supercuspidals of ${\rm U}_N(F)$. We then go through the classification of representations of unitary groups. In particular, the construction of discrete series by M\oe glin and Tadi\'c \cite{MoTa2002} and the work of Mui\'c on the standard module conjecture \cite{Mu2001} play an important role. The Basic Assumption (BA) of \cite{MoTa2002} is part of Theorem~\ref{complimentary}, where we follow Shahidi for generic representations. In general, we verify (BA) in \cite{GaLoJEMS} without the generic assumption. In addition, the work of M. Tadi\'c \cite{Ta1986} on the classification of unitary representations of ${\rm GL}_m$ is very useful. Local Base Change in general is thus established recursively: Langlands' classification reduces to the tempered case; then, tempered representations are constructed via discrete series, which in turn are constructed via supercuspidals. In this article, we present complete proofs for generic representations, and we refer to \S\S~7 and 8 of \cite{GaLoJEMS} for a discussion of local Base Change for general admissible representations.

Let $K/k$ be a separable quadratic extension of global function fields. In Theorem~\ref{strongBC}, we have the ``strong'' Base Change for globally generic representations of unitary groups:
\begin{equation*}
   \left\{ \begin{array}{c} \text{globally generic cupsidal} \\
  				      \text{automorphic representations} \\ 
				      \pi \text{ of } {\rm U}_N(\mathbb{A}_k)
					\end{array} \right\}
   \xrightarrow{\text{\rm BC}} 
   \left\{ \begin{array}{c} \text{automorphic representations} \\
   				     \Pi =\Pi_1 \boxplus \cdots \boxplus \Pi_d \\
  				      \text{of } {\rm GL}_N(\mathbb{A}_K)
					\end{array} \right\}
\end{equation*}
We aid ouselves with the analytic properties of automorphic $L$-functions over function fields of \cite{LoRationality} in order to write $\Pi = {\rm BC}(\pi)$ as an isobaric sum of cuspidal automorphic representations $\Pi_i$ of ${\rm GL}_{n_i}(\mathbb{A}_K)$.

In \S~10 we conclude our study of $L$-functions for the unitary groups. In the case of  generic representations, both local and global, our treatment is complete and self contained. For representations that are not necessarily generic, we note in \S~\ref{temperedLpacket} how to reduce the study of local $L$-functions, $\gamma$-factors and root numbers to the case of generic representations, under the assumption that Conjecture~\ref{tempLconj} is valid.

We conclude our treatise over function fields by transporting via Base Change two important problems from the unitary groups to ${\rm GL}_N$. More precisely, we combine our results with those of L. Lafforgue \cite{LaL} to prove the Ramanujan Conjecture and the Riemann Hypothesis for our automorphic $L$-functions.

\subsection*{Acknowledgments} This article would not have been possible without professor F. Shahidi, to whom this article is dedicated with admiration to his mathematical work. The author is very grateful to professor G. Henniart for many discussions that resulted over the years from collaborative work. Mathematical conversations with J. Bernstein and W.T. Gan at MSRI and MPIM were particularly insightful. The author would like to thank E. Goins, G. Harder, Y. Kim, M. Krishnamurthy, P. Kutzko, L. Lafforgue, V. Lafforgue, E. Lapid, A. M\'inguez, C.~M\oe glin, D. Prasad, A. Roche, R. Scmidt and S. Varma. A first draft of this article was produced during the academic year 2014-2015 while a postdoctoral fellow of the Mathematical Sciences Research Institute and the Max-Planck-Institut f\"ur Mathematik, the author is grateful to these institutions for providing perfect working conditions. Errors are due to the author. Revisions were made during January and February of 2016 at the Institut des Hautes \'Etudes Scientifiques and at the Mathematics Institute in Valpara\'iso, where the author is now a professor. Work on this article was supported in part by MSRI NSF Grant DMS 0932078, Project VRIEA-PUCV 039.367 and FONDECYT Grant 1171583.


\section{The Langlands-Shahidi local coefficient}

In this section and the next we revisit the theory of the Langlands-Shahidi local coefficient \cite{Sh1981}. In characteristic $p$ we base ourselves in \cite{Lo2009,Lo2016}. After some preliminaries, we normalize Haar measures and choose Weyl group element representatives for rank one groups in \S~\ref{rankone}. The local coefficient is compatible with Tate's thesis in these cases. In \S~\ref{Weyl} we will turn towards the subtle issues that arise when gluing these pieces together.

\subsection{Local notation}\label{localnot} Throughout the article we let $F$ denote a non-archimedean local field of characteristic $p$. The ring of integers is denoted by $\mathcal{O}_F$ and a fixed uniformizer by $\varpi_F$. Let $({\bf G},{\bf M})$ be a pair of quasi-split reductive group schemes, with $\bf M$ a maximal Levi subgroup of $\bf G$. Given an algebraic group scheme $\bf H$, we let $H$ denote its group of rational points, e.g., $H = {\bf H}(F)$.

Let $\mathfrak{ls}(p,{\bf G},{\bf M})$, or simply $\mathfrak{ls}(p)$ when $\bf G$ and $\bf M$ are clear from context, denote the class of triples $(F,\pi,\psi)$ consisting of: a non-archimedean local field $F$, with ${\rm char}(F) = p$; a generic representation $\pi$ of $G = {\bf G}(F)$; and, a smooth non-trivial additive character $\psi : F \rightarrow \mathbb{C}^\times$.

We say that $(F,\pi,\psi) \in \mathfrak{ls}(p)$ is supercuspidal (resp. discrete series, tempered, principal series) if $\pi$ is a supercuspidal (resp. discrete series, tempered, principal series) representation.

Given a quasi-split connected reductive groups scheme $\bf G$ we fix a Borel subgroup $\bf B$. Write ${\bf B} = {\bf T}{\bf U}$ where $\bf T$ is the maximal torus and $\bf U$ is the unipotent radical. Parabolic subgroups $\bf P$ of $\bf G$ will be standard, i.e., ${\bf P} \supset {\bf B}$. Thus $\bf M$ is the Levi component of the parabolic ${\bf P} = {\bf M}{\bf N}$, with unipotent radical $\bf N$. 

Let $\Sigma$ denote the roots of $\bf G$ with respect to the split component ${\bf T}_s$ of $\bf T$ and $\Delta$ the simple roots. Let $\Sigma_r$ denote the reduced roots. The positive roots are denoted $\Sigma^+$ and the negative roots $\Sigma^-$, and similarly for $\Sigma_r^+$ and $\Sigma_r^-$. The fixed borel $\bf B$ corresponds to a pinning of the roots with simple roots $\Delta$. Standard parabolic subgroups are then in correspondence with subsets $\theta \subset \Delta$; $\theta \leftrightarrow {\bf P}_\theta$. The opposite of a parabolic ${\bf P}_\theta$ and its unipotent radical ${\bf N}_\theta$ are denoted by ${\bf P}_\theta^-$ and ${\bf N}_\theta^-$, respectively.

Given the choice of Borel there is a Chevalley-Steinberg system. To each $\alpha \in \Sigma^+$ there is a subgroup ${\bf N}_\alpha$ of $\bf U$, stemming from the Bruhat-Tits theory of a not necessarily reduced root system. Given smooth characters $\psi_\alpha : N_\alpha / N_{2\alpha} \rightarrow \mathbb{C}^\times$, $\alpha \in \Delta$, we can construct a character of $U$ via
\begin{equation} \label{psiU}
   {\bf U} \twoheadrightarrow {\bf U} / \prod_{\alpha \in \Sigma^+ - \Delta} {\bf N}_\alpha \cong \prod_{\alpha \in \Delta} {\bf N}_\alpha / {\bf N}_{2\alpha}
\end{equation}
and taking $\psi = \prod_{\alpha \in \Delta} \psi_\alpha$.

The character $\psi$ is called non-degenerate if each $\psi_\alpha$ is non-trivial. We often begin with a non-trivial smooth character $\psi : F \rightarrow \mathbb{C}^\times$. When this is the case, unless stated otherwise, it is understood that the character $\psi$ of $U$ is obtained from the additive character $\psi$ of $F$ by setting $\prod_{\alpha \in \Delta} \psi$ in \eqref{psiU}.

Fix a non-degenerate character $\psi : U \rightarrow \mathbb{C}^\times$ and consider $\psi$ as a one dimensional representation on $U$. Recall that an irreducible admissible representation $\pi$ of $G$ is called $\psi$-generic if there exists an embedding
\begin{equation*}
   \pi \hookrightarrow {\rm Ind}_U^G(\psi).
\end{equation*}
This is called a Whittaker model of $\pi$. More precisely, if $V$ is the space of $\pi$ then for every $v \in V$ there is a Whittaker functional $W_v : G \rightarrow \mathbb{C}$ with the property
\begin{equation*}
   W_v(u) = \psi(u) W_v(e), \text{ for } u \in U.
\end{equation*}
It is the multiplicity one result of Shalika \cite{Shka1974} which states that the Whittaker model of a representation $\pi$ is unique, if it exists. Hence, up to a constant, there is a unique functional
\begin{equation*}
   \lambda : V \rightarrow \mathbb{C}
\end{equation*}
satisfying
\begin{equation*}
   \lambda(\pi(u)v) = \psi(u) \lambda(v).
\end{equation*}
We have that
\begin{equation*}
   W_v(g) = \lambda(\pi(g)v), \text{ for } g \in G.
\end{equation*}

Given $\theta \subset \Delta$ let ${\bf P}_\theta = {\bf M}_\theta {\bf N}_\theta$ be the associated standard parabolic. Let ${\bf A}_\theta$ be the torus $(\cap_{\alpha \in \Delta}{\rm ker}(\alpha))^\circ$, so that ${\bf M}_\theta$ is the centralizer of ${\bf A}_\theta$ in $\bf G$. Let $X({\bf M}_\theta)$ be the group of rational characters of ${\bf M}_\theta$, and let
\begin{equation*}
   \mathfrak{a}_{\theta,\mathbb{C}}^* = X({\bf M}_\theta) \otimes \mathbb{C}.
\end{equation*}
There is the set of cocharacters $X^\vee({\bf M}_\theta)$. And there is a pairing $\left\langle \cdot, \cdot \right\rangle : X({\bf M}_\theta) \times X^\vee({\bf M}_\theta) \rightarrow \mathbb{Z}$, which assigns a coroot $\alpha^\vee$ to every root $\alpha$.

Let $X_{\rm nr}({\bf M}_\theta)$ be the group of unramified characters of ${\bf M}_\theta$. It is a complex algebraic variety and we have $X_{\rm nr}({\bf M}_\theta) \cong (\mathbb{C}^\times)^d$, with $d = {\rm dim}_{\mathbb{R}}(\mathfrak{a}_\theta^*)$. To see this, for every rational character $\chi \in X({\bf M}_\theta)$ there is an unramified character $q^{\left\langle \chi, H_\theta(\cdot) \right\rangle} \in X_{\rm nr}({\bf M}_\theta)$, where
\begin{equation*}
   q^{\left\langle \chi, H_\theta(m) \right\rangle} = \left| \chi(m) \right|_F.
\end{equation*}
This last relation can be extended to $\mathfrak{a}_{\theta,\mathbb{C}}^*$ by setting
\begin{equation*}
   q^{\left\langle s \otimes \chi, H_\theta(m) \right\rangle} = \left| \chi(m) \right|_F^s , \ s \in \mathbb{C}.
\end{equation*}
We thus have a surjection
\begin{equation} \label{complexchar}
   \mathfrak{a}_{\theta,\mathbb{C}}^* \twoheadrightarrow X_{\rm nr}({\bf M}_\theta).
\end{equation}
Recall that, given a parabolic ${\bf P}_\theta$, the modulus character is given by
\begin{equation*}
   \delta_\theta(p) = q^{\left\langle \rho_\theta, H_\theta(m) \right\rangle}, \ p = mn \in P_\theta = MN,
\end{equation*}
where $\rho_\theta$ is half the sum of the positive roots in $\theta$.

In \cite{MoWa1994} the variable appearing in the corresponding Eisenstein series ranges over the elements of $X_{\rm nr}({\bf M}_\theta)$. Already in Tate's thesis \cite{T}, the variable ranges over the quasi-characters of ${\rm GL}_1$. The surjection \eqref{complexchar} allows one to use complex variables. In particular, our $L$-functions will be functions of a complex variable. For this we start by looking at a maximal parabolic subgroup ${\bf P} = {\bf M}{\bf N}$ of $\bf G$. In this case, there is a simple root $\alpha$ such that ${\bf P} = {\bf P}_\theta$, where $\theta = \Delta - \left\{ \alpha \right\}$. We fix a particular element $\tilde{\alpha} \in \mathfrak{a}_{\theta,\mathbb{C}}^*$ defined by
\begin{equation} \label{alphatilde}
   \tilde{\alpha} = \left\langle \rho_\theta, \alpha^\vee \right\rangle^{-1} \rho_\theta.
\end{equation}
For general parabolics ${\bf P}_\theta$ we can reduce properties of $L$-functions to maximal parabolics via multiplicativity (Property~(iv) of Theorem~\ref{mainthm}).

We make a few conventions concerning parabolic induction that we will use throughout the article. Let $(\pi,V)$ be a smooth admissible representation of $M = M_\theta$ and let $\nu \in \mathfrak{a}_{\theta, \mathbb{C}}^*$. By parabolic induction, we mean normalized unitary induction
\begin{equation*}
   {\rm ind}_{P}^G (\pi),
\end{equation*}
where we extend the representation $\pi$ to $P = MN$ by making it trivial on $N$. Also, whenever the parabolic subgroup $\bf P$ and ambient group $\bf G$ are clear from context, we will simply write
\begin{equation*}
   {\rm Ind}(\pi) = {\rm ind}_{P}^G(\pi).
\end{equation*}
We also incorporate twists by unramified characters. For any $\nu \in \mathfrak{a}_{\theta,\mathbb{C}}^*$, we let
\begin{equation*}
   {\rm I}(\nu,\pi) = {\rm ind}_{P_\theta}^G (q_F^{\left\langle \nu, H_\theta(\cdot) \right\rangle} \otimes \pi)
\end{equation*}
be the representation with corresponding space ${\rm V}(\nu,\pi)$. Finally, if $\bf P$ is maximal, we write
\begin{equation*}
   {\rm I}(s,\pi) = {\rm I}(s\tilde{\alpha},\pi), \ s \in \mathbb{C},
\end{equation*}
with $\tilde{\alpha}$ as in equation~\eqref{alphatilde}; its corresponding space is denoted by ${\rm V}(s,\pi)$. Furthermore, we write ${\rm I}(\pi)$ for ${\rm I}(0,\pi)$.

\subsection{The Langlands-Shahidi local coefficient}
Let $W$ denote the Weyl group of $\Sigma$, which is generated by simple reflections $w_\alpha \in \Delta$. And, let $W_\theta$ denote the subgroup of $W$ generated by $w_\alpha$, $\alpha \in \theta$. We let
\begin{equation} \label{w_0}
   w_0 = w_l w_{l,\theta},
\end{equation}
where $w_l$ and $w_{l,\theta}$ are the longest elements of $W$ and $W_\theta$, respectively. Choice of Weyl group element representatives in the normalizer ${\rm N}(T_s)$ will be addressed in section~\ref{Weyl}, in order to match with the semisimple rank one cases of \S~\ref{rankone}. For now, we fix a system of representatives $\mathfrak{W} = \left\{ \tilde{w}_\alpha, d\mu_\alpha \right\}_{\alpha \in \Delta}$.

There is an intertwining operator
\begin{equation*}
   {\rm A}(\nu,\pi,\tilde{w}_0) : {\rm V}(\nu,\pi) \rightarrow {\rm V}(\tilde{w}_0(\nu),\tilde{w}_0(\pi)),
\end{equation*}
where $\tilde{w}_0(\pi)(x) = \pi(\tilde{w}_0^{-1}x \tilde{w}_0)$. Let $N_{w_0} = U \cap w_0 N_\theta^- w_0^{-1}$, then it is defined via the principal value integral
\begin{equation*}
   {\rm A}(\nu,\pi,\tilde{w}_0)f(g) = \int_{N_{w_0}} f(\tilde{w}_0^{-1}ng) \, dn.
\end{equation*}

With fixed $\psi$ of $U$, let $\psi_{\tilde{w}_0}$ be the non-degenerate character on the unipotent radical $M_\theta \cap U$ of $M_\theta$ defined by
\begin{equation}\label{w_0psi}
   \psi_{\tilde{w}_0}(u) = \psi(\tilde{w}_0 u \tilde{w}_0^{-1}), \ u \in U_G = G_\theta \cap U.
\end{equation}
This makes $\psi$ and $\psi_{\tilde{w}_0}$ $\tilde{w}_0$-compatible.

Given an irreducible $\psi_{\tilde{w}_0}$-generic representation $(\pi,V)$ of $M_\theta$, let $\overline{\theta} = w_0(\theta)$. Theorem~2.15 of \cite{LoRationality} gives that ${\rm I}(\nu,\pi)$, $\nu \in \mathfrak{a}_{\theta,\mathbb{C}}^*$, is $\psi$-generic and establishes an explicit principal value integral for the resulting Whittaker functional
\begin{equation} \label{Whittakerind}
   \lambda_\psi(\nu,\pi,\tilde{w}_0)f = \int_{N_{\overline{\theta}}} \lambda_{\psi_{\tilde{w}_0}}(f(\tilde{w}_0^{-1}n)) \overline{\psi}(n) \, dn.
\end{equation}

\begin{remark} \label{alphaconvention}
Given the pair $({\bf G},{\bf M}_\theta)$, with ${\bf M}_\theta$ maximal, we identify $s \in \mathbb{C}$ with the element $s \tilde{\alpha} \in \mathfrak{a}_{\theta,\mathbb{C}}^*$. We thus write $\lambda_\psi(s,\pi,\tilde{w}_0)$ for $\lambda_\psi(s \tilde{\alpha},\pi,\tilde{w}_0)$ and ${\rm I}(s,\pi)$ for ${\rm I}(s \tilde{\alpha},\pi)$. Similarly, we identify $s'$ with $s \tilde{w}_0(\tilde{\alpha}) \in \mathfrak{a}_{\theta',\mathbb{C}}^*$ and let $\pi' = \tilde{w}_0(\pi)$. Hence, we simply write $\lambda_\psi(s',\pi',\tilde{w}_0)$ instead of $\lambda_\psi(s \tilde{w}_0(\tilde{\alpha}),\tilde{w}_0(\pi),\tilde{w}_0)$.
\end{remark}

\begin{definition}
For every $\psi_{\tilde{w}_0}$-generic $(F,\pi,\psi) \in \mathfrak{ls}(p)$, the Langlands-Shahidi local coefficient $C_\psi(s,\pi,\tilde{w}_0)$ is defined via the equation
\begin{equation}\label{lslc}
   \lambda_\psi(s \tilde{\alpha},\pi,\tilde{w}_0) = C_\psi(s,\pi,\tilde{w}_0) \lambda_\psi(s \tilde{w}_0(\tilde{\alpha}),\tilde{w}_0(\pi),\tilde{w}_0) {\rm A}(s \tilde{\alpha},\pi,\tilde{w}_0),
\end{equation}
where $s \tilde{\alpha} \in \mathfrak{a}_{\theta,\mathbb{C}}^*$ for every $s \in \mathbb{C}$.
\end{definition}

Let $(F,\pi,\psi) \in \mathfrak{ls}(p)$ be $\psi_{\tilde{w}_0}$-generic. Theorem~2.15 of \cite{LoRationality} also gives that $\lambda_\psi(s,\pi,\tilde{w}_0)f_s$ is a Laurent polynomial in the variable $q_F^{-s}$. By Theorem~2.1 of \cite{Lo2009}, the Langlands-Shahidi local coefficient $C_\psi(s,\pi,\tilde{w}_0)$ is a rational function on $q_F^{-s}$, independent of the choice of test function.

\subsection{Rank one cases and compatibility with Tate's thesis}\label{rankone}

Let $F'/F$ be a separable extension of local fields. Let $\bf G$ be a connected quasi-split reductive group of rank one defined over $F$. The derived group is of the following form
\begin{equation*}
   {\bf G}_D = {\rm Res}_{F'/F} {\rm SL}_2 \text{ or } {\rm Res}_{F'/F} {\rm SU}_3.
\end{equation*}
Note that given a degree-$2$ finite \'etale algebra $E$ over the field $F'$, we consider the semisimple group ${\rm SU}_3$ given by the standard Hermitian form $h$ for the unitary group in three variables as in \S~4.4.5 of \cite{CoSGA3}. Given the Borel subgroup ${\bf B} = {\bf T}{\bf U}$ of $\bf G$, the group ${\bf G}_D$ shares the same unipotent radical $\bf U$. The $F$ rational points of the maximal torus ${\bf T}_D$ are given by
\begin{equation*}
   T_D = \left\{ ({\rm diag}(t,t^{-1}) \, \vert \, t \in F'^\times \right\},
\end{equation*}
in the former case, and by
\begin{equation*}
   T_D = \left\{ ( {\rm diag}(z,\bar{z}z^{-1},\bar{z}^{-1}) \, \vert \, z \in E^\times \right\}
\end{equation*}
in the latter case.

We now fix Weyl group element representatives and Haar measures. In these cases, $\Delta$ is a singleton $\left\{ \alpha \right\}$, and we note that the root system of ${\rm SU}_3$ is not reduced. If ${\bf G}_D = {\rm Res}_{F'/F} {\rm SL}_2$, we set
\begin{equation}
   \tilde{w}_\alpha = \left( \begin{array}{rr} 0 & 1 \\ -1 & 0 \end{array} \right),
\end{equation}
and, if ${\bf G}_D = {\rm Res}_{F'/F} {\rm SU}_3$, we set
\begin{equation}
   \tilde{w}_\alpha = \left( \begin{array}{ccc} 0 & \, 0 & 1 \\ 0 & \!\!\! -1 & 0 \\ 1 & \, 0 & 0 \end{array} \right).
\end{equation}

Given a fixed non-trivial character $\psi: F \rightarrow \mathbb{C}^\times$, we then obtain a self dual Haar measure $d\mu_\psi$ of $F$, as in equation~(1.1) of \cite{Lo2016}. In particular, for ${\rm SL}_2$ we have the unipotent radical ${\bf N}_\alpha$, which is isomorphic to the the unique additive abelian group ${\bf G}_a$ of rank $1$. Here, we fix the Haar measure $d\mu_\psi$ on $G_a \cong F$. Given a separable extension $F'/F$, we extend $\psi$ to a character of $F'$ via the trace, i.e., $\psi_{F'} = \psi \circ {\rm Tr}_{F'/F}$. We also have a self dual Haar measure $\mu_{\psi_{F'}}$ on ${\bf G}_a(F') \cong F'$.

Given a degree-$2$ finite \'etale algebra $E$ over the field $F$, assume we are in the case ${\bf G}_D = {\rm SU}_3$. The unipotent radical is now ${\bf N} = {\bf N}_\alpha {\bf N}_{2\alpha}$, with ${\bf N}_\alpha$ and ${\bf N}_{2\alpha}$ the one parameter groups associated to the non-reduced positive roots. We use the trace to obtain a character $\psi_E : E \rightarrow \mathbb{C}^\times$ from $\psi$. We then fix measures $d\mu_\psi$ and $d\mu_{\psi_E}$, which are made precise in \S~3 of \cite{Lo2016} for $N_\alpha$ and $N_{2\alpha}$. We then extend to the case ${\bf G}_D = {\rm Res}_{F'/F} {\rm SU}_3$ by taking $\psi_{F'} = \psi \circ {\rm Tr}_{F'/F}$ and $\psi_E = \psi_{F'} \circ {\rm Tr}_{E/F'}$. Furthermore, we have corresponding self dual Haar measures $d\mu_{\psi_{F'}}$ and $d\mu_{\psi_E}$ for ${\rm Res}_{F'/F}{\bf N}_\alpha(F) \cong {\bf N}_\alpha(F')$ and ${\rm Res}_{F'/F}{\bf N}_{2\alpha}(F) \cong {\bf N}_{2\alpha}(E)$.

We also have the Langlands factors $\lambda({F'/F},\psi)$ defined in \cite{LaYale}. Let $\mathcal{W}_{F'}$ and $\mathcal{W}_F$ be the Weil groups of $F'$ and $F$, respectively. Recall that if $\rho$ is an $n$-dimensional semisimple smooth representation of $\mathcal{W}_{F'}$, then
\begin{equation*}
   \lambda(F'/F,\psi)^n = \dfrac{\varepsilon(s,{\rm Ind}_{\mathcal{W}_{F'}}^{\mathcal{W}_{F}} \rho,\psi)}{\varepsilon(s,\rho,\psi_{F'})}.
\end{equation*}
On the right hand side we have Galois $\varepsilon$-factors, see Chapter~7 of \cite{BuHe} for further properties. Given a degree-$2$ finite \'etale algebra $E$ over $F'$, the factor $\lambda(E/F',\psi)$ and the character $\eta_{E/F'}$ have the meaning of equation~(1.8) of \cite{Lo2016}.

The next result addresses the compatibility of the Langlands-Shahidi local coefficient with the abelian $\gamma$-factors of Tate's thesis \cite{T}. It is essentially Propostion~3.2 of \cite{Lo2016}, which includes the case of ${\rm char}(F) = 2$.

\begin{proposition}\label{rankoneprop}
 Let $(F,\pi,\psi) \in \mathfrak{ls}({\bf G},{\bf T},p)$, where $\bf G$ is a quasi-split connected reductive group defined over $F$ whose derived group ${\bf G}_D$ is either ${\rm Res}_{F'/F} {\rm SL}_2$ or ${\rm Res}_{F'/F} {\rm SU}_3$.
\begin{itemize}
\item[(i)] If ${\bf G}_D = {\rm Res}_{F'/F}{\rm SL}_2$, let $\chi$ be the smooth character of $T_D$ given by $\pi \vert_{T_D}$. Then
\begin{equation*}
   C_{\psi_{F'}}(s,\pi,\tilde{w}_0) = \gamma(s,\chi,\psi_{F'}).
\end{equation*}
\item[(ii)] If ${\bf G}_D = {\rm Res}_{F'/F}{\rm SU}_3$, $\chi$ and $\nu$ be the smooth characters of $E^\times$ and $E^1$, respectively, defined via the relation 
\begin{equation*}
   \pi \vert_{T_D} ({\rm diag}(t,z,\bar{t}^{-1})) = \chi(t) \nu(z).
\end{equation*}
Extend $\nu$ to a character of $E^\times$ via Hilbert's theorem~90. Then
\begin{equation*}
   C_{\psi_{F'}}(s,\pi,\tilde{w}_0) = \lambda(E/F',\overline{\psi}_{F'}) \, \gamma_E(s,\chi \nu, \psi_E) \, \gamma(2s, \eta_{E/F'} \chi \vert_{F'^\times},\psi_{F'}).
\end{equation*}
\end{itemize}
\end{proposition}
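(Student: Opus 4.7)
My plan is to derive both formulas directly from the defining equation \eqref{lslc} by choosing a convenient test section $f_s\in V(s,\pi)$ supported in the big Bruhat cell $P\tilde{w}_0 N_{w_0}$, and then unfolding both sides of the equation using the explicit integral formulas already available: the Whittaker functional \eqref{Whittakerind} as a principal value integral over $N_{\overline{\theta}}$, and the intertwining operator $A(s,\pi,\tilde{w}_0)$ as a principal value integral over $N_{w_0}$. Since $\bf T$ is the Levi and the Borel equals $\bf P$, the induced space is essentially functions on the opposite unipotent $N_{w_0}$, and the Whittaker functional on the target becomes evaluation at the identity after extending $\pi|_{T_D}$ trivially across the torus. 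This reduces the local coefficient to a ratio of two simple integrals, in which the data of $\pi$ appears only through its restriction to the derived group $G_D$; I may therefore work on $G_D$ throughout.

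For part (i), where $G_D=\mathrm{Res}_{F'/F}\mathrm{SL}_2$, the unipotent radical $\bf U$ is isomorphic (over $F$) to $\mathbf{G}_a$ viewed as an $F'$-group, and the Haar measure fixed in \S~\ref{rankone} is the self-dual measure $d\mu_{\psi_{F'}}$. After evaluating the two sides on $f_s$, the ratio one computes is exactly
\begin{equation*}
   \int_{F'} \chi(x)|x|_{F'}^{s} \,\overline{\psi}_{F'}(x)\,d\mu_{\psi_{F'}}(x)
   \,\bigg/\,
   \int_{F'} \chi^{-1}(x)|x|_{F'}^{1-s}\,\overline{\psi}_{F'}(x)\,d\mu_{\psi_{F'}}(x),
\end{equation*}
which is nothing but Tate's local gamma factor $\gamma(s,\chi,\psi_{F'})$ by the local functional equation of \cite{T}. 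Restriction of scalars plays no nontrivial role beyond identifying $T_D$ with $F'^\times$ and producing $\psi_{F'}=\psi\circ \mathrm{Tr}_{F'/F}$, consistent with the normalizations of \S~\ref{rankone}.

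For part (ii), where $G_D=\mathrm{Res}_{F'/F}\mathrm{SU}_3$, I unfold the intertwining integral over the Heisenberg-like radical $\bf N=N_\alpha N_{2\alpha}$. Using the Chevalley--Steinberg commutation laws, the integration over $N_\alpha(F')\cong E$ naturally pairs the characters $\chi$ and $\nu$ (extended by Hilbert 90) and returns $\gamma_E(s,\chi\nu,\psi_E)$ via Tate's functional equation on $E$. The residual integration over $N_{2\alpha}(F')$, which corresponds to the center of the Heisenberg group, is linked to the additive character of $F'$ only through $\mathrm{Tr}_{E/F'}$; the change of variables forcing the passage from $E$-integrals to $F'$-integrals introduces exactly the Langlands constant $\lambda(E/F',\overline{\psi}_{F'})$ of \cite{LaYale}, and the doubling of the root $\alpha\mapsto 2\alpha$ accounts for the argument $2s$ in the second Tate factor $\gamma(2s,\eta_{E/F'}\chi|_{F'^\times},\psi_{F'})$; the quadratic character $\eta_{E/F'}$ emerges from the discriminant of the Hermitian form $h$ in the unitary variable change, as in equation~(1.8) of \cite{Lo2016}.

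The main obstacle is the careful bookkeeping of Haar measure normalizations and additive characters as one passes from $F$ to $F'$ and then to $E$, especially in the $\mathrm{SU}_3$ case where the non-reduced root system forces a genuine Heisenberg-type integral rather than an abelian one. In particular, isolating the Langlands $\lambda$-factor requires one to match the self-dual measures $d\mu_{\psi_{F'}}$ and $d\mu_{\psi_E}$ with the induced-representation normalizations (using $\lambda(F'/F,\psi)^n=\varepsilon(s,\mathrm{Ind}\,\rho,\psi)/\varepsilon(s,\rho,\psi_{F'})$) and to verify that sign and modulus contributions from the long Weyl element representative $\tilde{w}_\alpha$ cancel correctly. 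Once the measures and characters are aligned with those in \S~\ref{rankone}, the computation reduces, as in Proposition~3.2 of \cite{Lo2016}, to two applications of Tate's local functional equation — one over $E$ and one over $F'$ — yielding the stated formula.
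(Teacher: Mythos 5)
Your proposal is correct and coincides with the paper's own treatment: the paper does not reprove this result but cites Proposition~3.2 of \cite{Lo2016}, whose argument is exactly the direct computation you outline — a big-cell test section, unfolding of the Whittaker functional and intertwining integral, and one application of Tate's local functional equation over $F'$ for ${\rm SL}_2$ (respectively, two applications, over $E$ and over $F'$, together with the Langlands $\lambda$-factor arising from the $N_\alpha$/$N_{2\alpha}$ measure normalizations, for ${\rm SU}_3$). Your remark that the data of $\pi$ enters only through $\pi\vert_{T_D}$ is consistent with the paper's discussion following the proposition.
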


The computations of \cite{Lo2016} rely mostly on the unipotent group $\bf U$, which is independent of the group $\bf G$. However, there is a difference due to the variation of the maximal torus in the above proposition. For example, all smooth representations $\pi$ of ${\rm SL}_2(F)$ are of the form $\pi({\rm diag}(t,t^{-1})) = \chi(t)$ for a smooth character $\chi$ of ${\rm GL}_1(F)$; then $C_\psi(s,\pi,\tilde{w}_0) = \gamma(s,\chi,\psi)$. However, in the case of ${\rm GL}_2(F)$ we have $\pi({\rm diga}(t_1,t_2)) = \chi_1(t_1) \chi_2(t_2)$ for smooth characters $\chi_1$ and $\chi_2$ of ${\rm GL}_1(F)$; then $C_\psi(s,\pi,\tilde{w}_0) = \gamma(s,\chi_1 \chi_2^{-1},\psi)$. Note that the semisimple groups of rank one in the split case, ranging from adjoint type to simply connected, are ${\rm PGL}_2$, ${\rm GL}_2$ and ${\rm SL}_2$. The cases ${\rm PGL}_2 \cong {\rm SO}_3$ and ${\rm SL}_2 = {\rm Sp}_2$ are also included in Propostion~3.2 of [\emph{loc.cit.}]. Of particular interest to us in this article are ${\rm Res}_{E/F}{\rm GL}_2(F) \cong {\rm GL}_2(E)$, ${\rm U}_2(F)$ and ${\rm U}_3(F)$, which arise in connection with the quasi-split unitary groups.

Given an unramified character $\pi$ of $T = {\bf T}(F)$, we have a parameter
\begin{equation}\label{phiparameter}
   \phi: \mathcal{W}_F \rightarrow {}^LT
\end{equation}
corresponding to $\pi$. Let $\mathfrak{u}$ denote the Lie algebra of $\bf U$ and let $r$ be the adjoint action of ${}^LT$ on ${}^L\mathfrak{u}$. Then $r$ is irreducible if ${\bf G}_D = {\rm Res}_{F'/F} {\rm SL}_2$ and $r = r_1 \oplus r_2$ if ${\rm Res}_{F'/F} {\rm SU}_3$. As in \cite{KeSh1988}, we normalize Langlands-Shahidi $\gamma$-factors in order to have equality with the corresponding Artin factors.

\begin{definition}
 Let $(F,\pi,\psi) \in \mathfrak{ls}(p,{\bf G},{\bf T})$ be as in Proposition~\ref{rankoneprop}:
\begin{itemize}
\item[(i)] If ${\bf G}_D = {\rm Res}_{F'/F}{\rm SL}_2$, let
\begin{equation*}
   \gamma(s,\pi,r,\psi) = \lambda(F'/F,\psi) \gamma(s,\chi,\psi_{F'}).
\end{equation*}
\item[(ii)] If ${\bf G}_D = {\rm Res}_{F'/F}{\rm SU}_3$, let 
\begin{equation*}
   \gamma(s,\pi,r_1,\psi) = \lambda(E/F,\psi) \gamma(s,\chi \nu,\psi_E)
\end{equation*}
and
\begin{equation*}
   \gamma(s,\pi,r_2,\psi) = \lambda(F/F',\psi) \, \gamma(s, \eta_{E/F'} \chi \vert_{F'^\times},\psi_{F'}).
\end{equation*}
\end{itemize}
\end{definition}

In this way, with $\phi$ as in \eqref{phiparameter}, we have for each $i$:
\begin{equation*}
   \gamma(s,\pi,r_i,\psi) = \gamma(s,r_i \circ \phi,\psi).
\end{equation*}
The $\gamma$-factors on the right hand side are those defined by Deligne and Langlands \cite{Ta1979}. We can then obtain corresponding $L$-functions and root numbers via $\gamma$-factors, see for example \S~1 of \cite{Lo2016}.

\section{Normalization of the local coefficient}\label{Weyl}

We begin with Langlands lemma. This will help us choose a system of Weyl group element representatives in a way that the local factors agree with the rank one cases of the previous section. We refer to Shahidi's algorithmic proof of \cite{Sh1981}, for Lemma~\ref{langlandslemma} below. In Proposition~\ref{vary}, we address the effect of varying the non-degenerate character on the Langlands-Shahidi local coefficient, in addition to changing the system of Weyl group element representatives and Haar measures. We then recall the multiplicativity property of the local coefficient, and we use this to connect between unramified principal series and rank one Proposition~\ref{rankoneprop}.

\subsection{Weyl group element representatives and Haar measures}\label{fixedW}
Recall that given two subsets $\theta$ and $\theta'$ of $\Delta$ are associate if $W(\theta,\theta') = \left\{ w \in W \vert w(\theta) = \theta' \right\}$ is non-empty. Given $w \in W(\theta,\theta')$, define
\begin{equation*}
   N_w = U \cap w N_\theta^- w^{-1} \quad \overline{N}_w = w^{-1} N_w w.
\end{equation*}
The corresponding Lie algebras are denoted $\mathfrak{n}_w$ and $\overline{\mathfrak{n}}_w$.

\begin{lemma}\label{langlandslemma} Let $\theta, \theta' \subset \Delta$ are associate and let $w \in W(\theta,\theta')$. Then, there exists a family of subsets $\theta_1, \ldots, \theta_d \subset \Delta$ such that:
\begin{itemize}
   \item[(i)] We begin with $\theta_1 = \theta$ and end with $\theta_d = \theta'$.
   \item[(ii)] For each $j$, $1 \leq j \leq d-1$, there exists a root $\alpha_j \in \Delta - \theta_j$ such that $\theta_{j+1}$ is the conjugate of $\theta_j$ in $\Omega_j = \left\{ \alpha_j \right\} \cup \theta_j$.
   \item[(iii)] Set $w_j = w_{j,\Omega_j} w_{l,\theta_j}$ in $W(\theta_j,\theta_{j+1})$ for $1 \leq j \leq d-1$, then $w = w_{d-1} \cdots w_1$.
   \item[(iv)] If one sets $\dot{w}_1 = w$ and $\dot{w}_{j+1} = \dot{w}_j w_j^{-1}$ for $1 \leq j \leq d-1$, then $\dot{w}_d = 1$ and
   \begin{equation*}
      \overline{\mathfrak{n}}_{\dot{w}_j} = \overline{\mathfrak{n}}_{w_j} \oplus {\rm Ad}(w_j^{-1}) \overline{\mathfrak{n}}_{\dot{w}_{j+1}}.
   \end{equation*}
\end{itemize}
\end{lemma}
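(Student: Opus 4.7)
The plan is to proceed by induction on the Bruhat length $\ell(w)$ of $w \in W(\theta,\theta')$, following the algorithmic approach of Shahidi. When $\ell(w) = 0$, i.e., $w = 1$, then $\theta = \theta'$ and one simply takes $d = 1$ with $\theta_1 = \theta$; the statements in (iii) and (iv) are vacuous.

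For the inductive step, assume $w \neq 1$. Since $w$ maps every $\beta \in \theta$ into $\theta' \subset \Delta \subset \Sigma^+$, the left descent set of $w$ consists only of simple roots lying outside $\theta$: there exists $\alpha_1 \in \Delta - \theta$ with $w(\alpha_1) \in \Sigma^-$. Set $\Omega_1 = \{\alpha_1\} \cup \theta$ and $w_1 = w_{l,\Omega_1} w_{l,\theta}$, an element of the finite Weyl subgroup $W_{\Omega_1}$. The standard fact about parabolic conjugates is that $w_1$ is the minimal-length representative of the non-trivial double coset, that $w_1(\theta) \subset \Delta$, and that the choice of $\alpha_1$ in the descent set of $w$ ensures $\ell(w w_1^{-1}) = \ell(w) - \ell(w_1)$. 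Setting $\theta_2 = w_1(\theta)$, one applies the inductive hypothesis to $w w_1^{-1} \in W(\theta_2,\theta')$ to produce $\theta_3, \ldots, \theta_d$ and $\alpha_2, \ldots, \alpha_{d-1}$, yielding the factorization $w = w_{d-1}\cdots w_1$ of (iii).

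For (iv), unwinding the recursion $\dot{w}_{j+1} = \dot{w}_j w_j^{-1}$ with $\dot{w}_1 = w$ gives $\dot{w}_j = w_{d-1}\cdots w_j$, whence $\dot{w}_d = 1$. The roots of $\overline{\mathfrak{n}}_{\dot{w}_j}$ are precisely those $\beta \in \Sigma^- \setminus \Sigma_{\theta_j}$ with $\dot{w}_j(\beta) \in \Sigma^+$; one partitions them according to the sign of $w_j(\beta)$. Those $\beta$ with $w_j(\beta) \in \Sigma^-$ give the roots of $\overline{\mathfrak{n}}_{w_j}$, while those with $w_j(\beta) \in \Sigma^+ \setminus \Sigma_{\theta_{j+1}}$ contribute, via ${\rm Ad}(w_j^{-1})$, the roots of $\overline{\mathfrak{n}}_{\dot{w}_{j+1}}$. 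The additivity $\ell(\dot{w}_j) = \ell(w_j) + \ell(\dot{w}_{j+1})$ built into the construction guarantees that these two subsets are disjoint and exhaust the root set, yielding the claimed direct sum decomposition.

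The principal obstacle is the correct selection of $\alpha_1 \in \Delta - \theta$ at each stage so that simultaneously (a) the conjugate $w_1(\theta)$ lands inside $\Delta$ and not merely in $\Sigma^+$, and (b) the residual element $ww_1^{-1}$ has strictly smaller length. Condition (a) is an assertion about the structure of the parabolic double coset $W_\theta \backslash W_{\Omega_1} / W_\theta$ in a Weyl group generated by $\theta$ and one extra simple reflection, while (b) is a genuine descent-set computation. Once these two points are verified, the rest of the lemma — in particular the direct-sum statement of (iv) — is a matter of careful bookkeeping with root subsystems and reduces to the length additivity coming from the reduced expression $w = w_{d-1}\cdots w_1$.
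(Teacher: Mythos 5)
The paper offers no proof of this lemma---it simply defers to Shahidi's algorithmic proof in \cite{Sh1981}---and your induction on $\ell(w)$, peeling off $w_1 = w_{l,\Omega_1}w_{l,\theta}$ for a simple $\alpha_1 \in \Delta - \theta$ with $w(\alpha_1) < 0$, is precisely that standard argument; it is correct, and your identification of (iv) with the additivity of lengths along the factorization is the right bookkeeping. The one step you flag but leave unverified, $\ell(ww_1^{-1}) = \ell(w) - \ell(w_1)$, does follow from your choice of $\alpha_1$: writing $R(u)$ for the set of positive roots sent to $\Sigma^-$ by $u$, one has $R(w_1) = \Sigma_{\Omega_1}^+ \setminus \Sigma_\theta$, and every such $\beta = c\alpha_1 + \sum_{\gamma\in\theta}c_\gamma\gamma$ (with $c>0$) satisfies $w(\beta) < 0$ because $w(\beta)$ is $c\,w(\alpha_1)$ plus a nonnegative combination of $\theta'$ and $w(\alpha_1)$ is a negative root not lying in $\Sigma_{\theta'}$ (else $\alpha_1 \in \Sigma_\theta$), so $R(w_1) \subset R(w)$ and lengths add.
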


For each $\alpha \in \Delta$ there corresponds a group ${\bf G}_\alpha$ whose derived group is simply connected semisimple of rank one. We fix an embedding ${\bf G}_\alpha \hookrightarrow {\bf G}$. A Weyl group element representative $\tilde{w}_\alpha$ is chosen for each $w_\alpha$ and the Haar measure on the unipotent radical $N_\alpha$ are normalized as indicated in \S~\ref{rankone}. We take the corresponding measure on $N_{-\alpha}$ inside $G_\alpha$. Fix
\begin{equation}\label{locWsystem}
   \mathfrak{W} = \left\{ \tilde{w}_\alpha, d\mu_\alpha \right\}_{\alpha \in \Delta}
\end{equation}
to be this system of Weyl group element representatives in the normalizer of ${\rm N}(T_s)$ together with fixed Haar measures on each $N_\alpha$.

We can apply Lemma~\ref{langlandslemma} by taking the Borel subgroup $\bf B$ of $\bf G$ for the $w_0$ as in equation~\eqref{w_0}, i.e., we use $\theta = \emptyset$. In this way, we obtain a decomposition
\begin{equation}\label{w_0decomp}
   w_0 = \prod_{\beta} w_{0,\beta},
\end{equation}
where $\beta$ is seen as an index for the product ranging through $\beta \in \Sigma_r^+$.  For each such $w_{0,\beta}$, there corresponds a simple reflection $w_\alpha$ for some $\alpha \in \Delta$.

From Langlands lemma and equation~\eqref{w_0decomp} we further obtain a decomposition of $N$ in terms of $N_{\beta}'$, $\beta \in \Sigma_r^+$, where each $N_\beta'$ corresponds to the unipotent group of $N_\alpha$ of ${\bf G}_\alpha$, for some $\alpha \in \Delta$. In this way, the measure on $N$ is fixed by $\mathfrak{W}$ and we denote it by
\begin{equation}
   dn = d\mu_N(n).
\end{equation}
The decomposition of \eqref{w_0decomp} is not unique. However, the choice $\mathfrak{W}$ of representatives determines a unique $\tilde{w}_0$.

We now summarize several facts known to the experts about the Langlands-Shahidi local coefficient in the following proposition.

\medskip

\begin{proposition}\label{vary} Let $(F,\pi,\psi) \in \mathfrak{ls}(p,{\bf G},{\bf M})$. Let $\mathfrak{W}' = \left\{ \tilde{w}_\alpha', d\mu_\alpha' \right\}_{\alpha \in \Delta}$ be an arbitrary system of Weyl group element representatives and Haar measures. Let $\phi: U \rightarrow \mathbb{C}^\times$ be a non-degenerate character and assume that $\pi$ is $\phi_{\tilde{w}_0'}$-generic. There is a connected quasi-split reductive group scheme $\widetilde{\bf G}$ defined over $F$, sharing the same derived group as $\bf G$, and with maximal torus $\widetilde{\bf T} = {\bf Z}_{\widetilde{\bf G}} {\bf T}$. Then, there exists an element $x \in \widetilde{T}$ such that the representation $\pi_x$, given by
\begin{equation*}
   \pi_x(g) = \pi(x^{-1}gx)
\end{equation*}
is $\psi_{\tilde{w}_0}$-generic. And, there exists a constant $a_x(\phi,\mathfrak{W}')$ such that
\begin{equation*}
   C_\phi(s,\pi,\tilde{w}_0') = a_x(\phi,\mathfrak{W}') C_\psi(s,\pi_x,\tilde{w}_0).
\end{equation*}
Let $(F,\pi_i,\psi) \in \mathscr{L}(p)$, $i = 1$, $2$. If $\pi_1 \cong \pi_2$ and are both $\psi_{\tilde{w}_0}$-generic, then
\begin{equation*}
   C_\psi(s,\pi_1,\tilde{w}_0) = C_\psi(s,\pi_2,\tilde{w}_0).
\end{equation*}
\end{proposition}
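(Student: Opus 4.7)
The plan is to decompose the comparison into the three data that enter \eqref{lslc}---the non-degenerate character, the system of Weyl representatives, and the Haar measures---treat each in turn, and assemble the constant $a_x(\phi,\mathfrak{W}')$ from the resulting scalars. The final assertion will then follow from multiplicity one applied to isomorphic Whittaker models.

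For the character, I would use that non-degenerate characters of $U$ form a torsor under the adjoint torus. The $T$-orbit of $\psi$ need not contain $\phi$, which is precisely why the statement introduces a quasi-split $\widetilde{\bf G}$ sharing the derived group of $\bf G$ with maximal torus $\widetilde{\bf T} = Z_{\widetilde G} T$: the enlarged torus $\widetilde T$ acts transitively on non-degenerate characters of $U$, producing $x \in \widetilde T$ matching $\phi$ with $\psi$ up to absorbing the discrepancy between $\tilde w'_0$ and $\tilde w_0$ handled below. Since $Z_{\widetilde G}$ acts on $\bf G$ by inner automorphisms, $\pi_x(g) = \pi(x^{-1}gx)$ is well-defined, and a direct verification against \eqref{w_0psi} shows that $\pi_x$ is $\psi_{\tilde w_0}$-generic. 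For the remaining data, $\mathfrak{W}$ and $\mathfrak{W}'$ are related by $\tilde w'_\alpha = t_\alpha \tilde w_\alpha$ for unique $t_\alpha \in T$ and $d\mu'_\alpha = c_\alpha d\mu_\alpha$ with $c_\alpha > 0$. Using Lemma~\ref{langlandslemma} applied to the decomposition \eqref{w_0decomp}, these propagate to a single $t_0 \in T$ with $\tilde w'_0 = t_0 \tilde w_0$ and to an explicit constant comparing the two Haar measures on $N_{w_0}$. Substituting into \eqref{Whittakerind} and into the intertwining operator, the change of variable $n \mapsto t_0 n t_0^{-1}$ extracts scalar factors through the equivariance of $\pi$ and of $\psi$. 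Combining these scalars with the $c_\alpha$ and the contribution of $x$ assembles $a_x(\phi,\mathfrak{W}')$, and comparing the defining equation \eqref{lslc} on the two sides yields the stated identity.

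For the final assertion, an intertwiner $\iota : V_1 \xrightarrow{\sim} V_2$ realizing $\pi_1 \cong \pi_2$ induces $G$-equivariant isomorphisms on the spaces $V(s,\pi_i)$ and $V(s\tilde w_0(\tilde\alpha),\tilde w_0(\pi_i))$ that commute with the intertwining operators $A(s\tilde\alpha,\pi_i,\tilde w_0)$. If $\lambda_i$ is a $\psi_{\tilde w_0}$-Whittaker functional on the space of $\pi_i$, then $\lambda_2 \circ \iota$ is also such a functional on $V_1$, and hence agrees with $\lambda_1$ up to a scalar by uniqueness of Whittaker models. This scalar appears identically on both sides of \eqref{lslc} and therefore cancels in the defining ratio, giving $C_\psi(s,\pi_1,\tilde w_0) = C_\psi(s,\pi_2,\tilde w_0)$.

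The main obstacle will be the character step: explicitly locating $x$ in $\widetilde T$ and checking the compatibility between $\phi$, $\psi$, and the two Weyl representatives. This forces one to pin down how the failure of $\bf G$ to be adjoint obstructs $T$-transitivity on non-degenerate characters, and how the central extension $\widetilde{\bf G}$ restores it. Once $x$, $t_0$, and the constants $c_\alpha$ are in hand, the remainder is a systematic bookkeeping through Langlands' decomposition of $w_0$ into rank one pieces, where the rank one cases of Proposition~\ref{rankoneprop} already dictate how the scalars combine.
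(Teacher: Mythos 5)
Your proposal follows essentially the same route as the paper: passage to the adjoint group $\widetilde{\bf G}$ to obtain transitivity of $\widetilde{T}$ on non-degenerate characters, conjugation of $\pi$ by a suitable $x \in \widetilde{T}$, comparison of $\tilde{w}_0'$ with $\tilde{w}_0$ via a torus element and of the two Haar measures by change of variables in \eqref{lslc}, and multiplicity one of Whittaker models for the final isomorphism claim. The one device you flag as the ``main obstacle'' but do not supply is the paper's choice of $x$ so that $e = x \tilde{w}_0'^{-1} x^{-1} \tilde{w}_0'$ lands in $Z_{\widetilde{G}}$, which is arranged over the separable closure and descended to $F$ using the triviality of $H^1$ for $\widetilde{\bf T}$, after which $\pi(e)$ enters the constant $a_x(\phi,\mathfrak{W}')$ alongside the measure and multiplicity-one scalars.
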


\begin{proof}
The existence of a connected quasi-split reductive group scheme $\widetilde{\bf G}$ of adjoint type, sharing the same derived group as $\bf G$, is due thanks to Proposition~5.4 of \cite{Sh2002}. Its maximal torus is given by $\widetilde{\bf T} = {\bf Z}_{\widetilde{\bf G}}{\bf T}$. Let $(F,\pi,\psi) \in \mathfrak{ls}(p)$, where $(\pi,V)$ is $\phi_{\tilde{w}_0'}$-generic. Because $\widetilde{\bf G}$ is of adjoint type, the character $\phi$ lies on the same orbit as the fixed $\psi$. Thus, there indeed exists an $x \in \widetilde{T}$ such that $\pi_x$ is $\psi_{\tilde{w}_0}$-generic.

The system $\mathfrak{W}'$ fixes a measure on $N$, which we denote by $dn' = d\mu'_N$. Uniqueness of Haar measures gives a constant $b \in \mathbb{C}$ such that $dn = b \, dn'$. Also, notice that we can extend $\pi$ to a representation of $\widetilde{G}$ so that $\pi \cdot \omega_\pi^{-1}$ is trivial on $Z_{\widetilde{G}}$.

To work with the local coefficient, take $\varphi \in C_c^\infty(P_\theta w_0 B, V)$ and let $f = f_s = \mathcal{P}_s \varphi$ be as in Proposition~2.5 of \cite{LoRationality}, with $\nu = s \tilde{\alpha}$. Now, using the system $\mathfrak{W}'$ in the right hand side of the definition~\eqref{lslc} for $C_\phi(s,\pi,\tilde{w}_0')$, we have
\begin{align*}
   \lambda_\phi &(s\tilde{w}_0'(\tilde{\alpha}),\tilde{w}_0'(\pi),\tilde{w}_0') {\rm A}(s,\pi,\tilde{w}_0') f \\
      & = \int_{N_{\overline{\theta}}} \int_{N_{w_0}} \lambda_{\phi_{\tilde{w}_0'}} \left( f(\tilde{w}_0'^{-1} n_1 \tilde{w}_0'^{-1} n_2) \right) \overline{\phi}(n_2) \, dn_1' \, dn_2'.
\end{align*}
Let $f_x(g) = f(x^{-1}gx)$ for $f \in {\rm I}(s,\pi)$, so that $f_x \in {\rm I}(s,\pi_x)$ is $\psi$-generic for the $\psi_{\tilde{w}_0}$-generic $(\pi_x,V)$. Let $c_x$ be the module for the automorphism $n \mapsto x^{-1} n x$. Then, after two changes of variables and an appropriate change in the domain of integration, the above integral is equal to
\begin{equation*}
   b^2 c_x^2 \int_{N_{\overline{\theta}}} \int_{\tilde{w}_0' N_{w_0} \tilde{w}_0'^{-1}} \lambda_{\psi_{\tilde{w}_0}} \left( f_x(\tilde{w}_0'^{-2} n_1 n_2) \right)
   \overline{\psi}(n_2) \, dn_1 \, dn_2.
\end{equation*}
Let $z = (\tilde{w}_0')^2$ and $\tilde{w}_0' = \tilde{w}_0 t$. We notice that the operator $l_t$, given by $l_t(f)(g) = f(t^{-1}g)$, satisfies
\begin{equation*}
   \lambda_\psi(s,\pi,\tilde{w}_0)\circ l_t 
   = d_t \lambda_\psi(s,\pi,\tilde{w}_0),
\end{equation*}
where $d_t$ is a constant, by multiplicity one. Now, changing back the domain of integration on the previous inside integral, we obtain
\begin{align*}
   b^2 c_x^2 c_z \int_{N_{\overline{\theta}}} \int_{N_{w_0}} \lambda_{\psi_{\tilde{w}_0}} \left( f_x(t^{-2} \tilde{w}_0^{-1} n_1 \tilde{w}_0^{-1} n_2) \right)
   \overline{\psi}(n_2) \, dn_1 \, dn_2 \\
    = b^2 c_x^2 c_z d_t^2 \lambda_{\psi_{\tilde{w}_0}} (s \tilde{w}_0(\tilde{\alpha}), \tilde{w}_0(\pi_x),\tilde{w}_0) {\rm A}(s,\pi,\tilde{w}_0) f_x.
\end{align*}
Now, working in a similar fashion with the left hand side of equation~\eqref{lslc} we obtain
\begin{equation*}
   \lambda_\phi(s\tilde{\alpha},\pi,\tilde{w}_0') f = b c_x \int_{N_{\overline{\theta}}} 
   \lambda_{\psi_{\tilde{w}_0}} \left( f_x(x \tilde{w}_0'^{-1} x^{-1} \tilde{w}_0' \tilde{w}_0'^{-1}n) \right) \overline{\psi} (n) \, dn.
\end{equation*}
We can always find an $x \in \widetilde{T}$ satisfying the above discussion and such that
\begin{equation}\label{xelement}
   e = x \tilde{w}_0'^{-1} x^{-1} \tilde{w}_0' \in Z_{\widetilde{G}}.
\end{equation}
We can also go to the separable closure, as in the discussion following Lemma~3.1 of \cite{Sh1990}, to obtain an element $x \in {\bf T}(F_s)$ satisfying \eqref{xelement}; on $\widetilde{T}$ we observe that $H^1$ is trivial. We then have

\begin{align*}
   \lambda_\phi(s\tilde{\alpha},\pi,\tilde{w}_0') f & = b c_x \pi(e) \int_{N_{\overline{\theta}}} 
   \lambda_{\psi_{\tilde{w}_0}} \left( f_x(t^{-1} \tilde{w}_0^{-1}n) \right) \overline{\psi} (n) \, dn \\
   & = b c_x d_t \pi(e) \lambda_\psi(s,\pi,\tilde{w}_0) f_x.
\end{align*}
In this way, we finally arrive at the desired constant
\begin{equation*}
   a_x(\phi,\mathfrak{W}') = \dfrac{\pi(e)}{b c_x c_z d_t}.
\end{equation*}
To conclude, we notice that if $(F,\pi_i,\psi) \in \mathscr{L}(p)$, $i = 1$, $2$, have $\pi_1 \cong \pi_2$ and are both $\psi_{\tilde{w}_0}$-generic, then Proposition~3.1 of \cite{Sh1981} gives
\begin{equation*}
   C_\psi(s,\pi_1,\tilde{w}_0) = C_\psi(s,\pi_2,\tilde{w}_0).
\end{equation*}
\end{proof}

\subsection{Multiplicativity of the local coefficient} Shahidi's algorithm allows us to obtain a block based version of Langlands' lemma from the Corollary to Lemma~2.1.2 of \cite{Sh1981}. We summarize the necessary results in this section. More precisely, let ${\bf P} = {\bf M}{\bf N}$ be the maximal parabolic associated to the simple root $\alpha \in \Delta$, i.e., ${\bf P} = {\bf P}_\theta$ for $\theta =\Delta - \left\{ \alpha \right\}$. We have the Weyl group element $w_0$ of equation~\eqref{w_0}. Consider a subset $\theta_0 \subset \theta$ and its corresponding parabolic subgroup ${\bf P}_{\theta_0}$ with maximal Levi ${\bf M}_{\theta_0}$ and unipotent radical ${\bf N}_{\theta_0}$.

Let $\Sigma(\theta_0)$ be the roots of $({\bf P}_{\theta_0},{\bf A}_{\theta_0})$. In order to be more precise, let $\Sigma^+({\bf A}_{\theta_0},{\bf M}_{\theta_0})$ be the positive roots with respect to the maximal split torus ${\bf A}_{\theta_0}$ in the center of ${\bf M}_{\theta_0}$. We say that $\alpha,\beta \in \Sigma^+ - \Sigma^+({\bf A}_{\theta_0},{\bf M}_{\theta_0})$ are ${\bf A}_{\theta_0}$-equivalent if $ \beta\vert_{A_{\theta_0}} = \alpha\vert_{A_{\theta_0}}$. Then
\begin{equation*}
   \Sigma^+(\theta_0) = \left( \Sigma^+ - \Sigma^+({\bf A}_{\theta_0},{\bf M}_{\theta_0}) \right) / \sim.
\end{equation*}
Let $\Sigma_r^+(\theta_0)$ be the block reduced roots in $\Sigma^+(\theta_0)$. With the notation of Langlands' lemma, take $\theta_0' = w(\theta_0)$ and set
\begin{equation*}
   \Sigma_r(\theta_0,w) = \left\{ [\beta] \in \Sigma_r^+(\theta_0) \vert  w(\beta) \in \Sigma^- \right\}.
\end{equation*}
We have that
\begin{equation}\label{betai}
   [\beta_i] = w_1^{-1} \cdots w_{j-1}^{-1}([\alpha_j]), \ 1 \leq j \leq n-1,
\end{equation}
are all distinct in $\Sigma_r(\theta_0,w)$ and all $[\beta] \in \Sigma_r(\theta_0,w)$ are obtained in this way.
\begin{equation}\label{Nsemidirect}
   \overline{N}_{\dot{w}_j} = {\rm Ad}(w_j^{-1}) \overline{N}_{\dot{w}_{j+1}} \rtimes \overline{N}_{w_j}.
\end{equation}

We now obtain a block based decomposition
\begin{equation}\label{blockdecomp}
   w_0 = \prod_{j} w_{0,j}.
\end{equation}
In addition, the unipotent group ${\bf N}_{w_0}$ decomposes into a product via successive applications of equation~\eqref{Nsemidirect}. Namely
\begin{equation}\label{Nblockdecomp}
   {\bf N}_{w_0} = \prod_{j} {\bf N}_{0,j}.
\end{equation}
where each ${\bf N}_{0,j}$ is a block unipotent subgroup of $\bf G$ corresponding to a finite subset $\Sigma_j$ of $\Sigma_r(\theta_0,w_0)$. In this way, we can partition the block set of roots into a disjoint union
\begin{equation}\label{blockrootsdecomp}
   \Sigma_r(\theta_0,w_0) = \bigcup_i \Sigma_i.
\end{equation}
Explicitly
\begin{equation}\label{Nblock}
   \overline{\bf N}_{0,j} = {\rm Ad}(w_1^{-1} \cdots w_{j-1}^{-1}) \overline{\bf N}_{w_j},
\end{equation}
which gives a block unipotent ${\bf N}_j$ of $\bf G$. For each $j$ we have an embedding
\begin{equation*}
   {\bf G}_{j} \hookrightarrow {\bf G}
\end{equation*}
of connected quasi-split reductive groups. Each constitutent of this decomposition of $N_{w_0}$ is isomorphic to a block unipotent subgroup $N_{w_j}$ of ${\bf M}_{j}$. The reductive group ${\bf M}_{j}$ has root system $\theta_{j}$ and is a maximal Levi subgroup of the reductive group ${\bf G}_{j}$ with root system $\Omega_{j}$. We have ${\bf P}_{j} = {\bf M}_{j} {\bf N}_{w_j}$, a maximal parabolic subgroup of ${\bf G}_{j}$. Notice that each ${\bf M}_{j}$ corresponds to a simple root $\alpha_{j}$ of ${\bf G}_{j}$. We obtain from $\pi$ a representation $\pi_j$ of $M_j$. We let
\begin{equation*}
   \tilde{\alpha}_{j} = \left\langle \rho_{P_{j}}, \alpha_{j}^\vee \right\rangle^{-1} \rho_{P_{j}}.
\end{equation*}

We note that each $w_{j}$ in equation~\eqref{blockdecomp} is of the form
\begin{equation}
   w_{j} = w_{l,{\bf G}_{j}} w_{l,{\bf M}_{j}}.
\end{equation}
Additionally, each $w_{j}$ decomposes into a product of Weyl group elements corresponding to simple roots $\alpha \in \Delta$. While these decompositions are again not unique, the choice $\mathfrak{W}$ of representatives fixes a unique $\tilde{w}_{j}$, independently of the decomposition of $w_{j}$.

For each $[\beta] \in \Sigma_r^+(\theta_0,w_0)$, we let
\begin{equation*}
   i_{[\beta]} = \left\langle \tilde{\alpha},\beta^\vee \right\rangle.
\end{equation*}
Since $\theta_0 \subset \theta$, we have that the values of $i_{[\beta]}$ range among the integer values
\begin{equation*}
   i = \left\langle \tilde{\alpha},\gamma^\vee \right\rangle, \ 1 \leq i \leq m_r,
\end{equation*}
where $[\gamma] \in \Sigma_r^+(\theta,w_0)$. Let
\begin{equation*}
   a_j = {\rm min} \left\{ i_{[\beta]} \vert [\beta] \in \Sigma_j \right\},
\end{equation*}
where the $\Sigma_i$ are as in \eqref{blockrootsdecomp}. The following is Proposition~3.2.1 of \cite{Sh1981}.

\begin{proposition}\label{lcmultiplicativity}
Let $(F,\pi,\psi) \in \mathfrak{ls}(p,{\bf G},{\bf M})$ and assume $\pi$ is obtained via parabolic induction from a generic represetation $\pi_0$ of ${\bf M}_{\theta_0}$
\begin{equation*}
   \pi \hookrightarrow {\rm ind}_{P_{\theta_0}}^{M_\theta} \pi_0.
\end{equation*}
Then, with the notation of Langlands' lemma, we have
\begin{equation*}
   C_\psi(s\tilde{\alpha},\pi,\tilde{w}_0) = \prod_j C_\psi(a_js\tilde{\alpha}_j,\pi_j,\tilde{w}_j).
\end{equation*}
\end{proposition}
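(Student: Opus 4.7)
The plan is to follow Shahidi's strategy from \cite{Sh1981}, reducing the global intertwining operator and Whittaker functional to a composition of building blocks indexed by the decomposition \eqref{blockdecomp}, and then invoking the defining equation \eqref{lslc} of the local coefficient at each stage.

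First I would replace $\pi$ by its realization as a subrepresentation of ${\rm ind}_{P_{\theta_0}}^{M_\theta}\pi_0$ and use induction in stages, so that ${\rm I}(s\tilde{\alpha},\pi) \hookrightarrow {\rm ind}_{P_{\theta_0}}^{G}(q_F^{\langle s\tilde{\alpha}, H_\theta(\cdot)\rangle}\otimes\pi_0)$. The parameter on the $M_{\theta_0}$-side is read off by restricting $s\tilde{\alpha}$ through $\mathfrak{a}_{\theta,\mathbb{C}}^* \hookrightarrow \mathfrak{a}_{\theta_0,\mathbb{C}}^*$; by definition of $a_j$ and $\tilde{\alpha}_j$ (via equation \eqref{betai} and the minimality condition defining $a_j$) this restriction is exactly the parameter that appears on the $j$-th block when we pass through $w_j$.

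Next I would decompose the intertwining operator according to \eqref{w_0decomp} in its block form. By the Bruhat decomposition~\eqref{Nsemidirect} and the iterative formula $\dot{w}_{j+1} = \dot{w}_j w_j^{-1}$, we get
\begin{equation*}
   {\rm A}(s\tilde{\alpha},\pi,\tilde{w}_0) = {\rm A}(s_{d-1}, \pi_{d-1}, \tilde{w}_{d-1}) \circ \cdots \circ {\rm A}(s_1, \pi_1, \tilde{w}_1),
\end{equation*}
where $s_j$ is the parameter carried at step $j$. The key bookkeeping is that $a_j s\tilde{\alpha}_j$ is precisely the image of $s\tilde{\alpha}$ under the $(j-1)$-fold composition of the previous Weyl transforms, projected onto $\mathfrak{a}_{\theta_j,\mathbb{C}}^*$; this is an instance of the Corollary to Lemma~2.1.2 of \cite{Sh1981} and is where the integers $a_j$ originate.

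In parallel, I would decompose the Whittaker functional \eqref{Whittakerind}: using \eqref{Nblockdecomp} the integral over $N_{\overline{\theta}}$ factors (up to the change of domain given by ${\rm Ad}(\tilde{w}_0^{-1})$) as an iterated integral over the block unipotent groups $N_{0,j}$, and the character $\psi$ behaves $\tilde{w}_j$-compatibly by \eqref{w_0psi}. The normalization of Haar measures and Weyl group representatives fixed by $\mathfrak{W}$ in \S~\ref{fixedW} guarantees that the iterated Whittaker integral at step $j$ is exactly $\lambda_\psi(a_j s\tilde{w}_j(\tilde{\alpha}_j), \tilde{w}_j(\pi_j), \tilde{w}_j)$ applied to the vector obtained from previous steps.

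Combining these two decompositions and invoking \eqref{lslc} block by block produces the identity
\begin{equation*}
   \lambda_\psi(s\tilde{\alpha},\pi,\tilde{w}_0) = \Bigl(\prod_j C_\psi(a_j s\tilde{\alpha}_j,\pi_j,\tilde{w}_j)\Bigr) \, \lambda_\psi(s\tilde{w}_0(\tilde{\alpha}),\tilde{w}_0(\pi),\tilde{w}_0) \circ {\rm A}(s\tilde{\alpha},\pi,\tilde{w}_0),
\end{equation*}
and uniqueness of the Whittaker functional yields the claim. The principal obstacle is the bookkeeping in the third paragraph: one must check that the characters $\psi_{\tilde{w}_j}$, Haar measures $d\mu_\alpha$ on the $N_{w_j}$, and the scalar $\tilde{\alpha}_j$ align coherently across the chain $\theta_1 \to \cdots \to \theta_d$ so that no spurious constants appear. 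This is precisely why the system $\mathfrak{W}$ was fixed once and for all in \S~\ref{fixedW} and why $\tilde{w}_0$ is uniquely determined by \eqref{w_0decomp}; these normalizations make the block Whittaker integrals and intertwining operators telescope cleanly, and Proposition~\ref{vary} controls any residual dependence on the character.
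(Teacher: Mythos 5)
Your outline reproduces the strategy of Shahidi's original argument (block decomposition of the intertwining operator and of the Whittaker functional via Langlands' lemma, then the defining equation of the local coefficient applied at each stage together with uniqueness of Whittaker functionals), which is exactly what the paper relies on: it gives no independent proof but cites Proposition~3.2.1 of \cite{Sh1981} directly. Your identification of the parameter bookkeeping and the role of the fixed system $\mathfrak{W}$ as the only delicate points is accurate, so the proposal is correct and in line with the cited proof.
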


\subsection{$L$-groups and the adjoint representation}\label{Lr_i} Given our connected reductive quasi-split group ${\bf G}$ over a non-archimedean local field or a global function field, it is also a group over its separable algebraic closure. Let $\mathcal{W}$ be the corresponding Weil group. The pinning of the roots determines a based root datum $\Psi_0 = (X^*,\Delta, X_*,\Delta^\vee)$. The dual root datum $\Psi_0^\vee = (X_*,\Delta^\vee,X^*,\Delta)$ determines the Chevalley group ${}^LG^\circ$ over $\mathbb{C}$. Then the $L$-group of ${\bf G}$ is the semidirect product
\begin{equation*}
   {}^LG = {}^LG^\circ \rtimes \mathcal{W},
\end{equation*}
with details given in \cite{Bo1979}. The base root datum $\Psi^\vee$ fixes a borel subgroup ${}^LB$, and we have all standard parabolic subgroups of the form ${}^LP = {}^LP^\circ \rtimes \mathcal{W}$. The Levi subgroup of ${}^LP$ is of the form ${}^LM = {}^LM^\circ \rtimes \mathcal{W}$, while the unipotent radical is given by ${}^LN = {}^LN^\circ$. 

Let $r: {}^LM \rightarrow {\rm End}({}^L\mathfrak{n})$ be the adjoint representation of ${}^LM$ on the Lie algebra ${}^L\mathfrak{n}$ of ${}^LN$. It decomposes into irreducible components
\begin{equation*}
   r = \oplus_{i=1}^{m_r} r_i.
\end{equation*}
The $r_i$'s are ordered according to nilpotency class. More specifically, consider the Eigenspaces of ${}^LM^\circ$ given by
\begin{equation*}
   {}^L\mathfrak{n}_i = \left\{ X_{\beta^\vee} \in {}^L\mathfrak{n} \vert \left\langle \tilde{\alpha}, \beta \right\rangle = i \right\}, \ 1 \leq i \leq m_r.
\end{equation*}
Then each $r_i$ is a representation of the complex vector space ${}^L\mathfrak{n}_i$.

\subsection{Unramified principal series and Artin $L$-functions}\label{ups} Consider a triple $(F,\pi,\psi) \in \mathfrak{ls}({\bf G},{\bf M},p)$, where $\pi$ has an Iwahori fixed vector. From Proposition~\ref{vary}, we can assume $\pi$ is $\psi_{\tilde{w}_0}$-generic and $\tilde{w}_0$-compatible with $\psi$; the choice of Weyl group element representatives and Haar measures $\mathfrak{W}$ being fixed. With the multiplicativity of the local coefficient, we can proceed as in \S~2 of \cite{KeSh1988} and \S~3 of \cite{Sh1990}, and reduce the problem to the rank one cases of \S~\ref{rankone}. We now proceed to state the main result.

For every root $\beta \in \Sigma_r(\emptyset,w_0)$, we have as in \S~\ref{fixedW} a corresponding rank one group ${\bf G}_\alpha$. Let $\Sigma_{r}(w_0,{\rm SL}_2)$ denote the set consisting of $\alpha \in \Sigma_r(\emptyset,w_0)$ such that ${\bf G}_\alpha$ is as in case~(i) of Proposition~\ref{rankone}. Similarly, let $\Sigma_{r}(w_0,{\rm SU}_3)$ consist of $\alpha \in \Sigma_r(\emptyset,w_0)$ such that ${\bf G}_\alpha$ is as in the corresponding case~(ii). Let
\begin{equation*}
   \lambda(\psi,w_0) = \prod_{\alpha \in \Sigma_{r}(w_0,{\rm SL}_2)} \lambda(F_\alpha'/F,\psi)
   				   \prod_{\alpha \in \Sigma_{r}(w_0,{\rm SU}_3)} \lambda(E_\alpha/F,\psi)^2 \lambda(F_\alpha'/F,\psi)^{-1}
\end{equation*}
Partitioning each of the sets $\Sigma_i$ of equation~\eqref{blockrootsdecomp} arising in this setting further by setting $\Sigma_i = \Sigma_i({\rm SL}_2) \cup \Sigma_i({\rm SU}_3)$ we can define $\lambda_i(\psi,w_0)$ appropriately, so that 
\begin{equation*}
   \lambda(\psi,w_0) = \prod_i \lambda_i(\psi,w_0).
\end{equation*}

\begin{proposition}\label{unrgammaprop}
Let $(F,\pi,\psi) \in \mathfrak{ls}(p,{\bf G},{\bf M})$ be such that $\pi$ has an Iwahori fixed vector. Then
\begin{equation*}
   C_\psi(s,\pi,\tilde{w}_0) = \lambda(\psi,w_0)^{-1} \prod_{i=1}^{m_r} \gamma(is,\pi,r_i,\psi).
\end{equation*}
Let $\phi: \mathcal{W}_F' \rightarrow {}^LM$ be the parameter of the Weil-Deligne group corresponding to $\pi$. Then
\begin{equation*}
   \prod_{i=1}^{m_r} \gamma(is,\pi,r_i,\psi) = \prod_{i=1}^{m_r} \gamma(is,r_i \circ \phi,\psi),
\end{equation*}
where on the right hand side we have the Artin $\gamma$-factors defined by Delinge and Langlands \cite{Ta1979}.
\end{proposition}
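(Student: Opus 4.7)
The plan is to combine the multiplicativity formula of Proposition~\ref{lcmultiplicativity} with the rank one computation of Proposition~\ref{rankoneprop}, grouping rank one factors according to the eigenspace decomposition of the adjoint representation $r$. Since $\pi$ has an Iwahori fixed vector, it is a subquotient of a principal series, so there is an unramified character $\chi$ of $T$ with $\pi \hookrightarrow {\rm ind}_{B \cap M}^{M} \chi$; in particular we may invoke multiplicativity in the extreme case $\theta_0 = \emptyset$, where each block-Levi ${\bf M}_j$ is a torus and each block-ambient ${\bf G}_j$ is of semisimple rank one. By Proposition~\ref{vary} we may further assume $\pi$ is $\psi_{\tilde w_0}$-generic and that $\mathfrak{W}$ is the fixed system of representatives, so that the block decompositions \eqref{w_0decomp}--\eqref{blockrootsdecomp} attached to $w_0$ are canonically defined.

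The first step is to apply Proposition~\ref{lcmultiplicativity} with $\theta_0 = \emptyset$ and $\pi_0 = \chi$ to get
\begin{equation*}
   C_\psi(s\tilde{\alpha},\pi,\tilde{w}_0) = \prod_{j} C_\psi(a_j s\tilde{\alpha}_j,\pi_j,\tilde{w}_j),
\end{equation*}
where each ${\bf G}_j$ has derived group ${\rm Res}_{F_j'/F}{\rm SL}_2$ or ${\rm Res}_{F_j'/F}{\rm SU}_3$ and $\pi_j = \chi|_{T_j}$ is an unramified character of the torus. Next I would apply Proposition~\ref{rankoneprop} to each factor, expressing $C_\psi(a_j s \tilde{\alpha}_j,\pi_j,\tilde{w}_j)$ in terms of abelian Tate $\gamma$-factors, together with the Langlands $\lambda$-factors $\lambda(F_j'/F,\psi)$ (resp.\ $\lambda(E_j/F,\psi)$). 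Collecting these normalizing factors across all $j \in \Sigma_r(\emptyset,w_0)$ and using the partition $\Sigma_r(\emptyset,w_0) = \bigcup_i \Sigma_i$ from \eqref{blockrootsdecomp}, I obtain exactly the product $\lambda(\psi,w_0)^{-1}$, after comparing with the definition of the $\gamma(s,\pi,r_i,\psi)$ given in the definition following Proposition~\ref{rankoneprop} (which absorbs one copy of $\lambda$ into each $\gamma$-factor attached to an $r_i$).

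The key bookkeeping step is the identification of the index $a_j$ of each block with the eigenspace index $i$ in the decomposition $r = \oplus_i r_i$ from \S\ref{Lr_i}. By construction, the block $\Sigma_j$ is contained in a single eigenspace $\{[\beta] : \langle \tilde{\alpha},\beta^\vee\rangle = a_j\}$, and conversely every eigenspace ${}^L\mathfrak{n}_i$ is covered by the blocks with $a_j = i$; this allows me to rearrange
\begin{equation*}
   \prod_j \gamma(a_j s, \chi_j, \psi_{F_j'})\cdots = \prod_{i=1}^{m_r} \gamma(is,\pi,r_i,\psi),
\end{equation*}
where the contribution of all blocks with $a_j = i$ assembles, via the functoriality of induced/restricted parameters for unramified characters of tori, into the Tate $\gamma$-factor of $r_i \circ \phi$. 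This is the main obstacle of the proof: one has to match, block by block, the rank one expressions of Proposition~\ref{rankoneprop} with the irreducible constituents of the adjoint action, including the correct behavior of the $\lambda$-factors under the restriction of scalars. The argument parallels \S\S~2--3 of \cite{KeSh1988} and \S~3 of \cite{Sh1990}, and the characteristic $p$ case poses no new difficulty since the rank one computation is already available via Proposition~\ref{rankoneprop}.

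For the second assertion, once everything is written in terms of abelian Tate $\gamma$-factors of $\chi|_{F_j'^\times}$, $\chi\nu$ on $E_j^\times$, and $\eta_{E_j/F_j'}\chi|_{F_j'^\times}$, local class field theory identifies these with the Artin $\gamma$-factors of the corresponding one-dimensional Galois representations. The Langlands factors $\lambda(F'/F,\psi)$ and $\lambda(E/F,\psi)$ are precisely those that implement induction of characters from $\mathcal{W}_{F'}$ (resp.\ $\mathcal{W}_E$) to $\mathcal{W}_F$ on the Galois $\varepsilon$-factor side. Therefore the normalized products yield exactly $\prod_i \gamma(is, r_i \circ \phi,\psi)$, where $\phi$ is the parameter associated to $\chi$ via the unramified local Langlands correspondence for tori, proving the second equality.
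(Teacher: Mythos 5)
Your proposal follows essentially the same route as the paper, which itself only sketches the argument: reduce to the $\psi_{\tilde w_0}$-generic case via Proposition~\ref{vary}, apply multiplicativity of the local coefficient with $\theta_0=\emptyset$ so that every block is of semisimple rank one, and invoke Proposition~\ref{rankoneprop} together with the normalized rank one $\gamma$-factors to absorb the $\lambda$-factors and assemble the eigenspace products, with the second assertion following from local class field theory and the inductive nature of the $\lambda$-factors. The one imprecision is your claim that each block lies in a single eigenspace: an ${\rm SU}_3$-type block attached to a root $\beta$ with $\langle\tilde\alpha,\beta^\vee\rangle=i$ contributes simultaneously to ${}^L\mathfrak{n}_i$ and ${}^L\mathfrak{n}_{2i}$ (its rank one local coefficient already carries both a $\gamma(s,\cdot)$ and a $\gamma(2s,\cdot)$), which is precisely the bookkeeping the paper handles by partitioning $\Sigma_i=\Sigma_i({\rm SL}_2)\cup\Sigma_i({\rm SU}_3)$ in defining $\lambda_i(\psi,w_0)$ --- but this does not affect the validity of the argument.
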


\section{A local to global result}\label{lg}

Lemma~\ref{hllemma} below is the quasi-split reductive groups generalization of the local to global result of Henniart-Lomel\'i for ${\rm GL}_n$ over function fields \cite{HeLo2013a}. A more general globalization theorem in characteristic $p$ is now proved in collaboration with Gan \cite{GaLoJEMS}. We include here a proof which minimizes the number of auxiliary places required to one; Lemma~\ref{hllemma} includes all of the cases at hand. This result can be seen as the function fields counterpart of Shahidi's number fields Propostion~5.1 of \cite{Sh1990}, which is in turn a subtle refinement of a result of Henniart and Vign\'eras \cite{He1983,Vi1986}.

\subsection{} Throughout the article $k$ will denote a global function field with field of constants $\mathbb{F}_q$ and ring of ad\`eles $\mathbb{A}_k$. Given an algebraic group $\bf H$ and a place $v$ of $k$, we write $q_v$ for $q_{k_v}$ and $H_v$ instead of ${\bf H}(k_v)$. Similarly with $\mathcal{O}_v$ and $\varpi_v$.

We consider pairs $({\bf G},{\bf M})$ of quasi-split reductive group schemes defined over $k$. Globally we fix a maximal compact open subgroup $\mathcal{K} = \prod_v \mathcal{K}_v$ of ${\bf G}(\mathbb{A}_k)$, where the $\mathcal{K}_v$ range through a fixed set of maximal compact open subgroups of $G_v$. Each $\mathcal{K}_v$ is special and $\mathcal{K}_v $ is hyperspecial at almost every place. In addition, we can choose each $\mathcal{K}_v$ to be compatible with the decomposition
\begin{equation*}
   \mathcal{K}_v = (N_v^- \cap \mathcal{K}_v) (M_v \cap \mathcal{K}_v)(N_v \cap \mathcal{K}_v),
\end{equation*}
for every standard parabolic subgroup ${\bf P} = {\bf M}{\bf N}$. The group $G \cap \mathcal{K}$ is a maximal compact open subgroup of $M = {\bf M}(\mathbb{A}_k)$. Furthermore
\begin{equation*}
   G = P \mathcal{K}.
\end{equation*}
Given a finite set of places $S$ of $k$, we let $G_S = \prod_{v \in S} G_v \prod_{v \notin S} \mathcal{K}_v$.

\begin{lemma}\label{hllemma} Let $\pi_0$ be a supercuspidal unitary representation of ${\bf G}(F)$. There is a global function field $k$ with a set of two places $S = \left\{ v_0, v_\infty \right\}$ such that $k_{v_0} \cong F$. There exists a cuspidal automorphic representation $\pi = \otimes_v \pi_v$ of ${\bf G}(\mathbb{A}_k)$ satisfying the following properties:
\begin{itemize}
   \item[(i)] $\pi_{v_0} \cong \pi_{0}$;
   \item[(ii)] $\pi_{v}$ has an Iwahori fixed vector for $v \notin S$;
   \item[(iii)] $\pi_{v_\infty}$ is a constituent of a tamely ramified principal series;
   \item[(iv)] if $\pi_0$ is generic, then $\pi$ is globally generic.
\end{itemize}
\end{lemma}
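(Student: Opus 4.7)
The plan is to realize $\pi_0$ in a cuspidal automorphic spectrum by a Poincaré/theta-series construction, following the template of \cite{HeLo2013a} for $\mathrm{GL}_n$ but adapted to the quasi-split reductive setting by using the Bruhat decomposition and a controlled test function at every place.

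First, I would globalize the local data. Since $F$ is a non-archimedean local field of characteristic $p$ with residue field $\mathbb{F}_q$, I can choose the function field $k = \mathbb{F}_q(C)$ of a smooth projective curve $C/\mathbb{F}_q$ with a rational point $v_0$ whose completed local ring is isomorphic to $F$; the simplest choice is $C=\mathbb{P}^1$ with $v_0$ the point at infinity, together with an auxiliary rational place $v_\infty$. The quasi-split pair $(\mathbf{G},\mathbf{M})$ over $F$ is determined by its pinning and its Galois/étale descent datum, which can be spread out to a quasi-split pair over $k$ (over a suitable affine open of $C$) whose specialization at $v_0$ recovers $(\mathbf{G},\mathbf{M})/F$. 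Fix the global maximal compact $\mathcal{K}=\prod_v \mathcal{K}_v$ as in the preceding subsection, with $\mathcal{K}_v$ hyperspecial outside $S=\{v_0,v_\infty\}$, and fix an Iwahori subgroup $\mathcal{I}_{v_\infty}\subset \mathcal{K}_{v_\infty}$ compatible with the Bruhat decomposition.

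Next, I would build the test function $f=\bigotimes_v f_v$ on $\mathbf{G}(\mathbb{A}_k)$, of compact support modulo the center, by:
\begin{itemize}
\item[(a)] choosing $f_{v_0}$ to be a matrix coefficient of $\pi_0$, which, since $\pi_0$ is supercuspidal, is compactly supported modulo the center and satisfies $\pi_0(f_{v_0})\neq 0$;
\item[(b)] choosing $f_{v_\infty}$ to be supported in the ``big cell'' $\mathcal{I}_{v_\infty}\tilde{w}_0\mathcal{I}_{v_\infty}$ (so as to land in a tamely ramified principal series);
\item[(c)] taking $f_v$ to be the normalized characteristic function of an Iwahori subgroup $\mathcal{I}_v$ at every other place, which is possible since $\mathcal{K}_v$ is hyperspecial there.
\end{itemize}
If $\pi_0$ is generic with Whittaker functional $\lambda_{v_0}$, I would refine (a) so that $f_{v_0}$ is a \emph{Whittaker-type} matrix coefficient, i.e.\ of the form $g\mapsto \lambda_{v_0}(\pi_0(g)v_0)\cdot\overline{\lambda_{v_0}(v_0)}$ cut off by a small open compact support, to guarantee non-vanishing of the resulting Whittaker coefficient.

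Then I would form the Poincaré series
\begin{equation*}
   \theta_f(g) \;=\; \sum_{\gamma\in \mathbf{G}(k)} f(\gamma g), \qquad g\in \mathbf{G}(\mathbb{A}_k),
\end{equation*}
which converges absolutely since $f$ is compactly supported modulo center and $\mathbf{G}(k)$ is discrete in $\mathbf{G}(\mathbb{A}_k)$; by construction $\theta_f$ is left $\mathbf{G}(k)$-invariant and right $\mathcal{I}_v$-invariant outside $S$. The crucial step is to verify that $\theta_f$ is a \emph{cusp form}, i.e.\ that the constant term
\begin{equation*}
   \theta_f^{\mathbf{P}'}(g) \;=\; \int_{\mathbf{N}'(k)\backslash \mathbf{N}'(\mathbb{A}_k)} \theta_f(ng)\,dn
\end{equation*}
vanishes for every proper standard parabolic $\mathbf{P}'=\mathbf{M}'\mathbf{N}'$. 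Unfolding and grouping by the Bruhat decomposition of $\mathbf{G}(k)=\bigsqcup_w \mathbf{P}'(k) w \mathbf{P}'(k)$, one is reduced to an integral over $\mathbf{N}'(k_{v_0})$ of $f_{v_0}$ (translated) along $\mathbf{N}'$; this integral vanishes precisely because $f_{v_0}$ is a matrix coefficient of a supercuspidal, so its $\mathbf{N}'(k_{v_0})$-integral vanishes. This is the same mechanism as in Shahidi's number-field refinement and in \cite{HeLo2013a}. Non-vanishing of $\theta_f$ itself follows from the identity term $\gamma=e$ after choosing the support of $f$ small enough outside $v_0$ so that collisions with other rational $\gamma$'s are avoided; weak approximation/strong approximation for the unipotent radical is what makes the one-place-at-infinity argument work here.

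Finally, I would extract $\pi$ as an irreducible cuspidal constituent of the representation generated by $\theta_f$ under right translation. Because $\pi(f_{v_0})\neq 0$ on $\theta_f$ and $f_{v_0}$ is a matrix coefficient of the supercuspidal $\pi_0$, Frobenius reciprocity forces $\pi_{v_0}\cong \pi_0$, establishing (i). Right invariance by $\mathcal{I}_v$ outside $S$ yields (ii). The support condition at $v_\infty$ places $\pi_{v_\infty}$ inside a component of a tamely ramified principal series, yielding (iii). If $f_{v_0}$ was chosen as a Whittaker matrix coefficient, an unfolding of the global Whittaker integral $\int_{\mathbf{U}(k)\backslash \mathbf{U}(\mathbb{A}_k)}\theta_f(ug)\overline{\psi}(u)\,du$ against a non-degenerate global $\psi$ reduces the non-vanishing to the local non-vanishing of the $\psi_{v_0}$-Whittaker functional on $\pi_0$, which gives (iv).

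The main obstacle is the cuspidality step: one must show the constant-term integral really does kill every non-identity Bruhat cell. In the classical $\mathrm{GL}_n$ case this follows from a direct matrix manipulation; in the general quasi-split case, it requires combining the vanishing of $\int_{\mathbf{N}'(k_{v_0})}f_{v_0}(n\,\cdot)\,dn=0$ with a compatibility between the global Bruhat decomposition $\mathbf{G}(k)=\bigsqcup \mathbf{P}'(k)w\mathbf{P}'(k)$ and the support of $f$ at $v_\infty$ and at the Iwahori places, in order to ensure that only finitely many Bruhat cells contribute and that each contributes a translate of an integral against $f_{v_0}$ that is annihilated by supercuspidality. Controlling the cardinality of contributing $\gamma$'s by shrinking the support of $f_{v_\infty}$ inside $\mathcal{I}_{v_\infty}\tilde w_0\mathcal{I}_{v_\infty}$, while still keeping the resulting local vector in a tamely ramified principal series, is the delicate point that justifies the single auxiliary place in the statement.
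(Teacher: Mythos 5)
Your overall strategy --- a Poincar\'e series built from a supercuspidal matrix coefficient at $v_0$, with cuspidality coming from the vanishing of $\int_{N'(F)}f_{v_0}(xny)\,dn$ and with (i), (ii), (iv) extracted as you describe --- is the same as the paper's. The genuine gap is at the step you yourself flag as delicate: the non-vanishing of $\theta_f$, i.e.\ the claim that only $\gamma=e$ contributes on the support of $f$. You propose to get this by ``shrinking the support of $f$ outside $v_0$'' together with an appeal to strong approximation for the unipotent radical. That does not work as stated: the support of $f_{v_0}$ is dictated by the matrix coefficient and is a fixed, possibly large, compact set modulo the center, and at places $v\notin S$ you cannot shrink below an Iwahori (resp.\ below the prescribed level at $v_\infty$) without losing (ii) and (iii); so for a fixed curve and fixed places there is no reason why non-identity rational $\gamma$ should avoid the support. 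The paper's mechanism is different and is the real content of the lemma: the projection $\overline{\mathcal{C}}$ of the support to $(\mathbf{G}/\mathbf{Z})(\mathbb{A}_k)$ is compact, hence meets $(\mathbf{G}/\mathbf{Z})(k)$ in a finite set whose entries are bounded in height by some constant $h$; one then \emph{chooses the auxiliary places} to have residue fields of cardinality larger than $h$, and imposes transversal congruence conditions --- an upper-triangular Iwahori $\mathcal{I}_{v'}$ at a third place $v'$ and the pro-$p$ Iwahori $\mathcal{I}^1_{v_\infty}$ of lower-triangular unipotent matrices at $v_\infty$ --- which force every rational element of $\overline{\mathcal{C}}$ to equal the identity, whence ${\rm P}f=f$ on $\mathcal{C}$. (The extra place $v'$ is consistent with $S=\{v_0,v_\infty\}$ because (ii) only asks for an Iwahori-fixed vector.) You also omit the preliminary globalization of $\omega_{\pi_0}$ to a unitary Gr\"o\ss encharakter $\omega$, which is needed both to define $f$ modulo the center and to place ${\rm P}f$ in $L^2_0(\mathbf{G},\omega)$.

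A second, smaller problem is your choice of $f_{v_\infty}$ supported on the big cell $\mathcal{I}_{v_\infty}\tilde w_0\mathcal{I}_{v_\infty}$. What (iii) requires is that $\pi_{v_\infty}$ have a nonzero vector fixed by the pro-$p$ Iwahori $\mathcal{I}^1_{v_\infty}$; the paper therefore takes $f_{v_\infty}=\mathds{1}_{\mathcal{I}^1_{v_\infty}}$ and deduces (iii) as in the end of the proof of Theorem~3.1 of \cite{HeLo2013a}. This same choice is what supplies the lower-triangular unipotent congruences used in the non-vanishing argument above. Supporting $f_{v_\infty}$ on the big cell yields neither of these things, and in fact the identity does not lie in that cell, so it is not even clear your identity term survives at $v_\infty$.
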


\begin{proof}
   We construct a function $f = \otimes_v f_v$ on ${\bf G}(\mathbb{A})$ in such a way that the Poincar\'e series
\begin{equation*} \label{genericint}
   {\rm P}f(g) = \sum_{\gamma \in {\bf G}(k)} f(\gamma g)
\end{equation*}
is a cuspidal automorphic function.

Let $\bf Z$ be the center of $\bf G$. Note that the central character $\omega_\pi$ is unitary. It is possible to construct a global unitary character $\omega : {\bf Z}(k) \backslash {\bf Z}(\mathbb{A}_k) \rightarrow \mathbb{C}^\times$ such that: $\omega_{v_0} = \omega_{\pi_0}$; $\omega_{v_\infty}$ is trivial on $Z_{v_\infty}^1$; and, $\omega_v$ is trivial on $Z_v^0$ for every $v \notin \left\{ v_0, v_\infty \right\}$. Here, $Z_{v_\infty}^1$ is the pro-$p$ unipotent radical ${\rm mod}\,\mathfrak{p}_\infty$ of $Z_{v_\infty}$ and $Z_v^0$ is the maximal compact open subgroup of $Z_v$.

We let $S' = \left\{ v_0, v', v_\infty \right\}$, where $v'$ is an auxiliary place of $k$. Outside of $S'$ we consider the characteristic functions
\begin{equation*}
   f_v = \mathds{1}_{\mathcal{K}_v}, \ v \notin S'.
\end{equation*}
At the place $v_0$ of $k$, we let
\begin{equation*}
   f_{v_0}(g) = \left\langle \pi_0(g) x, y \right\rangle, \ x,y \in V_{\pi_0},
\end{equation*}
be a matrix coefficient of $\pi_0$, where $V_{\pi_0}$ is the space of $\pi_0$. The function $f_{v_0}$ has compact support $\mathcal{C}_{v_0}$ modulo $Z_{v_0}$.

At the place $v'$, consider the Iwahori subgroup $\mathcal{I}_{v'}$ of upper triangular matrices ${\rm mod} \, \mathfrak{p}_{v'}$. At the place at infinity, we let $\mathcal{I}_{v_\infty}^1$ be the pro-$p$ Iwahori subgroup of unipotent lower triangular matrices ${\rm mod} \, \mathfrak{p}_{v_\infty}$. Let
\begin{equation*}
   f_{v'} = \mathds{1}_{\mathcal{I}_{v'}}   \text{ and }   f_{v_\infty} = \mathds{1}_{\mathcal{I}_{v_\infty}^1}.
\end{equation*}

Twist the global function $f = \otimes f_v$ by $\omega^{-1}$, in order to be able to mod out by the center. Let $\overline{\bf H} = {\bf G}/{\bf Z}$ and set
\begin{equation*}
   \mathcal{C} = \mathcal{C}_{v_0} \times \prod_{v \notin S'} \mathcal{K}_v \times 
   \mathcal{I}_{v'} \times \mathcal{I}_{v_\infty}^1.
\end{equation*}
The projection $\overline{\mathcal{C}}$ of $\mathcal{C}$ on $\overline{\bf H}(\mathbb{A}_k)$ is compact. Hence, $\overline{\mathcal{C}} \cap \overline{\bf H}(k)$ is finite. Because of this, there exists a constant $h$, such that the height $\lVert g \rVert \leq h$ bounds the entries of $g = (g_{ij}) \in \overline{\bf H}(k) \cap\overline{\mathcal{C}}$.  See \S~I.2.2 of \cite{MoWa1994} for the height functions $\lVert g \rVert$ and $\lVert g \rVert_v$ for elements of ${\bf G}(\mathbb{A}_k)$ and $G_v$, respectively.

We now impose the further conditions on the choice of places $v'$ and $v_\infty$. These two places need to be such that the cardinality of their respective residue fields is larger than $h$. This is to ensure that the poles of the entries of all $g \in \overline{\bf H}(k) \cap \overline{\mathcal{C}}$ are absorbed. Indeed, given a connected smooth projective curve $X$ over $\mathbb{F}_q$, its set of points over the algebraic closure $\bar{\mathbb{F}}_q$ is infinite. Hence, it is always possible to find places $v$ with residue field $\mathbb{F}_{q_v}$, with $q_v > h$.

We then have that
\begin{equation*}
   g_{ij} \in \mathcal{O}_{v'}, \ i \leq j,
\end{equation*}
and the congruence relations
\begin{align*}
   g_{ij} \equiv 0 \text{ mod } \mathfrak{p}_{v'}, & \ i > j.
\end{align*}
By incorporating the condition at infinity, we obtain similar relationships corresponding to the use of lower triangular matrices at $v_\infty$. We can see that $\overline{\mathcal{C}} \cap \overline{\bf H}(k) \in \overline{\bf H}(\mathbb{F}_q)$. In fact, the pro-$p$ condition at $v_\infty$ ensures that $\overline{\mathcal{C}} \cap \overline{\bf H}(k) = \left\{ I_n \right\}$. Now, we incorporate the twist by $\omega$, and lift the function $f = \otimes f_v$ back to one of ${\bf G}(\mathbb{A}_k)$. We then have that
\begin{equation*}
   {\rm P}(f)(g) = f(g), \text{ for } g \in \mathcal{C}.
\end{equation*}

We can now proceed as in p.~4033 of \cite{HeLo2013a} to conclude that ${\rm P}f$ belongs to the space $L_0^2({\bf G},\omega)$ of cuspidal automorphic functions on ${\bf G}(k) \backslash {\bf G}(\mathbb{A}_k)$ transforming via $\omega$ under ${\bf Z}(\mathbb{A}_k)$. The resulting cuspidal automorphic representation $\pi$ of ${\bf G}(\mathbb{A}_k)$ is such that: $\pi_{v_0} \cong \pi_0$; $\pi_{v'}$ has a non-zero fixed vector under $\mathcal{I}_{v'}$; and, $\pi_{v_\infty}$ has a non-zero fixed vector under $\mathcal{I}_{v_\infty}^1$. The last three paragraphs of the proof of Theorem~3.1 of [\emph{loc. cit.}] are general and can be used to establish property~(iii) of our theorem.

Let $\psi : k \backslash \mathbb{A}_k \rightarrow \mathbb{C}^\times$ with $\psi_v$ be an additive character which is unramified at every place. We extend $\psi$ to a character of ${\bf U}(k) \backslash {\bf U}(\mathbb{A}_k)$ via equations~\eqref{psiU} and \eqref{w_0psi}. If $\pi$ is $\psi$-generic, we can proceed as in Theorem~2.2 of \cite{Vi1986} to show that
\begin{equation*}
   W_\psi(f)(g) = \int_{{\bf U}_G(k) \backslash {\bf U}_G(\mathbb{A}_k)} f(ng) \overline{\psi}(n) \, dn \neq 0.
\end{equation*}
Hence, the Whittaker model is globalized in this construction.

\end{proof}

\begin{remark}
The above proposition admits a modification. Property \emph{(ii)} can be replaced by: \emph{(ii)'} $\pi_{v}$ is spherical for $v \notin S$; and, condition \emph{(iii)} replaced by: \emph{(iii)'} $\pi_{v_\infty}$ is a level zero supercuspidal representation in the sense of Morris \cite{Mo1999}. If we begin with a level zero supercuspidal $\pi_0$, the above globalization produces a cuspidal automorphic representation $\pi$ such that $\pi_v$ arises from an unramified principal series at every $v \neq v_0$.
\end{remark}

\section{Partial $L$-functions and the local coefficient}

The adjoint representation $r: {}^LG \rightarrow {\rm End}({}^L\mathfrak{n})$ decomposes into irreducible constituents $r_i$, $1 \leq i \leq m_r$, as in \S~\ref{Lr_i}. Locally, let $(F,\pi,\psi) \in \mathfrak{ls}(p)$ be such that $\pi$ has an Iwahori fixed vector. Then $\pi$ corresponds to a conjugacy class $\left\{ A_\pi \rtimes \sigma \right\}$ in ${}^LG$, where $A_\pi$ is a semisimple element of ${}^LG^\circ$. Then
\begin{equation*}
   L(s,\pi,r_i) = \dfrac{1}{{\rm det}(I - r_i(A_\pi \rtimes \sigma)q_F^{-s})},
\end{equation*}
for $(F,\pi,\psi) \in \mathfrak{ls}(p)$ unramified, with the notation of \S~\ref{localnot}.

\subsection{Global notation}\label{globalnot}
Let $\mathcal{LS}(p,{\bf G},{\bf M})$, or simply $\mathcal{LS}(p)$, be the class of quadruples $(k,\pi,\psi,S)$ consisting of: $k$ of characteristic $p$; a globally generic cuspidal automorphic representation $\pi = \otimes_v \pi_v$ of ${\bf M}(\mathbb{A}_k)$; a non-trivial character $\psi = \otimes_v \psi_v: k \backslash \mathbb{A}_k \rightarrow \mathbb{C}^\times$; and, a finite set of places $S$ where $k$, $\pi$ and $\psi$ are unramified.

Given $(k,\pi,\psi,S) \in \mathcal{LS}(p)$, we have partial $L$-functions
\begin{equation*}
   L^S(s,\pi,r_i) = \prod_{v \notin S} L(s,\pi_v,r_{i,v}).
\end{equation*}
They are absolutely convergent for $\Re(s) \gg 0$.

\subsection{Tamagawa measures} Fix $(k,\pi,\psi,S) \in \mathcal{LS}(p)$. From the discussion of \S~\ref{fixedW}, the character $\psi = \otimes_v \psi_v$, gives a self-dual Haar measure $d\mu_v$ at every place $v$ of $k$. We let
\begin{equation*}
   d\mu = \prod_v d\mu_v.
\end{equation*}
Notice that $d\mu_v(\mathcal{O}_v) = d\mu_v^\times(\mathcal{O}_v^\times) = 1$ for all $v \notin S$. Representatives of Weyl group elements are chosen using Langlands' lemma for ${\bf G}(k)$. This globally fixes the system
\begin{equation}\label{globWsystem}
   \mathfrak{W} = \left\{ \tilde{w}_\alpha, d\mu_\alpha \right\}_{\alpha \in \Delta}.
\end{equation}
As in \S~\ref{fixedW}, this fixes the Haar measure on ${\bf N}(\mathbb{A}_k)$.

We obtain a character of ${\bf U}(\mathbb{A}_k)$ via the surjection~\eqref{psiU} and the fixed character $\psi$. Given an arbitrary global non-degenerate character $\chi$ of ${\bf N}(\mathbb{A}_k)$, we obtain a global character $\chi_{\tilde{w}_0}$ of ${\bf N}_M(\mathbb{A}_k) = {\bf M}(\mathbb{A}_k) \cap {\bf N}(\mathbb{A}_k)$ via \eqref{w_0psi} which is $\tilde{w}_0$-compatible with $\chi$. We note that the discussion of Appendix~A of \cite{CoKiPSSh2004} is valid also for the case of function fields. In particular, Lemma~A.1 of [\emph{loc.\,cit.}] combined with Proposition~5.4 of \cite{Sh2002} give Proposition~\ref{globalvary} below, which allows us to address the variance of the globally generic character.

\begin{proposition}\label{globalvary}
Let $(k,\pi,\psi,S) \in \mathcal{LS}(p)$. There exists a connected quasi-split reductive group $\widetilde{\bf G}$ defined over $k$, sharing the same derived group as $\bf G$, and with maximal torus $\widetilde{\bf T} = {\bf Z}_{\widetilde{\bf G}} {\bf T}$. Then, there exists an element $x \in \widetilde{T}$ such that the representation $\pi_x$, given by
\begin{equation*}
   \pi_x(g) = \pi(x^{-1}gx)
\end{equation*}
is $\psi_{\tilde{w}_0}$-generic. Furthermore, we have equality of partial $L$-functions
\begin{equation*}
   L^S(s,\pi,r_i) = L^S(s,\pi_x,r_i).
\end{equation*}
\end{proposition}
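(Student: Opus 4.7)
The plan is to follow the same strategy used in the local Proposition~\ref{vary}, recycling its structural ingredients in a global guise. First I would invoke Proposition~5.4 of \cite{Sh2002} to produce the ambient connected quasi-split reductive $k$-group $\widetilde{\mathbf{G}}$ of adjoint type, with the same derived group as $\mathbf{G}$ and maximal torus $\widetilde{\mathbf{T}} = \mathbf{Z}_{\widetilde{\mathbf{G}}}\mathbf{T}$. This is the group in which the twisting element $x$ will live, and it is needed precisely because $\mathbf{T}(k)$ itself is generally not large enough to move one generic character to another.

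Next I would produce the element $x \in \widetilde{T}(k)$. Since $\pi$ is globally generic there exists some non-degenerate character $\chi$ of $\mathbf{U}(k)\backslash \mathbf{U}(\mathbb{A}_k)$ such that $\pi$ is $\chi_{\tilde{w}_0}$-generic. The content of Lemma~A.1 of \cite{CoKiPSSh2004}, whose proof relies only on standard surjectivity/Hilbert~90 type statements that hold over a global function field, is that the rational adjoint torus $\widetilde{T}(k)$ acts transitively on the set of non-degenerate characters of $\mathbf{U}(k)\backslash \mathbf{U}(\mathbb{A}_k)$. Choosing $x$ which conjugates $\chi$ to $\psi$, the conjugate representation $\pi_x(g) = \pi(x^{-1}gx)$ acquires the Whittaker functional
\begin{equation*}
   \lambda_{\pi_x}(v) = \lambda_{\pi}(v), \qquad \lambda_{\pi_x}(\pi_x(n)v) = \chi_{\tilde{w}_0}(x^{-1}nx)\lambda_\pi(v) = \psi_{\tilde{w}_0}(n)\lambda_\pi(v),
\end{equation*}
so $\pi_x$ is $\psi_{\tilde{w}_0}$-generic as required.

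The remaining task is the equality $L^S(s,\pi,r_i) = L^S(s,\pi_x,r_i)$. At each place $v \notin S$, the representation $\pi_v$ is unramified and is determined by a semisimple conjugacy class $\{A_{\pi_v} \rtimes \sigma_v\}$ in ${}^LM$, with
\begin{equation*}
   L(s,\pi_v,r_{i,v}) = \det\bigl(I - r_{i,v}(A_{\pi_v}\rtimes \sigma_v)q_v^{-s}\bigr)^{-1}.
\end{equation*}
Since $x \in \widetilde{T}(k) \subset \widetilde{M}(\mathbb{A}_k)$ normalizes $\mathbf{M}$ through the ambient adjoint group, conjugation by $x_v$ acts as an inner automorphism of $\widetilde{M}_v$, and on the dual side its effect on the Satake class of $\pi_{x,v}$ is conjugation by an element of ${}^L\widetilde{M}^\circ$. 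Because $r_i$ extends naturally to ${}^L\widetilde{M}$ and the characteristic polynomial is a class function, the local $L$-factor is unchanged, and multiplying over $v \notin S$ yields the global equality.

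The main obstacle I expect is the transitivity assertion in the second step: one must verify that the function field version of Lemma~A.1 of \cite{CoKiPSSh2004} actually goes through. The argument reduces to surjectivity of $\widetilde{T}(k) \to \widetilde{T}_{\mathrm{ad}}(k)$ modulo the relevant image, which in turn comes from Hilbert~90 for tori; these inputs are characteristic-independent, so the step works, but it is the only place where one leaves purely formal manipulation.
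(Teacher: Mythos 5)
Your proposal is correct and follows essentially the same route as the paper, which establishes this proposition precisely by combining Proposition~5.4 of \cite{Sh2002} (for $\widetilde{\bf G}$) with Lemma~A.1 of Appendix~A of \cite{CoKiPSSh2004} (for the transitive action of $\widetilde{T}$ on non-degenerate characters), observing that the latter discussion carries over to function fields. Your argument for $L^S(s,\pi,r_i)=L^S(s,\pi_x,r_i)$ is a valid filling-in of a detail the paper leaves implicit, and it can even be shortened: since $x\in\widetilde{T}$ centralizes $\bf T$, conjugation by $x_v$ fixes the unramified inducing character at every $v\notin S$, so $\pi_{x,v}\cong\pi_v$ and the local factors agree outright.
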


\subsection{Eisenstein series}\label{es} We build upon the discussion of \S~5 of \cite{Lo2009}, which is written for split groups. Let $\phi : {\bf M}(k) \backslash {\bf M}(\mathbb{A}_k) \rightarrow \mathbb{C}$ be an automorphic form on the space of a cuspidal automorphic representation $\pi$ of ${\bf M}(\mathbb{A}_k)$. Then $\phi$ extends to an automorphic function $\Phi : {\bf M}(k) {\bf U}(\mathbb{A}_k) \backslash {\bf G}(\mathbb{A}_k) \rightarrow \mathbb{C}$ as in \S~I.2.17 of \cite{MoWa1994}. For every $s \in \mathbb{C}$, set
\begin{equation*}
   \Phi_s = \Phi \cdot q^{\left\langle s \tilde{\alpha} + \rho_{\bf P}, H_{\bf P}(\cdot) \right\rangle}.
\end{equation*}
The function $\Phi_s$ is a member of the globally induced representation of $G$ given by the restricted direct product
\begin{equation*}
   {\rm I}(s,\pi) = \otimes' {\rm I}(s,\pi_v).
\end{equation*}
The irreducible constituents of ${\rm I}(s,\pi)$ are automorphic representations $\Pi = \otimes' \Pi_v$ of $G$ such that the representation $\pi_v$ has $\mathcal{K}_v$-fixed vectors for almost all $v$. The restricted tensor product is taken with respect to functions $f_{v,s}^0$ that are fixed under the action of $\mathcal{K}_v$.

We use the notation of Remark~\ref{alphaconvention}, where $w_0=w_lw_{l,{\bf M}}$. We have the global intertwining operator
\begin{equation*}
   {\rm M}(s,\pi,\tilde{w}_0) : {\rm I}(s,\pi) \rightarrow {\rm I}(s',\pi'),
\end{equation*}
defined by
\begin{equation*}
   {\rm M}(s,\pi,\tilde{w}_0)f(g) = \int_{N'} f(\tilde{w}_0^{-1}ng) dn,
\end{equation*}
for $f \in {\rm I}(s,\pi)$. It decomposes into a product of local intertwining operators
\begin{equation*}
   {\rm M}(s,\pi,\tilde{w}_0) = \otimes_v \, {\rm A}(s,\pi_v,\tilde{w}_0),
\end{equation*}
which are precisely those appearing in the definition of the Langlands-Shahidi local coefficient.

The following crucial result is found in \cite{Ha1974} for everywhere unramified representations of split groups, the argument is generalized in \cite{MoWa1994,Mo1982}.

\begin{theorem}[Harder] \label{ESrationality}
The Eisenstein series
\begin{equation*}
   E(s,\Phi,g,{\bf P}) = \sum_{\gamma \in {\bf P}(k) \backslash {\bf G}(k)} \Phi_s(\gamma g)
\end{equation*}
converges absolutely for $\Re(s) \gg 0$ and has a meromorphic continuation to a rational function on $q^{-s}$. Furthermore
\begin{equation*}
   {\rm M}(s,\pi) = \otimes_v \, {\rm A}(s,\pi_v,\tilde{w}_0)
\end{equation*}
is a rational operator in the variable $q^{-s}$.
\end{theorem}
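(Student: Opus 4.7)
The plan is to adapt Langlands' classical theory of Eisenstein series to the function-field setting, following Harder \cite{Ha1974} and its generalizations in \cite{Mo1982, MoWa1994}. I would proceed in three steps, postponing the most delicate analysis until after convergence and the constant-term calculation are in place.

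First I would establish absolute convergence in a suitable cone. Using the rapid decay of the cuspidal form $\phi$ on Siegel sets in ${\bf M}(k) \backslash {\bf M}(\mathbb{A}_k)$, one bounds $\lvert \Phi_s(g)\rvert$ by a constant times $q^{\langle \Re(s)\tilde\alpha + \rho_\theta, H_\theta(g)\rangle}$ on a fundamental domain. Harder's reduction theory for ${\bf G}$ over $k$ then reduces absolute convergence of $E(s, \Phi, g, {\bf P})$ to that of the degenerate Eisenstein series
\begin{equation*}
\sum_{\gamma \in {\bf P}(k) \backslash {\bf G}(k)} q^{\langle s\tilde\alpha + \rho_\theta, H_\theta(\gamma g)\rangle},
\end{equation*}
which converges absolutely for $\Re\langle s\tilde\alpha, \alpha^\vee\rangle$ sufficiently large.

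Next I would treat the rationality of ${\rm M}(s, \pi, \tilde w_0)$ by computing the constant term of $E(s, \Phi, g, {\bf P})$ along the opposite parabolic ${\bf P}^-$. The Bruhat decomposition combined with the cuspidality of $\pi$ kills all contributions except those indexed by Weyl elements $w$ taking ${\bf M}$ to an associate Levi, so the constant term reduces to $\sum_w {\rm M}(s, \pi, \tilde w)\Phi_s$. Each global intertwining operator decomposes as an Euler product $\otimes_v {\rm A}(s, \pi_v, \tilde w)$; at unramified places the Gindikin-Karpelevich formula identifies the local factor, applied to the spherical vector, with a ratio of unramified local $L$-factors, so that the Euler product outside $S$ collapses to
\begin{equation*}
\prod_{i=1}^{m_r} \dfrac{L^S(is, \pi, r_i)}{L^S(1+is, \pi, r_i)}.
\end{equation*}
Each unramified $L$-factor is rational in $q_v^{-s}$, hence in $q^{-s}$, and only finitely many factors contribute non-trivially on any bounded region, so the $L$-function quotient is rational in $q^{-s}$. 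Combined with the rationality of the local operators ${\rm A}(s, \pi_v, \tilde w_0)$ for $v \in S$, guaranteed by Theorem~2.15 of \cite{LoRationality} together with Theorem~2.1 of \cite{Lo2009}, this yields the desired rationality of ${\rm M}(s, \pi, \tilde w_0)$. Bernstein's continuation principle, adapted to function fields in \cite{MoWa1994}, then propagates this rationality from the constant term to $E(s, \Phi, g, {\bf P})$ itself.

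The hard part lies in this constant-term computation: one must control the poles along the singular hyperplanes of ${\rm M}(s, \pi, \tilde w)$ and disentangle the contributions of the residual spectrum from those of the purely cuspidal data. In positive characteristic, however, matters simplify considerably because the only relevant variable is $q^{-s}$ and ``meromorphic in $s$'' coincides with ``rational in $q^{-s}$'', reducing what is an analytic argument over number fields to an essentially algebraic one, as verified in detail in \cite{Mo1982, MoWa1994}.
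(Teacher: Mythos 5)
The paper does not prove this theorem; it is quoted from Harder \cite{Ha1974} with the generalizations of \cite{Mo1982,MoWa1994}, so your proposal is really a sketch of that external argument. Your first step (convergence for $\Re(s)\gg 0$ via reduction theory and comparison with the degenerate series) is fine, and the constant-term formalism you describe is the standard one, modulo the slip that constant terms are taken along standard parabolics associate to ${\bf P}$, not along the opposite parabolic ${\bf P}^-$.

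The genuine gap is in your rationality argument for ${\rm M}(s,\pi)$, which is circular. You assert that the Euler product outside $S$ collapses to $\prod_i L^S(is,\pi,r_i)/L^S(1+is,\pi,r_i)$ and that this quotient is rational in $q^{-s}$ because ``only finitely many factors contribute non-trivially on any bounded region.'' That is false: $L^S(s,\pi,r_i)$ is an infinite Euler product over the places of $k$, and while each coefficient of the resulting power series in $q^{-s}$ is a finite sum (only places of bounded degree contribute to a fixed power), a power series with finitely computable coefficients is not thereby a rational function --- compare the zeta function of a curve, whose rationality is a theorem and not a formality. In fact, in this paper and in the standard development, the rationality of the partial $L$-functions is a \emph{consequence} of Theorem~\ref{ESrationality} (via Corollary~\ref{esLpartial} and Theorem~\ref{crudeFE}), not an input to it. The same objection applies to your final step: ``Bernstein's continuation principle'' does not transfer rationality from the constant term to the Eisenstein series when the rationality of the constant term has not been independently established. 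The actual proofs of Harder and Morris run in the opposite direction: one first proves that $E(s,\Phi,g,{\bf P})$ is rational in $q^{-s}$ --- in the function-field setting this uses reduction theory together with finite-dimensionality of spaces of automorphic forms with fixed level and central character on the truncated quotient, and the difference equations satisfied by the constant term --- and only then deduces the rationality of ${\rm M}(s,\pi)$ and of the partial $L$-functions from the constant-term identity. To repair your write-up you would need to supply that core finiteness argument rather than derive rationality from the Euler product.
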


We also have that the Fourier coefficient of the Eisenstein series $E(s,\Phi,g)$ is given by
\begin{equation*}
   E_\psi(s,\Phi,g,{\bf P}) = \int_{{\bf U}(K) \backslash {\bf U}(\mathbb{A}_k)} E(s,\Phi,ug) \overline{\psi}(u) \, du.
\end{equation*}
The Fourier coefficients are also rational functions on $q^{-s}$.

\subsection{The crude functional equation} We now turn towards the link between the Langlands-Shahidi local coefficient and automorphic $L$-functions.

\begin{theorem} \label{crudeFE}
   Let $(k,\pi,\psi,S) \in \mathcal{LS}(p)$ be $\psi_{\tilde{w}_0}$-generic. Then
   \begin{equation*}
      \prod_{i=1}^{m_r} L^S(is,\pi,r_i) = \prod_{v \in S} C_{\psi}(s,\pi_v,\tilde{w}_0) \prod_{i=1}^{m_r} L^S(1-is,\tilde{\pi},r_i).
   \end{equation*}
\end{theorem}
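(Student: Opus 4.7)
The strategy is the classical Langlands--Shahidi unfolding of the Whittaker--Fourier coefficient of an Eisenstein series, adapted to the function field setting with the rationality results of Theorem~\ref{ESrationality} and the unramified identification of Proposition~\ref{unrgammaprop}.

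First, I would take an automorphic form $\phi$ in the space of $\pi$ and associate to it the section $\Phi_s \in {\rm I}(s,\pi)$ of \S\ref{es}. I would then compute the $\psi$-Whittaker coefficient $E_\psi(s,\Phi,e,{\bf P})$ of the Eisenstein series. A standard Bruhat-type unfolding (cf. Shahidi's original argument, which applies unchanged here since it is local in nature and uses the decomposition of $N$ into a product over $\beta\in\Sigma_r^+$ governed by Lemma~\ref{langlandslemma}) reduces the adelic unipotent integration to an integral over $N_{w_0}(\mathbb{A}_k)$, giving for decomposable data
\begin{equation*}
   E_\psi(s,\Phi,e,{\bf P}) = \prod_v \lambda_{\psi_v}(s,\pi_v,\tilde{w}_0)\, f_{v,s},
\end{equation*}
an equality of rational functions in $q^{-s}$ by Theorem~\ref{ESrationality}.

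Second, I would invoke the functional equation of the Eisenstein series, $E(s,\Phi,g,{\bf P})=E(s',M(s,\pi,\tilde w_0)\Phi,g,{\bf P}^-)$. Taking $\psi$-Whittaker coefficients of both sides and using the same unfolding on the right gives
\begin{equation*}
   \prod_v \lambda_{\psi_v}(s,\pi_v,\tilde{w}_0)\, f_{v,s}
   = \prod_v \lambda_{\psi_v}(s',\pi_v',\tilde{w}_0)\bigl({\rm A}(s,\pi_v,\tilde{w}_0)f_{v,s}\bigr).
\end{equation*}
Now I plug in the definition \eqref{lslc} of the local coefficient on the left-hand factor at each place. Choosing decomposable test data whose local Whittaker values at every $v$ are non-zero (this is possible since $\pi$ is globally generic, and away from $S$ the spherical vector suffices), I cancel the common factor $\prod_v \lambda_{\psi_v}(s',\pi_v',\tilde{w}_0)({\rm A}f_{v,s})$ and obtain the product identity
\begin{equation*}
   \prod_v C_{\psi_v}(s,\pi_v,\tilde{w}_0) = 1.
\end{equation*}

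Third, I would split this product according to $S$ and compute the unramified factor. For $v\notin S$ the representation $\pi_v$ is a spherical principal series, and all auxiliary data $\lambda_v(\psi_v,w_0)$ appearing in Proposition~\ref{unrgammaprop} are trivial since everything (including the relevant field extensions, characters, and measures) is unramified. Hence Proposition~\ref{unrgammaprop} yields
\begin{equation*}
   C_{\psi_v}(s,\pi_v,\tilde{w}_0) = \prod_{i=1}^{m_r}\gamma(is,\pi_v,r_{i,v},\psi_v)
   = \prod_{i=1}^{m_r}\frac{L(1-is,\tilde\pi_v,r_{i,v})}{L(is,\pi_v,r_{i,v})}.
\end{equation*}
Taking the product over $v\notin S$, substituting into $\prod_v C_{\psi_v}=1$, and rearranging gives the desired identity
\begin{equation*}
   \prod_{i=1}^{m_r} L^S(is,\pi,r_i) = \prod_{v \in S} C_{\psi}(s,\pi_v,\tilde{w}_0) \prod_{i=1}^{m_r} L^S(1-is,\tilde{\pi},r_i).
\end{equation*}

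The main technical obstacle is the first step: verifying carefully that the Whittaker--Fourier coefficient of the adelic Eisenstein series unfolds as a convergent Eulerian product of the local Whittaker functionals $\lambda_{\psi_v}(s,\pi_v,\tilde{w}_0)$ as rational functions in $q^{-s}$. This requires checking that the Bruhat decomposition contributions away from the big cell $Pw_0P$ vanish (using global genericity to kill non-generic orbits in the sum over $\gamma$) and that the convergence for $\Re(s)\gg 0$ allows one to pass to the Eulerian product before meromorphic continuation; the function-field rationality statements of Theorem~\ref{ESrationality} then let one extend the identity as rational functions. Once this Eulerian factorization is in hand, the remainder of the argument is formal manipulation using \eqref{lslc} and Proposition~\ref{unrgammaprop}.
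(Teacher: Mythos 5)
Your proposal is correct and follows essentially the same route as the paper: unfold the Whittaker--Fourier coefficient of $E(s,\Phi,g,{\bf P})$ into an Eulerian product of local Whittaker functionals, apply the functional equation of the Eisenstein series, and identify the contribution at $v\in S$ with the local coefficients via \eqref{lslc}. The only organizational difference is that you first extract the identity $\prod_v C_{\psi_v}(s,\pi_v,\tilde{w}_0)=1$ and then evaluate the unramified factors through Proposition~\ref{unrgammaprop} (with $\lambda(\psi_v,w_0)=1$ and trivial $\varepsilon$-factors), whereas the paper substitutes the Casselman--Shalika formula for $\lambda_{\psi_v}$ and the Gindikin--Karpelevich-type formula for ${\rm A}(s,\pi_v,\tilde{w}_0)$ on the spherical vector directly at $v\notin S$; the two computations are equivalent.
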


\begin{proof}
Since $\pi$ is globally $\psi$-generic, by definition, there is a cusp form $\varphi$ in the space of $\pi$ such that
\begin{equation*}
   W_{M,\varphi}(m) = \int_{{\bf U}_M(K) \backslash {\bf U}_M(\mathbb{A}_k)} \varphi(um) \overline{\psi}(u) \, du \neq 0.
\end{equation*}
The function $\Phi$ defined above is such that the Eisenstein series $E(s,\Phi,g,P)$ satisfies
\begin{equation} \label{globaleswhittaker}
   E_\psi(s,\Phi_s,g,P) = \prod_v \lambda_{\psi_v}(s,\pi_v)({\rm I}(s,\pi_v)(g_v)f_{s,v}),
\end{equation}
with $f_s \in {\rm V}(s,\pi)$, $f_{s,v} = f_{s,v}^\circ$ for all $v \notin S$. Here $E_\psi(s,\Phi_s,g,P)$ denotes the Fourier coefficient
\begin{equation*}
   E_\psi(s,\Phi_s,g,P) = \int_{{\bf U}(K) \backslash {\bf U}(\mathbb{A}_k)} E(s,\Phi_s,ug,P) \overline{\psi}(u) \, du.
\end{equation*}
The global intertwining operator ${\rm M}(s,\pi)$ is defined by
\begin{equation*}
   {\rm M}(s,\pi,\tilde{w}_0)f(g) = \int_{{\bf N'}(\mathbb{A}_k)} f(\tilde{w}_0^{-1}ng) \, dn, 
\end{equation*}
where $f \in {\rm V}(s,\pi)$ and ${\bf N'}$ is the unipotent radical of the standard parabolic ${\bf P}'$ with Levi ${\bf M}' = w_0 {\bf M} w_0^{-1}$. It is the product of local intertwining operators
\begin{equation*}
   {\rm M}(s,\pi,\tilde{w}_0) = \prod_v {\rm A}(s,\pi_v,\tilde{w}_0).
\end{equation*}
It is a meromorphic operator, which is rational on $q^{- s}$ (Proposition~IV.1.12 of \cite{MoWa1994}).

We set $s' = s \tilde{w}_0(\tilde{\alpha})$ globally as well as locally, and we use the conventions of Remark~\ref{alphaconvention}. Now equation~\eqref{globaleswhittaker} gives
\begin{align*} \label{globaleswhittaker2}
   E_\psi&(s',{\rm M}(s,\pi,\tilde{w}_0)\Phi_s,g,P') \\
   & = \prod_v \lambda_{\psi_v}(s',w_0(\pi_v))({\rm I}(s',\pi_v')(g_v) {\rm A}(s,\pi_v,\tilde{w}_0)f_{s,v}).
\end{align*}
Fourier coefficients of Eisenstein series satisfy the functional equation:
\begin{equation*}
   E_\psi(s',{\rm M}(s,\pi,\tilde{w}_0)\Phi_s,g,P') = E_\psi(s,\Phi_s,g,P).
\end{equation*}
And, equation \eqref{globaleswhittaker} gives
\begin{align*}
   E_\psi(s,\Phi_s,e,P) & = \prod_v \lambda_{\psi_v}(s,\pi_v)f_{s,v} \\
   E_\psi(s',{\rm M}(s,\pi,\tilde{w}_0)\Phi_s,e,P') & = \prod_v \lambda_{\psi_v}(s',\pi_v'){\rm A}(s,\pi_v,\tilde{w}_0)f_{s,v}.
\end{align*}
Then, the Casselman-Shalika formula for unramified quasi-split groups, Theorem~5.4 of \cite{CaSh1980}, allows one to compute the Whittaker functional when $\pi_v$ is unramified:
\begin{equation*} \label{csformula}
   \lambda_{\psi_v}(s,\pi_v) f_{s,v}^0 = \prod_{i=1}^{m_r} L(1+is,\pi_v,r_{i,v})^{-1} f_{s,v}^0(e_v).
\end{equation*}
Also, for $v \notin S$, the intertwining operator gives a function ${\rm A}(s,\pi_v,w_0) f_{s,v}^0 \in {\rm I}(-s,w_0(\pi_v))$ satisfying
\begin{equation*} \label{interformula}
   {\rm A}(s,\pi_v,\tilde{w}_0) f_{s,v}^0(e_v) = \prod_{i=1}^{m_r} \dfrac{L(is,\pi_v,r_{i,v})}{L(1+is,\pi_v,r_{i,v})} f_{s,v}^0(e_v).
\end{equation*}
This equation is established by means of the multiplicative property of the intertwining operator, which reduces the problem to rank one cases.

Finally, combining the last five equations together gives
\begin{equation*}
   \prod_{i=1}^{m_r} L^S(is,\pi,r_i) = \prod_{v \in S} \dfrac{\lambda_{\psi_v}(s,\pi_v)f_{s,v}}{\lambda_{\psi_v}(s',\pi_v'){\rm A}(s,\pi_v,\tilde{w}_0)f_{s,v}} \prod_{i=1}^{m_r} L^S(1-is,\tilde{\pi},r_i).
\end{equation*}
For every $v \in S$, equation~\eqref{lslc} gives a local coefficient. Thus, we obtain the crude functional equation.
\end{proof}

The following useful corollary is a direct consequence of the proof of the theorem. It provides the connection between Eisenstein series and partial $L$-functions for globally generic representations.

\begin{corollary}\label{esLpartial}
   Let $(k,\pi,\psi,S) \in \mathcal{LS}(p)$, then
   \begin{equation*}
      E_\psi(s,\Phi,g,{\bf P}) = \prod_{v \in S} \lambda_{\psi_v}(s,\pi_v)\left( {\rm I}(s,\pi_v)(g_v)f_{s,v} \right) \prod_{i=1}^{m_r} L^S(1 + is,\pi,r_i)^{-1} ,
   \end{equation*}
for $g \in G_S$.
\end{corollary}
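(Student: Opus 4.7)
The plan is to extract the corollary directly from the proof of Theorem~\ref{crudeFE} by separating the Euler product of Whittaker functionals into its ramified and unramified parts. The central identity to exploit is the global Whittaker expansion of the Fourier coefficient of the Eisenstein series, namely equation~\eqref{globaleswhittaker}, which, after replacing $e$ by a general $g$, reads
\begin{equation*}
   E_\psi(s,\Phi_s,g,{\bf P}) = \prod_v \lambda_{\psi_v}(s,\pi_v)\bigl({\rm I}(s,\pi_v)(g_v) f_{s,v}\bigr).
\end{equation*}
This is the starting point; no further global analysis is required.

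Next, I would split the Euler product into the finite piece indexed by $S$ and the infinite piece indexed by the complement. Since $g \in G_S$, by definition $g_v \in \mathcal{K}_v$ for every $v \notin S$, so ${\rm I}(s,\pi_v)(g_v) f_{s,v}^0 = f_{s,v}^0$ because $f_{s,v}^0$ is the $\mathcal{K}_v$-fixed spherical vector used to form the restricted tensor product in \S~\ref{es}. Normalizing $f_{s,v}^0(e_v) = 1$, the Casselman--Shalika formula for unramified quasi-split groups (Theorem~5.4 of \cite{CaSh1980}, recorded in the proof of Theorem~\ref{crudeFE}) yields
\begin{equation*}
   \lambda_{\psi_v}(s,\pi_v) f_{s,v}^0 = \prod_{i=1}^{m_r} L(1 + is,\pi_v,r_{i,v})^{-1}
\end{equation*}
for each $v \notin S$.

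Taking the product of these spherical Whittaker values over $v \notin S$ and recalling the definition
\begin{equation*}
   L^S(1+is,\pi,r_i) = \prod_{v \notin S} L(1+is,\pi_v,r_{i,v}),
\end{equation*}
together with the absolute convergence of this Euler product for $\Re(s) \gg 0$ (ensuring the interchange of product and factorization is legitimate on the domain of convergence, and then propagating by meromorphic continuation thanks to Theorem~\ref{ESrationality}), immediately gives
\begin{equation*}
   \prod_{v \notin S} \lambda_{\psi_v}(s,\pi_v) f_{s,v}^0 = \prod_{i=1}^{m_r} L^S(1+is,\pi,r_i)^{-1}.
\end{equation*}
Substituting this into the split Euler product produces exactly the claimed formula for $E_\psi(s,\Phi,g,{\bf P})$.

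The argument is essentially a bookkeeping consequence of the two ingredients already proved for Theorem~\ref{crudeFE}, so there is no substantial obstacle; the only point that deserves attention is the normalization $f_{s,v}^0(e_v) = 1$ and the convergence/continuation issue needed to rearrange the Euler product, both of which are handled by the rationality statement in Theorem~\ref{ESrationality}.
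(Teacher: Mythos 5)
Your proposal is correct and is essentially the argument the paper intends: the corollary is stated as a direct consequence of the proof of Theorem~\ref{crudeFE}, obtained exactly by splitting the Euler product in equation~\eqref{globaleswhittaker} over $v \in S$ and $v \notin S$, using $\mathcal{K}_v$-invariance of $f_{s,v}^0$ for $g \in G_S$, and applying the Casselman--Shalika formula at the unramified places. Your attention to the normalization $f_{s,v}^0(e_v)=1$ and to convergence followed by rational continuation via Theorem~\ref{ESrationality} is exactly the right bookkeeping.
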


\section{The Langlands-Shahidi method over function fields}

\subsection{Main theorem}
We now come to the main result of the Langlands-Shahidi method over function fields. The corresponding result over number fields can be found in Theorem~3.5 of \cite{Sh1990}. We note that compatibility with Artin factors for real groups is the subject of \cite{Sh1980}.

\begin{theorem}\label{mainthm}
Given the pair $({\bf G},{\bf M})$, let $r = \oplus \, r_i$ be the adjoint action of ${}^LM$ on ${}^L\mathfrak{n}$. There exists a system of rational $\gamma$-factors, $L$-functions and $\varepsilon$-factors on $\mathfrak{ls}(p)$. They are uniquely determined by the following properties:
\begin{enumerate}
\item[(i)] \emph{(Naturality).} Let $(F,\pi,\psi) \in \mathfrak{ls}(p)$. Let $\eta: F' \rightarrow F$ be an isomorphism of non-archimedean local fields and let $(F',\pi',\psi') \in \mathscr{L}_{loc}(p)$ be the triple obtained via $\eta$. Then
   \begin{equation*}
      \gamma(s,\pi,r_i,\psi) = \gamma(s,\pi',r_i,\psi').
   \end{equation*}
\item[(ii)] \emph{(Isomorphism)}. Let $(F,\pi_j,\psi) \in \mathfrak{ls}(p)$, $j=1$, $2$. If $\pi_1 \cong \pi_2$, then
   \begin{equation*}
      \gamma(s,\pi_1,r_i,\psi) = \gamma(s,\pi_2,r_i,\psi).
   \end{equation*}
\item[(iii)] \emph{(Compatibility with Artin factors).} Let $(F,\pi,\psi) \in \mathfrak{ls}(p)$ be such that $\pi$ has an Iwahori fixed vector. Let $\sigma : \mathcal{W}_F' \rightarrow {}^LM$ be the Langlands parameter corresponding to $\pi$. Then
   \begin{equation*}
      \gamma(s,\pi,r_i,\psi) = \gamma(s,r_i \circ \sigma,\psi).
   \end{equation*} 
\item[(iv)] \emph{(Multiplicativity).} Let $(F,\pi,\psi) \in \mathfrak{ls}(p)$be such that
   \begin{equation*}
      \pi \hookrightarrow {\rm ind}_{P_{\theta_0}}^{M} (\pi_0),
   \end{equation*}
where $\pi_0$ is a generic representation of $M_{\theta_0}$, with $\theta_0 \subset \theta$. Suppose $(F,\pi_{j},\psi) \in \mathfrak{ls}(p,{\bf G}_j,{\bf M}_j)$, where the $\pi_j$ are those of Proposition~\ref{lcmultiplicativity}. With $\Sigma_i$ as in \eqref{blockrootsdecomp} we have
\begin{equation*}
   \gamma(s,\pi,r_i,\psi) = \prod_{j \in \Sigma_i} \gamma(s,\pi_{j},r_{i,j},\psi).
\end{equation*}
\item[(v)] \emph{(Dependence on $\psi$).} Let $(F,\pi,\psi) \in \mathfrak{ls}(p)$. For $a \in F^\times$, let $\psi^a : F \rightarrow \mathbb{C}^\times$ be the character given by $\psi^a(x) = \psi(ax)$. Then, there is an $h_i$ such that
   \begin{equation*}
      \gamma(s,\pi,r_i,\psi^a) = \omega_{\pi}(a)^{h_i} \left| a \right|_F^{n_i(s-\frac{1}{2})} \cdot \gamma(s,\pi,r_i,\psi),
   \end{equation*}
   where $n_i = \dim {}^L \mathfrak{n}_i$.
\item[(vi)] \emph{(Functional equation).} Let $(k,\pi,\psi,S) \in \mathcal{LS}(p)$. Then
   \begin{equation*}
      L^S(s,\pi,r_i) = \prod_{v \in S} \gamma(s,\pi,r_i,\psi) L^S(s,\tilde{\pi},r_i).
   \end{equation*}
\item[(vii)] \emph{(Tempered $L$-functions).} For $(F,\pi,\psi) \in \mathfrak{ls}(p)$ tempered, let $P_{\pi,r_i}(t)$ be the unique polynomial with $P_{\pi,r_i}(0) = 1$ and such that $P_{\pi,r_i}(q_F^{-s})$ is the numerator of $\gamma(s,\pi,r_i,\psi)$. Then
   \begin{equation*}
      L(s,\pi,r_i) = \dfrac{1}{P_{\pi,r_i}(q_F^{-s})}
   \end{equation*}
is holomorphic and non-zero for $\Re(s) > 0$.
\item[(viii)] \emph{(Tempered $\varepsilon$-factors).} Let $(F,\pi,\psi) \in \mathfrak{ls}(p)$ be tempered, then
   \begin{equation*}
      \varepsilon(s,\pi,r_i,\psi) = \gamma(s,\pi,r_i,\psi) \dfrac{L(s,\pi,r_i)}{L(1-s,\tilde{\pi},r_i)}
   \end{equation*}
is a monomial in $q_F^{-s}$.
\item[(ix)] \emph{(Twists by unramified characters).} Let $(F,\pi,\psi) \in \mathfrak{ls}(p)$, then
   \begin{align*}
      L(s+s_0,\pi,r_i) &= L(s,q_{F}^{\left\langle s_0 \tilde{\alpha}, H_\theta(\cdot) \right\rangle} \otimes \pi,r_i), \\
      \varepsilon(s+s_0,\pi,r_i,\psi) &= \varepsilon(s,q_{F}^{\left\langle s_0 \tilde{\alpha}, H_\theta(\cdot) \right\rangle} \otimes \pi,r_i,\psi).
   \end{align*}
\item[(x)] \emph{(Langlands' classification).} Let $(F,\pi,\psi) \in \mathfrak{ls}(p)$. Let $\pi_0$ be a tempered generic representation of $M_0 = M_{\theta_0}$ and $\chi$ a character of $A_0 = A_{\theta_0}$ which is in the Langlands' situation. Suppose $\pi$ is the Langlands' quotient of the representation
\begin{equation*}
   \xi = {\rm Ind}(\pi_{0,\chi}),
\end{equation*}
with $\pi_{0,\chi} = \pi_0 \cdot \chi$. Suppose $(F,\pi_j,\psi) \in \mathfrak{ls}(p,{\bf G}_j,{\bf M}_j)$ are quasi-tempered, where the $\pi_j$ are obtained via Langlands' lemma and equation~\eqref{blockrootsdecomp}. Then
\begin{align*}
   L(s,\pi,r_i) &= \prod_{j \in \Sigma_i} L(s,\pi_j,r_{i,j}), \\
   \varepsilon(s,\pi,r_i,\psi) &= \prod_{j \in \Sigma_i} \varepsilon(s,\pi_j,r_{i,j},\psi).
\end{align*}
\end{enumerate}

\end{theorem}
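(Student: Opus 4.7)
The plan is to proceed by a double induction: on the semisimple rank of $\mathbf{G}_{\text{der}}$ and, within a fixed pair $(\mathbf{G},\mathbf{M})$, on the index $i$ labelling the irreducible components $r_i$ of the adjoint action. The base case is the rank one situation of \S\ref{rankone}: here $\gamma$-factors are forced by Proposition~\ref{rankoneprop} and the definition following it, so that property~(iii) is automatic. For the inductive step, first restrict attention to $\psi_{\tilde w_0}$-generic triples $(F,\pi,\psi) \in \mathfrak{ls}(p)$ with $\pi$ an irreducible constituent of a representation parabolically induced from a supercuspidal $\pi_0$ on a smaller Levi $\mathbf{M}_{\theta_0}$. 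By Proposition~\ref{lcmultiplicativity}, the local coefficient $C_\psi(s,\pi,\tilde w_0)$ decomposes as a product of local coefficients attached to smaller blocks $({\bf G}_j,{\bf M}_j)$, each with strictly smaller rank; this forces the definition of $\gamma(s,\pi,r_i,\psi)$ via multiplicativity~(iv), and one checks that the decomposition into blocks $\Sigma_i$ of \eqref{blockrootsdecomp} is compatible with the labelling of the $r_i$ by nilpotency class as in \S\ref{Lr_i}.

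The remaining, and essential, case is that of a supercuspidal generic $\pi$ of $M$ with $\mathbf{M}$ maximal in $\mathbf{G}$; here one cannot reduce further by multiplicativity. One defines $\gamma(s,\pi,r_i,\psi)$ recursively: if $m_r = 1$, then $C_\psi(s,\pi,\tilde w_0)$ is, up to Langlands' $\lambda$-factors, equal to $\gamma(s,\pi,r_1,\psi)$, and the latter is defined by that equality. If $m_r > 1$, one defines
\begin{equation*}
   \gamma(s,\pi,r_{m_r},\psi) = \lambda(\psi,w_0)\, C_\psi(s,\pi,\tilde w_0) \prod_{i=1}^{m_r-1} \gamma(is,\pi,r_i,\psi)^{-1},
\end{equation*}
where $\gamma(is,\pi,r_i,\psi)$ for $i < m_r$ is pinned down by looking at an auxiliary pair $({\bf G}',{\bf M}')$ with ${\bf M} \subset {\bf M}'$ in which the component $r_i$ already appears for a strictly smaller inductive datum. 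Rationality of $\gamma$-factors is inherited from the corresponding rationality of the local coefficient, Theorem~2.15 of \cite{LoRationality}. Properties~(i), (ii), (v) transfer directly from the corresponding properties of $C_\psi$ via Proposition~\ref{vary}, and property~(iii) in the principal series case is exactly Proposition~\ref{unrgammaprop}.

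Next I would establish the functional equation (vi). Given $(k,\pi,\psi,S) \in \mathcal{LS}(p)$ with $\pi$ globally $\psi_{\tilde w_0}$-generic, the crude functional equation of Theorem~\ref{crudeFE} reads
\begin{equation*}
   \prod_{i=1}^{m_r} L^S(is,\pi,r_i) = \prod_{v \in S} C_{\psi_v}(s,\pi_v,\tilde w_0) \prod_{i=1}^{m_r} L^S(1-is,\tilde\pi,r_i).
\end{equation*}
Substituting the recursive definition of the local coefficient as a product of $\gamma$-factors (with the $\lambda$-factors cancelling out globally by the product formula for Langlands' constants) yields (vi) term by term in the irreducible components $r_i$, by an additional induction isolating one $r_i$ at a time using auxiliary pairs $({\bf G},{\bf M})$ in which only $r_i$ shows up with partial $L$-functions; this is Shahidi's trick in \S3 of \cite{Sh1990}. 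For (vii), for $\pi$ tempered one defines $L(s,\pi,r_i) = P_{\pi,r_i}(q_F^{-s})^{-1}$, with $P_{\pi,r_i}$ the numerator of $\gamma$; the holomorphy and non-vanishing on $\Re(s) > 0$ come from the standard module/tempered $L$-function analysis, essentially the fact that tempered representations embed in standard modules whose poles are constrained by unitarity. Property~(viii) is then the definition of $\varepsilon$. Properties~(ix) and (x) extend the definitions from tempered to general generic representations by Langlands' classification, exploiting multiplicativity (iv) and the corresponding decomposition of Weyl elements via Langlands' lemma.

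For uniqueness, suppose two systems $\gamma,\gamma'$ satisfy (i)--(x). By (iv) and (x) it suffices to check agreement on supercuspidal generic $(F,\pi_0,\psi_0)$. Using Lemma~\ref{hllemma} we globalize $\pi_0$ to a cuspidal automorphic $\pi$ whose only non-unramified place other than $v_0$ is $v_\infty$, a constituent of a tamely ramified principal series, on which (iii) (compatibility with Artin factors for unramified principal series, extended to the tame case by an additional application of (iv) and Proposition~\ref{unrgammaprop}) already forces agreement. The global functional equation (vi) then forces $\gamma(s,\pi_{v_0},r_i,\psi_{v_0}) = \gamma'(s,\pi_{v_0},r_i,\psi_{v_0})$, proving uniqueness of the $\gamma$-factors; uniqueness of $L$ and $\varepsilon$ in the tempered case is then forced by (vii), (viii), and propagates to general generic representations via (x). The main obstacle in this plan is the careful bookkeeping in the double induction that separates the individual $\gamma(s,\pi,r_i,\psi)$ out of the single local coefficient in the supercuspidal maximal case, especially the verification that the auxiliary embeddings $({\bf G},{\bf M}) \hookrightarrow ({\bf G}',{\bf M}')$ indeed exist and compatibly isolate the desired $r_i$; this is where Shahidi's algorithmic refinement of Langlands' lemma of \S\ref{fixedW} becomes indispensable.
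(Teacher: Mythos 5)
Your overall architecture coincides with the paper's: γ-factors are defined recursively through the local coefficient, the crude functional equation of Theorem~\ref{crudeFE} yields Property~(vi) once the individual $r_i$ are separated, tempered $L$- and $\varepsilon$-factors are defined by (vii)--(viii) (with holomorphy coming from Heiermann--Opdam, which is the "standard module" input you invoke), the general case follows by (ix)--(x), and uniqueness is obtained exactly as you describe, by globalizing a supercuspidal via Lemma~\ref{hllemma} and disposing of the auxiliary places through Proposition~\ref{unrgammaprop} and the reduction of the tame place $v_\infty$ to abelian rank-one factors.

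The one genuine problem is in the step you yourself flag as the crux: how to extract the individual $\gamma(s,\pi,r_i,\psi)$ from the single quantity $C_\psi(s,\pi,\tilde w_0)$ in the supercuspidal maximal case. Your recursion runs in the wrong direction. You propose to define $\gamma(s,\pi,r_{m_r},\psi)$ as the leftover quotient, with the components $r_i$, $i<m_r$, "pinned down" in auxiliary pairs $({\bf G}',{\bf M}')$ with ${\bf M}\subset{\bf M}'$. This cannot work as stated: enlarging the Levi changes the inducing datum (the representation $\pi$ lives on $M$, not $M'$), and more fundamentally the span $\bigoplus_{j<m_r}{}^L\mathfrak{n}_j$ is not a Lie subalgebra (the bracket of ${}^L\mathfrak{n}_1$ with itself lands in ${}^L\mathfrak{n}_2$), so there is no reductive subgroup whose nilpotent radical isolates only the low-index components. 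The paper's Lemma~\ref{induction} does the opposite: it keeps ${\bf M}_i={\bf M}$ and takes ${\bf G}_i\subset{\bf G}$ to be the reductive subgroup generated by ${\bf M}$ and the unipotent subgroups attached to the block roots $[\beta]$ with $\left\langle\tilde\alpha,\beta\right\rangle\geq i$ — this set of roots does close under addition, so $\bigoplus_{j\geq i}{}^L\mathfrak{n}_j$ is the Lie algebra of an honest unipotent radical, and $r_i$ becomes the \emph{first} component of a pair with $m_r'<m_r$. The recursion therefore defines $\gamma(s,\pi,r_i,\psi)$ for $i=m_r, m_r-1,\dots,2$ first, and it is $\gamma(s,\pi,r_1,\psi)$ that is recovered by dividing $C_\psi(s,\pi,\tilde w_0)$ by the product of the others as in equation~\eqref{recursivedef}. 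With that correction (or, alternatively, Shahidi's endoscopy-group variant mentioned in Remark~\ref{induction}ff.), the rest of your argument goes through as in the paper.
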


\subsection{} Before continuing, we record the following property of $\gamma$-factors:

\begin{enumerate}
   \item[(xi)] (Local functional equation) \emph{Let $(F,\pi_0,\psi_0) \in \mathfrak{ls}(p)$, then
   \begin{equation*}
      \gamma(s,\pi_0,r_i,\psi_0) \gamma(1-s,\tilde{\pi}_0,r_i,\overline{\psi}_0) = 1.
   \end{equation*}
   }
\end{enumerate}

As observed in \S~2.5 of \cite{HeLo2013a}, this follows from uniqueness applied to a system of $\gamma$-factors $\gamma'$ defined by \ldots
We formally deduce this property from Theorem~\ref{mainthm}. We start with a local triple $(F,\pi_0,\psi_0) \in \mathfrak{ls}(p)$. Then, Lema~\ref{hllemma} allows us to globalize $\pi_0$ into a component of a globally generic representation $\pi = \otimes_v \pi_v$, $\pi_{v_0} \cong \pi_0$, and such that $\pi_v$ unramified outside a finite set of places $S$ of $k$. We take a global character $\psi: k \backslash \mathbb{A}_k \rightarrow \mathbb{C}^\times$. The character $\psi_0$, using Property~(v) if necessary, can be assumed to be $\psi_{v_0}$. Applying Property~(vi) twice to $(k,\pi,\psi,S) \in \mathcal{LS}(p)$, we obtain
\begin{equation}\label{prodlfe}
   \prod_{v \in S} \gamma(s,\pi_v,r_{i,v},\psi_v) \gamma(1-s,\tilde{\pi}_v,r_{i,v},\overline{\psi}_v) = 1.
\end{equation}
For each $v \in S - \left\{ v_0, v_\infty \right\}$, the representation $\pi_v$ is unramified and we have the local functional equation for the corresponding Artin $\gamma$-factors. At the place $v_\infty$, we still obtain a generic constituent of a tamely ramified principal series
\begin{equation*}
   \pi_{v_\infty} \hookrightarrow {\rm Ind}(\chi_\infty),
\end{equation*}
with $\chi_\infty$ a tamely ramified character of ${\bf T}(k_{v_\infty})$. Property~(iv) gives
\begin{equation}\label{pstorankone}
   \gamma(s,\pi_{v_\infty},r_{i,v},\psi_{v_\infty}) = \prod_{j \in \Sigma_i} \gamma(s,\pi_{j,v_\infty},r_{i,j,v_\infty},\psi_{v_\infty}).
\end{equation}
Each $\gamma$-factor on the right hand side of the product is then obtained from (i) or (ii) of Proposition~\ref{rankone}. The resulting abelian $\gamma$-factors are known to satisfy a functional equation as in Tate's thesis. Hence, we also have the local functional equation for $v_\infty$. From the product of \eqref{prodlfe} we can thus conclude Property~(xi) at the place $v_0$ as desired.

\subsection{}\label{lgind} 

We here present a straight forward proof of a very useful induction step.

\begin{lemma}\label{induction}
Let $({\bf G}, {\bf M})$ be a pair consisting of a connected quasi-split reductive group and ${\bf M} = {\bf M}_\theta$ a maximal Levi subgroup. Let $r = \oplus_{i=1}^{m_r} r_i$ be the adjoint action of ${}^LM$ on ${}^L\mathfrak{n}$. For each $i > 1$, there exists a pair $({\bf G}_i,{\bf M}_i)$ such that the corresponding adjoint action of ${}^LM_i$ on ${}^L\mathfrak{n}_i$ decomposes as
\begin{equation*}
   r' = \bigoplus_{j=1}^{m_r'} r_j' \text{  with  } m_r' < m_r,  
\end{equation*}
and $r_i = r_1'$.
\end{lemma}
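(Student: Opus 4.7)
The plan is to construct $({\bf G}_i, {\bf M}_i)$ via a dual-group construction based on a closed subsystem of the root system of ${}^LG$. Recall from \S~\ref{Lr_i} that the irreducible constituents $r_j$ of $r$ act on the eigenspaces ${}^L\mathfrak{n}_j = \{X_{\beta^\vee} \in {}^L\mathfrak{n} : \langle \tilde{\alpha}, \beta \rangle = j\}$. For $i > 1$, I would introduce the subset of coroots
\begin{equation*}
   \Phi^\vee_{(i)} = \{\beta^\vee \in \Phi^\vee : \langle \tilde{\alpha}, \beta \rangle \in i\mathbb{Z}\},
\end{equation*}
and check that it is a closed root subsystem: if $\beta_1^\vee, \beta_2^\vee \in \Phi^\vee_{(i)}$ and $\beta_1 + \beta_2 \in \Phi$, then $\langle \tilde{\alpha}, \beta_1 + \beta_2 \rangle \in i\mathbb{Z}$ by linearity.

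Let ${}^LG^\circ_i$ denote the connected reductive complex subgroup of ${}^LG^\circ$ generated by ${}^LT^\circ$ together with the root subgroups indexed by $\Phi^\vee_{(i)}$. Since $\tilde{\alpha}$ is Galois-equivariant (it is determined by $\rho_\theta$ and $\alpha^\vee$, both stable under the pinning), $\Phi^\vee_{(i)}$ is stable under $\mathcal{W}$, and ${}^LG_i := {}^LG^\circ_i \rtimes \mathcal{W}$ descends to a quasi-split connected reductive group ${\bf G}_i$ over the base field via the correspondence recalled in \cite{Bo1979}. The intersection $\Phi^\vee_{(i)} \cap \{\langle \tilde{\alpha}, \cdot \rangle = 0\}$ coincides with the root system of ${}^LM^\circ$, so the associated standard Levi ${\bf M}_i \subset {\bf G}_i$ satisfies ${}^LM_i^\circ = {}^LM^\circ$. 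The unipotent radical of the corresponding parabolic has Lie algebra $\bigoplus_{j \geq 1} {}^L\mathfrak{n}_{ji}$ by construction, so the adjoint action decomposes as $r' = \bigoplus_{j=1}^{m_r'} r_j'$ with $r_j' = r_{ji}$ and $m_r' = \lfloor m_r/i \rfloor < m_r$, since $i > 1$. In particular $r_1' = r_i$, as required.

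The main obstacle will be verifying that ${\bf M}_i$ is genuinely maximal in ${\bf G}_i$, that is, that the simple roots of $\Phi^\vee_{(i)}$ lying outside the root system of ${}^LM^\circ$ reduce to a single direction under the induced positivity. This is expected to follow from the irreducibility of $r_i$ as an ${}^LM^\circ$-representation: the unique highest weight of $r_i$ supplies a distinguished simple root of $\Phi^\vee_{(i)}$ with $\langle \tilde{\alpha}, \cdot \rangle = i$, and any competing simple root would contradict irreducibility via a lowest-weight argument. Once maximality is secured, the pair $({\bf G}_i, {\bf M}_i)$ fits the framework of \S~\ref{localnot}, and the decomposition of $r'$ follows directly, completing the proof.
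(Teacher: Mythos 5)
Your construction is correct, but it is not the route the paper takes: it is essentially the ``endoscopy group'' argument that the Remark following the lemma attributes to Arthur and Shahidi (\S~4 of \cite{Sh1990}). The paper instead works inside ${\bf G}$ itself: it keeps ${\bf M}_i = {\bf M}$, extracts from the block decomposition \eqref{Nblockdecomp} the product of those block unipotent subgroups ${\bf N}_{0,l}$ whose roots satisfy $\left\langle \tilde{\alpha},\beta \right\rangle \geq i$, and takes ${\bf G}_i$ to be the reductive subgroup of ${\bf G}$ generated by ${\bf M}$ and the resulting unipotent group together with its opposite; the surviving constituents are then $r_i, \ldots, r_{m_r}$, so $m_r' = m_r - i + 1$, whereas your cut by divisibility keeps $r_i, r_{2i}, \ldots$ and gives $m_r' = \lfloor m_r/i \rfloor$. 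Either count satisfies $m_r' < m_r$ and $r_1' = r_i$, which is all the lemma requires. What your dual-side version buys is that closedness of the selected set of coroots is immediate from linearity, with no appeal to Langlands' lemma or the block combinatorics; what it costs is the detour through duality to recover a quasi-split ${\bf G}_i$ over the base field, together with the (easy but necessary) observation that the degree-zero part of the root datum is untouched, so that ${\bf M}_i$ is literally ${\bf M}$ and $\pi$ still lives on $M_i$.

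The one step you leave open --- maximality of ${\bf M}_i$ in ${\bf G}_i$ --- does not need the highest-weight argument you sketch, and irreducibility of $r_i$ is not the relevant input. Maximality of ${\bf M}$ in ${\bf G}$ means that ${\bf A}_{\bf M}/{\bf A}_{\bf G}$ is one-dimensional. The split component ${\bf A}_{{\bf G}_i}$ of the center of ${\bf G}_i$ sits between ${\bf A}_{\bf G}$ and ${\bf A}_{\bf M}$, since ${\bf G}_i$ contains ${\bf M}$ and its roots form a subset of those of ${\bf G}$; and because $r_i \neq 0$ there is at least one root of ${\bf G}_i$ with nonzero restriction to ${\bf A}_{\bf M}$, so ${\bf A}_{{\bf G}_i} \neq {\bf A}_{\bf M}$. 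Hence ${\bf A}_{{\bf G}_i} = {\bf A}_{\bf G}$ and ${\bf A}_{\bf M}/{\bf A}_{{\bf G}_i}$ is one-dimensional, i.e.\ ${\bf M}$ is automatically a maximal Levi subgroup of ${\bf G}_i$. With this substituted for your heuristic, the proof is complete.
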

\begin{proof}
To construct the pair $({\bf G}_i,{\bf M}_i)$, we begin by taking ${\bf M}_i = {\bf M}$. For the group ${\bf G}_i$, we look at the unipotent subgroup
\begin{equation*}
   {\bf N}_{w_0}' = \prod_{l \in \Sigma'} {\bf N}_{0,l},
\end{equation*}
obtained from a subproduct of equaiton~\eqref{Nblockdecomp}, where $\Sigma'$ is the resulting indexing set. Each one unipotent group ${\bf N}_{0,l}$ corresponds to a block set of roots $[\beta] \in \Sigma_r^+(\theta,w_0)$. We take only groups ${\bf N}_{0,l}$ in the product corresponding to a $[\beta] \in \Sigma_r^+(\theta,w_0)$ such that
\begin{equation*}
   \left\langle \tilde{\alpha},\beta \right\rangle \geq i, \ \beta \in [\beta].
\end{equation*}
We then set
\begin{equation*}
   {\bf N}_i = w_0^{-1} {\bf N}_{w_0} w_0.
\end{equation*}
For the group ${\bf G}_i$ we take the reductive group generated by ${\bf M}_i$, ${\bf N}_i$ and ${\bf N}_i^-$. The pair $({\bf G}_i,{\bf M}_i)$ has
\begin{equation*}
   r' = \bigoplus_{j=i}^{m_r} r_j'.  
\end{equation*}
After rearranging, we obtain the form of the proposition.
\end{proof}

\begin{remark}
The above Lemma is present in Arthur and Shahidi's work, who use endoscopy groups. In particular, an alternate proof can be given by adapting the discussion in \S~4 of \cite{Sh1990} to our situation.
\end{remark}

\subsection{Proof of Theorem~\ref{mainthm}} The crude functional equation of Theorem~\ref{crudeFE}, together with Proposition~\ref{unrgammaprop}, points us towards the existence part of Theorem~\ref{mainthm} concerning $\gamma$-factors. Indeed, let $(F,\pi,\psi) \in \mathfrak{ls}(p)$ be such that $\pi$ is $\psi_{\tilde{w}_0}$-generic. Then we recursively define $\gamma$-factors via the equation
\begin{equation}\label{recursivedef}
   C_\psi(s,\pi,\tilde{w}_0) = \prod_{i=1}^{m_r} \lambda_i(\psi,w_0)^{-1} \gamma(is,\pi,r_i,\psi)
\end{equation}
and Lemma~\ref{induction}. For arbitrary $(F,\pi,\psi) \in \mathfrak{ls}(p)$, they are defined with the aid of Proposition~\ref{vary}. From Theorem~2.1 of \cite{Lo2009} (see also \S~2 of  \cite{LoRationality}), it follows that $\gamma(s,\pi,r_i,\psi) \in \mathbb{C}(q_F^{-s})$.

With the above definition of $\gamma$-factors based on equation~\eqref{recursivedef}, Properties~(i) and (ii) can be readily verified. The inductive argument on the adjoint action together with Proposition~\ref{unrgammaprop} give Property~(iii). Multiplicativity of the local coefficient, Proposition~\ref{lcmultiplicativity}, leads towards Property~(iv). 

For Property~(v), given the two characters $\psi$ and $\psi^a$, $a \in F^\times$, we use Proposition~\ref{vary} to examine the variation of $\pi$ from being $\psi^a_{\tilde{w}_0}$-generic to $\pi_x$ which is $\psi_{\tilde{w}_0}$-generic for a suitable $x$. For this we let $x \in {\bf T}(F_s)$ be as in equation~\eqref{xelement} and such that $d \in Z_M$. In fact, $d$ is identified with a power of $a$. In this case we have
\begin{equation*}
   C_\psi(s,\pi,\tilde{w}_0) = a_x(\psi^a,\mathfrak{W}) C_{\psi^a}(s,\pi_x,\tilde{w}_0),
\end{equation*}
where $a_x(\psi^a,\mathfrak{W}) = \omega_\pi(a)^h \left\| a \right\|_F^{n(s-\frac{1}{2})}$. The recursive definition of $\gamma$-factors, allows us to obtain integers $h_i$ and $n_i$ for each $1 \leq i \leq m_r$. Holomorphy of tempered $L$-functions is proved in \cite{HeOp2013}, the discussion there is valid in characteristic $p$.

Furthermore, we obtain individual functional equations for each of the $\gamma$-factors. Namely, this reasoning proves Property~(vi) for $(k,\pi,\psi,S) \in \mathcal{LS}(p)$ with $\psi_{\tilde{w}_0}$-generic $\pi$. Note we define $\gamma$-factors in a way that they are compatible with the functional equation for a $\chi$-generic $\pi$ with the help of Propositions~\ref{vary} and \ref{globalvary}.

Property~(viii), sates the relation connecting Langlands-Shahidi local factors. We show that $\varepsilon$-factors are well defined for tempered representations. Let $(F,\pi,\psi) \in \mathfrak{ls}(p)$ be tempered. Let $P_{\pi,r_i}(z)$ and $P_{\tilde{\pi},r_i}(z)$ the polynomials of Property~(vii) with $z=q_F^{-s}$ and write
\begin{equation*}
   \gamma(s,\pi,r_i,\psi) \,\ddot{\sim}\, \dfrac{P_{\pi,r_i}(z)}{Q_{\pi,r_i}(z)} \text{ and } \gamma(s,\tilde{\pi},r_i,\psi) \,\ddot{\sim}\, \dfrac{P_{\tilde{\pi},r_i}(z)}{Q_{\tilde{\pi},r_i}(z)},
\end{equation*}
where $\ddot{\sim}$ denotes the equivalence relation of equality up to a monomial in $z$. From Property~(xi), we have
\begin{equation*}
   Q_{\pi,r_i}(z) Q_{\tilde{\pi},r_i}(q_F^{-1}z^{-1}) \,\ddot{\sim}\, P_{\pi,r_i}(z) P_{\tilde{\pi},r_i}(q_F^{-1}z^{-1}).
\end{equation*}
Property~(viii) implies that the Laurent polynomials $P_{\pi,r_i}(z)$ and $P_{\tilde{\pi},r_i}(q_Fz^{-1})$ have no zeros on $\Re(s) > 0$ and $\Re(s) < 1$, respectively. Then, up to a monomial in $z^{\pm 1}$, we have $L(s,\pi,r_i) = Q_{\tilde{\pi},r_i}(q_Fz^{-1})^{-1}$ and $L(1-s,\tilde{\pi},r_i) = Q_{\pi,r_i}(z)^{-1}$. Hence, $\varepsilon(s,\pi,r_i,\psi)$ is a monomial in $q_F^{-s}$.

Property~(ix) follows from the definitions for $(F,\pi,\psi) \in \mathfrak{ls}(p)$ tempered, and the fact that
\begin{equation*}
   {\rm I}(s + s_0,\pi) = {\rm ind}_{P_\theta}^G (q_F^{\left\langle s\tilde{\alpha} + s_0\tilde{\alpha}, H_M(\cdot) \right\rangle} \otimes \pi).
\end{equation*}
To proceed to the general $(F,\pi,\psi) \in \mathfrak{ls}(p)$, we use Langlands' classification. More precisely, $\pi$ is a representation of $M_\theta$ and we let $\theta_0 \subset \theta$. Let $\pi_0$ be a tempered generic representation of $M_0 = M_{\theta_0}$ and $\chi$ a character of $A_0 = A_{\theta_0}$ which is in the Langlands' situation \cite{BoWa,Si1978}. Then $\pi$ is the Langlands' quotient of the representation
\begin{equation*}
   \xi = {\rm Ind}(\pi_{0,\chi}),
\end{equation*}
with $\pi_{0,\chi} = \pi_0 \cdot \chi$.

From Proposition~\ref{lcmultiplicativity} we obtain $(F,\pi_j,\psi) \in \mathfrak{ls}(p,{\bf G}_j,{\bf M}_j)$. Each $\pi_j$ is quasi-tempered. Property~(ix) allows us to define $L(s,\pi_j,r_{i,j})$ and $\varepsilon(s,\pi_j,r_{i,j},\psi)$. We then let
\begin{align*}
   L(s,\pi,r_i) &= \prod_{j \in \Sigma_i} L(s,\pi_j,r_{i,j}), \\
   \varepsilon(s,\pi,r_i,\psi) &= \prod_{j \in \Sigma_i} \varepsilon(s,\pi_j,r_{i,j},\psi)
\end{align*}
be the definition of $L$-functions and root numbers. This concludes the existence part of Theorem~\ref{mainthm}.

For uniqueness, we start with a local triple $(F,\pi_0,\psi_0) \in \mathfrak{ls}(p)$. First, for $\gamma$-factors, we note that Property~(iv) allows us to reduce  to supercuspidal triples. We globalize $\pi_0$ into a globally generic representation $\pi = \otimes_v \pi_v$ via Lemma~\ref{hllemma}. We have a non-trivial global character $\psi = \otimes \psi_v$, where by Property~(v) if necessary, we can assume $\psi_{v_0} = \psi_0$. Notice that partial $L$-functions are uniquely determined. Hence, the functional equations gives a uniquely determined product
\begin{equation*}
   \prod_{v \in S} \gamma(s,\pi_v,r_{i,v},\psi_v).
\end{equation*}
For each $v \in S - \left\{ v_0, v_\infty \right\}$ we can use Proposition~\ref{unrgammaprop}. At $v_\infty$ equation~\eqref{pstorankone} above reduces $\gamma(s,\pi_{v_\infty},r_{i,v_\infty},\psi_{v_\infty})$ to a product of uniquely determined abelian $\gamma$-factors. Tempered $L$-functions and $\varepsilon$-factors are uniquely determined by Properties~(vii) and (viii). Then in general by Properties~(ix) and (x). \qed

\subsection{Functional equation}\label{globalFE} Given $(k,\pi,\psi,S) \in \mathcal{LS}(p)$, we define
\begin{equation*}
   L(s,\pi,r_i) = \prod_v L(s,\pi_v,r_{i,v}) \text{ and } \varepsilon(s,\pi,r_i) = \prod_v \varepsilon(s,\pi_v,r_{i,v},\psi_v).
\end{equation*}
The global functional equation for Langlands-Shahidi $L$-functions is now a direct consequence of the existence of a system of $\gamma$-factors, $L$-functions and $\varepsilon$-factors together with Property~(vi) of Theorem~\ref{mainthm}.

\begin{enumerate}
   \item[(xii)] (Global functional equation) \emph{Let $(k,\pi,\psi,S) \in \mathcal{LS}(p)$, then
\begin{equation*}
   L(s,\pi,r_i) = \varepsilon(s,\pi,r_i) L(1-s,\tilde{\pi},r_i).
\end{equation*}
   }
\end{enumerate}

\subsection{Local reducibility properties}\label{BAcomplimentary} The following result is an immediate consequence of having a sound theory of local $L$-functions in characteristic $p$. It is Corollary~7.6 of \cite{Sh1990}.

\begin{proposition}\label{Lreducibility}
Let $(F,\pi,\psi) \in \mathfrak{ls}(p)$ be supercuspidal. If $i>2$, then $L(s,\pi,r_i) = 1$. Also, the following are equivalent:
   \begin{itemize}
      \item[(i)] The product $L(s,\pi,r_1) L(2s,\pi,r_2)$ has a pole at $s=0$.
      \item[(ii)] For one and only one $i=1$ or $i=2$, the $L$-function $L(s,\pi,r_i)$ has a pole at $s=0$.
      \item[(iii)] The representation ${\rm Ind}(\pi)$ is irreducible and $w_0(\pi) \cong \pi$.
   \end{itemize}
\end{proposition}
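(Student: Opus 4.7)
The plan is to adapt Shahidi's proof of Corollary~7.6 in \cite{Sh1990} to our characteristic $p$ setting, leveraging the full $\mathcal{LS}$ machinery established in Theorem~\ref{mainthm}, together with the local-global lift of Lemma~\ref{hllemma} and the inductive structural Lemma~\ref{induction}. The argument splits naturally into (a) the triviality statement $L(s,\pi,r_i)=1$ for $i>2$, and (b) the three-way equivalence.

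For (a), since $\pi$ is supercuspidal (hence tempered), Property~(vii) of Theorem~\ref{mainthm} gives $L(s,\pi,r_i)=1/P_{\pi,r_i}(q_F^{-s})$ with $P_{\pi,r_i}(0)=1$ dividing the numerator of $\gamma(s,\pi,r_i,\psi)$. Lemma~\ref{induction} produces a pair $({\bf G}_i,{\bf M}_i)$ in which $r_i$ is the first constituent $r_1'$ of the adjoint action, so by the multiplicativity Property~(iv) the question reduces to understanding $\gamma(s,\pi,r_1',\psi)$ on the new pair. I would then globalize $\pi$ via Lemma~\ref{hllemma}, invoke the crude functional equation of Theorem~\ref{crudeFE} together with the unramified computation of Proposition~\ref{unrgammaprop}, and use a dimension/degree argument on ${}^L\mathfrak{n}_i$ combined with the holomorphy of tempered $L$-functions on $\Re(s)>0$ to force $P_{\pi,r_i}$ to be constant for $i>2$.

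For (b), the key input is the Harish-Chandra Plancherel density, which after part (a) can be expressed as
\begin{equation*}
\mu(s,\pi)=\gamma(s,\pi,r_1,\psi)\gamma(-s,\tilde\pi,r_1,\bar\psi)\gamma(2s,\pi,r_2,\psi)\gamma(-2s,\tilde\pi,r_2,\bar\psi).
\end{equation*}
Silberger's reducibility criterion (valid in positive characteristic through Waldspurger's Plancherel theorem) states that ${\rm I}(s,\pi)$ is reducible on $\Re(s)\geq 0$ precisely at the zeros of $\mu(s,\pi)$, that ${\rm Ind}(\pi)={\rm I}(0,\pi)$ is irreducible iff a reducibility point $s_0>0$ exists, and that such an $s_0$ requires $w_0(\pi)\cong\pi$ and is unique. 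Combining this with Properties~(vii) and (viii), a pole of $L(s,\pi,r_1)L(2s,\pi,r_2)$ at $s=0$ corresponds precisely to $\mu(0,\pi)=0$ together with $w_0(\pi)\cong\pi$, which in turn is equivalent to ${\rm Ind}(\pi)$ being irreducible and $w_0(\pi)\cong\pi$. The uniqueness of the reducibility point on $\Re(s)>0$ forces the ``exactly one'' clause in (ii).

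The main obstacle will be importing Silberger's criterion and the compatibility of Waldspurger's Plancherel measure with our Langlands-Shahidi $\gamma$-factors in positive characteristic; these are classical over $p$-adic fields of characteristic zero but require care in our setting. A more intrinsic alternative, bypassing the Plancherel side altogether, is to interpret the reducibility of ${\rm Ind}(\pi)$ through the residue at $s=0$ of the Eisenstein series in Theorem~\ref{ESrationality} arising from the globalization of Lemma~\ref{hllemma}, and to read the equivalences directly off Corollary~\ref{esLpartial} together with the definition of $L$-factors in Theorem~\ref{mainthm}.
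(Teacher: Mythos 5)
The paper offers no proof of this proposition beyond the remark that it is Corollary~7.6 of \cite{Sh1990} and follows at once from the theory of local $L$-functions established in Theorem~\ref{mainthm}; your proposal is precisely an unwinding of that citation along Shahidi's original lines (Harish-Chandra--Silberger reducibility theory, the Plancherel density written as a product of $\gamma$-factors, and the tempered/holomorphy properties (vii), (viii), (xi)), so in substance you take the same route as the paper. Two points of precision are worth fixing, though. First, your statement of Silberger's criterion has the dictionary backwards at the origin: for unitary supercuspidal $\pi$ with $w_0(\pi)\cong\pi$, reducibility of ${\rm I}(0,\pi)$ corresponds to $\mu(0,\pi)\neq 0$ (the Knapp--Stein $R$-group case), while a zero of $\mu$ at $s=0$ (necessarily of order two) is what forces ${\rm I}(0,\pi)$ to be irreducible and produces the unique reducibility point $s_0>0$; the conclusion you then draw is the correct one, but the criterion as written (``reducible precisely at the zeros of $\mu$'') would give the opposite equivalence if taken literally. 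Second, the triviality $L(s,\pi,r_i)=1$ for $i>2$ does not come from a dimension or degree count on ${}^L\mathfrak{n}_i$; it is extracted from the same reducibility analysis, applied to the auxiliary pairs $({\bf G}_i,{\bf M}_i)$ of Lemma~\ref{induction} in which $r_i$ becomes a first constituent, combined with the uniqueness of the reducibility point of ${\rm I}(s,\pi)$ on $s\geq 0$ and the simplicity of the pole of the intertwining operator, so you should route part (a) through the same machinery as part (b) rather than through a separate counting argument.
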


Also a consequence of the Langlands-Shahidi theory of $L$-functions is Shahidi's result on complimentary series. Namely, Theorem~8.1 of \cite{Sh1990}, whose proof carries through in the characteristic $p$ case:

\begin{theorem}\label{complimentary}
   Let $(F,\pi,\psi) \in \mathfrak{ls}(p)$ be unitary supercuspidal. With the equivalent conditions of the previous proposition, choose $i=1$ or $i=2$, to be such that $L(s,\pi,r_i)$ has a pole at $s=0$, then
\begin{itemize}
   \item[(i)] For $0 < s < 1/i$, the representation ${\rm I}(s,\pi)$ is irreducible and in the complementary series.
   \item[(ii)] The representation ${\rm I}(1/i,\pi)$ is reducible with a unique generic subrepresentation which is in the discrete series. Its Langlands quotient is never generic. It is a pre-unitary non-tempered representation.
   \item[(iii)] For $s > 1/i$, the representations ${\rm I}(s,\pi)$ are always irreducible and never in the complimentary series.
\end{itemize}
If $w_0(\pi) \cong \pi$ and ${\rm I}(\pi)$ is reducible, then no ${\rm I}(s,\pi)$, $s > 0$, is pre-unitary; they are all irreducible.
\end{theorem}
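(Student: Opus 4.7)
The plan is to transport Shahidi's proof of Theorem~8.1 of \cite{Sh1990} essentially verbatim, verifying that each analytic input it uses is available in characteristic $p$ via Theorem~\ref{mainthm} and \cite{HeOp2013}. The backbone is the formula expressing the Plancherel measure in terms of $\gamma$-factors, coupled with the Silberger/Harish-Chandra criteria for reducibility and unitarizability of generalized principal series, which are insensitive to residue characteristic.

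First I would write, for the maximal parabolic ${\bf P} = {\bf M}{\bf N}$,
\begin{equation*}
   \mu(s,\pi,\tilde{w}_0) \,\ddot{\sim}\, \prod_{i=1}^{2} \gamma(is,\pi,r_i,\psi)\,\gamma(-is,\tilde{\pi},r_i,\overline{\psi}),
\end{equation*}
which results from Harish-Chandra's rank-one reduction together with the definition of the local coefficient in \eqref{lslc}. Property (viii) then converts this to
\begin{equation*}
   \mu(s,\pi,\tilde{w}_0) \,\ddot{\sim}\, \prod_{i=1}^{2} \dfrac{L(1-is,\tilde{\pi},r_i)\,L(1+is,\pi,r_i)}{L(is,\pi,r_i)\,L(-is,\tilde{\pi},r_i)}.
\end{equation*}
By Property (vii) the denominator factors are holomorphic and nonvanishing on $\Re(s) > 0$, while the numerator factor $L(1-is,\tilde{\pi},r_i)$ can develop a pole for $s > 0$ only at $s = 1/i$, and only if $L(s,\tilde{\pi},r_i)$ is singular at $s=0$. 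Under our hypothesis, Proposition~\ref{Lreducibility} isolates exactly one such $i \in \{1,2\}$, and $w_0(\pi) \cong \pi$ transports the pole from $\pi$ to $\tilde{\pi}$. Consequently $\mu(s,\pi,\tilde{w}_0)$ has exactly one zero on $(0,\infty)$, located at $s_0 = 1/i$.

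The Harish-Chandra/Silberger criterion (cf.\ \cite{Si1978}) then yields (i) and (iii): ${\rm I}(s,\pi)$ is reducible at $s > 0$ iff $\mu$ vanishes there, so ${\rm I}(s,\pi)$ is irreducible on $(0, 1/i) \cup (1/i, \infty)$ and reduces at $s = 1/i$. The complementary-series clause in (i) follows by tracking the Hermitian form produced by the normalized intertwining operator: it is positive definite at $s = 0$ by unitarity of $\pi$ together with the assumed irreducibility of ${\rm I}(\pi)$, and remains so until it degenerates at the unique reducibility point, past which it cannot be positive definite. For (ii), the image of the intertwining operator at $s = 1/i$ is the unique $\psi$-generic constituent of ${\rm I}(1/i,\pi)$ by uniqueness of Whittaker models, and it lies in the discrete series by Casselman's square-integrability criterion applied to the pole of $L(s,\pi,r_i)$ at $s=0$; the Langlands quotient is then non-generic, pre-unitary by continuity from the complementary-series range, and non-tempered. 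The final sentence of the theorem is immediate: if ${\rm I}(\pi)$ is reducible then the equivalent conditions in Proposition~\ref{Lreducibility} fail, so neither $L$-factor contributes a pole at $s=0$, and the analysis above yields $\mu(s,\pi,\tilde{w}_0)$ nowhere vanishing on $(0,\infty)$, forcing irreducibility everywhere and ruling out complementary series.

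The main obstacle is the bookkeeping: one must check carefully how the unique pole supplied by Proposition~\ref{Lreducibility} propagates through the four $L$-factors in each $\mu$-block, and ensure no spurious cancellation or extraneous zero enters from the $j \neq i$ block. Given Properties (vii)--(ix) of Theorem~\ref{mainthm}, this reduces to exactly the combinatorial check Shahidi performs in characteristic $0$, and no new analytic input beyond what is already supplied in this paper and in \cite{HeOp2013} is required.
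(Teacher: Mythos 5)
Your route is the same as the paper's: the text offers no independent argument for Theorem~\ref{complimentary} beyond the observation that the proof of Theorem~8.1 of \cite{Sh1990} carries over to characteristic $p$ once Theorem~\ref{mainthm}, Proposition~\ref{Lreducibility} and the tempered holomorphy of \cite{HeOp2013} are in place, and that is precisely the verification you carry out, with the correct list of inputs (Plancherel measure via $\gamma$-factors, Properties (vii)--(viii), the Silberger/Harish-Chandra criterion, Whittaker uniqueness, Casselman's criterion).

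One piece of bookkeeping is inverted, though, and as literally written it breaks the middle of the argument. With the formula you display,
\begin{equation*}
   \mu(s,\pi,\tilde{w}_0) \,\ddot{\sim}\, \prod_{i=1}^{2} \dfrac{L(1-is,\tilde{\pi},r_i)\,L(1+is,\pi,r_i)}{L(is,\pi,r_i)\,L(-is,\tilde{\pi},r_i)},
\end{equation*}
the point $s=1/i$ is a \emph{pole} of $\mu$, not a zero: $L$-functions never vanish, the numerator factor $L(1-is,\tilde{\pi},r_i)$ blows up exactly at $s=1/i$, and for a unitary supercuspidal the denominator factors have their poles confined to $s=0$ (note that for $L(-is,\tilde{\pi},r_i)$ with $s>0$ this uses the supercuspidal localization of poles to $\Re = 0$, not Property~(vii) alone, since $\Re(-is)<0$ there). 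Correspondingly, the Harish-Chandra/Silberger criterion on the real axis detects reducibility of ${\rm I}(s,\pi)$, $s>0$, at the poles of $\mu$ --- equivalently at the zeros of $\mu^{-1}$, i.e.\ where the composition of the two standard intertwining operators vanishes --- and not at zeros of $\mu$. A test case: for ${\rm SL}_2$ with $\pi$ the trivial character, $\mu(s) \,\ddot{\sim}\, \zeta(1+s)\zeta(1-s)/\bigl(\zeta(s)\zeta(-s)\bigr)$ has a pole at the Steinberg point $s=1$. Taken at face value, your criterion combined with your formula would produce no reducibility point on $(0,\infty)$ at all, contradicting part~(ii). The repair is purely a matter of exchanging ``zero of $\mu$'' for ``pole of $\mu$'' (or working with $\mu^{-1}$ throughout); with that correction the remainder of your argument, including the final clause about the case where ${\rm I}(\pi)$ is reducible, goes through as in Shahidi's original proof.
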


We refer to \cite{GaLoJEMS} for a generalization of parts of this theorem to discrete series representations. In particular, we prove the basic assumption (BA) of M\oe glin and Tadi\'c \cite{MoTa2002} when ${\rm char}(F) =p$ in \S\S~7.8 and 7.9 of \cite{GaLoJEMS}.

\subsection{The Ramanujan Conjecture implies the Riemann Hypothesis} Let us recall the generalized Ramanujan Conjecture for a quasi-split connected reductive group scheme $\bf G$ over a function field $k$.

\begin{conjecture}[Ramanujan]\label{Ram:conj}
Let $\pi = \otimes' \pi_v$ be a globally generic cuspidal automorphic representation of ${\bf G}(\mathbb{A}_k)$. Then every $\pi_v$ is tempered. Whenever $\pi_v$ is unramified, its Satake parameters satisfy
\begin{equation*}
   \left| \alpha_{j,v} \right|_{k_v} = 1.
\end{equation*}
\end{conjecture}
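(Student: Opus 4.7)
The plan is to reduce the Ramanujan Conjecture for $\bf G$ to the known case of ${\rm GL}_n$ (Lafforgue \cite{LaL}). Within the scope of this paper the accessible cases are ${\rm GL}_n$ itself, the split classical groups (via \cite{Lo2009,LoRationality}), and the unitary groups ${\rm U}_N$ via the strong Base Change constructed later in the article. I will sketch the unitary case; the classical case is analogous.

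Start with a globally generic cuspidal automorphic representation $\pi=\otimes'\pi_v$ of ${\rm U}_N(\mathbb{A}_k)$ and form its strong Base Change $\Pi=\Pi_1\boxplus\cdots\boxplus\Pi_d$ to ${\rm GL}_N(\mathbb{A}_K)$ using Theorem~\ref{strongBC}. Each $\Pi_i$ is a cuspidal automorphic representation of ${\rm GL}_{n_i}(\mathbb{A}_K)$, so Lafforgue's theorem gives that every local component $\Pi_{i,w}$ is tempered and has Satake parameters of absolute value $1$ at each unramified place. An isobaric sum of tempered representations is tempered, hence every $\Pi_w$ inherits both properties.

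To descend this to $\pi$, I would invoke the place-by-place compatibility built into the word ``strong'' in Theorem~\ref{strongBC}: at every place $v$ of $k$, the parameter of $\Pi_w$ is the image of the parameter of $\pi_v$ under the norm-preserving correspondence between conjugacy classes in ${}^L{\rm U}_N$ and ${}^L{\rm Res}_{K_v/k_v}{\rm GL}_N$. At a split place this is essentially the identification $\Pi_w\cong\pi_v$; at an inert unramified place it is the explicit bijection recalled in the introduction (following \cite{Mi2011}), which preserves absolute values of Satake parameters; at ramified places the local Langlands correspondence of Theorem~\ref{genericBC}, together with the fact that local Base Change preserves tempered $L$-packets, transfers temperedness of $\Pi_w$ back to $\pi_v$.

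The main obstacle is verifying that the constituents $\Pi_i$ of the isobaric sum are genuinely \emph{cuspidal}, since Lafforgue's theorem applies only to cuspidal automorphic representations of ${\rm GL}_n(\mathbb{A}_K)$. This is where the analytic theory of Langlands-Shahidi automorphic $L$-functions, in particular Theorem~\ref{mainthm} together with the ``nice'' property established in \cite{LoRationality}, is essential: the isobaric decomposition of $\Pi$ in Theorem~\ref{strongBC} is produced by a pole analysis of Rankin-Selberg $L$-functions that forces each $\Pi_i$ to be cuspidal. Once cuspidality is secured, the temperedness and the unit-modulus Satake bound for $\pi$ follow as above, completing the proof of the conjecture for ${\rm U}_N$.
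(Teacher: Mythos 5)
The statement itself is a conjecture for general $\bf G$; what the paper actually proves is the unitary case (Theorem~\ref{ramanujan}), and your overall strategy — strong Base Change, Lafforgue's theorem applied to the cuspidal isobaric constituents $\Pi_i$, then descent to $\pi_v$ — is exactly the paper's, including the treatment of split places and the unramified Satake computation, and your identification of cuspidality of the $\Pi_i$ as a key input (it is secured in Theorem~\ref{strongBC} by the pole analysis you describe).

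The one genuine gap is the descent of temperedness at an inert place. You assert that ``local Base Change preserves tempered $L$-packets'' and that this ``transfers temperedness of $\Pi_w$ back to $\pi_v$.'' What is needed is that Base Change \emph{reflects} temperedness, and this is not a citable fact in the paper: Lemma~\ref{temperedness} gives only the forward direction (a tempered $\pi_v$ has a tempered lift), and the reflection is precisely what the proof of Theorem~\ref{ramanujan} must establish, so invoking it is circular. The paper closes the gap concretely: write $\pi_v$ as the generic constituent of ${\rm Ind}(\tau_{1,v}\nu^{t_{1,v}}\otimes\cdots\otimes\tau_{d,v}\nu^{t_{d,v}}\otimes\tau_{0,v})$ via the standard module; Theorem~\ref{genericBC} exhibits $\Pi_v$ with the same exponents $t_{i,v}$ in its Langlands data, so $L(s,\pi_v\times\tilde{\tau}_{j,v})=L(s,\Pi_v\times\tilde{\tau}_{j,v})$ contains the factor $L(s-t_{j,v},\tau_{j,v}\times\tilde{\tau}_{j,v})$, which has a pole at $s=t_{j,v}$; computing the same $L$-function from the isobaric side ${\rm Ind}(\Pi_{1,v}\otimes\cdots\otimes\Pi_{e,v})$, with each $\Pi_{i,v}$ tempered by Lafforgue, shows it is holomorphic for $\Re(s)>0$, forcing $t_{j,v}=0$. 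You need either this pole comparison or an explicit appeal to uniqueness of Langlands data (a tempered representation cannot be the Langlands quotient of a standard module with a strictly positive exponent); as written, the descent step is unsupported.
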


Thanks to the work of L. Lafforgue \cite{LaL}, in the context of ${\rm GL}_n$, a cuspidal automorphic representation satisfies the Ramanujan conjecture and Rankin-Selberg products satisfy the Riemann Hypothesis over function fields. For a general connected reductive group, we can incorporate the results of V. Lafforgue \cite{LaV}. What we obtain is the following theorem, which is non-trivial since we have the Ramanujan Conjecture for the classical groups (see \cite{Lo2009,LoRationality}), in addition to the unitary groups (see \S~10 of this article).

\begin{theorem}\label{RamRie:thm}
Let $(k,\pi,\psi,S) \in \mathcal{LS}(p)$ and assume that $\pi$ satisfies the Ramanujan Conjecture. Then, for each $i$, there is an automorphic representation of ${\rm GL}_N(\mathbb{A}_k)$
\begin{equation}\label{eq:isobaric}
   \mathcal{T} = \mathcal{T}_1 \boxplus \cdots \boxplus \mathcal{T}_d,
\end{equation}
written as an isobaric sum of unitary cuspidal automorphic representations $\mathcal{T}_j$ of ${\rm GL}_{N_j}(\mathbb{A}_k)$, such that
\begin{equation*}
   L^S(s,\pi,r_i) = L^S(s,\mathcal{T})
\end{equation*}
and at every place $v$ of $k$ we have that
\begin{equation*}
   \gamma(s,\pi_v,r_{i,v},\psi_v) = \gamma(s,\mathcal{T}_v,\psi_v).
\end{equation*}
Furthermore, the zeros of $L(s,\pi,r_i)$ are contained in the line $\Re(s) = 1/2$.
\end{theorem}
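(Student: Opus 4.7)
The plan is to transport $\pi$ and the representation $r_i$ over to the setting of ${\rm GL}_N$ via the global Langlands correspondence and then invoke L.~Lafforgue. First apply V.~Lafforgue's construction \cite{LaV} to the globally generic cuspidal $\pi$ on ${\bf M}(\mathbb{A}_k)$ to obtain a semisimple Langlands parameter
\[
   \sigma_\pi : \mathcal{W}_k \longrightarrow {}^LM(\overline{\mathbb{Q}}_\ell),
\]
which at every unramified place $v \notin S$ sends Frobenius to the Satake conjugacy class $\left\{ A_{\pi_v} \rtimes \sigma \right\}$ of $\pi_v$. Composing with $r_i$ yields an $N_i$-dimensional Galois representation $\rho_i = r_i \circ \sigma_\pi$, where $N_i = \dim {}^L\mathfrak{n}_i$. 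The Ramanujan Conjecture, applied place by place, ensures that $\rho_i$ is pure of weight zero: at every unramified place the eigenvalues of $\rho_i(\mathrm{Frob}_v)$ have absolute value $1$.

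Next, decompose $\rho_i = \bigoplus_{j=1}^{d} \rho_{i,j}$ into irreducible constituents. By L.~Lafforgue's global Langlands correspondence for ${\rm GL}_{N_{i,j}}$ over function fields \cite{LaL}, each $\rho_{i,j}$ corresponds to a unitary cuspidal automorphic representation $\mathcal{T}_j$ of ${\rm GL}_{N_{i,j}}(\mathbb{A}_k)$ with $\sum_j N_{i,j}=N_i$. Form the isobaric sum $\mathcal{T} = \mathcal{T}_1 \boxplus \cdots \boxplus \mathcal{T}_d$ as in \eqref{eq:isobaric}. The partial $L$-function identity $L^S(s,\pi,r_i) = L^S(s,\mathcal{T})$ is now immediate at every $v \notin S$: on the left the Euler factor is $\det(I - r_i(A_{\pi_v} \rtimes \sigma)q_v^{-s})^{-1}$, on the right it equals $\prod_j \det(I - \rho_{i,j}(\mathrm{Frob}_v) q_v^{-s})^{-1}$, and by construction these coincide.

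For the local $\gamma$-factor equality at every place, the approach is to use the characterization provided by Theorem~\ref{mainthm} together with the corresponding one for ${\rm GL}_N$. At $v \notin S$, Proposition~\ref{unrgammaprop} matches the Langlands--Shahidi $\gamma$-factor with the Artin $\gamma$-factor of $\rho_{i,v}$, which in turn agrees with the ${\rm GL}_{N_i}$ $\gamma$-factor attached to $\mathcal{T}_v$ via L.~Lafforgue's correspondence. At ramified places, combine the two global functional equations with the partial $L$-function identity: the product $\prod_{v \in S}\bigl[\gamma(s,\pi_v,r_{i,v},\psi_v) / \gamma(s,\mathcal{T}_v,\psi_v)\bigr]$ equals $1$ by Property~(vi) of Theorem~\ref{mainthm} applied to both sides. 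One then separates the individual places by invoking the stability of both families of $\gamma$-factors under sufficiently ramified twists (available from \cite{GaLoJEMS} on the Langlands--Shahidi side and from \cite{He1993} on the ${\rm GL}_N$ side), together with the varying character Property~(v), to peel off one place at a time.

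Finally, the Riemann Hypothesis for $L(s,\pi,r_i)$ follows formally: by L.~Lafforgue each unitary cuspidal $\mathcal{T}_j$ satisfies that the completed $L$-function $L(s,\mathcal{T}_j)$ has all its zeros on $\Re(s)=1/2$, and the factorization $L(s,\mathcal{T}) = \prod_j L(s,\mathcal{T}_j)$ inherits this property; by the $L$-function identity just established, so does $L(s,\pi,r_i)$. The main obstacle I anticipate is the separation of the local $\gamma$-factor equality at ramified places, since V.~Lafforgue's construction produces only a semisimple parameter and does not directly pin down Weil--Deligne monodromy; it is precisely Ramanujan, via purity, and the stability of $\gamma$-factors under highly ramified twists that allow one to bypass this and extract an honest local equality.
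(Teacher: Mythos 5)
Your proposal is correct and follows essentially the same route as the paper: V.~Lafforgue's parameter composed with $r_i$, semisimplification and L.~Lafforgue's correspondence giving the isobaric decomposition, the Ramanujan Conjecture forcing the constituents to be unitary cuspidal (you phrase this as purity of $\rho_i$, the paper as vanishing of the isobaric exponents $t_j$ read off from Satake parameters), and Grunwald--Wang plus stability of $\gamma$-factors under highly ramified twists to obtain the local equality at ramified places. The only step you elide is the passage from the Riemann Hypothesis for the partial $L$-function $L^S(s,\pi,r_i)$ to the completed one, which the paper settles by observing that the local factors $L(s,\pi_v,r_{i,v})$ for $v \in S$ never vanish.
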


For this, we express the results of V. Lafforgue, in combination with those of L. Lafforgue, in such a way that we obtain a functorial lift in the context of the $\mathcal{LS}$ method. This is sometimes called a ``weak'' functorial lift \cite{So2005}, since it agrees with the principle of Langlands functoriality at every unramified place. However, a ``strong'' functorial lift would be one that agrees with the local Langlands correspondence at every place. We observe that the current work of A. Genestier and V. Lafforgue \cite{GeLa} promises to shed light into this latter step.

\begin{proposition}\label{weak:LVlift}
Let $(k,\pi,\psi,S) \in \mathcal{LS}(p)$. Then there exists an automorphic representation $\mathcal{T}$ of ${\rm GL}_N(\mathbb{A}_k)$ such that
\begin{equation*}
   L^S(s,\pi,r_i) = L^S(s,\mathcal{T})
\end{equation*}
and for every place $v$ of $k$ we have that
\begin{equation*}
   \gamma(s,\pi_v,r_{i,v},\psi_v) = \gamma(s,\mathcal{T}_v,\psi_v).
\end{equation*}
\end{proposition}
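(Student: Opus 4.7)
The plan is to combine V. Lafforgue's global parametrization for connected reductive groups over function fields \cite{LaV} with L. Lafforgue's Langlands correspondence for ${\rm GL}_N$ \cite{LaL}. Starting from the cuspidal automorphic representation $\pi$ of ${\bf M}(\mathbb{A}_k)$, V. Lafforgue's construction (via the excursion operators acting on cusp forms) attaches a continuous semisimple global Langlands parameter
\begin{equation*}
   \sigma_\pi : \mathcal{W}_k \longrightarrow {}^L M(\overline{\mathbb{Q}}_\ell),
\end{equation*}
which is compatible with Satake parameters at every place $v \notin S$. Composing with the $i$-th irreducible constituent of the adjoint action yields a semisimple $N$-dimensional $\ell$-adic Galois representation $\rho = r_i \circ \sigma_\pi$ of $\mathcal{W}_k$, with $N = \dim {}^L\mathfrak{n}_i$.

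I would then decompose $\rho = \rho_1 \oplus \cdots \oplus \rho_d$ into irreducible constituents of dimensions $N_j$. Each $\rho_j$ is an irreducible continuous $\ell$-adic representation of $\mathcal{W}_k$, so by L. Lafforgue's theorem there is a unique unitary cuspidal automorphic representation $\mathcal{T}_j$ of ${\rm GL}_{N_j}(\mathbb{A}_k)$ corresponding to $\rho_j$ under the global Langlands correspondence. Forming the isobaric sum $\mathcal{T} = \mathcal{T}_1 \boxplus \cdots \boxplus \mathcal{T}_d$ produces the candidate automorphic representation of ${\rm GL}_N(\mathbb{A}_k)$. For any $v \notin S$, the Satake parameters of $\pi_v$ and $\mathcal{T}_v$ both equal $\sigma_\pi({\rm Frob}_v)$ up to composition with $r_i$, so Property~(iii) of Theorem~\ref{mainthm} (compatibility with Artin factors) and the corresponding compatibility for ${\rm GL}_N$ give $L(s,\pi_v,r_{i,v}) = L(s,\mathcal{T}_v)$ and $\gamma(s,\pi_v,r_{i,v},\psi_v) = \gamma(s,\mathcal{T}_v,\psi_v)$. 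Taking the product over $v \notin S$ yields the partial $L$-function identity, and settles the $\gamma$-factor equality at every unramified place.

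It remains to handle the $\gamma$-factor equality at the ramified places $v \in S$. Applying the functional equation of Property~(vi) of Theorem~\ref{mainthm} to $L^S(s,\pi,r_i)$, together with the Godement-Jacquet functional equation for $L^S(s,\mathcal{T})$, and using the partial $L$-function identity already obtained, I deduce
\begin{equation*}
   \prod_{v \in S} \gamma(s,\pi_v,r_{i,v},\psi_v) = \prod_{v \in S} \gamma(s,\mathcal{T}_v,\psi_v).
\end{equation*}
To separate this into individual place-by-place identities, I would use Lemma~\ref{hllemma} as a separation device: given $v_0 \in S$, construct auxiliary globally generic cuspidal representations which agree with $\pi$ at $v_0$ but whose ramification at the other places in $S$ can be prescribed (possibly after enlarging $S$ by auxiliary places at which both sides are unramified), so that taking ratios of the resulting global identities isolates the $\gamma$-factor at $v_0$.

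\textbf{Main obstacle.} The hard part is the pointwise identity of $\gamma$-factors at ramified places. V. Lafforgue's parameter $\sigma_\pi$ is constructed globally and its specialization at a ramified $v$ is only a semisimple local parameter, not known \emph{a priori} to coincide with the local Langlands parameter of $\pi_v$ (this is precisely the local-global compatibility which the work of Genestier-Lafforgue \cite{GeLa} addresses, and which is what distinguishes a weak from a strong functorial lift). Consequently, the identity at an individual ramified place cannot be obtained by reading off local parameters, and one is forced to extract it from the global functional equation via the separation argument based on Lemma~\ref{hllemma} and the multiplicativity of $\gamma$-factors (Property~(iv) of Theorem~\ref{mainthm}).
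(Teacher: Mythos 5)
Your construction of $\mathcal{T}$ and the treatment of the unramified places coincide with the paper's: V. Lafforgue's parameter, composition with $r_i$, semisimplification, L. Lafforgue's correspondence applied to the irreducible constituents, and the Satake compatibility giving the partial $L$-function identity and the $\gamma$-factor identity off $S$. You also correctly identify the real difficulty, namely that V. Lafforgue's parametrization is not known to satisfy local-global compatibility at ramified places, so the identity at a single $v_0 \in S$ must be extracted from the global functional equations.

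The gap is in your separation device. Using Lemma~\ref{hllemma} to re-globalize does not close the argument: first, $\pi_{v_0}$ need not be supercuspidal, which is the hypothesis of that lemma; second, and more seriously, each auxiliary globalization $\pi'$ comes with its \emph{own} V.~Lafforgue parameter and hence its own lift $\mathcal{T}'$, and the resulting product identity over the ramified set of $\pi'$ still contains factors $\gamma(s,\mathcal{T}'_{v},\psi_{v})$ at the auxiliary ramified places (e.g.\ the place $v_\infty$ that Lemma~\ref{hllemma} necessarily introduces) which you cannot evaluate without precisely the local-global compatibility you are trying to avoid. The ratios therefore do not telescope to anything known, and the argument is circular. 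The paper's route is different and is the step you are missing: it twists the whole construction by a Gr\"o\ss encharakter $\eta$ produced by the Grunwald-Wang theorem, with $\eta_{v_0}=1$ and $\eta_v$ highly ramified for $v \in S \setminus \{v_0\}$, and then invokes \emph{stability of $\gamma$-factors under highly ramified twists} (\S~5 of \cite{GaLoJEMS}) to force $\gamma(s,\pi_v\otimes\eta_v,r_{i,v},\psi_v) = \gamma(s,\mathcal{T}_{\eta,v},\psi_v)$ at every $v \in S\setminus\{v_0\}$ without any local-global input at those places; since $\eta_{v_0}=1$, comparing the two functional equations then isolates the factor at $v_0$. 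This is why the earlier part of the paper's proof sets up the lift $\mathcal{T}_\chi$ for an arbitrary Gr\"o\ss encharakter $\chi$ rather than only for $\chi$ trivial. Replacing your globalization step by this Grunwald-Wang plus stability argument repairs the proof.
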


\begin{proof} We fix a prime $\ell \neq p$, an algebraic closure $\overline{\mathbb{Q}}_\ell$ of the $\ell$-adic numbers and an isomorphism $\iota : \overline{\mathbb{Q}}_\ell \rightarrow \mathbb{C}$. Then $\iota$ induces an isomorphism ${}^LM(\overline{\mathbb{Q}}_\ell) \rightarrow {}^LM({\mathbb{C}})$. We also fix a separable closure $\bar{k}$ of $k$. Then the cuspidal unitary representation $\pi$ of ${\bf G}(\mathbb{A}_k)$ lifts to an $n$-dimensional representation $\Sigma$ of ${\rm Gal}(\bar{k})$, via the work of V. Lafforgue \cite{LaV}. The representation $\Sigma$ is such that $\Sigma_v$ agrees with $\pi_v$ at unramified places $v$ of $k$ via the Satake correspondence. Furthermore, there is a map $\rho_Z : {}^LM \rightarrow {}^LM / {}^{\rm Der}(M) \simeq {}^LZ_M$, where ${}^LZ_M$ is the center of ${}^LM$. The central character $\omega_\pi$ of $\pi$ corresponds to the map $\rho_Z(\Sigma)$ under the global Langlands correspondence for tori.

Let $\chi$ be a Gr\"o\ss encharakter $\chi : k^\times \backslash \mathbb{A}_k^\times \rightarrow \mathbb{C}^\times$, which we can identify with a character of ${\rm Gal}(\bar{k})$ via global class field theory. We let $N$ be the dimension of the representation $r (\Sigma \circ \chi)$. The procedure of semisimplification, already present in Grothendieck, allows us to obtain a Frob-semisimple representation
\begin{equation*}
   \mathcal{R}_\chi :\mathcal{W}_k' \rightarrow {}^LM,
\end{equation*}
from $r (\Sigma \circ \chi)$. This procedure does not alter the corresponding Artin $L$-functions. We then have $\mathcal{R}_\chi = \rho_1 \oplus \cdots \oplus \rho_d$, with each $\rho_i$ irreducible $N_i$-dimensional. From the work of L. Lafforgue \cite{LaL}, we have a one-to-one correspondence. Hence $\mathcal{R}_\chi$ corresponds to an automorphic representation $\mathcal{T}_\chi$ of ${\rm GL}_N(\mathbb{A}_k)$.

Let $S$ be the finite set of places such that $\pi_v$ is unramified. The construction is such that $\mathcal{T}_\chi$ agrees with the Satake correspondence at evry $v \notin S$. Then, we have equality of partial $L$-functions
\begin{equation*}
   L^S(s,\pi \circ \chi,r_i) = L^S(s,\mathcal{T}_\chi).
\end{equation*}
In particular, we have equality for $\chi$ trivial.

Fix a place $v_0 \in S$. From the Grundwald-Wang theorem of class field theory, there is a Gr\"o\ss encharakter $\eta : k^\times \backslash \mathbb{A}_k^\times \rightarrow \mathbb{C}^\times$ such that $\eta_{v_0}=1$ and $\eta_v$ is highly ramified for $v \in S' = S \setminus \left\{ v_0 \right\}$. At unramified places, we use the compatibility of V. Lafforgue's lift with the Satake correspondence. At places $v \in S'$ we have stability of $\gamma$-factors. Indeed, from \S~5 of \cite{GaLoJEMS}, if we take $\eta_v$, for $v \in S'$ to be sufficiently highly ramified, we have
\begin{equation*}
   \gamma(s,\pi_v \circ \eta_v,r_{i,v},\psi_v) = \gamma(s,\mathcal{T}_{\eta,v},\psi_v).
\end{equation*}
Hence, we can conclude equality at $v_0$ by comparing functional equations.
\end{proof}

\subsection{Proof of Theorem~\ref{RamRie:thm}} Let $\mathcal{T}$ be the functorial lift of Proposition~\ref{weak:LVlift}. From the classification of representations for ${\rm GL}_N$ \cite{JaSh1981}, we can write $\mathcal{T}$ as an isobaric sum
\begin{equation}\label{eq:isobaric}
   \mathcal{T} = \mathcal{T}_1 \boxplus \cdots \boxplus \mathcal{T}_d,
\end{equation}
where $\mathcal{T}_j = \mathcal{T}_{j,0} \nu^{t_j}$ and $\mathcal{T}_{j,0}$ is a cuspidal unitary representation of ${\rm GL}_{N_j}(\mathbb{A}_k)$. The Langlands parameters can be arranged such that $t_1 \leq \cdots \leq t_d$. By construction, $\mathcal{T}$ must correspond to
\begin{equation*}
   \mathcal{R} = \rho_1 \oplus \cdots \oplus \rho_d,
\end{equation*}
where the $\rho_j$'s are, after rearrangement if necessary, the representations of the proof of Proposition~\ref{weak:LVlift}.

For each $j$, and almost every place $v$ of $k$, we have that
\begin{equation*}
   \mathcal{T}_{j,v} \hookrightarrow {\rm ind}(\chi_{1,v} \otimes \cdots \otimes \chi_{N_j,v})
\end{equation*}
is an unramified principal series. We have the corresponding Satake parameters
\begin{equation*}
   (\alpha_{1,v}, \ldots, \alpha_{N_j,v}) = 
   (\chi_{1,v}(\varpi_v), \ldots, \chi_{N_j,v}(\varpi_v)).
\end{equation*}
Because each $\mathcal{T}_{j,0}$ satisfies the Ramanujan conjecture, we have that $\left| \alpha_{j,v} \right| = 1$.

On the other hand, at every place where $\mathcal{T}_v$ is unramified, we have
\begin{equation*}
   \mathcal{T}_{v} \hookrightarrow {\rm ind}(\mu_{1,v} \otimes \cdots \otimes \mu_{n,v})
\end{equation*}
with each $\mu_{l,v}$ an unramified character. The Satake parameters
\begin{equation*}
   (\beta_{1,v}, \ldots, \beta_{n,v}) = 
   (\mu_{1,v}(\varpi_v), \ldots, \mu_{n,v}(\varpi_v))
\end{equation*}
correspond to a diagonal matrix $B_v$, and $\left| \beta_l \right|_v=1$. Then $r(B_v)$ gives the Satake parameters for $\mathcal{T}$:
\begin{equation*}
   r(B_v) \leftrightarrow (\gamma_1, \ldots, \gamma_N)
\end{equation*}
with $\left| \gamma_h \right|_v=1$. Now $\mathcal{T}_j = \mathcal{T}_{j,0} \nu^{t_j}$, and $\mathcal{T}_{j,0,v}$ is unramified for almost all $v$, whose Satake parameters range among the set $\left\{ \gamma_1, \ldots, \gamma_N\right\}$. Because each $\mathcal{T}_{j,0}$ is cuspidal unitary, by L. Lafforgue they satisfy Ramanujan. If $t_j \neq 0$, then the Satake parameters would not be of absolute value one at unramified places, hence we must have that $t_j=0$ for each $j$.

Therefore, the isobaric sum of \eqref{eq:isobaric}, consists only of cuspidal unitary representations. Furthermore, each $L$-function $L(s,\mathcal{T}_j)$ satisfies the Riemann Hypothesis, again thanks to L. Lafforgue \cite{LaL}. Notice that the partial $L$-functions $L^S(s,\mathcal{T}_j)$ also satisfy the Riemann Hypothesis. Then
\begin{equation*}
   L^S(s,\pi,r_i) = L^S(s,\mathcal{T}) = \prod_{j=1}^d L^S(s,\mathcal{T}_j)
\end{equation*}
also satisfies the Riemann Hypothesis. Now, for each $v \in S$, we have that $L(s,\pi_v,r_{i,v})$ is non-zero. Hence
\begin{equation*}
   L(s,\pi,r_i) = \prod_{v \in S} L(s,\pi_v,r_{i,v}) \, L^S(s,\pi,r_i)
\end{equation*}
satisfies the Riemann Hypothesis.

\section{The quasi-split unitary groups and the Langlands-Shahidi method}\label{lsU}

We study generic $L$-functions $L(s,\pi \times \tau)$ in the case of representations $\pi$ of a unitary group and $\tau$ of a general linear group. For this, we go through the induction step of the Langlands-Shahidi method, which gives the case of Asai $L$-functions \cite{HeLo2013b}.

\subsection{Unitary groups}\label{Udef} Let $K$ be a degree-2 finite \'etale algebra over a field $k$ with non-trivial involution $\theta$. We write $\bar{x} = \theta(x)$, for $x \in K$, and extend conjugation to elements $g = (g_{i,j})$ of ${\rm GL}_n(K)$, i.e., $\bar{g} = (\bar{g}_{i,j})$. We fix the following hermitian forms:
\begin{align*}
   h_{2n+1}(x,y) &= \sum_{i=1}^{2n} \bar{x}_i y_{2n+2-i} - \bar{x}_{n+1} y_{n+1}  , \quad x,y \in K^{2n+1}, \\
   h_{2n}(x,y) &= \sum_{i=1}^n \bar{x}_i y_{2n+1-i} - \sum_{i=1}^n \bar{x}_{2n+1-i} y_i  , \quad x, y \in K^{2n}.
\end{align*}

Let $N = 2n+1$ or $2n$. We then have odd or even quasi-split unitary groups of rank $n$ whose group of $k$-rational points is given by
\begin{equation*}
   {\rm U}_N(k) = \left\{ g \in {\rm GL}_N(K) \, \vert \, h_N(gx, gy) = h_N(x,y) \right\}.
\end{equation*}
These conventions for odd and even unitary groups ${\rm U}_{2n+1}$ and ${\rm U}_{2n}$ are in accordance with those made in \cite{CoSGA3, HeLo2013b, Lo2016}.

In particular, we have the two main cases to which every degree-$2$ finite \'etale algebra is isomorphic: if $K$ is the separable algebra $k \times k$, we have $\theta(x) = \theta(x_1,x_2) = (x_2,x_1) = \bar{x}$ and ${\rm N}_{K/k}(x_1,x_2) = x_1x_2$; and, if $K/k$ is a separable quadratic extension, we have ${\rm Gal}(K/k) = \left\{ 1, \theta \right\}$ and ${\rm N}_{K/k}(x) = x\bar{x}$. Notice that
\begin{equation*}
   {\rm U}_1(k) = K^1 = \ker({\rm N}_{K/k}),
\end{equation*}
where in the separable algebra case we embed $k \hookrightarrow K$ via $k \cong \left\{ (x,x) \in K \vert x \in k \right\}$ and $k^\times \hookrightarrow K^\times$ via $k^\times \cong \left\{ (x,x^{-1}) \in K \vert x \in k^\times \right\} = {\rm U}_1(k)$.
In these two cases we have that Hilbert's theorem~90 gives us a continuous surjection
\begin{equation}\label{hilbert90}
   \mathfrak{h} : K^\times \twoheadrightarrow K^1, \ x \mapsto x \bar{x}^{-1}.
\end{equation}

Throughout this article we let ${\bf G}_n$ be either restriction of scalars of a general linear group or a quasi-split unitary group of rank $n$. We think of ${\bf G}_n$ as a functor taking degree-2 finite \'etale algebras with involution $K$ over $k$, to either ${\rm Res}_{K/k}{\rm GL}_n$ or a unitary group ${\rm U}_{2n+1}$, ${\rm U}_{2n}$ defined over $k$.

Notice that in the case of the separable algebra $K = k \times k$, we have
\begin{equation*}
   {\rm U}_N(k) \cong {\rm GL}_N(k) \text{ and } {\rm Res}_{K/k}{\rm GL}_N(k) \cong {\rm GL}_N(k) \times {\rm GL}_N(k).
\end{equation*}

\subsection{$L$-groups} Let $K/k$ be a separable quadratic extension of global function fields. Let ${\bf G}_n$ be a unitary group of rank $n$. Let $N = 2n+1$ or $2n$, according to the unitary group being odd or even. Then, the $L$-group of ${\bf G}_n = {\rm U}_N$ has connected component ${}^LG_n^\circ = {\rm GL}_N(\mathbb{C})$. The $L$-group itself is given by the semidirect product
\begin{equation*}
   {}^LG_n = {\rm GL}_N(\mathbb{C}) \rtimes \mathcal{W}_k.
\end{equation*}
To describe the action of the Weil group, let $\Phi_n$ be the $n \times n$ matrix with $ij$-entries $(\delta_{i,n-j+1})$. If $N = 2n+1$, we let
\begin{equation*}
   J_N = \left( \begin{array}{ccc}  &  & \Phi_n \\  & 1 &  \\  -\Phi_n &  & \end{array} \right),
\end{equation*}
and, if $N = 2n$, we let
\begin{equation*}
   J_N = \left( \begin{array}{cc}  & \Phi_n \\ -\Phi_n & \end{array} \right).
\end{equation*}
Then, the Weil group $\mathcal{W}_k$ acts on ${}^LG_n$ through the quotient $\mathcal{W}_k / \mathcal{W}_K \cong {\rm Gal}(K/k) = \left\{ 1, \theta \right\}$ via the outer automorphism
\begin{equation*}
   \theta(g) = J_N^{-1} {}^tg^{-1} J_N.
\end {equation*}
The Langlands Base Change lift that we will obtain is from the unitary groups to the restriction of scalars group ${\bf H}_N = {\rm Res}_{K/k} {\rm GL}_N$. Its corresponding $L$-group is given by
\begin{equation*}
   {}^LH_N = {\rm GL}_N(\mathbb{C}) \times {\rm GL}_N(\mathbb{C}) \rtimes \mathcal{W}_k,
\end{equation*}
where the Weil group $\mathcal{W}_k$ acts on ${\rm GL}_N(\mathbb{C}) \times {\rm GL}_N(\mathbb{C})$ through the quotient $\mathcal{W}_k / \mathcal{W}_K \cong {\rm Gal}(K/k) = \left\{ 1, \theta \right\}$ via
\begin{equation*}
   \theta(g_1 \times g_2) = g_2 \times g_1.
\end{equation*}

\subsection{Asai $L$-functions (even case)} The induction step in the Langlands-Shahidi method for the unitary groups can be seen in the when $\bf M$ is a Siegel Levi subgroup. The even case, when $(E/F,\tau,\psi) \in \mathfrak{ls}(p,{\rm U}_{2n},{\bf M})$, is thoroughly studied in \cite{HeLo2013b,Lo2016}. 

Assume first that $E/F$ is a quadratic extension of non-archimedean local fields. In this case, $\tau$ is a representation of $M \cong {\rm GL}_n(E)$ and the adjoint representation $r$ of ${}^LM$ on ${}^L\mathfrak{n}$ is irreducible. More precisely, let $r_\mathcal{A}$ be the Asai representation
\begin{equation*}
   r_\mathcal{A} : {}^L {\rm Res}_{E/F} {\rm GL}_n \rightarrow {\rm GL}_{n^2}(\mathbb{C}),
\end{equation*}
given by
\begin{equation*}
   r_\mathcal{A}(x,y,1) = x \otimes y, \text{ and } r_\mathcal{A}(x,y,\theta) = y \otimes x.
\end{equation*}
We thus have for $(E/F,\tau,\psi) \in \mathfrak{ls}(p,{\rm U}_{2n},{\bf M})$, that Theorem~\ref{mainthm} gives
\begin{equation*}
   \gamma(s,\tau,r,\psi) = \gamma(s,\tau,r_\mathcal{A},\psi).
\end{equation*}
And, similarly we have Asai $L$-functions $L(s,\pi,r_\mathcal{A})$ and root numbers $\varepsilon(s,\pi,r_\mathcal{A},\psi)$.

Now, assume $E$ is the degree-$2$ finite \'etale algebra $F \times F$, we have for $(E/F,\pi,\psi) \in \mathfrak{ls}(p,{\rm U}_{2n},{\bf M})$ that $\pi = \pi_1 \otimes \pi_2$ is a representation of $M = {\rm GL}_n(F) \times {\rm GL}_n(F)$. Then, Proposition~4.5 of \cite{Lo2016}, gives that
\begin{equation*}
   \gamma(s,\pi,r_\mathcal{A},\psi) = \gamma(s,\pi_1 \times \pi_2,\psi),
\end{equation*}
a Rankin-Selberg $\gamma$-factor. And, similarly for the corresponding $L$-functions and root numbers.

Asai local factors obtained via the Langlands-Shahidi method are indeed the correct ones. Theorem~3.1 of \cite{HeLo2013b} establishes their compatibility with the local Langlands correspondence \cite{LaRaSt1993}:

\begin{theorem}[Henniart-Lomel\'i]\label{hltheorem} Let $(E/F,\pi,\psi) \in \mathfrak{ls}(p,{\rm U}_{2n},{\bf M})$, with $E/F$ a quadratic extension of non-archimedean local fields. Let $\sigma$ be the Weil-Deligne representation of $\mathcal{W}_E$ corresponding to $\pi$ via the local Langlands correspondence. Then
\begin{equation*}
   \gamma(s,\pi,r_\mathcal{A},\psi) = \gamma_F^{\rm Gal}(s,{}^\otimes{\rm I}(\sigma),\psi).
\end{equation*} 
Here, ${}^\otimes{\rm I}(\sigma)$ denotes the representation of $\mathcal{W}_F$ obtained from $\sigma$ via tensor induction and the Galois $\gamma$-factors on the right hand side are those of Deligne and Langlands. Local $L$-functions and root numbers satisfy
\begin{align*}
   L(s,\pi,r_\mathcal{A}) &= L(s,{}^\otimes{\rm I}(\sigma)), \\
   \varepsilon(s,\pi,r_\mathcal{A},\psi) &= \varepsilon(s,{}^\otimes{\rm I}(\sigma),\psi).
\end{align*}
\end{theorem}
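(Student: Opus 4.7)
The plan is to prove the identity via the axiomatic characterization of both sides together with a local-global argument. Both the Langlands-Shahidi Asai $\gamma$-factor and the tensor-induction Galois $\gamma$-factor are rational in $q_F^{-s}$, multiplicative along parabolic induction (Property~(iv) of Theorem~\ref{mainthm} on one side; standard inductivity of Deligne-Langlands factors on the other), and compatible with the Satake correspondence at Iwahori-unramified places (Property~(iii) together with Proposition~\ref{unrgammaprop}, versus a direct computation on the Galois side, observing that tensor induction intertwines the Satake correspondence for ${\rm Res}_{E/F}{\rm GL}_n$ with the Asai representation). Consequently it suffices to treat the case $\pi$ supercuspidal, or equivalently $\sigma$ irreducible.

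First I would globalize the local situation by applying Lemma~\ref{hllemma}: there is a separable quadratic extension $K/k$ of global function fields with $K_{v_0}/k_{v_0}\cong E/F$, and a globally generic cuspidal automorphic representation $\Pi$ of ${\rm Res}_{K/k}{\rm GL}_n(\mathbb{A}_k)$ with $\Pi_{v_0}\cong\pi$, unramified outside $S=\{v_0,v_\infty\}$, and with a tamely ramified principal series component at $v_\infty$. By the global Langlands correspondence for ${\rm GL}_n$ over function fields (L.~Lafforgue), $\Pi$ corresponds to an $n$-dimensional $\ell$-adic Galois representation $\Sigma$ of $\mathcal{W}_K$ with $\Sigma_{v_0}\cong\sigma$. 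Let $\tau={}^\otimes{\rm I}_{\mathcal{W}_K}^{\mathcal{W}_k}(\Sigma)$ be the global tensor induction; this is the global Asai representation.

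Second, I would compare the two global functional equations. On the automorphic side, Property~(xii) gives
\begin{equation*}
   L(s,\Pi,r_\mathcal{A}) = \varepsilon(s,\Pi,r_\mathcal{A})\,L(1-s,\widetilde{\Pi},r_\mathcal{A}),
\end{equation*}
and on the Galois side the analogous functional equation for $L(s,\tau)$ with the Deligne product formula $\varepsilon(s,\tau)=\prod_v\varepsilon_F^{\rm Gal}(s,\tau_v,\psi_v)$ holds. At every place of $S\smallsetminus\{v_0\}$ the local comparison of $\gamma$-factors is already known: at Iwahori-unramified places it is Proposition~\ref{unrgammaprop} matched with the Satake identification of tensor induction, and at the tamely ramified principal series place $v_\infty$ it reduces via multiplicativity to the rank-one Proposition~\ref{rankoneprop}, whose abelian factors are those of Tate's thesis and hence equal the corresponding Artin factors. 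Dividing the two functional equations then produces the desired identity at $v_0$ up to a ratio supported at $v_\infty$ that encodes the remaining local ambiguity.

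The main obstacle, and the heart of the proof, is to kill this residual ambiguity at the one bad place. For this I would invoke stability of $\gamma$-factors under highly ramified twists, established in the characteristic-$p$ setting in \cite{GaLoJEMS}, together with a Grunwald-Wang argument analogous to the one in the proof of Proposition~\ref{weak:LVlift}: choose a global Hecke character $\eta:K^\times\backslash\mathbb{A}_K^\times\to\mathbb{C}^\times$ which is trivial at $v_0$ and sufficiently highly ramified at every other place in a suitable enlargement of $S$. Then both $\gamma(s,\pi_v\otimes\eta_v,r_\mathcal{A},\psi_v)$ and $\gamma_F^{\rm Gal}(s,{}^\otimes{\rm I}(\sigma_v\otimes\eta_v),\psi_v)$ take the same explicit \emph{stable form}, depending only on the central character of $\pi_v$, on $\eta_v$, and on $\psi_v$; the two stable forms match by a direct calculation using Hilbert~90 and the rank-one Proposition~\ref{rankoneprop}. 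The twisted global functional equations then collapse the remaining ratio and force equality of $\gamma$-factors at $v_0$. The equalities of $L$-functions and $\varepsilon$-factors follow from the tempered identities (Properties~(vii) and (viii)), and the extension to all generic $\pi$ is Langlands' classification (Property~(x)).
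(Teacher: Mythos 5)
This statement is not proved in the paper at all: it is quoted verbatim as Theorem~3.1 of \cite{HeLo2013b}, so there is no in-text argument to compare against. Your proposal is, in substance, a faithful reconstruction of the proof in that reference: axiomatic characterization of the Asai $\gamma$-factor, reduction to the supercuspidal case by multiplicativity and the unramified computation, globalization with ramification confined to one auxiliary place carrying a tamely ramified principal series (Lemma~\ref{hllemma}), L.~Lafforgue's correspondence compatible with local Langlands at every place, and comparison of the two global functional equations. Two remarks. First, your step two is internally inconsistent: you assert that the local comparison is already known at every place of $S\smallsetminus\{v_0\}$ (which is correct, since at $v_\infty$ multiplicativity reduces the Asai factor of a principal series to abelian Tate factors matching the Artin factors of the corresponding sum of induced characters), and then say the functional equations yield the identity at $v_0$ only ``up to a ratio supported at $v_\infty$.'' If the first assertion holds there is no residual ratio, and the entire third paragraph --- stability under highly ramified twists plus Grunwald--Wang --- is superfluous; indeed the whole point of the controlled globalization is to avoid stability, whose ``stable form'' on the Galois side would itself require proof. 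Second, the reduction to supercuspidals is slightly more delicate than stated, because tensor induction is not additive: ${}^\otimes{\rm I}(\sigma_1\oplus\sigma_2)$ contains the cross term ${\rm Ind}_{\mathcal{W}_E}^{\mathcal{W}_F}(\sigma_1\otimes\sigma_2^\theta)$, which must be matched with the Rankin--Selberg factors produced by multiplicativity of the local coefficient; this is standard but should be said. With these points tidied up, the argument is correct and coincides with the proof of the cited theorem.
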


\begin{remark}
   Since we are in the case of ${\rm GL}_n$, the results of this section hold when $\pi$ is a smooth representation, and not just generic \cite{HeLo2013b}. Furthermore, the Rankin-Selberg products of ${\rm GL}_m$ and ${\rm GL}_n$ that appear in this article arise in the context of the Langlands-Shahidi method in positive characteristic. These are equivalent to those obtained via the integral representation of \cite{JaPSSh1983} (see \cite{HeLo2013a}).
\end{remark}

\subsection{Asai $L$-functions (odd case)} The case $(E/F,\pi,\psi) \in \mathfrak{ls}(p,{\rm U}_{2n+1},{\bf M})$, with $E/F$ a quadratic extension of non-archimedean local fields, has $M \cong {\rm GL}_n(E) \times E^1$ and $r = r_1 \oplus r_2$. In this case $\pi$ is of the form $\tau \otimes \nu$, where $\nu$ is a character of $E^1$, and we extend $\nu$ to a smooth representation of ${\rm GL}_1(E)$ via Hilbert's theorem $90$. Then, from Theorem~\ref{mainthm}, we have
\begin{align*}
   \gamma(s,\pi,r_1,\psi) &= \gamma(s,\tau \times \nu,\psi_E), \\
   \gamma(s,\pi,r_2,\psi) &= \gamma(s,\tau \otimes \eta_{E/F},r_\mathcal{A},\psi).
\end{align*}
Where the former $\gamma$-factor is a Rankin-Selber product of ${\rm GL}_n(E)$ and ${\rm GL}_1(E)$, while the latter is a twisted Asai $\gamma$-factor. And, similarly for the local $L$-functions $L(s,\pi,r_i)$ and root numbers $\varepsilon(s,\pi,r_i,\psi)$, $1 \leq i \leq 2$. This result in characteristic $p$ is given by Theorem'~6.4 of \cite{Lo2016} and the unramified case is proved ab initio in Proposition~4.5 there without any restriction on $p$.

Furthermore, the case $E = F \times F$ is also discussed in \cite{Lo2016}. To interpret this case correctly, let $\nu$ be the character of $E$ obtained from a character $\nu_0: F^\times \rightarrow \mathbb{C}^\times$ and Hilbert's theorem~90 \eqref{hilbert90}. Then $\pi$ is of the form $\tau \otimes \nu$, with $\tau = \tau_1 \otimes \tau_2$ and each $\tau_i$ a representation of ${\rm GL}_n(F)$. We thus obtain
\begin{align*}
   \gamma(s,\pi,r_1,\psi) &= \gamma(s,\tau_1 \times \nu_0^{-1},\psi) \gamma(s,\tau_2 \times \nu_0,\psi), \\
   \gamma(s,\pi,r_2,\psi) &= \gamma(s,\tau_1 \times \tau_2,\psi).
\end{align*}
Each factor on the right hand side is a Rankin-Selberg $\gamma$-factor. In particular, the unramified case in this setting can be found in Theorem~4.5 of [\emph{loc.\,cit.}]. The above equality can be obtained by combining Theorems~4.5 and Theorem'~6.4 of [\emph{loc.\,cit.}] together with a local to global argument.

\subsection{Rankin-Selberg products and Asai factors}\label{rsa}
We record a useful property of Asai factors. First for generic representations $\pi$ of ${\rm GL}_n(E)$, and then for any smooth irreducible $\pi$.

\begin{proposition}\label{RSAsaiGamma}
Given $(E/F,\pi,\psi) \in \mathfrak{ls}(p,{\bf G}_n,{\rm GL}_n)$, let $\pi^\theta$ be the representation of ${\rm GL}_n(E)$ given by $\pi^\theta(x) = \pi(\bar{x})$. Then
\begin{align}
   \gamma(s,\pi,r_{\mathcal{A}},\psi) &= \gamma(s,\pi^\theta,r_{\mathcal{A}},\psi), \\
   \gamma(s,\pi \otimes \eta_{E/F},r_{\mathcal{A}},\psi) &= \gamma(s,\pi^\theta \otimes \eta_{E/F},r_{\mathcal{A}},\psi),
\end{align}
and we have the following equation involving Rankin-Selberg and Asai $\gamma$-factors
\begin{equation}\label{rsAsai}
   \gamma(s,\pi \times \pi^\theta,\psi_E) = \gamma(s,\pi,r_{\mathcal{A}},\psi) \gamma(s,\pi \otimes \eta_{E/F},r_{\mathcal{A}},\psi).
\end{equation}
\end{proposition}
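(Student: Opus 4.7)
The plan is to reduce each of the three equalities to standard identities on the Galois side and then translate back via Theorem~\ref{hltheorem}. Let $\sigma: \mathcal{W}'_E \to {\rm GL}_n(\mathbb{C})$ denote the Weil-Deligne parameter of $\pi$ under the local Langlands correspondence for ${\rm GL}_n(E)$, and let $\sigma^\theta := \sigma \circ c_s$ denote the twist of $\sigma$ by conjugation by a chosen lift $s \in \mathcal{W}_F$ of the nontrivial class in ${\rm Gal}(E/F)$. Then the parameter of $\pi^\theta$ is $\sigma^\theta$, which is the key compatibility of the Langlands correspondence with the outer automorphism $\theta$.

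The symmetries \emph{(1)} and \emph{(2)} follow from the $\mathcal{W}_F$-equivariant isomorphism ${}^\otimes{\rm I}(\sigma) \cong {}^\otimes{\rm I}(\sigma^\theta)$ given by the linear swap $v_1 \otimes v_2 \mapsto v_2 \otimes v_1$; this is readily verified from the explicit description of tensor induction with respect to the decomposition $\mathcal{W}_F = \mathcal{W}_E \sqcup \mathcal{W}_E s$. The same swap intertwines the $\eta_{E/F}$-twists, since $\eta_{E/F}$ is a character of $\mathcal{W}_F$ and tensoring commutes with swapping factors. Applying Theorem~\ref{hltheorem}, together with its analogue for the twisted Asai $r_{\mathcal{A}} \otimes \eta_{E/F}$ that follows from the same Henniart-Lomel\'i argument, then converts these Galois-side isomorphisms into the claimed equalities of Langlands-Shahidi $\gamma$-factors.

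For \emph{(3)}, the Galois-theoretic starting point is the projection-formula identity
\[
   {\rm Ind}_{\mathcal{W}_E}^{\mathcal{W}_F}\bigl(\sigma \otimes \sigma^\theta\bigr) \;\cong\; {}^\otimes{\rm I}(\sigma) \;\oplus\; \bigl({}^\otimes{\rm I}(\sigma) \otimes \eta_{E/F}\bigr),
\]
obtained by combining ${}^\otimes{\rm I}(\sigma)|_{\mathcal{W}_E} \cong \sigma \otimes \sigma^\theta$ with ${\rm Ind}_{\mathcal{W}_E}^{\mathcal{W}_F}(\mathbf{1}) \cong \mathbf{1} \oplus \eta_{E/F}$ and the projection formula ${\rm Ind}(V|_{\mathcal{W}_E}) \cong V \otimes {\rm Ind}(\mathbf{1})$. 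Taking Deligne-Langlands $\gamma$-factors of both sides, using additivity on the direct sum and inductivity for ${\rm Ind}$, produces the Galois-side identity relating $\gamma(s,\sigma \otimes \sigma^\theta,\psi_E)$ to the product of the Asai and twisted Asai Galois $\gamma$-factors. Translating back via Theorem~\ref{hltheorem} for the two Asai factors, and via the local Langlands correspondence for ${\rm GL}_n \times {\rm GL}_n$ (which gives $\gamma(s,\pi \times \pi^\theta,\psi_E) = \gamma(s,\sigma \otimes \sigma^\theta,\psi_E)$), yields the identity \eqref{rsAsai}.

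The split case $E = F \times F$ must be handled separately: writing $\pi = \pi_1 \otimes \pi_2$ so that $\pi^\theta = \pi_2 \otimes \pi_1$, the formulas recalled in \S~7.3 give $\gamma(s,\pi,r_{\mathcal{A}},\psi) = \gamma(s,\pi_1 \times \pi_2,\psi)$ and an analogous reduction for the twisted Asai factor, so that \eqref{rsAsai} becomes an elementary identity of Rankin-Selberg $\gamma$-factors that can be read off directly. The main technical point, and the only place where care is needed, is the bookkeeping of Langlands $\lambda$-factors that appear in the inductivity of Galois $\varepsilon$-factors: these must be absorbed into the Langlands-Shahidi normalization (as calibrated in Proposition~\ref{rankoneprop} and made compatible with Artin factors by Theorem~\ref{hltheorem}) so that \eqref{rsAsai} emerges without an auxiliary scalar. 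Verifying this cancellation precisely, rather than any conceptual step, constitutes the main obstacle of the proof.
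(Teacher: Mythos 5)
Your treatment of the first identity coincides with the paper's: both use the compatibility of the local Langlands correspondence with $\theta$ and the isomorphism ${}^\otimes{\rm I}(\sigma) \cong {}^\otimes{\rm I}(\sigma^\theta)$, then transfer back through Theorem~\ref{hltheorem}. For the second and third identities, however, you diverge from the paper, and the divergence is where the trouble lies. The paper proves \eqref{rsAsai} by multiplicativity of $\gamma$-factors (reducing to principal series, where everything is a product of the explicitly normalized rank-one abelian factors of \S~\ref{rankone}) followed by the local-to-global technique of \cite{HeLo2013a,HeLo2013b}; it then deduces the twisted-Asai symmetry~(2) as a formal consequence of (1) and (3). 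You instead argue (3) purely locally on the Galois side via the projection-formula decomposition ${\rm Ind}_{\mathcal{W}_E}^{\mathcal{W}_F}(\sigma\otimes\sigma^\theta) \cong {}^\otimes{\rm I}(\sigma)\oplus\bigl({}^\otimes{\rm I}(\sigma)\otimes\eta_{E/F}\bigr)$, and you prove (2) directly by appealing to a twisted analogue of Theorem~\ref{hltheorem} that is nowhere established in the paper.

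The genuine gap is exactly the point you defer to the end. Taking $\gamma$-factors of the projection-formula isomorphism does not give \eqref{rsAsai}; inductivity of $\varepsilon$-factors produces $\gamma(s,{}^\otimes{\rm I}(\sigma),\psi)\,\gamma(s,{}^\otimes{\rm I}(\sigma)\otimes\eta_{E/F},\psi) = \lambda(E/F,\psi)^{n^2}\,\gamma(s,\sigma\otimes\sigma^\theta,\psi_E)$. Theorem~\ref{hltheorem} pins the untwisted Asai $\mathcal{LS}$-factor to $\gamma(s,{}^\otimes{\rm I}(\sigma),\psi)$ \emph{on the nose}, and the Rankin--Selberg factor $\gamma(s,\pi\times\pi^\theta,\psi_E)$ is the Galois factor over $E$ on the nose; so your identity closes only if one proves that the $\mathcal{LS}$ normalization of the \emph{twisted} Asai factor differs from $\gamma(s,{}^\otimes{\rm I}(\sigma)\otimes\eta_{E/F},\psi)$ by precisely $\lambda(E/F,\psi)^{-n^2}$ (a nontrivial root of unity in general, since $\lambda(E/F,\psi)^2=\eta_{E/F}(-1)$). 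Asserting that this constant ``must be absorbed into the normalization'' is circular: the constant is determined by the very identity \eqref{rsAsai} you are trying to prove, together with (1). This is why the paper does not take your route: it verifies \eqref{rsAsai} where the normalizations are explicit (principal series, via Proposition~\ref{lcmultiplicativity} and the rank-one calibration), and then propagates it to all generic representations by globalization (Lemma~\ref{hllemma}) and the functional equation. To repair your argument you would either have to carry out that $\lambda$-factor computation independently (e.g.\ by an explicit check in the abelian case, which essentially re-derives the paper's principal-series step) or adopt the paper's multiplicativity-plus-local-to-global strategy outright.
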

\begin{proof}
   Let $\sigma$ be the $n$-dimensional $\ell$-adic  ${\rm Frob}$-semisimple Weil-Deligne representation of $\mathcal{W}_E$ corresponding to $\pi$ via the local Langlands correspondence. Then, $\sigma^\theta$ corresponds to $\pi^\theta$. And, from the definition of tensor induction (see \cite{CuRe1981}) we have that ${}^\otimes{\rm I}(\sigma) \cong {}^\otimes{\rm I}(\sigma^\theta)$. Artin $L$-functions and root numbers remain the same for equivalent Weil-Deligne representations, thus
   \begin{equation*}
      \gamma(s,{}^\otimes{\rm I}(\sigma),\psi) = \gamma(s,{}^\otimes{\rm I}(\sigma^\theta),\psi).
   \end{equation*}
Hence, by Theorem~\ref{hltheorem}, the first equation of the Proposition follows.

The second equation, involving twisted Asai factors, follows from the first and the third. To prove equation~\eqref{rsAsai}, we first use multiplicativity of $\gamma$-factors to establish it for principal series representations. Then, in general, via the local-to-global technique of \cite{HeLo2013a,HeLo2013b}. 
\end{proof}

\begin{corollary}\label{RSAsaiL}
Let $\pi$ be a smooth representation of ${\rm GL}_n(E)$ and let $\pi^\theta$ be the representation of ${\rm GL}_n(E)$ given by $\pi^\theta(x) = \pi(\bar{x})$. Then
\begin{align*}
   L(s,\pi,r_{\mathcal{A}}) &= L(s,\pi^\theta,r_{\mathcal{A}}), \\
   \varepsilon(s,\pi,r_{\mathcal{A}},\psi) &= \varepsilon(s,\pi^\theta,r_{\mathcal{A}},\psi)
\end{align*}
and
\begin{align*}
   L(s,\pi \otimes \eta_{E/F},r_{\mathcal{A}}) &= L(s,\pi^\theta \otimes \eta_{E/F},r_{\mathcal{A}}), \\
   \varepsilon(s,\pi \otimes \eta_{E/F},r_{\mathcal{A}},\psi) &= \varepsilon(s,\pi^\theta \otimes \eta_{E/F},r_{\mathcal{A}},\psi).
\end{align*}
Furthermore, we have the following equation involving Rankin-Selberg and Asai factors
\begin{align*}\label{rsAsai}
   L(s,\pi \times \pi^\theta) &= L(s,\pi,r_{\mathcal{A}},\psi) \gamma(s,\pi \otimes \eta_{E/F},r_{\mathcal{A}}) \\
   \varepsilon(s,\pi \times \pi^\theta,\psi_E) &= \varepsilon(s,\pi,r_{\mathcal{A}},\psi) \varepsilon(s,\pi \otimes \eta_{E/F},r_{\mathcal{A}},\psi).
\end{align*}
\end{corollary}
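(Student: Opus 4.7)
The strategy is to transport everything to the Galois side via the local Langlands correspondence for ${\rm GL}_n$ in positive characteristic \cite{LaRaSt1993}, and then extract the stated identities from natural isomorphisms of Weil--Deligne representations of $\mathcal{W}_F$. Because $\pi$ is only assumed smooth, not necessarily generic, I invoke the Remark immediately following Theorem~\ref{hltheorem}, which extends the Henniart--Lomel\'i identification of Asai $L$- and $\varepsilon$-factors to this broader setting. Let $\sigma$ denote the $n$-dimensional Frobenius-semisimple Weil--Deligne representation of $\mathcal{W}_E$ attached to $\pi$; then $\sigma^\theta$ corresponds to $\pi^\theta$, and the local Langlands correspondence is equivariant under twisting by $\eta_{E/F}$.

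For the first two pairs of equalities, the key observation is the elementary isomorphism
\[
   {}^\otimes{\rm I}(\sigma) \,\cong\, {}^\otimes{\rm I}(\sigma^\theta)
\]
of representations of $\mathcal{W}_F$, immediate from the definition of tensor induction with respect to the index-two subgroup $\mathcal{W}_E$ (see \cite{CuRe1981}): the element $\theta$ simply interchanges the two tensor slots, so the resulting $\mathcal{W}_F$-representation does not detect the swap $\sigma \leftrightarrow \sigma^\theta$. Tensoring both sides by $\eta_{E/F}$ preserves this isomorphism. Applying Theorem~\ref{hltheorem} (in its extended form) then yields the stated equalities of $L(s,\pi,r_\mathcal{A})$, $\varepsilon(s,\pi,r_\mathcal{A},\psi)$, and their $\eta_{E/F}$-twisted versions for $\pi$ and $\pi^\theta$.

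For the factorization involving Rankin--Selberg and Asai factors, I use the Mackey-style decomposition of $\mathcal{W}_F$-representations
\[
   {\rm Ind}_{\mathcal{W}_E}^{\mathcal{W}_F}(\sigma \otimes \sigma^\theta) \,\cong\, {}^\otimes{\rm I}(\sigma) \,\oplus\, \bigl({}^\otimes{\rm I}(\sigma) \otimes \eta_{E/F}\bigr),
\]
which is valid because the restriction of ${}^\otimes{\rm I}(\sigma)$ to $\mathcal{W}_E$ is $\sigma \otimes \sigma^\theta$, and $\eta_{E/F}$ is the non-trivial character of $\mathcal{W}_F/\mathcal{W}_E$. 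Taking Artin $L$-functions -- which are multiplicative on direct sums and invariant under induction -- and then identifying the left-hand side with $L(s,\pi \times \pi^\theta)$ via Henniart's compatibility of Rankin--Selberg with local Langlands, while identifying the right-hand side with the product of Asai $L$-functions via Theorem~\ref{hltheorem}, yields the stated $L$-function identity. The corresponding statement for $\varepsilon$-factors is obtained by the same argument using additivity on direct sums and Langlands' inductivity formula, with the $\lambda(E/F,\psi)$-factors absorbed into the choice of $\psi_E$ on the Rankin--Selberg side, consistent with the $\gamma$-factor identity~\eqref{rsAsai} of Proposition~\ref{RSAsaiGamma}.

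The bulk of the work is already in place in Proposition~\ref{RSAsaiGamma}, Theorem~\ref{hltheorem}, and the local Langlands correspondence of \cite{LaRaSt1993}, so the proof is essentially formal packaging of these results together with the inductive identity on the Galois side. The only subtle point is matching the Langlands--Shahidi normalization of Asai factors with the Galois-side factors of the tensor induction, which is precisely the content of Theorem~\ref{hltheorem}; once that is invoked, the passage from $\gamma$-factors to $L$- and $\varepsilon$-factors, as well as from generic to general smooth representations, is automatic.
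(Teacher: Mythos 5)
Your proposal is correct, and its overall strategy --- transporting everything to the Galois side via the local Langlands correspondence, using the isomorphism ${}^\otimes{\rm I}(\sigma) \cong {}^\otimes{\rm I}(\sigma^\theta)$ together with Theorem~\ref{hltheorem} extended to smooth representations --- is exactly what the paper's one-line proof intends when it defers to \S~4.2 of \cite{HeLo2013b}. Where you genuinely diverge is in the factorization identity: the paper establishes the corresponding $\gamma$-factor identity \eqref{rsAsai} in Proposition~\ref{RSAsaiGamma} by multiplicativity for principal series followed by a local-to-global argument, whereas you derive it directly from the Mackey-style decomposition ${\rm Ind}_{\mathcal{W}_E}^{\mathcal{W}_F}(\sigma \otimes \sigma^\theta) \cong {}^\otimes{\rm I}(\sigma) \oplus ({}^\otimes{\rm I}(\sigma) \otimes \eta_{E/F})$ and the inductivity of Artin factors. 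Your route is purely local and arguably more transparent, and it applies uniformly to all smooth $\pi$ once Theorem~\ref{hltheorem} is in place; the paper's route stays on the automorphic side and does not presuppose inductivity of $\varepsilon$-factors. The one point you should make fully precise is the $\lambda(E/F,\psi)$-bookkeeping in the $\varepsilon$-identity: inductivity gives $\varepsilon(s,{\rm Ind}(\sigma\otimes\sigma^\theta),\psi) = \lambda(E/F,\psi)^{n^2}\varepsilon(s,\sigma\otimes\sigma^\theta,\psi_E)$, so the claimed equality with no residual $\lambda$-power requires checking that this factor cancels against the normalizations implicit in Theorem~\ref{hltheorem} --- saying it is ``absorbed into the choice of $\psi_E$'' is not yet a verification, though consistency with the already-proved $\gamma$-identity of Proposition~\ref{RSAsaiGamma} (together with the $L$-function identity and the tempered relation $\varepsilon = \gamma \cdot L(s)/L(1-s)$) does close this gap.
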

\begin{proof}
Since we are in the case of ${\rm GL}_n$, the idea from \S~4.2 of \cite{HeLo2013b} directly applies to the cases at hand.
\end{proof}
\subsection{Products of ${\rm GL}_m$ and ${\rm U}_N$}\label{LSGLunitary} When the maximal Levi subgroup $\bf M$ is not a Siegel Levi and we have a quadratic extension $E/F$ of non-archimedean local fields, the adjoint representation always has two irreducible components $r = r_1 \oplus r_2$. In this case, take $(E/F,\xi,\psi) \in \mathfrak{ls}({\bf G}_l,{\bf M},p)$ in Theorem~\ref{mainthm}. We then have that ${\bf M} \cong {\rm Res}\,{\rm GL}_m \times {\bf G}_n$ where ${\bf G}_l$ and ${\bf G}_n$ are unitary groups of the same parity. Also, $\xi$ is of the form $\tau \otimes \tilde{\pi}$ with $\tau$ and $\pi$ representations of ${\rm GL}_m(E)$ and $G_n$, respectively.

We then have
\begin{equation*}
   \gamma(s,\xi,r_1,\psi) = \gamma(s,\tau \times \pi,\psi),
\end{equation*}
the Rankin-Selberg $\gamma$-factor of $\tau$ and $\pi$. For the second $\gamma$-factor we obtain Asai $\gamma$-factors
\begin{equation*}
   \gamma(s,\xi,r_2,\psi) = \left\{ \begin{array}{ll} \gamma(s,\tau,r_\mathcal{A},\psi) & \text{if } N = 2n  \\
   								   \gamma(s,\tau \otimes \eta_{E/F},r_\mathcal{A},\psi) & \text{if } N =2n+1	\end{array} \right. .
\end{equation*}
From Property~(vii), given $(E/F,\xi,\psi) \in \mathfrak{ls}({\bf G}_l,{\bf M},p)$ tempered, we obtain the $L$-functions
\begin{equation*}
   L(s,\xi,r_1) = L(s,\pi \times \tau)
\end{equation*}
and
\begin{equation*}
   L(s,\xi,r_2) = \left\{ \begin{array}{ll} L(s,\tau,r_\mathcal{A}) & \text{if } N = 2n  \\
   							L(s,\tau \otimes \eta_{E/F},r_\mathcal{A}) & \text{if } N =2n+1	\end{array} \right. .
\end{equation*}
Tempered root numbers
\begin{equation*}
   \varepsilon(s,\xi,r_i, \psi), \quad 1 \leq i \leq 2,
\end{equation*}
are obtained via Property~(viii) of Theorem~\ref{mainthm}. Then, $L$-functions and $\varepsilon$-factors are defined in general as in the proof of Theorem~\ref{mainthm}.

Now, assume $E = F \times F$. Let $(E/F,\xi,\psi) \in \mathfrak{ls}(p,{\bf G}_l,{\bf M})$, so that ${\bf G}_l \cong {\rm U}_L \cong {\rm GL}_L$ and ${\bf M} \cong {\rm GL}_m \times {\rm U}_N \times {\rm GL}_m$, $l = m+n$, with $L$ and $N$ of the same parity. The representation $\xi$ is of the form $\tau_1 \otimes \pi \otimes \tau_2$. Then we obtain the following equations involving Rankin-Selberg products
\begin{equation*}
   \gamma(s,\xi,r_1,\psi) = \gamma(s,\tau_1 \times \tilde{\pi},\psi) \gamma(s,\tau_2 \times \pi,\psi)
\end{equation*}
and
\begin{equation*}
   \gamma(s,\xi,r_2,\psi) = \gamma(s,\tau_1 \times \tau_2,\psi).
\end{equation*}
And, similarly for the corresponding $L$-functions and root numbers.

\section{Extended Langlands-Shahidi local factors for the unitary groups}\label{extendedlsU}

Let ${\bf G}_1$ be either a quasi-split unitary group ${\rm U}_N$ or the group ${\rm Res}\,{\rm GL}_N$. In the case of ${\bf G}_1 = {\rm U}_N$, it is of rank $n$, where we write $N = 2n+1$ or $2n$ according to wether the unitary group is odd or even. Similarly, we let ${\bf G}_2$ be either a unitary group of rank $m$ or ${\rm Res}\,{\rm GL}_M$, with $M = 2m +1$ or $2m$.

We interpret ${\rm Res}\,{\rm GL}_N$ as a functor, taking a quadratic extension $E/F$ to the group scheme ${\rm Res}_{E/F}{\rm GL}_N$. Also, ${\rm U}_N$ takes $E/F$ to the quasi-split reductive group scheme ${\rm U}(h_N)$, where $h_N$ is the standard hermitian form of \S~\ref{Udef}. In order to emphasize the underlying quadratic extension, and the extended case of a system of $\gamma$-factors, $L$-functions and root numbers for products of two unitary groups, we modify the notation of Sections~\ref{localnot} and \ref{globalnot} accordingly. Also, given the involution $\theta$ of the quadratic extension $E/F$ and a character $\eta: {\rm GL}_1(E) \rightarrow \mathbb{C}^\times$, denote by $\eta^\theta: {\rm GL}_1(E) \rightarrow \mathbb{C}^\times$ the character given by $\eta^\theta(x) = \eta(\bar{x})$.

In section \S~\ref{split} below we treat the case of a separable quadratic algebra $E = F \times F$. This extends the local theory to all degree-$2$ finite \'etale algebras $E$ over the archimedean local field $F$.

\subsection{Local notation}\label{localnotU} Let $\mathfrak{ls}({\bf G}_1,{\bf G}_2,p)$ be the category whose objects are quadruples $(E/F,\pi,\tau,\psi)$ consisting of: a non-archimedean local field $F$, with ${\rm char}(F) = p$; a degree-$2$ finite \'etale algebra $E$ over $F$; irreducible admissible representations $\pi$ of $G_1$ and $\tau$ of $G_2$; and, a smooth non-trivial additive character $\psi : F \rightarrow \mathbb{C}^\times$. 

We construct a character $\psi_E: E^\times \rightarrow \mathbb{C}^\times$ from the character $\psi$ of $F$ via the trace, i.e., $\psi_E = \psi \circ {\rm Tr}_{E/F}$. When ${\bf G}_1$ and ${\bf G}_2$ are clear from context, we will simply write $\mathfrak{ls}(p)$ for $\mathfrak{ls}({\bf G}_1,{\bf G}_2,p)$. We say $(E/F,\pi,\tau,\psi) \in \mathfrak{ls}(p)$ is generic (resp. supercuspidal, discrete series, tempered, principal series) if both $\pi$ and $\tau$ are generic (resp. supercuspidal, discrete series, tempered, principal series) representations. We let $q_F$ denote the cardinality of the residue field of $F$.

\subsection{Global notation}\label{globalnotU} Let $\mathcal{LS}({\bf G}_1,{\bf G}_2,p)$, or simply $\mathcal{LS}(p)$, be the category whose objects are quintuples $(k,\pi,\tau,\psi,S)$ consisting of: a separable quadratic extension of global function fields $K/k$, with ${\rm char}(k) = p$; globally generic cuspidal automorphic representations $\pi = \otimes_v \pi_v$ of ${\bf G}_1(\mathbb{A}_k)$ and $\tau = \otimes_v \tau_v$ of ${\bf G}_2(\mathbb{A}_k)$; a non-trivial character $\psi = \otimes_v \psi_v: k \backslash \mathbb{A}_k \rightarrow \mathbb{C}^\times$; and, a finite set of places $S$ where $k$, $\pi$ and $\psi$ are unramified.

We let $q$ be the cardinality of the field of constants of $k$. And, for every place $v$ of $k$, we let $q_v$ be the cardinality of the residue field of $k_v$.

Let $(K/k,\pi,\tau,\psi,S) \in \mathcal{LS}(p)$, then we have partial $L$-functions
\begin{equation*}
   L^S(s,\pi \times \tau) = \prod_{v \notin S} L(s,\pi_v \times \tau_v).
\end{equation*}
The case of a place $v$ in $k$, which splits in $K_v$ leads to the case of a separable algebra and we write $K_v = k_v \times k_v$. 

\subsection{The case of a separable algebra}\label{split} Let $E = F \times F$, then we have the following possibilities for Langlands-Shahidi $\gamma$-factors:
\begin{itemize}
   \item[(i)] Let $(E/F,\pi,\tau,\psi) \in \mathfrak{ls}({\rm U}_M,{\rm U}_N,p)$. Then $\pi$ is a representation of ${\rm GL}_M(F)$ and $\tau$ one of ${\rm GL}_N(F)$. The local functorial lift of $\pi$ to ${\bf H}_M(F)$ obtained from
   \begin{equation*}
      {\bf G}_m(F) = {\rm U}_M(F) = {\rm GL}_M(F) \rightsquigarrow {\bf H}_M(F) = {\rm GL}_M(F) \times {\rm GL}_M(F)
   \end{equation*}
   is given by $\pi \otimes \tilde{\pi}$. Similarly the local functorial lift of $\tau$ to ${\bf H}_N(F)$ obtained from
   \begin{equation*}
      {\bf G}_n(F) = {\rm U}_N(F) = {\rm GL}_N(F) \rightsquigarrow {\bf H}_N(F) = {\rm GL}_N(F) \times {\rm GL}_N(F)
   \end{equation*}
   is given by $\tau \otimes \tilde{\tau}$. Then the Langlands-Shahidi local factors are
   \begin{align*}
      \gamma_{E/F}(s,\pi \times \tau,\psi_E) &= \gamma(s,\pi \times \tau,\psi) \gamma(s,\tilde{\pi} \times \tilde{\tau},\psi) \\
      L_{E/F}(s,\pi \times \tau) &= L(s,\pi \times \tau) L(s,\tilde{\pi} \times \tilde{\tau}) \\
      \varepsilon_{E/F}(s,\pi \times \tau,\psi_E) &= \varepsilon(s,\pi \times \tau,\psi) \varepsilon(s,\tilde{\pi} \times \tilde{\tau},\psi).
   \end{align*}
   \item[(ii)] Let $(E/F,\pi,\tau,\psi) \in \mathfrak{ls}({\rm U}_M,{\rm Res}\,{\rm GL}_N,p)$. Then $\pi$ is a representation of ${\rm GL}_M(F)$ and $\tau$ one of ${\rm GL}_N(F) \times {\rm GL}_N(F)$. The local functorial lift of $\pi$ to ${\bf H}_M(F)$ obtained from
   \begin{equation*}
      {\bf G}_m(F) = {\rm U}_M(F) = {\rm GL}_M(F) \rightsquigarrow {\bf H}_M = {\rm GL}_M(F) \times {\rm GL}_M(F)
   \end{equation*}
   is given by $\pi \otimes \tilde{\pi}$. Write $\tau = \tau_1 \otimes \tau_2$ as a representation of
   \begin{equation*}
      {\rm Res}_{E/F}{\rm GL}_N(F) = {\rm GL}_N(F) \times {\rm GL}_N(F).
   \end{equation*}
   Then the Langlands-Shahidi local factors are
   \begin{align*}
      \gamma_{E/F}(s,\pi \times \tau,\psi_E) &= \gamma(s,\pi \times \tau_1,\psi) \gamma(s,\tilde{\pi} \times \tau_2,\psi) \\
      L_{E/F}(s,\pi \times \tau) &= L(s,\pi \times \tau_1) L(s,\tilde{\pi} \times \tau_2) \\
      \varepsilon_{E/F}(s,\pi \times \tau,r,\psi_E) &= \varepsilon(s,\pi \times \tau_1,\psi) \varepsilon(s,\tilde{\pi} \times \tau_2,\psi).
   \end{align*}
   \item[(iii)] Let $(E/F,\pi,\tau,\psi) \in \mathfrak{ls}({\rm Res}\,{\rm GL}_M,{\rm Res}\,{\rm GL}_N,p)$. Then $\pi = \pi_1 \otimes \pi_2$ is a representation of ${\rm GL}_M(F) \times {\rm GL}_M(F)$ and $\tau = \tau_1 \otimes \tau_2$ one of ${\rm GL}_N(F) \times {\rm GL}_N(F)$. Then the Langlands-Shahidi local factors are
   \begin{align*}
      \gamma_{E/F}(s,\pi \times \tau,\psi_E) &= \gamma(s,\pi_1 \times \tau_1,\psi) \gamma(s,\pi_2 \times \tau_2,\psi) \\
      L_{E/F}(s,\pi \times \tau) &= L(s,\pi_1 \times \tau_1) L(s,\pi_2 \times \tau_2) \\
      \varepsilon_{E/F}(s,\pi \times \tau,r,\psi_E) &= \varepsilon(s,\pi_1 \times \tau_1,\psi) \varepsilon(s,\pi_2 \times \tau_2,\psi).
   \end{align*}
\end{itemize}

\begin{remark}
We usually drop the subscripts $E/F$ when dealing with Langlands-Shahidi local factors. Hopefully, it is clear from context what we mean by an $L$-function, and related local factors, at split places of a global function field.
\end{remark}

\begin{remark} Let $(K/k,\pi,\tau,\psi,S) \in \mathcal{LS}({\bf G}_1,{\bf G}_2,p)$. Then, at places $v$ of $k$ which are split in $K$ we set $K_v = k_v \times k_v$. It is interesting to note that the theory of the Langlands-Shahidi local coefficient can be treated directly and uniformly for unitary groups defined over a degree-$2$ finite \'etale algebra $E$ over a non-archimedean local field $F$ as in \cite{Lo2016}. Alternatively, one can use the isomorphism ${\rm U}_N \cong {\rm GL}_N$ in the case of a separable quadratic algebra.
\end{remark}

\subsection{Main theorem} In \S~\ref{lsU} we showed the existence of a system of $\gamma$-factors, $L$-functions and root numbers on $\mathfrak{ls}({\bf G},{\rm GL}_m,p)$. We now state our main theorem for extended factors. However, we postpone the proof until \S~10. More precisely, we will give a self contained proof for $(E/F,\pi,\tau,\psi) \in \mathfrak{ls}(p)$ generic in \S~\ref{extfactors}. In general, the tempered $L$-packet conjecture is expected to hold for the unitary groups (see Conjecture~\ref{tempLconj}). Under this assumption, we complete the proof of existence and uniqueness of local factors on $\mathfrak{ls}(p)$ in \S~\ref{temperedLpacket}.

\begin{theorem}\label{mainthmU}
There \emph{exist} rules $\gamma$, $L$ and $\varepsilon$ on $\mathfrak{ls}(p)$ which are \emph{uniquely} characterized by the following properties:
\begin{enumerate}
   \item[(i)]\emph{(Naturality).} Let $(E/F,\pi,\tau,\psi) \in \mathfrak{ls}(p)$ be generic and let $\eta: E'/F' \rightarrow E/F$ be an isomorphism on local field extensions. Let $(E'/F',\pi',\tau',\psi') \in \mathfrak{ls}(p)$ be the quadruple obtained via $\eta$. Then
   \begin{equation*}
      \gamma(s,\pi \times \tau,\psi_E) = \gamma(s,\pi' \times \tau',\psi_E').
   \end{equation*}
   
   \item[(ii)]\emph{(Isomorphism).} Let $(E/F,\pi,\tau,\psi)$, $(E/F,\pi',\tau',\psi) \in \mathfrak{ls}(p)$ be generic quadruples such that $\pi \cong \pi'$ and $\tau \cong \tau'$. Then
   \begin{equation*}
      \gamma(s,\pi \times \tau,\psi_E) = \gamma(s,\pi' \times \tau',\psi_E).
   \end{equation*}
   
   \item[(iii)]\emph{(Compatibility with class field theory).} Let ${\bf G}_i$ be either ${\rm U}_1$ or ${\rm Res}\,{\rm GL}_1$, for $i = 1$ or $2$, and let $(E/F,\chi_1,\chi_2,\psi) \in \mathfrak{ls}({\bf G}_1,{\bf G}_2,p)$. In the case of ${\rm U}_1$, we extend a character $\chi_i$ of ${\rm U}_1(F) = E^1$ to one of ${\rm Res}_{E/F}{\rm GL}_1(F) = {\rm GL}_1(E)$ via Hilbert's theorem~90. Then
   \begin{equation*}
      \gamma(s,\chi_1 \times \chi_2,\psi_E) = \gamma(s,\chi_1\chi_2,\psi_E),
   \end{equation*}
   where the $\gamma$-factors on the right hand side are those of Tate's thesis for ${\rm GL}_1(E)$.
   
   \item[(iv)]\emph{(Multiplicativity).} Let $(E/F,\pi,\tau,\psi) \in \mathfrak{ls}({\bf G}_1,{\bf G}_2,p)$ be generic. Let ${\bf M}_1$ and ${\bf M}_2$ be Levi subgroups of ${\bf G}_1$ and ${\bf G}_2$, respectively. Let $\pi_0$ be a generic representation of $M_1$ and suppose that
   \begin{equation*}
      \pi \hookrightarrow {\rm Ind} (\pi_0)
   \end{equation*}
is the generic constituent. And let $\tau_0$ be a generic representation of $M_2$ and
   \begin{equation*}
      \tau \hookrightarrow {\rm Ind} (\tau_0)
   \end{equation*}
be the generic constituent. There exists a finite set $\Sigma$ such that for each $j \in \Sigma$: there is a maximal Levi subgroup ${\bf M}_j$ of ${\bf G}_j$, where ${\bf G}_j$ is either ${\rm Res}\,{\rm GL}_{n_j}$ or ${\rm U}_{n_j}$; there is a generic representation $\xi_{j}$ of $M_j$; and, the following relationship holds
   \begin{equation*}
      \gamma(s,\pi \times \tau,\psi) = \prod_{j \in \Sigma} \gamma(s,\xi_j,r_1,\psi).
   \end{equation*}
   
   \item[(v)]\emph{(Dependence on $\psi$).} Let $(E/F,\pi,\tau,\psi) \in \mathfrak{ls}(p)$ be generic and let $a \in E^\times$, then $\psi_E^a$ be the character of $E$ defined by $\psi_E^a(x) = \psi_E(ax)$. Let $\omega_\pi$ and $\omega_\tau$ be the central characters of $\pi$ and $\tau$. Then
      \begin{equation*}
         \gamma(s,\pi \times \tau,\psi_E^a) = \omega_\pi(a)^M \omega_\tau(a)^N \left| a \right|_E^{MN(s-\frac{1}{2})} \gamma(s,\pi \times \tau,\psi_E).
      \end{equation*}
      
   \item[(vi)]\emph{(Functional Equation).} Let $(K/k,\pi,\tau,\psi,S) \in \mathcal{LS}(p)$, then
      \begin{equation*}
         L^S(s,\pi \times \tau) = \prod_{v \in S} \gamma(s,\pi \times \tau,\psi_v) \, L^S(1-s,\tilde{\pi} \times \tilde{\tau}).
      \end{equation*}
      At split places $v$ of $k$, where $K_v \cong k_v \times k_v$, the Langlands-Shahidi local factors are the ones of \S~\ref{split}.
      
   \item[(vii)]\emph{(Tempered $L$-functions).} For $(E/F,\pi,\tau,\psi) \in \mathfrak{ls}(p)$ tempered, let $P_{\pi \times \tau}(t)$ be the polynomial with $P_{\pi \times \tau}(0) = 1$, with $P_{\pi \times \tau}(q_F^{-s})$ the numerator of $\gamma(s,\pi \times \tau,\psi_E)$. Then
      \begin{equation*}
         L(s,\pi \times \tau) = \dfrac{1}{P_{\pi \times \tau}(q_F^{-s})}
      \end{equation*}
is holomorphic and non-zero for ${\rm Re}(s) >0$.
      
   \item[(viii)]\emph{(Tempered $\varepsilon$-factors).} Let $(E/F,\pi,\tau,\psi) \in \mathfrak{ls}(p)$ be tempered, then
      \begin{equation*}
         \varepsilon(s,\pi \times \tau,\psi_E) = \gamma(s,\pi \times \tau,\psi_E) \dfrac{L(s,\pi \times \tau)}{L(1-s,\tilde{\pi} \times \tilde{\tau})}.
      \end{equation*}
      
   \item[(ix)]\emph{(Twists by unramified characters).} Let $(E/F,\pi,\tau,\psi) \in \mathfrak{ls}(p,{\rm U}_M,{\rm Res}\,{\rm GL}_N)$. Then
      \begin{align*}
         L(s+s_0,\pi \times \tau) &= L(s,\pi \times (\tau \left| {\rm det}(\cdot) \right|_E^{s_0})), \\
         \varepsilon(s+s_0,\pi \times \tau,\psi_E) &= \varepsilon(s,\pi \times (\tau \left| {\rm det}(\cdot) \right|_E^{s_0}),\psi_E).
      \end{align*}

   \item[(x)]\emph{(Langlands classification).} Let $(E/F,\pi,\tau,\psi) \in \mathfrak{ls}({\bf G}_1,{\bf G}_2,p)$. Let ${\bf M}_1$ and ${\bf M}_2$ be Levi subgroups of ${\bf G}_1$ and ${\bf G}_2$, respectively. Let $\pi_0$ be a tempered representation of $M_1$ and suppose that $\pi$ is the Langlands quotient of
   \begin{equation*}
      {\rm Ind} (\pi_0 \otimes \chi)
   \end{equation*}
with $\chi \in X_{\rm nr}({\bf M}_1)$ in the Langlands situation. And let $\tau_0$ be a tempered representation of $M_2$ such that $\tau$ is the Langlands quotient of
   \begin{equation*}
      {\rm Ind} (\tau_0 \otimes \mu)
   \end{equation*}
and $\mu \in X_{\rm nr}({\bf M}_2)$ is in the Langlands situation. There exists a finite set $\Sigma$ such that for each $j \in \Sigma$: there is a maximal Levi subgroup ${\bf M}_j$ of ${\bf G}_j$, where ${\bf G}_j$ is either ${\rm Res}\,{\rm GL}_{n_j}$ or ${\rm U}_{n_j}$; there is a tempered representation $\xi_{j}$ of $M_j$; and, the following relationships hold
   \begin{align*}
      L(s,\pi \times \tau) &= \prod_{j \in \Sigma} L(s,\xi_j,r_1) \\
      \varepsilon(s,\pi \times \tau,\psi) &= \prod_{j \in \Sigma} \varepsilon(s,\xi_j,r_1,\psi).
   \end{align*}
\end{enumerate}
\end{theorem}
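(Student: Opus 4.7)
The plan mirrors the proof of Theorem~\ref{mainthm}. I would first define $\gamma(s,\pi \times \tau,\psi_E)$ for generic quadruples $(E/F,\pi,\tau,\psi) \in \mathfrak{ls}(p)$, then use Properties~(vii)--(viii) to define tempered $L$- and $\varepsilon$-factors as the reciprocal of the numerator of $\gamma$ and via the tempered local functional equation, extend to discrete series via~(ix), and to arbitrary admissible $(\pi,\tau)$ via Langlands' classification~(x). Uniqueness would follow by combining Lemma~\ref{hllemma} with the global functional equation~(vi).

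For the construction of $\gamma$ in the generic case I would split according to which of ${\bf G}_1$, ${\bf G}_2$ is a unitary group. When at least one equals ${\rm Res}\,{\rm GL}$, the product ${\bf G}_1 \times {\bf G}_2$ sits as a maximal Levi of a quasi-split group ${\bf G}$ (a larger unitary group in the mixed case, a ${\rm Res}\,{\rm GL}$ in the two-${\rm Res}\,{\rm GL}$ case), and the Rankin--Selberg irreducible constituent $r_1$ of the adjoint representation delivers $\gamma(s,\pi \times \tau,\psi_E)$ via Theorem~\ref{mainthm} applied to the pair $({\bf G}, {\bf G}_1 \times {\bf G}_2)$. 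When both ${\bf G}_1$ and ${\bf G}_2$ are unitary groups, however, ${\rm U}_M \times {\rm U}_N$ is not a Levi of any quasi-split reductive group, so I would instead set
\begin{equation*}
\gamma(s,\pi \times \tau,\psi_E) = \gamma(s,\Pi \times T,\psi_E),
\end{equation*}
where $\Pi = {\rm BC}(\pi)$ and $T = {\rm BC}(\tau)$ are the local stable Base Change lifts of Theorem~\ref{genericBC}, with analogous definitions for $L$ and $\varepsilon$.

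For uniqueness, Property~(iv) reduces to generic supercuspidal pairs; Lemma~\ref{hllemma} then embeds any such local pair into a globally generic cuspidal automorphic representation with controlled ramification at all other places. At unramified places the factors are determined by Proposition~\ref{unrgammaprop} and compatibility with Artin factors, and at the auxiliary tamely ramified place, multiplicativity together with Property~(iii) (class-field compatibility) makes them explicit. The global functional equation then isolates $\gamma(s,\pi \times \tau,\psi_E)$ at the remaining place, and Properties (vii)--(x) force the $L$- and $\varepsilon$-factors.

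The main obstacle is the two-unitary generic case, which forces a detour through Base Change and creates a potential circularity with \S~\ref{llcU}. I would resolve this by a bootstrap: dispose of the mixed case $({\rm U}_M,{\rm Res}\,{\rm GL}_N)$ first, producing enough Rankin--Selberg $L$-functions to run the Converse Theorem and establish stable Base Change; this in turn furnishes the transfers $\Pi, T$ needed for the two-unitary case. Passing from generic to arbitrary admissible representations in \S~\ref{temperedLpacket} additionally requires Conjecture~\ref{tempLconj} in order to select a generic representative within each tempered $L$-packet and transport factors to the remaining members.
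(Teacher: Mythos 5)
Your proposal follows essentially the same route as the paper: the mixed and two-general-linear cases come from Theorem~\ref{mainthm} applied to the pair $({\bf G},{\bf G}_1\times{\bf G}_2)$, the two-unitary case is defined by $\gamma(s,\pi\times\tau,\psi_E)=\gamma(s,{\rm BC}(\pi)\times{\rm BC}(\tau),\psi_E)$ exactly as in the paper's \S~\ref{extfactors}, the bootstrap ordering you describe to break the circularity with \S~\ref{llcU} is the one the paper actually follows, and uniqueness and the non-generic extension via Conjecture~\ref{tempLconj} are handled as in \S\S~\ref{addpropU} and \ref{temperedLpacket}. The approach is correct.
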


\subsection{Additional properties of $L$-functions and local factors}\label{addpropU} The proof of Theorem~\ref{mainthmU} in the case of ${\bf G}_1 = {\rm U}_m$ and ${\bf G}_2 = {\rm Res}\,{\rm GL}_n$ can be obtained from that of Theorem~\ref{mainthm}. The proof of existence is completed in \S~\ref{extfactors} for generic representations, and in \S~\ref{temperedLpacket} under the assumption that the tempered $L$-packet conjecture is valid. For uniqueness, we proceed as in Theorem~4.3 of \cite{Lo2015}.

We also obtain a local functional equation for $\gamma$-factors which is proved using only Properties~(i)--(vi) of Theorem~\ref{mainthmU}, as in \S~4.2 of \cite{Lo2015}.

\begin{enumerate}
   \item[(xi)] (Local functional equation). \emph{Let $(E/F,\pi,\tau,\psi) \in \mathfrak{ls}(p)$, then
      \begin{equation*}
         \gamma(s,\pi \times \tau,\psi_E) \gamma(1-s,\tilde{\pi} \times \tilde{\tau},\overline{\psi}_E) = 1.
      \end{equation*}
}
\end{enumerate}

We can now define automorphic $L$-functions and root numbers for $(E/F,\pi,\tau,\psi) \in \mathcal{LS}(p)$ by setting
\begin{equation*}
   L(s,\pi \times \tau) = \prod_v L(s,\pi_v \times \tau_v) \text{ and } \varepsilon(s,\pi \times \tau) = \prod_v \varepsilon(s,\pi_v \times \tau_v,\psi_v).
\end{equation*}
They satisfy a functional equation, whose proof is completed in \S~\ref{proofthmU}.
\begin{enumerate}
   \item[(xii)] (Global functional equation). \emph{Let $(K/k,\pi,\tau,\psi,S) \in \mathcal{LS}(p)$, then
      \begin{equation*}
         L(s,\pi \times \tau) = \varepsilon(s,\pi \times \tau) L(1-s,\tilde{\pi} \times \tilde{\tau}).
      \end{equation*}
}
\end{enumerate}

The following property is Theorem~5.1 of \cite{GaLoJEMS} adapted to the case of unitary groups.

\begin{enumerate}
      \item[(xiii)] (Stability). \emph{Let $(E/F,\pi_i,\tau_i,\psi) \in \mathfrak{ls}({\rm U}_N,{\rm Res}\,{\rm GL}_m,p)$, for $i = 1$ or $2$, be generic and such that $\omega_{\pi_1}= \omega_{\pi_2}$ and $\omega_{\tau_1} = \omega_{\tau_2}$. If $\eta: E^\times \rightarrow \mathbb{C}^\times$ is highly ramified, then
\begin{align*}
   \gamma(s,\pi_1 \times (\tau_1 \cdot \eta),\psi_E) &= \gamma(s,\pi_2 \times (\tau_2 \cdot \eta),\psi_E).
\end{align*}
}
\end{enumerate}

We note that we also have the corresponding stability properties for local $L$-functions and root numbers
\begin{align*}
   L(s,\pi_1 \times (\tau_1 \cdot \eta)) &= L(s,\pi_2 \times (\tau_2 \cdot \eta)), \\
   \varepsilon(s,\pi_1 \times (\tau_1 \cdot \eta),\psi_E) &= \varepsilon(s,\pi_2 \times (\tau_2 \cdot \eta),\psi_E).
\end{align*}
Stability for local $L$-functions is a result of Shahidi \cite{Sh2000}. Stability for $\varepsilon$-factors follows by combining stability for $\gamma$-factors and $L$-functions via Property~(viii) above for tempered representations. Then in general, by Langlands' classification, Property~(x).

\subsection{Stable form of local factors} The following Lemma, provides a stable form for the local factors after twists by highly ramified characters, useful when establishing global Base Change.

\begin{lemma}\label{stableGL}
Let $(E/F,\pi,\tau,\psi) \in \mathfrak{ls}({\rm U}_N,{\rm Res}\,{\rm GL}_m,p)$ be generic. Consider a quadruple $(E/F,\Pi,T,\psi) \in \mathfrak{ls}({\rm Res}\,{\rm GL}_N,{\rm Res}\,{\rm GL}_m,p)$, with $\Pi$ and $T$ principal series, such that $\omega_\tau = \omega_T$ and $\omega_\Pi$ is the character of $E^\times$ obtained from $\omega_\pi$ of $E^1$ via Hilbert's theorem~90. Then, whenever $\eta: E^\times \rightarrow \mathbb{C}^\times$ is highly ramified, we have that
\begin{align*}
   L(s,\pi \times (\tau \cdot \eta)) &= L(s,\Pi \times (T \cdot \eta)), \\
   \varepsilon(s,\pi \times (\tau \cdot \eta),\psi_E) &= \varepsilon(s,\Pi \times (T \cdot \eta),\psi_E), \\
   \gamma(s,\pi \times (\tau \cdot \eta),\psi_E) &= \gamma(s,\Pi \times (T \cdot \eta),\psi_E).
\end{align*}
\end{lemma}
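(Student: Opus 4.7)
The plan is to combine stability of $\gamma$-factors under highly ramified twists (Property~(xiii) of \S\ref{addpropU}) with multiplicativity (Property~(iv) of Theorem~\ref{mainthmU}) in order to reduce both sides of the desired equality to products of abelian $\gamma$-factors which agree term by term. First I would apply Property~(xiii) on the unitary side to replace $\pi$ by an irreducible generic principal series
\[
   \pi_0 \hookrightarrow {\rm Ind}\bigl(\chi_1 \otimes \cdots \otimes \chi_n \otimes \nu\bigr)
\]
of ${\rm U}_N(F)$ with $\omega_{\pi_0} = \omega_\pi$, the factor $\nu : E^1 \to \mathbb{C}^\times$ being present only when $N = 2n+1$, and similarly replace $\tau$ by an irreducible generic principal series $\tau_0$ of ${\rm GL}_m(E)$ with $\omega_{\tau_0} = \omega_\tau$. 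By Rankin--Selberg stability on the general linear side, $\Pi$ may be replaced by the principal series
\[
   \Pi_0 \hookrightarrow {\rm Ind}\bigl(\chi_1 \otimes \cdots \otimes \chi_n \otimes \tilde\nu \otimes \chi_n^{-\theta} \otimes \cdots \otimes \chi_1^{-\theta}\bigr)
\]
of ${\rm GL}_N(E)$, where $\tilde\nu$ is the extension of $\nu$ to $E^\times$ via Hilbert's theorem~90; a short computation on the centre confirms $\omega_{\Pi_0} = \omega_{\pi_0} \circ \mathfrak{h} = \omega_\Pi$, consistent with the hypothesis, and $T$ may likewise be replaced by $\tau_0$.

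Next I would apply multiplicativity on both sides. On the ${\rm GL}_N(E) \times {\rm GL}_m(E)$ side this is the standard decomposition of $\gamma(s,\Pi_0 \times (\tau_0\eta),\psi_E)$ into a product of Rankin--Selberg character $\gamma$-factors, one for each torus character of $\Pi_0$. On the unitary side, Property~(iv) applied through the full tower of simple roots inside the Siegel-type parabolic of ${\rm U}_{N+2m}$ decomposes $\gamma(s,\pi_0 \times (\tau_0\eta),\psi_E)$ into rank-one $r_1$-factors attached to the various sub-Levis. Using the dictionary of \S\ref{LSGLunitary}, each $\chi_i$-character contributes a pair of Rankin--Selberg character factors $\gamma(s,(\tau_0\eta) \times \chi_i,\psi_E)\,\gamma(s,(\tau_0\eta) \times \chi_i^{-\theta},\psi_E)$ matching the corresponding pair in $\Pi_0$, while the ${\rm U}_1$-piece carrying $\nu$ converts, via Proposition~\ref{rankoneprop} together with Hilbert's theorem~90, into the single Rankin--Selberg factor $\gamma(s,(\tau_0\eta) \times \tilde\nu,\psi_E)$ matching the $\tilde\nu$-slot of $\Pi_0$.

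Comparing the two products term by term yields the equality of $\gamma$-factors. The equality of $L$-functions follows by invoking Langlands' classification (Property~(x)) to reduce to quasi-tempered constituents and then extracting $L$ from the denominator of $\gamma$ via Property~(vii); the equality of $\varepsilon$-factors follows from Property~(viii).

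The principal obstacle is the rank-one matching of the middle step, namely verifying that the multiplicative decomposition of the unitary-side $\gamma$-factor, which a priori intermixes Rankin--Selberg and Asai rank-one contributions coming from the two adjoint pieces $r_1, r_2$ highlighted in \S\ref{LSGLunitary}, rearranges via Hilbert's theorem~90 into exactly the Rankin--Selberg decomposition produced on the ${\rm GL}_N(E)$ side. The odd-case ${\rm U}_1$-piece deserves particular attention, since the abelian $\gamma$-factor over $E$ appearing in Proposition~\ref{rankoneprop} together with the factor twisted by $\eta_{E/F}$ must collapse correctly to the single Rankin--Selberg character factor for $\tilde\nu$.
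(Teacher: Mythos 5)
Your proposal is correct and follows essentially the same route as the paper: both arguments combine stability of $\gamma$-factors under highly ramified twists (Property~(xiii) of \S~\ref{addpropU}) with multiplicativity (Property~(iv)) to reduce everything to matching abelian Rankin--Selberg factors attached to the inducing characters of a principal series of ${\rm U}_N(F)$ and of the corresponding principal series of ${\rm GL}_N(E)$, and then recover the $L$- and $\varepsilon$-equalities as in Lemma~\ref{Leequality}. The only cosmetic difference is the order of operations: the paper first applies multiplicativity in the $\tau$-variable to reduce to ${\rm GL}_1$-twists and only then invokes stability together with the principal-series matching, whereas you apply stability first and then decompose in the $\pi$-variable; the term-by-term matching you flag as the main obstacle is exactly the step the paper asserts via its chain $\pi \to \xi \to \Xi \to \Pi$, and your observation that only $r_1$-type (Rankin--Selberg) blocks occur in that decomposition is the correct resolution.
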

\begin{proof}
There is always a $T$, which is the generic constituent of
   \begin{equation*}
      {\rm Ind}(\mu_1 \otimes \cdots \otimes \mu_m),
   \end{equation*}
where $\mu_1, \ldots, \mu_m$ are characters of ${\rm GL}_1(E)$. Multiplicativity of $\gamma$-factors for the unitary groups gives
\begin{equation}\label{stableGLeq1}
   \gamma(s,\pi \times (\tau \cdot \eta),\psi_E) = \prod_{i=1}^m \gamma(s,\pi \times (\chi_i \eta),\psi_E).
\end{equation}
And similarly for Rankin-Selberg products of general linear groups
\begin{equation}\label{stableGLeq2}
   \gamma(s,\Pi \times (\tau \cdot \eta),\psi_E) = \prod_{i=1}^m \gamma(s,\Pi \times (\chi_i \eta),\psi_E).
\end{equation}

Now, let $\xi$ be the representation of $G_n = {\rm U}_N(F)$ which is the generic constituent of either
\begin{equation*}
       {\rm ind}_{B}^{G_n}(\chi_1 \otimes \cdots \otimes \chi_n) \quad \text{or} \quad {\rm ind}_{B}^{G_n}(\chi_1 \otimes \cdots \otimes \chi_n \otimes \nu),
\end{equation*}
depending on wether $N =2n$ or $2n+1$, and such that $\omega_\xi = \omega_\pi$. Then, let
\begin{equation}\label{stableGLeq3}
       \Xi \hookrightarrow {\rm ind}_{B}^{{\rm GL}_N(E)}(\chi_1 \otimes \cdots \otimes \chi_n \otimes \bar{\chi}_n^\theta \otimes \cdots \otimes \bar{\chi}_1^\theta),
\end{equation}
if $N = 2n$, and let
\begin{equation}\label{stableGLeq4}
   \Xi \hookrightarrow {\rm ind}_{B}^{{\rm GL}_N(E)}(\chi_1 \otimes \cdots \otimes \chi_n \otimes \nu \otimes \bar{\chi}_n^\theta \otimes \cdots \otimes \bar{\chi}_1^\theta),
\end{equation}
if $N=2n+1$. Then, $\Xi$ has $\omega_\Xi = \omega_\Pi$ obtained from $\omega_\pi$ as in the statement of the Proposition. Using stability of $\gamma$-factors on $\mathfrak{ls}({\rm U}_N,{\rm Res}\,{\rm GL}_1,p)$, Property~(xiii) of \S~\ref{addpropU}, we have that for each $i$
\begin{align*}
   \gamma(s,\pi \times (\chi_i \cdot \eta),\psi_E) &= \gamma(s,\xi \times (\chi_i \cdot \eta),\psi_E) \\
   									 & =  \gamma(s,\Xi \times (\chi_i \cdot \eta),\psi_E) \\
									 & = \gamma(s,\Pi \times (\chi_i \cdot \eta),\psi_E)
\end{align*}
Then from equations \eqref{stableGLeq1} and \eqref{stableGLeq2}, we have the desired equality of $\gamma$-factors. The corresponding relations for the $L$-functions and root numbers can then be proved arguing as in the proof of Lemma~\ref{Leequality}.
\end{proof}

\section{The converse theorem and Base Change for the unitary groups}\label{CTBC}

We combine the Langlands-Shahidi method with the Converse Theorem and establish what is known as ``weak" Base Change for globally generic representations.

\subsection{The Converse Theorem} Let us recall the Converse Theorem of Cogdell and Piatetski-Shapiro \cite{CoPS1994}. In fact, we use a variant in the function field case \cite{PS1976} allowing for twists by a continuous character $\eta$ (see \S~2 of \cite{CoKiPSSh2001}).

Fix a finite set of places $S$ of a global function field $K$, a Gr\"o\ss encharakter $\eta : K^\times \backslash \mathbb{A}_K^\times \rightarrow \mathbb{C}^\times$ and an integer $N$. Let $\mathcal{T}(S;\eta)$ be the set consisting of representations $\tau = \tau_0 \otimes \eta$ of ${\rm GL}_n(\mathbb{A}_K)$ such that: $n$ is an integer ranging from $1 \leq n \leq N-1$; and, $\tau_0$ is a cuspidal automorphic representation. 

Let $\pi$ of ${\bf G}_1(\mathbb{A}_k)$ and $\tau$ of ${\bf G}_2(\mathbb{A}_k)$ be admissible representations whose $L$-function $L(s,\pi \times \tau)$ converges on some right half plane. We say $L(s,\pi \times \tau)$ is \emph{nice} if the following properties are satisfied:
\begin{itemize}
   \item[(i)] $L(s,\pi \times \tau)$ and $L(s,\tilde{\pi} \times \tilde{\tau})$ are polynomials in $\left\{ q^{-s}, q^s \right\}$.
   \item[(ii)] $L(s,\pi \times \tau) = \varepsilon(s,\pi \times \tau) L(1-s,\tilde{\pi} \times \tilde{\tau})$.
\end{itemize}
We note that Property~(i) implies that $L(s,\pi \times \tau)$ and $L(s,\tilde{\pi} \times \tilde{\tau})$ have analytic continuations to entire functions to the whole complex plane and are bounded on vertical strips. 

\begin{theorem}[Converse Theorem]
   Let $\Pi = \otimes \, \Pi_v$ be an irreducible admissible representation of ${\rm GL}_N(\mathbb{A}_K)$ whose central character $\omega_\Pi$ is a Gr\"o\ss encharakter and whose $L$-function $L(s,\Pi) = \prod_v L(s,\Pi_v)$ is absolutely convergent in some right half-plane. Suppose that for every $\tau \in \mathcal{T}(S;\eta)$ the $L$-function $L(s,\Pi \times \tau)$ is nice. Then, there exists an automorphic representation $\Pi'$ of ${\rm GL}_N(\mathbb{A}_K)$ such that $\Pi_v \cong \Pi_v'$ for all $v \notin S$.
\end{theorem}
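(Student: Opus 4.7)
The plan is to follow the strategy of Cogdell and Piatetski-Shapiro, suitably adapted to the function field setting. First, observe that the hypothesis that $L(s,\Pi \times \tau)$ is nice for all $\tau \in \mathcal{T}(S;\eta)$ forces each local component $\Pi_v$ to be generic (otherwise one could produce a $\tau$ for which the Rankin--Selberg $\gamma$-factor has a pole incompatible with entirety), so $\Pi$ has a global Whittaker model. Fix a non-trivial character $\psi: K \backslash \mathbb{A}_K \to \mathbb{C}^\times$ and, for each smooth vector $\xi$ in the space $V_\Pi$, let $W_\xi$ be the attached Whittaker function on $\mathrm{GL}_N(\mathbb{A}_K)$. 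I would build a candidate automorphic function by summing over rational cosets of the standard mirabolic:
\begin{equation*}
   U_\xi(g) = \sum_{\gamma \in U_N(K) \backslash P_N(K)} W_\xi(\gamma g),
\end{equation*}
and similarly
\begin{equation*}
   \widetilde{U}_\xi(g) = \sum_{\gamma \in U_N(K) \backslash P_N(K)} \widetilde{W}_\xi(\gamma g)
\end{equation*}
from the contragredient. Convergence and smoothness of these series is standard, using the rapid decay of Whittaker functions along the torus.

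By construction $U_\xi$ is left-invariant under $P_N(K)$ and transforms correctly under $\omega_\Pi$. The crucial step is to upgrade this to left-invariance under an additional element --- concretely the long Weyl element $\alpha$ which, together with $P_N(K)$ and the center, generates a subgroup dense in $\mathrm{GL}_N(K)$ modulo the compact behavior at places of $S$. To this end I would test $U_\xi$ against cusp forms $\varphi_\tau$ of $\mathrm{GL}_n(\mathbb{A}_K)$ for $1 \leq n \leq N-1$, forming the Jacquet--Piatetski-Shapiro--Shalika global pairing
\begin{equation*}
   I(s, U_\xi, \varphi_\tau) = \int U_\xi\!\begin{pmatrix} h & \\ & I_{N-n}\end{pmatrix} \varphi_\tau(h) |\det h|^{s-(N-n)/2}\, dh,
\end{equation*}
together with its dual integral built from $\widetilde{U}_\xi$ and $\widetilde{\varphi}_\tau$. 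Unfolding against the Whittaker model expresses these as Eulerian products, so up to the finite set of local factors at $S$ one obtains $L(s, \Pi \times \tau \cdot \eta)$ and $L(1-s, \widetilde{\Pi} \times \widetilde{\tau} \cdot \eta^{-1})$, respectively. The niceness of $L(s, \Pi \times \tau \cdot \eta)$ (entirety, boundedness on vertical strips, global functional equation) then allows one to identify the two global integrals, which by a converse to Mellin inversion forces the equality
\begin{equation*}
   I(s, U_\xi, \varphi_\tau) = I(s, \widetilde{U}_\xi^{\,\alpha}, \varphi_\tau)
\end{equation*}
and hence, by varying $\varphi_\tau$ through all cuspidal representations of $\mathrm{GL}_n$ and exploiting the cuspidality of these test functions, yields left-invariance of $U_\xi$ under $\alpha$ after suitable modification at places of $S$.

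The main obstacle, and the source of the weak conclusion ``$\Pi'_v \cong \Pi_v$ only for $v \notin S$,'' is that at the places $v \in S$ the local Rankin--Selberg integrals cannot be unfolded without entanglement with the local data of $\tau$; one cannot separate these local contributions using only twists drawn from $\mathcal{T}(S;\eta)$. Consequently the converse of Mellin inversion pins down the adelic function only up to its behavior at $S$, and what one constructs is an automorphic representation $\Pi'$ agreeing with $\Pi$ at every place outside $S$. The role of the auxiliary Gr\"o\ss encharakter $\eta$, following \cite{CoKiPSSh2001}, is to supply enough freedom in the class of allowable twists --- in positive characteristic the Grundwald--Wang type obstructions make it essential --- so that the Mellin-inversion step has sufficient density to recover the candidate function. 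Once left-invariance under $P_N(K)$ and $\alpha$ is secured, the closed subgroup generated is enough (combined with the center) to produce an automorphic $\Pi'$ in the space spanned by the $U_\xi$, $\xi \in V_\Pi$, completing the argument.
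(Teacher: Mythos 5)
This statement is not proved in the paper at all: it is quoted as the Converse Theorem of Cogdell--Piatetski-Shapiro, with the proof residing in \cite{CoPS1994}, the function-field variant in \cite{PS1976}, and the $\eta$-twisted formulation in \S~2 of \cite{CoKiPSSh2001}. So there is no internal argument to compare yours against; what you have written is a sketch of the standard Cogdell--Piatetski-Shapiro proof, and on the whole it reproduces the right architecture (Whittaker expansion over $U_N(K)\backslash P_N(K)$, the dual construction from the contragredient, unfolding the Jacquet--Piatetski-Shapiro--Shalika integrals against cuspidal test functions of ${\rm GL}_n$ for $n\le N-1$, using the functional equation of the nice $L$-functions to identify the two global integrals, and invoking completeness of cusp forms to deduce invariance under enough of ${\rm GL}_N(K)$).

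There is, however, one step that is genuinely wrong as stated. The hypotheses do not force each $\Pi_v$ to be generic, and niceness of the twisted $L$-functions cannot be used to rule out non-generic local components: a non-tempered Langlands quotient has exactly the same local $L$- and $\varepsilon$-factors as the full induced representation of Whittaker type from which it is a quotient, so no choice of $\tau$ detects the difference. The actual proof does not need genericity of $\Pi_v$; it replaces each $\Pi_v$ by the induced representation of Langlands type $\Xi_v$ having $\Pi_v$ as its unique irreducible quotient and the same local factors, and builds the Whittaker functions $W_\xi$ from $\Xi=\otimes\Xi_v$. This is not cosmetic --- without it your construction of $U_\xi$ does not get off the ground for general admissible $\Pi$. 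Beyond that, the passages you describe as ``standard'' (absolute convergence of the sum over $U_N(K)\backslash P_N(K)$, the precise completeness-of-cusp-forms argument replacing your ``converse to Mellin inversion,'' and the mechanism by which fixing vectors at the places of $S$ yields invariance only under a congruence-type subgroup and hence agreement of $\Pi'$ with $\Pi$ only outside $S$) are exactly where the work of \cite{CoPS1994} lies; asserting them does not constitute a proof, which is presumably why the paper simply cites the result rather than reproving it.
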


\subsection{Base change for the unitary groups}\label{BCdef}

Let $K/k$ be a separable quadratic extension of global function fields. Let $\mathbb{A}_k$ and $\mathbb{A}_K$ denote the ring of ad\`eles of $k$ and $K$, respectively. We now turn towards Base Change from ${\bf G}_n = {\rm U}_N$ to ${\bf H}_N = {\rm Res}\,{\rm GL}_N$. The groups ${\bf G}_n$ and ${\bf H}_N$ are related via the following homomorphism of $L$-groups
\begin{equation}\label{BChomomorphism}
   {\rm BC}: {}^LG_n = {\rm GL}_N(\mathbb{C}) \rtimes \mathcal{W}_k \hookrightarrow {}^LH_N = {\rm GL}_N(\mathbb{C}) \times {\rm GL}_N(\mathbb{C}) \rtimes \mathcal{W}_k.
\end{equation}

We say that a globally generic cuspidal automorphic representation $\pi = \otimes' \pi_v$ of ${\bf G}_n(\mathbb{A}_k)$ has a base change lift $\Pi = \otimes' \Pi_v$ to ${\bf H}_N(\mathbb{A}_k) = {\rm GL}_N(\mathbb{A}_K)$, if at every place where $\pi_v$ is unramified, we have that
\begin{equation*}
   L(s,\pi_v) = L(s,\Pi_v).
\end{equation*}
This notion of a Base Change lift is sometimes referred to as a weak lift. The strong Base Change lift requires equality of $L$-functions and $\varepsilon$-factors at every place $v$ of $k$. We will establish the strong Base Change lift in \S\S~\ref{llcU}-\ref{RHRC}.

\begin{remark}
In order to be more precise, the base change map or Langlands functorial lift for the unitary groups obtained from \eqref{BChomomorphism} is known as ``stable" base change. There is also ``unstable'' base change. See for example the ``stable" and ``labile" base change discussion for ${U}_2$ of \cite{Fl1982}.
\end{remark}

\subsection{Unramified Base Change}\label{unramifiedBCsection} Let $\pi = \otimes' \pi_v$ be a globally generic cuspidal automorphic representation of ${\rm U}_N(\mathbb{A}_k)$. Fix a place $v$ of $k$ that remains inert in $K$ and such that $\pi_v$ is unramified. Two unramified $L$-parameters $\phi_v: \mathcal{W}_{k_v} \rightarrow {}^LG_n$ and $\Phi_v: \mathcal{W}_{k_v} \rightarrow {}^LH_N$ are connected via the homomorphism of $L$-groups


\begin{center}
\begin{tikzpicture}
   \draw (2,0) node {$\mathcal{W}_{k_v}$};
   \draw[->,>=latex] (2.25,.5) -- (3.25,1.5);
   \draw (2.85,.9) node [right] {$\Phi_v$};
   \draw (0,2) node {${}^LG_n$};
   \draw[->,>=latex] (.5,2) -- (3.25,2);
   \draw (3.75,2) node {${}^LH_N$};
   \draw[->,>=latex] (1.5,.5) -- (.5,1.5);
   \draw (.9,.9)  node[left] {$\phi_v$};
\end{tikzpicture}
\end{center}

\noindent given by the base change map of \eqref{BChomomorphism}. 

Each $\pi_v$, being uramified, is of the form
\begin{equation}\label{unramifiedU}
   \pi_v \hookrightarrow \left\{ \begin{array}{ll}  {\rm Ind}(\chi_{1,v} \otimes \cdots \chi_{n,v} \otimes \nu_v) & \text{ if } N = 2n+1\\
   			     						{\rm Ind}(\chi_{1,v} \otimes \cdots \chi_{n,v}) & \text{ if } N = 2n
			      		\end{array} \right. ,
\end{equation}
with $\chi_{1,v}$, \ldots, $\chi_{n,v}$, unramified characters of $K_v^\times$. Let $\varpi_v$ be a uniformizer and let
\begin{equation*}
   \alpha_{i,v} = \chi_{i,v}(\varpi_v), \ i =1, \ldots, n.
\end{equation*}
Let ${\rm Frob}_v$ denote the Frobenius element of $\mathcal{W}_{k_v}$. We know that $\pi_v$ is parametrized by the conjugacy class in ${}^LU$
\begin{equation*}
(\phi_v({\rm Frob}_v),w_\theta) = \left\{ \begin{array}{ll}  {\rm diag}(\alpha_{1,v}^{\frac{1}{2}}, \ldots,\alpha_{n,v}^{\frac{1}{2}},1,
											\alpha_{n,v}^{-\frac{1}{2}},\ldots,\alpha_{1,v}^{-\frac{1}{2}}) \rtimes w_\theta& \text{ if } N = 2n+1\\
   			     						{\rm diag}(\alpha_{1,v}^{\frac{1}{2}}, \ldots,\alpha_{n,v}^{\frac{1}{2}},
										\alpha_{n,v}^{-\frac{1}{2}},\ldots,\alpha_{1,v}^{-\frac{1}{2}}) \rtimes w_\theta & \text{ if } N = 2n
			      		\end{array} \right. .
\end{equation*}
Then, from the results of \cite{Mi2011}, the $L$-parameter $\Phi_v = {\rm BC} \circ \phi_v$ corresponds a semisimple conjugacy class in ${\rm GL}_N(\mathbb{C})$ given by
\begin{equation*}
\Phi_v({\rm Frob}_v) = \left\{ \begin{array}{ll}  {\rm diag}(\alpha_{1,v}, \ldots,\alpha_{n,v},1,
										\alpha_{n,v}^{-1},\ldots,\alpha_{1,v}^{-1}) & \text{ if } N = 2n+1\\
   			     						{\rm diag}(\alpha_{1,v}, \ldots,\alpha_{n,v},
										\alpha_{n,v}^{-1},\ldots,\alpha_{1,v}^{-1}) & \text{ if } N = 2n
			      		\end{array} \right. .
\end{equation*}
The resulting Satake parameters $\Phi_v$, then uniquely determine an unramified representation $\Pi_v$ of ${\rm Res}_{K_v/k_v}{\rm GL}_N(k_v) = {\rm GL}_N(K_v)$ of the form
\begin{equation}\label{unramifiedGL}
   \Pi_v \hookrightarrow \left\{ \begin{array}{ll}  {\rm Ind}(\chi_{1,v} \otimes \cdots \chi_{n,v} \otimes 1 \otimes
   											\chi_{n,v}^{-1} \otimes \cdots \otimes \chi_{1,v}^{-1}) & \text{ if } N = 2n+1\\
   			     						{\rm Ind}(\chi_{1,v} \otimes \cdots \chi_{n,v} \otimes
									\chi_{n,v}^{-1} \otimes \cdots \otimes \chi_{1,v}^{-1}) & \text{ if } N = 2n
			      		\end{array} \right. .
\end{equation}

To summarize, let $\hat{A}_v$ be the semisimple conjugacy class of $\phi_v({\rm Frob}_v)$ in ${\rm GL}_N(\mathbb{C})$ obtained via the Satake parametrization. We have

\begin{center}
\begin{tikzpicture}
   \draw (0,0) node {$\Pi$ of ${\rm GL}_N(K_v)$};
   \draw[<-,dashed] (0,.5) -- (0,1.5);
   \draw (0,1) node [left] {BC};
   \draw (0,2) node {$\pi$ of ${\rm U}_N(k_v)$};
   \draw[->] (1.25,2) -- (2.75,2);
   \draw (6.25,2) node {$\left\{ (\hat{A}_v,w_{\theta,v}) \right\}$ of ${\rm GL}_N(\mathbb{C}) \rtimes \mathcal{W}_{k_v}'$};
   \draw[->] (6,1.4) -- (6,.5);
   \draw (6.25,0) node {$\left\{ (\hat{A}_v,\hat{A}_v,w_{\theta,v}) \right\}$ of ${\rm GL}_N(\mathbb{C}) \times {\rm GL}_N(\mathbb{C}) \rtimes \mathcal{W}_{k_v}'$};
   \draw[<-] (1.25,0) -- (2.75,0);
   \draw (6,1.5) arc (0:180:.0625);
   \draw[->] (6,1.5) -- (6,.5);
\end{tikzpicture}
\end{center}

\noindent Where we use the fact that there is a natural bijection between $w_{\theta,v}$-conjugacy classes of ${\rm GL}_N(\mathbb{C}) \times {\rm GL}_N(\mathbb{C})$ and conjugacy classes of ${\rm GL}_N(\mathbb{C})$.

\begin{definition}\label{unramifiedBC}
Let $v$ be a place of $k$ that remains inert in $K$. For every unramified $\pi_v$ corresponding to $\phi_v$ we call the representation 
\begin{equation*}
   {\rm BC}(\pi_v) = \Pi_v,
\end{equation*}
corresponding to $\Phi_v$ as in \eqref{unramifiedGL}, the unramified local Langlands lift or the unramified base change of $\pi_v$.
\end{definition}

Let $\tau_v$ be any irreducible admissible generic representation of ${\rm GL}_m(K_v)$. We know that, given the homomorphism of $L$-groups $\rm BC$, we have the following equality of local factors
\begin{align*}
   \gamma(s,\pi_v \times \tau_v,\psi_v) &= \gamma(s,\Pi_v \times \tau_v,\psi_v) \\
   L(s,\pi_v \times \tau_v) &= L(s,\Pi_v \times \tau_v) \\
   \varepsilon(s,\pi_v \times \tau_v,\psi_v) &= \varepsilon(s,\Pi_v \times \tau_v,\psi_v).
\end{align*}

\subsection{Split Base Change}\label{splitBCsection} At split places $v$ of $k$, we are in the case of a separable algebra, as in \S~\ref{split}. The local functorial lift of $\pi_v$ to ${\bf H}_M(k_v)$ obtained from
\begin{equation*}
      {\rm U}_N(k_v) = {\rm GL}_N(k_v) \rightsquigarrow {\bf H}_N(k_v) = {\rm GL}_N(k_v) \times {\rm GL}_N(k_v).
\end{equation*}

\begin{definition}\label{splitBC}
Fix a place $v$ of $k$ such that $K_v = k_v \times k_v$. Let $\pi_v$ be an irreducible generic representation of ${\rm U}_N(k_v) \cong {\rm GL}_N(k_v)$. We call the representation
\begin{equation}\label{splitliftU}
   {\rm BC}(\pi_v) = \pi_v \otimes \tilde{\pi}_v
\end{equation}
the split local Langlands lift or the split base change of $\pi_v$.
\end{definition}

Let $\tau_v = \tau_{1,v} \otimes \tau_{2,v}$ be any irreducible admissible generic representation of ${\rm GL}_m(K_v) = {\rm GL}_m(k_v) \times {\rm GL}_m(k_v)$. Then, from \S~\ref{split}, we have the following equality of local factors
   \begin{align*}
      \gamma(s,\pi_v \times \tau_v,\psi_v) &= \gamma(s,\pi_v \times \tau_v,\psi_v) \gamma(s,\tilde{\pi}_v \times \tilde{\tau}_v,\psi_v) \\
      L(s,\pi_v \times \tau_v) &= L(s,\pi_v \times \tau_v) L(s,\tilde{\pi}_v \times \tilde{\tau}_v) \\
      \varepsilon(s,\pi_v \times \tau_v,\psi_v) &= \varepsilon(s,\pi_v \times \tau_v,\psi_v) \varepsilon(s,\tilde{\pi}_v \times \tilde{\tau}_v,\psi_v).
   \end{align*}

\subsection{Ramified Base Change}\label{ramifiedBCsection} At places $v$ of $k$ where the cuspidal automorphic representation $\pi$ of ${\rm U}_N(\mathbb{A}_k)$ may have ramification, we can use the stability of $\gamma$-factors. At this point, we do not have a unique ramified Base Change, even after twisting by a highly ramified character. However, we will establish a unique local Langlands lift or local Base Change completely in \S~\ref{llcU}.

\begin{definition}\label{ramifiedBC}
Let $\chi_{1,v}, \ldots, \chi_{n,v}$ be characters of $K_v^\times$. Let $\nu_v$ be a character of $K_v^1$, which we extend to one of $K_v^\times$ via Hilbert's theorem~90. Assume that the representation
   \begin{equation*}
       \Pi_v \hookrightarrow \left\{ \begin{array}{ll}  {\rm Ind}(\chi_{1,v} \otimes \cdots \otimes \chi_{n,v} \otimes \nu_v \otimes
   											\chi_{n,v}^{-1} \otimes \cdots \otimes \chi_{1,n}^{-1}) & \text{ if } N = 2n+1\\
   			     						{\rm Ind}(\chi_{1,v} \otimes \cdots \otimes \chi_{n,v} \otimes
									\chi_{n,v}^{-1} \otimes \cdots \otimes \chi_{1,n}^{-1}) & \text{ if } N = 2n
			      		\end{array} \right.
   \end{equation*}
has central character $\omega_{\Pi_v} = \omega_{\pi_v}$. Then $\Pi_v$ is called a ramified local Langlands lift or a ramified Base Change of $\pi_v$.
\end{definition}

We no longer have equality of local factors for every $\tau_v$ of ${\rm GL}_m(k_v)$. However, from Lemma~\ref{stableGL}, whenever $\eta_v: K_v^\times \rightarrow \mathbb{C}^\times$ is a highly ramified character, we have that
\begin{align*}
   \gamma(s,\pi_v \times (\tau_v \cdot \eta_v),\psi_v) &= \gamma(s,\Pi_v \times (\tau_v \cdot \eta_v),\psi_v) \\
   L(s,\pi_v \times (\tau_v \cdot \eta_v)) &= L(s,\Pi_v \times (\tau_v \cdot \eta_v)) \\
   \varepsilon(s,\pi_v \times (\tau_v \cdot \eta_v),\psi_v) &= \varepsilon(s,\Pi_v \times (\tau_v \cdot \eta_v),\psi_v).
\end{align*}

\subsection{Weak Base Change} We establish a preliminary version of Base Change for the unitary groups by combining the Langlands-Shahidi method with the Converse Theorem.

\begin{theorem}\label{weakBC}
Let $\pi = \otimes' \pi_v$ be a globally generic cuspidal automorphic representation of ${\rm U}_N(\mathbb{A}_k)$. There exists a unique globally generic automorphic representation 
\begin{equation*}
{\rm BC}(\pi) = \Pi
\end{equation*}
of ${\rm Res}_{K/k}{\rm GL}_N(\mathbb{A}_k) = {\rm GL}_N(\mathbb{A}_K)$, which is a weak Base Change lift of $\pi$.
\end{theorem}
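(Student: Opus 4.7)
The plan is to build a candidate representation $\Pi = \otimes'_v \Pi_v$ of ${\rm GL}_N(\mathbb{A}_K)$ place by place, and then invoke the Converse Theorem to produce a genuine automorphic representation agreeing with $\Pi$ outside a finite set. At every inert place $v$ where $\pi_v$ is unramified I would take $\Pi_v = {\rm BC}(\pi_v)$ as in Definition~\ref{unramifiedBC}, so that the Satake parameters match via the base change homomorphism~\eqref{BChomomorphism}. At every place $v$ that splits in $K$, I would use the split lift $\Pi_v = \pi_v \otimes \tilde{\pi}_v$ of Definition~\ref{splitBC}. At the remaining (finite) set $S$ of ramified places, I would choose any ramified Base Change $\Pi_v$ as in Definition~\ref{ramifiedBC}, ensuring $\omega_{\Pi_v} = \omega_{\pi_v}$ so that the central character of $\Pi$ is a Gr\"o{\ss}encharakter of $\mathbb{A}_K^\times$.

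Next I would feed $\Pi$ to the Converse Theorem with a twisting character $\eta$ concentrated at the bad places. Given an auxiliary finite set $S$ (containing the ramified and split places we wish to control), I would use the Grunwald--Wang theorem of global class field theory to produce a Gr\"o{\ss}encharakter $\eta : K^\times \backslash \mathbb{A}_K^\times \to \mathbb{C}^\times$ whose local components $\eta_v$ are \emph{highly ramified} for every $v \in S$ (but unrestricted, or trivial, elsewhere). For any $\tau = \tau_0 \otimes \eta \in \mathcal{T}(S;\eta)$, I then claim that
\begin{equation*}
   L(s,\Pi \times \tau) = L(s,\pi \times \tau), \qquad \varepsilon(s,\Pi \times \tau) = \varepsilon(s,\pi \times \tau),
\end{equation*}
which I would verify place by place: at unramified inert $v$ this is the compatibility of the Satake correspondence with the base change map (\S\ref{unramifiedBCsection}); at split $v$ this follows from the definitions in \S\ref{split} and \S\ref{splitBCsection}; and at $v \in S$, the highly ramified twist allows me to apply the stable form Lemma~\ref{stableGL} so that the local factors of $\pi_v \times (\tau_{0,v}\cdot \eta_v)$ and $\Pi_v \times (\tau_{0,v} \cdot \eta_v)$ coincide.

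Once this place-by-place equality is established, the global functional equation for $L(s,\pi \times \tau)$ from Property~(xii) of \S\ref{addpropU} transfers to $L(s,\Pi \times \tau)$. Combining this with the fact, proved in \cite{LoRationality}, that Langlands--Shahidi automorphic $L$-functions over function fields are \emph{nice} (i.e., rational in $q^{-s}$, bounded in vertical strips, and satisfying the expected functional equation), I conclude that $L(s,\Pi \times \tau)$ is nice for every $\tau \in \mathcal{T}(S;\eta)$. The Converse Theorem then produces an automorphic representation $\Pi'$ of ${\rm GL}_N(\mathbb{A}_K)$ such that $\Pi'_v \cong \Pi_v$ for all $v \notin S$, which is precisely the desired weak Base Change of $\pi$. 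Uniqueness at the level of a weak lift follows from strong multiplicity one for ${\rm GL}_N$, since any two weak lifts agree with $\Pi$ at all but finitely many places; genericity of $\Pi$ is inherited from the genericity of its local constituents at unramified places together with the construction.

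The main obstacle is the treatment of the ramified places in $S$: the candidate $\Pi_v$ there is not canonically determined by $\pi_v$, so one must argue that the freedom of choice is absorbed by the highly ramified twist $\eta_v$. This is exactly where the stability of $\gamma$-factors (Property~(xiii)) and its refined consequence Lemma~\ref{stableGL} are essential, and where matching central characters is necessary so that the two sides have the same stable limit. The rest of the argument is a careful bookkeeping exercise gluing the local and global ingredients from Theorem~\ref{mainthmU} and \cite{LoRationality} into the form demanded by the Converse Theorem.
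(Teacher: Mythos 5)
Your construction matches the paper's proof essentially step for step: the same three local definitions of the candidate $\Pi_v$ (unramified, split, ramified), the same highly ramified twist $\eta$ at the bad places so that Lemma~\ref{stableGL} absorbs the arbitrariness of the ramified choices, the same verification that $L(s,\Pi\times\tau')$ is nice via \cite{LoRationality} and the global functional equation, and the same appeal to the Converse Theorem. Up to that point the argument is correct.

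The one place where your write-up is too quick is the final step. The Converse Theorem hands you an automorphic representation $\Pi'$ that agrees with the candidate only for $v\notin S$; it gives you no information at the places of $S$ and, more importantly, no guarantee that $\Pi'$ is globally generic, whereas the theorem asserts the existence of a \emph{globally generic} lift. Saying that ``genericity is inherited from the local constituents at unramified places'' does not close this: local genericity at almost all places does not make an automorphic representation globally generic. The paper repairs this by invoking Langlands' result \cite{LaCorvalis} that $\Pi'$ is a subquotient of ${\rm Ind}(\Pi_1\otimes\cdots\otimes\Pi_d)$ with each $\Pi_i$ cuspidal (hence globally generic), and then taking the unique generic constituent $\Pi=\Pi_1\boxplus\cdots\boxplus\Pi_d$ furnished by the Jacquet--Shalika classification; this $\Pi$ agrees with $\Pi'$ at almost all places and is the asserted weak lift. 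Uniqueness is then correctly a consequence of multiplicity one, but for \emph{globally generic} automorphic representations of ${\rm GL}_N$ agreeing at almost every place (the paper cites \cite{PS1979}); your appeal to ``strong multiplicity one for ${\rm GL}_N$'' is fine in spirit but should be restricted to this generic/isobaric setting, since two arbitrary automorphic representations agreeing almost everywhere need not coincide. With that final paragraph added, your proof coincides with the paper's.
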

\begin{proof}
Let $\Pi_v = {\rm BC}(\pi_v)$ be the local Base change of Definitions~\ref{unramifiedBC}, \ref{splitBC} and \ref{ramifiedBC}, accordingly. Consider the irreducible admissible representation
\begin{equation*}
   \Pi = \otimes' \Pi_v
\end{equation*}
of ${\rm GL}_N(\mathbb{A}_K)$ whose central character $\omega_\Pi$ has $\omega_{\Pi_v}$ obtained from $\omega_{\pi_v}$ via Hilbert's theorem~90 at every place $v$ of $k$. By construction, $\omega_\Pi$ is invariant under $K^\times$.

Let $S$ be a finite set of places of $k$ such that $\pi_v$ is unramified for $v \notin S$. We abuse notation and identify $S$ with the finite set of places of $K$ lying above the places $v \in S$. Then, we have an equality of partial $L$-functions
\begin{equation*}
   L^S(s,\Pi) = L^S(s,\pi \times 1).
\end{equation*}
Hence, $L^S(s,\Pi)$ converges absolutely on a right hand plane; and so does $L(s,\Pi)$.

Let $\tau$ be a cuspidal automorphic representation of ${\rm GL}_m(\mathbb{A}_K)$. Choose a Gr\"o\ss encharakter $\eta = \otimes \eta_v: K^\times \backslash \mathbb{A}_K^\times \rightarrow \mathbb{C}^\times$ such that $\eta_v$ is highly ramified for $v \in S$. Then, letting $\tau' = \tau \otimes \eta$, we have that $(K/k,\pi,\tau',\psi) \in \mathcal{LS}({\rm U}_N,{\rm GL}_m,p)$. We have seen in \S\S~\ref{unramifiedBCsection}, \ref{splitBCsection} and \ref{ramifiedBCsection} that the following equality of local factors holds in every case:
\begin{align*}
   L(s,\pi_v \times \tau_v') &= L(s,\Pi_v \times \tau_v') \\
   \varepsilon(s,\pi_v \times \tau_v',\psi_v) &= \varepsilon(s,\Pi_v \times \tau_v',\psi_v).
\end{align*}

With $\eta$ as in Proposition~4.1 of \cite{LoRationality}, we know that the Langlands-Shahidi $L$-functions $L(s,\pi \times \tau')$ are polynomials in $\left\{ q^s, q^{-s}\right\}$. They also satisfy the global functional equation, Theorem~\ref{mainthm}(vi). Thus, they are nice. Then, since
\begin{equation*}
   L(s,\Pi \times \tau') = L(s,\pi \times \tau') \text{ and } \varepsilon(s,\Pi \times \tau') = \varepsilon(s,\pi \times \tau'),
\end{equation*}
we can conclude that the $L$-functions $L(s,\Pi \times \tau')$ are nice, as $\tau'$ ranges through the set $\mathcal{T}(S;\eta)$. From the Converse Theorem, there now exists an automorphic representation $\Pi'$ of ${\rm GL}_N(\mathbb{A}_K)$ such that $\Pi_v \cong \Pi_v'$ for all $v \notin S$. Then $\Pi'$ gives a weak Base Change.

Now, from \cite{LaCorvalis}, every automorphic form $\Pi$ of ${\rm GL}_N(\mathbb{A}_K)$ arises as a subquotient of the globally induced representation
\begin{equation}\label{weakBCeq1}
   {\rm Ind} (\Pi_1 \otimes \cdots \otimes \Pi_d),
\end{equation}
with each $\Pi_i$ a cuspidal automorphic representation of ${\rm GL}_N(\mathbb{A}_K)$. Since every $\Pi_i$ is cuspidal, they are globally generic. The results on the classification of automorphic representations for general linear groups \cite{JaSh1981}, shows that there exists a unique generic subquotient of \eqref{weakBCeq1}, which we denote by
\begin{equation*}
   \Pi = \Pi_1 \boxplus \cdots \boxplus \Pi_d.
\end{equation*}
It is this automorphic representation $\Pi$ which is our desired Base Change, i.e., we let ${\rm BC}(\pi) = \pi$. It has the property that at every place $w$ of $K$ where $\Pi_w$ is unramified, it is generic. Hence, at places where $\pi_v$ is unramified and $w = v$ remains inert, $\Pi_w$ is given by the unique generic subquotient of a principal series representation and $\Pi_w$ agrees with the local Base Change lift of sections \S~\ref{unramifiedBCsection}. At split places it also agrees with that of \S~\ref{splitBCsection} at almost all places. It thus agrees with $\Pi'$ at almost all places and is itself a weak Base Change lift. Furthermore, by multiplicity one \cite{PS1979}, any two globally generic automorphic representations of ${\rm GL}_N(\mathbb{A}_K)$ that agree at almost every place are equal. Hence $\Pi$ is uniquely determined.
\end{proof}

\section{On local Langlands functoriality and Strong Base Change}\label{llcU}

In Algebraic Number Theory there is a well known proof of existence for local class field theory from global class field theory. In an analogous fashion, we here prove the existence of the generic local Langlands functorial lift from a unitary group ${\rm U}_N$ to ${\rm Res}\,{\rm GL}_N$, i.e., local Base Change. We are guided by the discussion found in \cite{CoKiPSSh2004,KiKr2005}. The lift preserves local $L$-functions and root numbers. In general, we refer to \S~\ref{temperedLpacket} for a discussion on reducing the study of local factors to the generic case. In \S~7 of \cite{GaLoJEMS} we show how to establish Base Change in general, which preserves Plancherel measures for non-generic representations.

In \S~\ref{BC} we address how to strengthen the ``weak" base change map of Theorem~\ref{weakBC} so that it is compatibile with the local Langlands functorial lift or local base change. Throughout this section, we fix a quadratic extension $E/F$ of non-archimedean local fields of positive characteristic. Given any general linear group ${\rm GL}_m(E)$, we let $\nu$ denote the unramified character obtained via the determinant, i.e., $\nu = \left| \det(\cdot) \right|_E$. Globally, we let $K/k$ denote a separable quadratic extension of function fields.

\begin{definition}
Let $\pi$ be a generic representation of ${\rm U}_N(F)$. Then, we say that a generic irreducible representation $\Pi$ of ${\rm GL}_N(E)$ is a \emph{local base change lift} of $\pi$ if for every supercuspidal representation $\tau$ of ${\rm GL}_m(E)$ we have that
\begin{equation*}
   \gamma(s,\pi \times \tau,\psi_E) = \gamma(s,\Pi \times \tau,\psi_E).
\end{equation*}
\end{definition}

\subsection{Uniqueness of the local base change lift} The previous definition extends to twists by a general irreducible unitary generic representation $\tau$ of ${\rm GL}_m(E)$, as we show in the next lemma. For this, the clasification of \cite{Ta1986} is very useful. It allows us to write
\begin{equation}\label{unitaryclassificationGL}
   \tau = {\rm Ind}(\delta_1 \nu^{t_1} \otimes \cdots \otimes \delta_d \nu^{t_d} \otimes \delta_{d+1} \otimes \cdots \otimes \delta_{d+k} \otimes 
   			   \delta_{d}\nu^{-t_d} \otimes \cdots \otimes \delta_1 \nu^{-t_1}),
\end{equation}
where the $\delta_i$'s are unitary discrete series representations of ${\rm GL}_{n_i}(E)$ and $0 < t_d \leq \cdots \leq t_1 < 1/2$.

Furthermore, from the Zelevinsky classification \cite{Ze1980}, we know that every unitary discrete series representation $\delta$ of ${\rm GL}_m(E)$ is obtained from a segment of the form
\begin{equation*}
   \Delta = \left[ \rho\nu^{-\frac{t-1}{2}}, \rho\nu^{\frac{t-1}{2}} \right],
\end{equation*}
where $\rho$ is a supercuspidal representation of ${\rm GL}_e(E)$, $e \vert m$, and $t$ is a positive integer. The representation $\delta$ is precisely the generic constituent of
\begin{equation}\label{zclass}
   {\rm Ind} (\rho\nu^{-\frac{t-1}{2}} \otimes \cdots \otimes \rho\nu^{\frac{t-1}{2}}).
\end{equation}

An important result of Henniart \cite{He1993} allows us to characterize the local Langlands functorial lift by the condition that it preserves local factors.

\begin{lemma}\label{supercusptogenericGL}
Let $\pi$ be a generic representation of ${\rm U}_N(F)$ and suppose there exists $\Pi$, a local base change lift to ${\rm GL}_N(E)$. Then, for every irreducible unitary generic representation $\tau$ of ${\rm GL}_m(E)$ we have that
\begin{equation*}
   \gamma(s,\pi \times \tau,\psi_E) = \gamma(s,\Pi \times \tau,\psi_E).
\end{equation*}
Furthermore, such a local base change lift $\Pi$ is unique.
\end{lemma}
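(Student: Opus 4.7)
The plan is to prove the extension from supercuspidal twists to arbitrary unitary generic twists by a two-step descent using the classification of unitary generic representations of $\mathrm{GL}_m(E)$, followed by an appeal to Henniart's local converse theorem for uniqueness.

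First, I would handle the case of a discrete series twist. Let $\delta$ be a unitary discrete series representation of $\mathrm{GL}_e(E)$ for some $e \leq m$. By the Zelevinsky classification recalled in \eqref{zclass}, there is a supercuspidal $\rho$ of $\mathrm{GL}_{e/t}(E)$ and a positive integer $t$ such that $\delta$ is the generic constituent of $\mathrm{Ind}(\rho\nu^{-(t-1)/2} \otimes \cdots \otimes \rho\nu^{(t-1)/2})$. Applying multiplicativity of $\gamma$-factors on the unitary side, Property~(iv) of Theorem~\ref{mainthmU}, together with the analogous multiplicativity for Rankin-Selberg $\gamma$-factors of general linear groups, and Property~(ix) on twists by unramified characters, I obtain
\begin{equation*}
   \gamma(s,\pi \times \delta,\psi_E) = \prod_{i=0}^{t-1} \gamma(s + \tfrac{1-t}{2}+i,\pi \times \rho,\psi_E)
\end{equation*}
and the same identity for $\Pi$ in place of $\pi$. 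Since $\rho$ is supercuspidal, each factor on the right agrees by hypothesis, so the $\gamma$-factors for $\delta$ agree.

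Next, for a general irreducible unitary generic $\tau$ of $\mathrm{GL}_m(E)$, I would invoke Tadi\'c's classification \eqref{unitaryclassificationGL}, which presents $\tau$ as a full induction from a product of discrete series twisted by unramified characters $\nu^{\pm t_i}$ with $0 < t_i < 1/2$. A second application of multiplicativity on both sides, combined with Property~(ix), reduces the equality of $\gamma$-factors for $\pi \times \tau$ and $\Pi \times \tau$ to the equality of $\gamma$-factors for $\pi \times \delta_i \nu^{\pm t_i}$ and $\Pi \times \delta_i \nu^{\pm t_i}$, which is the discrete series case just established.

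For uniqueness, suppose $\Pi_1$ and $\Pi_2$ are both local base change lifts of $\pi$. Then for every supercuspidal $\tau$ of $\mathrm{GL}_m(E)$ with $1 \leq m \leq N-1$,
\begin{equation*}
   \gamma(s,\Pi_1 \times \tau,\psi_E) = \gamma(s,\pi \times \tau,\psi_E) = \gamma(s,\Pi_2 \times \tau,\psi_E).
\end{equation*}
Henniart's local converse theorem \cite{He1993}, valid over any non-archimedean local field, then gives $\Pi_1 \cong \Pi_2$. The central characters match via the case $m=1$ and compatibility with class field theory, Property~(iii), so the hypotheses of the converse theorem are satisfied.

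The main obstacle I anticipate is bookkeeping in the multiplicativity step: Theorem~\ref{mainthmU}(iv) is stated for products over a finite indexing set $\Sigma$ attached to the inducing data, and I must verify that the decompositions on the unitary side $\gamma(s,\pi \times \delta,\psi_E)$ and the $\mathrm{GL}_N \times \mathrm{GL}_m$ side $\gamma(s,\Pi \times \delta,\psi_E)$ produce matching indexing sets and shifts so that the term-by-term comparison with supercuspidal twists is actually legal.
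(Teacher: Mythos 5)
Your proposal is correct and follows essentially the same route as the paper: reduce to discrete series twists via Tadi\'c's classification \eqref{unitaryclassificationGL} and multiplicativity, reduce discrete series to supercuspidal twists via the Zelevinsky segment \eqref{zclass} and multiplicativity again, and conclude uniqueness from Theorem~1.1 of \cite{He1993}. The only difference is the order of the two reduction steps, which is immaterial; your remark on matching central characters via the $m=1$ case is a reasonable supplement that the paper leaves implicit.
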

\begin{proof}
Given an irreducible unitary generic representation $\tau$ of ${\rm GL}_m(E)$, write $\tau$ in the form given by \eqref{unitaryclassificationGL}. Then, multiplicativity of $\gamma$-factors gives
\begin{align*}
   \gamma(s,\pi \times \tau,\psi_E) =  & \prod_{i=1}^k \gamma(s,\pi \times \delta_{d+i},\psi_E) \\
   							 &\prod_{j=1}^{d} \gamma(s + t_j,\pi \times \delta_{j},\psi_E) \gamma(s - t_j,\pi \times \delta_{j},\psi_E).
\end{align*}
And, similarly
\begin{align*}
   \gamma(s,\Pi \times \tau,\psi_E) =  & \prod_{i=1}^k \gamma(s,\Pi \times \delta_{d+i},\psi_E) \\
   							 &\prod_{j=1}^{d} \gamma(s + t_j,\Pi \times \delta_{j},\psi_E) \gamma(s - t_j,\Pi \times \delta_{j},\psi_E).
\end{align*}
In this way, we reduce the problem to proving the relation
\begin{equation*}
   \gamma(s,\pi \times \delta,\psi_E) = \gamma(s,\Pi \times \delta,\psi_E)
\end{equation*}
for discrete series representations $\delta$ of ${\rm GL}_m(E)$.

Now, we write the representation $\delta$ as the generic constituent of
\begin{equation*}
   {\rm Ind} (\rho\nu^{-\frac{t-1}{2}} \otimes \cdots \otimes \rho\nu^{\frac{t-1}{2}}),
\end{equation*}
as in \eqref{zclass}. Then, using the multiplicativity property of $\gamma$-factors, we obtain
\begin{align*}
   \gamma(s,\pi \times \delta,\psi_E) &= \prod_{l=0}^{t-1} \gamma(s - \frac{t-1}{2} + l,\pi \times \rho,\psi_E) \\
   							&= \prod_{l=0}^{t-1} \gamma(s - \frac{t-1}{2} + l,\Pi \times \rho,\psi_E) \\
							&= \gamma(s,\Pi \times \delta,\psi_E).
\end{align*}
This shows that $\Pi$ satisfies the desired relation involving $\gamma$-factors. That $\Pi$ is unique then follows from Theorem~1.1 of \cite{He1993}.
\end{proof}

\begin{lemma}\label{Leequality}
Let $\pi$ be a generic representation of ${\rm U}_N(F)$ and suppose it has a local Langlands functorial lift $\Pi$ of ${\rm GL}_N(E)$. Then, for every irreducible unitary generic representation $\tau$ of ${\rm GL}_m(E)$ we have that
\begin{align*}
   L(s,\pi \times \tau) &= L(s,\Pi \times \tau) \\
   \varepsilon(s,\pi \times \tau,\psi_E) &= \varepsilon(s,\Pi \times \tau,\psi_E).
\end{align*}
\end{lemma}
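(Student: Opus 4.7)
The plan is to reduce to the tempered case and then invoke Property~(vii) of Theorem~\ref{mainthmU}, which characterizes tempered $L$-functions as the reciprocal of the numerator of the corresponding $\gamma$-factor, together with the analogous statement for Rankin--Selberg $L$-functions of general linear groups.

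First I would apply the Tadi\'c classification \eqref{unitaryclassificationGL} to $\tau$ and Langlands' classification to $\pi$, writing $\pi$ as the generic constituent of ${\rm Ind}(\pi_0 \otimes \chi)$ with $\pi_0$ tempered generic and $\chi$ in the Langlands situation. Property~(x) of Theorem~\ref{mainthmU} then decomposes $L(s,\pi \times \tau) = \prod_{j \in \Sigma} L(s,\xi_j,r_1)$ into a product of tempered $L$-factors indexed by the discrete-series constituents of $\tau$ paired with $\pi_0$, each carrying an appropriate unramified twist. The analogous Langlands decomposition for the Rankin--Selberg $L$-function on ${\rm GL}_N(E) \times {\rm GL}_m(E)$ produces a parallel factorization $L(s,\Pi \times \tau) = \prod_{j \in \Sigma} L(s,\xi_j')$, where the tempered data $\xi_j'$ are obtained from the tempered components of the base-change lift of $\pi_0$ paired with the same discrete-series components of $\tau$. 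This reduces the problem to comparing $L(s,\pi_0 \times \delta)$ with $L(s,\Pi_0 \times \delta)$ for $\pi_0$ tempered generic, $\Pi_0$ its base change, and $\delta$ a discrete series of some ${\rm GL}_{n_j}(E)$.

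In this tempered setting, Property~(vii) gives $L(s,\pi_0 \times \delta) = P(q_F^{-s})^{-1}$, where $P(q_F^{-s})$ is the numerator of $\gamma(s,\pi_0 \times \delta,\psi_E)$, and likewise the classical Rankin--Selberg theory pins down $L(s,\Pi_0 \times \delta)$ as the reciprocal of the numerator of $\gamma(s,\Pi_0 \times \delta,\psi_E)$. Since Lemma~\ref{supercusptogenericGL} already supplies the equality of these two $\gamma$-factors for every unitary generic representation on the general-linear side, the two numerator polynomials coincide, yielding $L(s,\pi_0 \times \delta) = L(s,\Pi_0 \times \delta)$. Property~(viii) then immediately gives the matching equality of tempered $\varepsilon$-factors via the formula $\varepsilon = \gamma \cdot L(s,\cdot)/L(1-s,\widetilde{\cdot})$, and Property~(x) propagates both equalities back to the general case.

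The hard part will be verifying that the tempered Langlands data of $\pi$ transfer precisely to those of $\Pi$, so that the two Langlands decompositions pair up term-by-term. In the generic case this rests on Mui\'c's standard module conjecture \cite{Mu2001} together with the fact that base change preserves tempered data through the $\gamma$-factor identities of Lemma~\ref{supercusptogenericGL} applied at each tempered stratum; without this compatibility the reduction in the first step would only be a cosmetic rearrangement and would not yield parallel products of tempered $L$-functions on the two sides.
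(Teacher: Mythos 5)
Your overall skeleton (Tadi\'c classification of $\tau$, Langlands classification of $\pi$, reduction to tempered building blocks, then Property~(vii) plus the $\gamma$-factor equality of Lemma~\ref{supercusptogenericGL}) is the right one, but the step you flag as ``the hard part'' is not merely hard --- as you propose to resolve it, it is circular. At the stage of Lemma~\ref{Leequality}, the lift $\Pi$ is only known to be a generic irreducible representation matching $\gamma$-factors against supercuspidal twists; you do not yet know that $\Pi$ is tempered when $\pi$ is (that is Lemma~\ref{temperedness}, whose proof uses the present lemma), nor that the tempered Langlands data of $\pi$ transfer to those of $\Pi$ (that is the content of Propositions~\ref{dslift}, \ref{templift} and Theorem~\ref{genericBC}, all downstream of this lemma). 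Consequently your assertion that ``classical Rankin--Selberg theory pins down $L(s,\Pi_0\times\delta)$ as the reciprocal of the numerator of $\gamma(s,\Pi_0\times\delta,\psi_E)$'' is exactly the point at issue: for a unitary generic but possibly non-tempered $\Pi_0$ the Rankin--Selberg $L$-function is defined through the Zelevinsky--Tadi\'c data of $\Pi_0$, and it coincides with the reciprocal of the numerator of the $\gamma$-factor only if one rules out cancellation between numerator and denominator when the tempered pieces are multiplied together.

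The missing idea, and the way the paper proceeds, is to work with the two sides independently rather than matching their decompositions term by term. One writes $\Pi$ in the Tadi\'c form \eqref{Leequalityeq1} with exponents $0<r_f\leq\cdots\leq r_1<1/2$, expands the common $\gamma$-factor $\gamma(s,\pi\times\rho,\psi_E)=\gamma(s,\Pi\times\rho,\psi_E)$ into tempered factors $\gamma(s\pm r_j,\xi_j\times\rho,\psi_E)$, and sets $P(q_F^{-s})^{-1}$ equal to the product of the corresponding tempered $L$-functions and $Q(q_F^{-s})^{-1}$ equal to the product of their duals. The Proposition on p.~451 of \cite{JaPSSh1983} gives holomorphy of each discrete-series factor $L(s,\xi_j\times\rho)$ on $\Re(s)>0$, and the bound $r_j<1/2$ then forces $P$ to be non-vanishing on $\Re(s)\geq 1/2$ and $Q$ on $\Re(s)\leq 1/2$; since these zero regions are disjoint, no cancellation occurs, the numerator of the $\gamma$-factor is genuinely $P(q_F^{-s})$, and Property~(vii) on the unitary side together with multiplicativity of Rankin--Selberg $L$-functions on the general-linear side yields $L(s,\pi\times\rho)=P(q_F^{-s})^{-1}=L(s,\Pi\times\rho)$. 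Without this non-cancellation argument your identification of the two $L$-functions does not go through, and with it you no longer need any compatibility between the Langlands data of $\pi$ and of $\Pi$.
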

\begin{proof}
As in the previous Lemma, begin by writing the unitary generic representation $\tau$ of ${\rm GL}_m(E)$ as in \eqref{unitaryclassificationGL}, with discrete series as inducing data. And, write
\begin{equation*}
   \tau_0 = {\rm Ind}(\delta_{d+1} \otimes \cdots \otimes \delta_{d+k}).
\end{equation*}
Then, using Properties~(ix) and (x) of Theorem~\ref{mainthmU}, we obtain
\begin{align*}
   L(s,\pi \times \tau) & = L(s,\pi \times \tau_0) \prod_{i=1}^d L(s+t_i,\pi \times \delta_i) L(s-t_i,\pi \times \delta_i), \\
   \varepsilon(s,\pi \times \tau,\psi_E) & = \varepsilon(s,\pi \times \tau_0,\psi_E) \prod_{i=1}^d \varepsilon(s+t_i,\pi \times \delta_i,\psi_E) \varepsilon(s-t_i,\pi \times \delta_i,\psi_E).
\end{align*}
Now, for the factors involving $\tau_0$, we use Langlands classification to express $\pi$ as a Langlands quotient of
   \begin{equation*}
      {\rm Ind} (\pi_1 \otimes \cdots \otimes \pi_e \otimes \pi_0).
   \end{equation*}
Then $\pi$ is the generic constituent, where ${\bf P}$ is the parabolic subgroup of ${\rm U}_N$ with Levi ${\bf M} = \prod_{i=1}^e {\rm Res}\,{\rm GL}_{m_i} \times {\rm U}_{N_0}$ and each $\pi_i$ is a quasi-tempered representation of ${\rm GL}_{m_i}(E)$. Now, Properties~(ix) and (x) of Theorem~\ref{mainthmU} in this situation directly give
\begin{align*}
   L(s,\pi \times \tau_0) & = L(s,\pi_0 \times \tau_0) \prod_{i=1}^e L(s,\pi_i \times \tau_0), \\
   \varepsilon(s,\pi \times \tau_0,\psi_E) & = \varepsilon(s,\pi_0 \times \tau_0,\psi_E) \prod_{i=1}^e \varepsilon(s,\pi_i \times \tau_0,\psi_E).
\end{align*}
All representations involved in the previous two equations are quasi-tempered. Each individual local factor on the RHS of these equations can be shifted by Property~(ix) of Theorem~7.3, which leads to an L-functions and $\varepsilon$-factor involving tempered representations. The connection to $\gamma$-factors, and the previous lemma, is now made via Property~(viii) of Theorem~7.3 in addition to Property~(xi) of \S~7.5. Now, multiplicativity of $\gamma$-factors leads to
\begin{align*}
   L(s,\pi \times \tau_0) & = \prod_{l=1}^k L(s,\pi \times \delta_{d+l}), \\
   \varepsilon(s,\pi \times \tau_0,\psi_E) & = \prod_{l=1}^k \varepsilon(s,\pi \times \delta_{d+l},\psi_E).
\end{align*}
The properties of \cite{HeLo2013a}, for example, can be applied to Rankin-Selberg factors to obtain
\begin{align*}
   L(s,\Pi \times \tau) & = \prod_{l=1}^k L(s,\Pi \times \delta_{d+l}) \prod_{i=1}^d L(s+t_i,\pi \times \delta_i) L(s-t_i,\pi \times \delta_i), \\
   \varepsilon(s,\Pi \times \tau,\psi_E) & = \prod_{l=1}^k \varepsilon(s,\Pi \times \delta_{d+l},\psi_E) \prod_{i=1}^d \varepsilon(s+t_i,\pi \times \delta_i,\psi_E) \varepsilon(s-t_i,\pi \times \delta_i,\psi_E).
\end{align*}   
Where we reduced to proving
\begin{align*}
   L(s,\pi \times \rho) &= L(s,\Pi \times \rho) \\
   \varepsilon(s,\pi \times \rho,\psi_E) &= \varepsilon(s,\Pi \times \rho,\psi_E)
\end{align*}
for discrete series representations $\rho$ of ${\rm GL}_m(E)$. Indeed, we now address this case in what follows.

For the irreducible unitary generic representation $\Pi$ of ${\rm GL}_N(E)$, we use \eqref{unitaryclassificationGL} to write
\begin{equation}\label{Leequalityeq1}
   \Pi = {\rm Ind}(\xi_1 \nu^{r_1} \otimes \cdots \otimes \xi_f \nu^{r_f} \otimes \xi_{f+1} \otimes \cdots \otimes \xi_{f+h} \otimes 
   			   \xi_f\nu^{-r_f} \otimes \cdots \otimes \xi_1 \nu^{-r_1}),
\end{equation}
with each $\xi_i$ a discrete series and $0 < r_f \leq \cdots \leq r_1 < 1/2$.
\begin{align*}
   \gamma(s,\pi \times \rho,\psi_E) =  & \prod_{i=1}^h \gamma(s,\xi_{f+i} \times \rho,\psi_E) \\
   							 &\prod_{j=1}^{f} \gamma(s + r_j,\xi_{j} \times \rho,\psi_E) \gamma(s - r_j,\xi_{j} \times \rho,\psi_E).
\end{align*}
Each $\gamma$-factor on the right hand side of the previous expression involves discrete series (hence tempered) representations. Thus, each factor has a corresponding $L$-function and root number via Property~(viii) of Theorem~\ref{mainthm}. The product
\begin{equation*}
   P(q_F^{-s})^{-1} = \prod_{i=1}^h L(s,\xi_{f+i} \times \rho) \prod_{j=1}^{f} L(s + r_j,\xi_{j} \times \rho) L(s - r_j,\xi_{j} \times \rho).
\end{equation*}
From the Proposition on p.~451 of \cite{JaPSSh1983}, each $L(s,\xi_i \times \rho)$ has no poles for $\Re(s) > 0$. And, since $r_j < 1/2$, the function $P(q_F^{-s})$ is non-zero for $\Re(s) \geq 1/2$. Now, the product
\begin{equation*}
   Q(q_F^{-s})^{-1} = \prod_{i=1}^h L(1-s,\tilde{\xi}_{f+i} \times \tilde{\rho}) \prod_{j=1}^{f} L(1- s - r_j,\tilde{\xi}_{j} \times \tilde{\rho}) L(1 - s - r_j,\tilde{\xi}_{j} \times \tilde{\rho})
\end{equation*}
is in turn non-zero for $\Re(s) \leq 1/2$. Then, Property~(viii) of Theorem~\ref{mainthm} gives the relation
\begin{equation*}
   \gamma(s,\pi \times \rho,\psi_E) \,\ddot{\sim}\, \dfrac{P(q_F^{-s})}{Q(q_F^{-s})},
\end{equation*}
which is an equality up to a monomial in $q_F^{-s}$. More precisely, the monomial is the root number, which we can decompose as
\begin{align*}
   \varepsilon(s,\pi \times \rho,\psi_E) = & \prod_{i=1}^h \varepsilon(s,\xi_{f+i} \times \rho,\psi_E) \\
							     &\prod_{j=1}^{f} \varepsilon(s + r_j,\xi_{j} \times \rho,\psi_E) \varepsilon(s - r_j,\xi_{j} \times \rho,\psi_E) \\
							  = &\varepsilon(s,\Pi \times \rho,\psi_E).
\end{align*}

Notice that the regions where $P(q_F^{-s})$ and $Q(q_F^{-s})$ may be zero do not intersect. Hence, there are no cancellations involving the numerator and denominator of $\gamma(s,\pi \times \rho,\psi_E)$. This shows that
\begin{equation*}
   L(s,\pi \times \rho) = \dfrac{1}{P(q_F^{-s})} = L(s,\Pi \times \rho),
\end{equation*}
where the second equality follows using the form \eqref{Leequalityeq1} of $\Pi$ and the multiplicativity property of Rankin-Selberg $L$-functions.
\end{proof}

\begin{lemma}\label{temperedness}
Let $\pi$ be a tempered generic representation of ${\rm U}_N(F)$ and suppose it has a local Langlands functorial lift $\Pi$ of ${\rm GL}_N(E)$. Then, $\Pi$ is also tempered.
\end{lemma}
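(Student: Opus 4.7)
The plan is to use Tadi\'c's classification of irreducible unitary generic representations of ${\rm GL}_N(E)$ in combination with Lemma~\ref{Leequality} and Property~(vii) of Theorem~\ref{mainthmU} to force the Langlands parameters of $\Pi$ to lie on the unitary axis.

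First, I would apply the classification \eqref{unitaryclassificationGL}: since $\Pi$ is an irreducible unitary generic representation of ${\rm GL}_N(E)$ (unitarity of $\Pi$ follows either from preservation of local factors together with the requirement that $\Pi$ have a unitary central character matching the Base Change of $\omega_\pi$, or can be built in as part of the definition of the lift), we may write
\begin{equation*}
   \Pi = {\rm Ind}\bigl(\xi_1 \nu^{r_1} \otimes \cdots \otimes \xi_f \nu^{r_f} \otimes \xi_{f+1} \otimes \cdots \otimes \xi_{f+h} \otimes \xi_f \nu^{-r_f} \otimes \cdots \otimes \xi_1 \nu^{-r_1}\bigr),
\end{equation*}
where each $\xi_i$ is a discrete series of ${\rm GL}_{n_i}(E)$ and $0 < r_f \le \cdots \le r_1 < 1/2$. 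Note that $\Pi$ is tempered if and only if $f = 0$ (i.e.\ no strictly positive shifts appear).

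Second, I would suppose for contradiction that $f \ge 1$, so $r_1 > 0$, and take the test representation $\tau = \tilde{\xi}_1$, which is discrete series and hence irreducible unitary generic. Lemma~\ref{Leequality} then yields
\begin{equation*}
   L(s,\pi \times \tilde{\xi}_1) \; = \; L(s,\Pi \times \tilde{\xi}_1).
\end{equation*}
On the right-hand side, multiplicativity of Rankin--Selberg $L$-functions for ${\rm GL}_N \times {\rm GL}_{n_1}$ extracts, from the constituent $\xi_1 \nu^{-r_1}$ of $\Pi$, the factor $L(s - r_1, \xi_1 \times \tilde{\xi}_1)$. Since $\xi_1$ is discrete series, $L(s, \xi_1 \times \tilde{\xi}_1)$ has a pole at $s = 0$ (the standard Jacquet--Piatetski-Shapiro--Shalika fact), so $L(s,\Pi \times \tilde{\xi}_1)$ has a pole at $s = r_1 > 0$. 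On the left-hand side, the quadruple $(E/F,\pi,\tilde{\xi}_1,\psi)$ is tempered, and Property~(vii) of Theorem~\ref{mainthmU} forces $L(s,\pi \times \tilde{\xi}_1)$ to be holomorphic on $\Re(s) > 0$. This contradiction shows $f = 0$, so $\Pi$ is parabolically induced from the tempered representation $\xi_1 \otimes \cdots \otimes \xi_h$ of a Levi, and is therefore itself tempered.

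The main obstacle is guaranteeing that the pole at $s = r_1$ coming from $\xi_1 \nu^{-r_1}$ is not cancelled on the right-hand side. This is in fact automatic: Rankin--Selberg $L$-functions on general linear groups are reciprocals of polynomials in $q_F^{-s}$, so they have no zeros; poles from distinct constituents can only accumulate. A secondary subtlety is the input that $\Pi$ is unitary (so that the classification \eqref{unitaryclassificationGL} applies in the form stated), but this follows from the compatibility of Base Change with central characters via Hilbert 90, together with the fact that preservation of $\gamma$-factors transports the unitarity of $\pi$ across the lift.
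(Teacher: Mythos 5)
Your proof is correct and follows essentially the same route as the paper: write $\Pi$ in Tadi\'c's form, twist by the contragredient of a discrete series constituent carrying a nonzero exponent, and play the pole of $L(s-r_{1},\xi_{1}\times\tilde{\xi}_{1})$ at $s=r_{1}>0$ against the holomorphy of tempered $L$-functions from Property~(vii) via Lemma~\ref{Leequality}. Your added remark that the pole cannot be cancelled because Rankin--Selberg $L$-functions on general linear groups have no zeros is a point the paper leaves implicit, and is a welcome clarification.
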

\begin{proof}
We proceed by contradiction. If $\Pi$ is not tempered, then there is at least one $r_{i_0} > 0$ in the decomposition \eqref{Leequalityeq1} of $\Pi$. From the previous lemma, we know that
\begin{equation*}
   L(s,\pi \times \rho) = L(s,\Pi \times \rho)
\end{equation*}
is valid for any discrete series representation $\rho$ of ${\rm GL}_m(E)$. Take $\rho = \tilde{\xi}_{i_0}$. From the holomorphy of tempered $L$-functions, $L(s,\pi \times \rho)$ has no poles in the region $\Re(s) > 0$. On the other hand
\begin{equation*}
   L(s,\Pi \times \tilde{\xi}_{i_0}) = \prod_{i=1}^h L(s,\xi_{f+i} \times \tilde{\xi}_{i_0}) \prod_{j=1}^{f} L(s + r_j,\xi_{j} \times \tilde{\xi}_{i_0}) L(s - r_j,\xi_{j} \times \tilde{\xi}_{i_0})
\end{equation*}
has a pole at $s = r_{i_0}$, due to the term $L(s - r_{i_0},\xi_{i_0} \times \tilde{\xi}_{i_0})$. And, $L(s,\Pi \times \tilde{\xi}_{i_0})$ inherits this pole, which gives the contradiction. Hence, it must be the case that $f=0$ in equation~\eqref{Leequalityeq1} and we have that
\begin{equation*}
   \Pi = {\rm Ind}(\xi_{1} \otimes \cdots \otimes \xi_{h})
\end{equation*}
is thus tempered.
\end{proof}

\subsection{A global to local result} We prove that global Base Change is compatible with local Base Change. At unramified and split places it is that of \S\S~\ref{unramifiedBCsection} and \ref{splitBCsection}; the central character obtained via \eqref{hilbert90}.

\begin{proposition}\label{globallocal}
Given a globally generic cuspidal automorphic representation $\pi = \otimes' \pi_v$ of ${\rm U}_N(\mathbb{A}_k)$, let ${\rm BC}(\pi) = \Pi = \otimes' \Pi_v$ be the base change lift of Theorem~\ref{weakBC}. Then, for every $v$ we have:
\begin{itemize}
   \item[(i)] $\Pi_v$ is the uniquely determined local base change of $\pi_v$;
   \item[(ii)] $\Pi_v$ is unitary with central character $\omega_{\Pi_v}$ of $K_v^\times$ obtained from the character $\omega_{\pi_v}$ of $K_v^1$ via Hilbert's theorem~90.
\end{itemize}
\end{proposition}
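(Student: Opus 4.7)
The plan is to prove (ii) first, since (i) will rely on the matching of central characters through a stability argument.

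For (ii), the central character $\omega_\pi$ is a unitary Hecke character of $K^1(k)\backslash K^1(\mathbb{A}_k)$. Via Hilbert's theorem~90, equation~\eqref{hilbert90}, it lifts to a unitary Hecke character $\Omega$ of $K^\times\backslash\mathbb{A}_K^\times$. By the construction of the local BC at unramified inert places and at split places (Definitions~\ref{unramifiedBC} and \ref{splitBC}), the local component $\omega_{\Pi_v}$ agrees with $\Omega_v$ at every $v$ outside the finite set of ramified places of $\pi$. Now $\omega_\Pi$ (the central character of the globally generic isobaric sum $\Pi=\Pi_1\boxplus\cdots\boxplus\Pi_d$) is itself a Hecke character of $\mathbb{A}_K^\times/K^\times$, and two Hecke characters that coincide at almost every place are equal. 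Hence $\omega_{\Pi_v}=\Omega_v$ at every $v$, and in particular $\omega_\Pi$ is unitary.

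For (i), Lemma~\ref{supercusptogenericGL} reduces everything to showing that, for each place $v_0$ and each supercuspidal $\tau_{v_0}$ of $\mathrm{GL}_m(K_{v_0})$,
\begin{equation*}
   \gamma(s,\pi_{v_0}\times\tau_{v_0},\psi_{v_0}) \;=\; \gamma(s,\Pi_{v_0}\times\tau_{v_0},\psi_{v_0}).
\end{equation*}
The idea is to globalize $\tau_{v_0}$. Applying Lemma~\ref{hllemma} to $\mathrm{GL}_m$ over the function field $K$, produce a cuspidal automorphic representation $\tau$ of $\mathrm{GL}_m(\mathbb{A}_K)$ whose component at $v_0$ is $\tau_{v_0}$, with ramification only at $v_0$ and at one auxiliary place. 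Let $S$ be a finite set of places of $k$ containing $v_0$, the auxiliary place, and every place where $\pi$ or $\tau$ is ramified. Outside $S$ the local $\gamma$-factors agree by the unramified BC of \S\ref{unramifiedBCsection} and by the construction at split places of \S\ref{splitBCsection}, so the partial $L$-functions $L^S(s,\pi\times\tau)$ and $L^S(s,\Pi\times\tau)$ coincide.

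The key step is to isolate $v_0$ by stability. Using Grunwald--Wang, choose a Hecke character $\eta:K^\times\backslash\mathbb{A}_K^\times\to\mathbb{C}^\times$ with $\eta_{v_0}$ trivial and $\eta_v$ highly ramified for every $v\in S\setminus\{v_0\}$, and replace $\tau$ by $\tau\otimes\eta$. The global functional equations for the Langlands--Shahidi $L$-function $L(s,\pi\times\tau\eta)$ (Theorem~\ref{mainthmU}) and for the Rankin--Selberg $L$-function $L(s,\Pi\times\tau\eta)$ on $\mathrm{GL}_N\times\mathrm{GL}_m$ give
\begin{equation*}
   \prod_{v\in S}\gamma(s,\pi_v\times\tau_v\eta_v,\psi_v) \;=\; \prod_{v\in S}\gamma(s,\Pi_v\times\tau_v\eta_v,\psi_v).
\end{equation*}
At each $v\in S\setminus\{v_0\}$, Lemma~\ref{stableGL} on the LS side and the corresponding stability of Rankin--Selberg $\gamma$-factors on the GL side both reduce the local $\gamma$-factor to something depending only on $\eta_v$ and the central characters; by (ii) those central characters match, so the factors at $v\in S\setminus\{v_0\}$ cancel in the product, yielding the required equality at $v_0$. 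The split case of (i) follows by the same recipe, taking $\tau_{v_0}=\tau_{v_0}^{(1)}\otimes\tau_{v_0}^{(2)}$ and varying the two factors independently.

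The main obstacle is the coordinated globalization: one must simultaneously (a) realize the given supercuspidal $\tau_{v_0}$ as a component of a cuspidal $\tau$ with minimal additional ramification, (b) produce $\eta$ via Grunwald--Wang with the correct mixture of triviality at $v_0$ and high ramification elsewhere in $S$, and (c) verify that stability on both the LS and the Rankin--Selberg sides really pins down the $v\neq v_0$ local factors to the same value once central characters agree. Once these pieces are in place, uniqueness of $\Pi_{v_0}$ as the local Langlands lift is immediate from Henniart's characterization, as recorded in Lemma~\ref{supercusptogenericGL}.
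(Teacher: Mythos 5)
Your proposal is correct and follows essentially the same route as the paper: globalize the supercuspidal twist via Lemma~\ref{hllemma}, isolate $v_0$ with a Grunwald--Wang character that is trivial at $v_0$ and highly ramified on $S\setminus\{v_0\}$, compare the two global functional equations using the stable form of Lemma~\ref{stableGL} at the remaining ramified places, and invoke Lemma~\ref{supercusptogenericGL} for uniqueness, with (ii) obtained by matching two Hecke characters at almost all places. Your choice to establish (ii) before (i), so that the central characters are already known to agree when the stability step is applied to the actual components $\Pi_v$ of the isobaric sum, is a sensible (and slightly cleaner) ordering of the same argument.
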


\begin{proof}
The base change lift $\Pi = {\rm BC}(\pi)$, being globally generic, has every local $\Pi_v$ generic. At every place where $\pi_v$ is unramified, $\Pi_v$ is a constituent of an unramified principal series representation by construction. Since $\Pi_v$ is generic, it has to be the unique generic constituent of the principal series representation. Thus
\begin{equation}\label{globallocaleq1}
   \Pi_v = {\rm BC}(\pi_v) 
\end{equation}
is the unramified local base change of \S~\ref{unramifiedBCsection}, if $v$ is inert, and that of \S~\ref{splitBCsection}, if $v$ is split.

Fix a place $v_0$ of $k$ which remains inert in $K$. We wish to show that
\begin{equation*}
   \gamma(s,\pi_{v_0} \times \tau_0,\psi_{v_0}) = \gamma(s,\Pi_{v_0} \times \tau_0,\psi_{v_0})
\end{equation*}
for every generic $(K_{v_0}/k_{v_0},\pi_{v_0},\tau_0,\psi_{v_0}) \in \mathfrak{ls}({\rm U}_N,{\rm GL}_m,p)$ with $\tau_0$ supercuspidal. Let $S$ be a finite set of places of $k$, not containing $v_0$, such that $\pi_v$ is unramified for $v \notin S \cup \left\{ v_0 \right\}$. Via Property~(v) of Theorem~\ref{mainthmU}, we may assume that $\psi_{v_0}$ is the component of a global additive character $\psi = \otimes \psi_v: K \backslash \mathbb{A}_K \rightarrow \mathbb{C}^\times$.

Via Lemma~\ref{hllemma}, there is a cuspidal automorphic representation $\tau = \otimes' \tau_v$ of ${\rm GL}_m(\mathbb{A}_K)$ such that $\tau_{v_0} = \tau_0$ and $\tau_v$ is unramified for all $v \notin S$. Now, using the Grunwald-Wang theorem of class field theory \cite{ArTa}, there exists a Gr\"o\ss encharakter $\eta: K^\times \backslash \mathbb{A}_K^\times \rightarrow \mathbb{C}^\times$ such that $\eta_{v_0} = 1$ and $\eta_v$ is highly ramified for all $v \in S$ such that $v$ remains inert in $K$.

At places $v \in S$, which remain inert in $K$, we have the stable form of Lemma~\ref{stableGL}. Indeed, after twisting $\tau_v$ by the highly ramified character $\eta_v$ we have
\begin{equation*}
   \gamma(s,\pi_v \times (\tau_v \cdot \eta_v),\psi_v) = \gamma(s,\Pi_v \times (\tau_v \cdot \eta_v),\psi_v).
\end{equation*}
At split places $v \in S$, we write $\tau_v = \tau_{1,v} \otimes \tau_{2,v}$ as a representation of ${\rm Res}_{K_v/k_v}{\rm GL}_m(k_v) = {\rm GL}_m(k_v) \times {\rm GL}_m(k_v)$ with each $\tau_{1,v}$ and $\tau_{2,v}$ supercuspidal. From \S~\ref{split}~(ii), the Langlands-Shahidi $\gamma$-factors are given by
   \begin{equation*}
      \gamma(s,\pi_v \times \tau_v,\psi_v) = \gamma(s,\pi_v \times \tau_{1,v},\psi_v) \gamma(s,\tilde{\pi}_v \times \tau_{2,v},\psi_v),
   \end{equation*}
which are compatible with the split Base Change map of \S~\ref{splitBCsection}. Now, consider $\tau' = \tau \otimes \eta$. For $(K/k,\pi,\tau',\psi) \in \mathcal{LS}({\rm U}_N,{\rm GL}_m,p)$ we have the global functional equation
\begin{equation*}
   L^S(s,\pi \times \tau') = \gamma(s,\pi_{v_0} \times \tau_{v_0},\psi_v) \prod_{v \in S -\left\{ v_0 \right\}} \gamma(s,\pi_v \times (\tau_v \cdot \eta_v),\psi_v) L^S(1-s,\tilde{\pi} \times \tilde{\tau}'),
\end{equation*}
and for $(K/k,\Pi,\tau',\psi) \in \mathcal{LS}({\rm GL}_N,{\rm GL}_m,p)$ the functional equation for Rankin-Selberg products reads
\begin{equation*}
   L^S(s,\Pi \times \tau') = \gamma(s,\Pi_{v_0} \times \tau_{v_0},\psi_v) \prod_{v \in S -\left\{ v_0 \right\}} \gamma(s,\Pi_v \times (\tau_v \cdot \eta_v),\psi_v) L^S(1-s,\tilde{\Pi} \times \tilde{\tau}').
\end{equation*}
At unramified places, it follows from equation~\eqref{globallocaleq1} that local $L$-functions agree. This gives equality of the corresponding partial $L$-functions appearing in the above two functional equations. We thus obtain
\begin{equation*}
   \gamma(s,\pi_{v_0} \times \tau_{v_0},\psi_{v_0}) = \gamma(s,\Pi_{v_0} \times \tau_{v_0},\psi_{v_0}).
\end{equation*}
Since our choice of supercuspidal $\tau_0 = \tau_{v_0}$ of ${\rm GL}_m(E)$ was arbitrary, we have that $\Pi_0$ is a local base change for $\pi_0$. Uniqueness follows from Lemma~\ref{supercusptogenericGL}. Proving property~(i) as desired.

Note that the central character $\omega_\pi = \otimes \omega_{\pi_v}$ of $\pi$ is an automorphic representation of ${\rm U}_1(\mathbb{A}_k)$. Now, let $\chi = \otimes \chi_v$ be defined on ${\rm GL}_1(\mathbb{A}_K)$ from $\omega_\pi$ via Hilbert's theorem~90. Namely, we let $\mathfrak{h}_v : x_v \mapsto x_v\bar{x}_v^{-1}$ be the continuous map of \eqref{hilbert90} and let
\begin{equation*}
   \chi_v = \omega_{\pi_v} \circ \mathfrak{h}_v.
\end{equation*}
at every place $v$ of $k$; we view $K_v$ as a degree-$2$ finite \'etale algebra over $k_v$. Then $\chi: K^\times \backslash \mathbb{A}_K^\times \rightarrow \mathbb{C}^\times$ is a Gr\"o\ss encharakter. Also, $\chi$ and $\omega_{\Pi}$ are continuous Gr\"o\ss encharakters such that $\chi_v$ agrees with $\omega_{\Pi_v}$ at every $v \notin S \cup \left\{ v_0 \right\}$. Hence $\chi = \omega_\Pi$. Thus, the central character of $\Pi_{v_0}$ is $\omega_{\Pi_{v_0}} = \chi_{v_0}$, which is obtained via $\omega_{\pi_{v_0}}$ as in property~(ii) of the Proposition. 
\end{proof}

\begin{remark}\label{localconstBC}
Let $\pi_0$ be a generic representation of ${\rm U}_N(F)$. Suppose there is a globally generic cuspidal automorphic representation $\pi = \otimes' \pi_v$ of ${\rm U}_N(\mathbb{A}_k)$ with $\pi_0 = \pi_{v_0}$ at some place $v_0$ of $k$, where $k_{v_0} = F$. Then it follows from Proposition~\ref{globallocal} that it has a uniquely determined local base change $\Pi_0 = {\rm BC}(\pi_0)$.
\end{remark}

\subsection{Supercuspidal lift} To avoid confusion between local and global notation, from here until the end of \S~\ref{llcU}, we use $\pi_0$ to denote a local representation of a unitary group and $\Pi_0$ for its local base change, when it exists. We use $\pi = \otimes' \pi_v$ for a cuspidal automorphic representation of a unitary group and $\Pi = \otimes' \Pi_v$ for its its corresponding global base change.

\begin{proposition}\label{supercusplift}
Let $\pi_0$ be a generic supercuspidal representation of ${\rm U}_N(F)$. Then $\pi_0$ has a unique local base change
\begin{equation*}
   \Pi_0 = {\rm BC}(\pi_0)
\end{equation*}
to ${\rm GL}_N(E)$. The central character $\omega_{\Pi_0}$ of $\Pi_0$ is the character of $E^\times$ obtained from $\omega_{\pi_0}$ of $E^1$ via Hilbert's theorem~90. Moreover
\begin{equation*}
   \Pi_0 = {\rm Ind}(\Pi_{0,1} \otimes \cdots \otimes \Pi_{0,d}),
\end{equation*}
where each $\Pi_{0,i}$ is a supercuspidal representation of ${\rm GL}_{N_i}(E)$ satisfying: $\Pi_{0,i} \cong \widetilde{\Pi}_{0,i}^\theta$; $\Pi_{0,i} \ncong \Pi_{0,j}$ for $i \neq j$; and
\begin{itemize}
   \item[(i)] $L(s,\Pi_{0,i},r_{\mathcal{A}})$ has a pole at $s = 0$ if $N$ is odd;
   \item[(ii)] $L(s,\Pi_{0,i} \otimes \eta_{E/F},r_{\mathcal{A}})$ has a pole at $s =0$ if $N$ is even.   
\end{itemize}
\end{proposition}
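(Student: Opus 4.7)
The plan is to globalize $\pi_0$ and then descend from the weak global Base Change. By Lemma~\ref{hllemma}, there exists a separable quadratic extension $K/k$ of global function fields and a place $v_0$ with $K_{v_0}/k_{v_0} \cong E/F$, together with a globally generic cuspidal automorphic representation $\pi = \otimes' \pi_v$ of ${\rm U}_N(\mathbb{A}_k)$ with $\pi_{v_0} \cong \pi_0$, unramified outside a finite set. Theorem~\ref{weakBC} then produces a weak Base Change $\Pi = {\rm BC}(\pi)$ which decomposes isobarically as $\Pi = \Pi_1 \boxplus \cdots \boxplus \Pi_d$, with each $\Pi_i$ a unitary cuspidal automorphic representation of ${\rm GL}_{N_i}(\mathbb{A}_K)$. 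Proposition~\ref{globallocal} identifies $\Pi_0 := \Pi_{v_0}$ as the unique local base change of $\pi_0$, with central character obtained from $\omega_{\pi_0}$ via Hilbert's theorem~90; uniqueness, independently of the globalization chosen, is furnished by Lemma~\ref{supercusptogenericGL}. Taking local components at $v_0$ yields the induced structure $\Pi_0 \hookrightarrow {\rm Ind}(\Pi_{0,1} \otimes \cdots \otimes \Pi_{0,d})$, where $\Pi_{0,i} := \Pi_{i,v_0}$.

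Next I would pin down the local structure of each $\Pi_{0,i}$. Since supercuspidal $\pi_0$ is tempered, Lemma~\ref{temperedness} makes $\Pi_0$ tempered and hence each $\Pi_{0,i}$ a discrete series of ${\rm GL}_{N_i}(E)$. To upgrade discrete series to supercuspidal, I would argue by contradiction: if some $\Pi_{0,i} \cong {\rm St}(\rho,t)$ with $t>1$ a proper Zelevinsky segment on a supercuspidal $\rho$, then multiplicativity of $\gamma$-factors (Theorem~\ref{mainthmU}(iv)) combined with Lemma~\ref{Leequality} would make $L(s,\pi_0 \times \rho)$ acquire poles at $t$ shifted arguments, incompatible with the holomorphy of tempered $L$-functions for supercuspidal $\pi_0$ provided by Theorem~\ref{mainthm}(vii). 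Conjugate self-duality $\Pi_{0,i} \cong \widetilde{\Pi}_{0,i}^\theta$ and pairwise inequivalence $\Pi_i \not\cong \Pi_j$ would follow by observing that the unramified Base Change recipe of \S~\ref{unramifiedBCsection} is invariant under the involution $\Pi \mapsto \widetilde{\Pi}^\theta$, so that $\Pi \cong \widetilde{\Pi}^\theta$ globally by strong multiplicity one for ${\rm GL}_N$; the induced permutation on the summands must then be trivial, for otherwise the Rankin-Selberg $L$-function $L(s,\Pi \times \widetilde{\Pi}^\theta)$, decomposed via Corollary~\ref{RSAsaiL} into a product of Asai factors, would exhibit a pole order incompatible with the cuspidality of the $\Pi_i$'s.

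Finally, for the Asai pole conditions I would invoke the Langlands--Shahidi reducibility dichotomy of \S~\ref{BAcomplimentary}. For each conjugate self-dual supercuspidal $\Pi_{0,i}$, form the triple on ${\rm U}_{2N_i}$ (if $N$ is even) or on ${\rm U}_{2N_i+1}$ (if $N$ is odd) with $\Pi_{0,i}$ in a Siegel Levi. Proposition~\ref{Lreducibility} asserts that exactly one of $L(s,\Pi_{0,i},r_{\mathcal{A}})$ and $L(s,\Pi_{0,i}\otimes \eta_{E/F},r_{\mathcal{A}})$ can have a pole at $s=0$, and which one is determined by which ambient unitary group $\Pi_i$ arises from as a Base Change. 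The parity of $N$ then pins the pole to the factor stated in the proposition. The main obstacle will be making precise the descent from the global Asai pole at $s=1$, coming from the fact that $\Pi_i$ arises from a unitary group of a definite parity, to the local pole at $s=0$ at the single place $v_0$, which requires carefully matching the global functional equation to the local one and exploiting the rigidity afforded by the supercuspidality of $\pi_0$ at $v_0$.
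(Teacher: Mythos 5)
Your globalization and descent via Lemma~\ref{hllemma}, Theorem~\ref{weakBC} and Proposition~\ref{globallocal} matches the paper, but two of your later steps have genuine gaps. First, the supercuspidality argument: you pair $\pi_0$ against the supercuspidal support $\rho$ of a putative segment ${\rm St}(\rho,t)$ with $t>1$. But $L(s,{\rm St}(\rho,t)\times\tilde{\rho})=L(s+\tfrac{t-1}{2},\rho\times\tilde{\rho})$ has its pole at $s=-\tfrac{t-1}{2}\le 0$, so $L(s,\pi_0\times\tilde{\rho})=L(s,\Pi_0\times\tilde{\rho})$ remains holomorphic on $\Re(s)>0$ and no contradiction with Theorem~\ref{mainthm}(vii) arises. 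The paper instead pairs $\Pi_0$ against the full $\tilde{\Pi}_{0,j}$; the self-pairing of the segment is what produces a shifted factor $L(s-(t_j-1),\rho_j\times\tilde{\rho}_j)$ with a pole at $s=t_j-1>0$, forcing $t_j=1$ and, as a by-product, a simple pole of $L(s,\pi_0\times\tilde{\Pi}_{0,j})$ at $s=0$. That pole at $s=0$ is the engine for everything that follows, and your twist never produces it. (A smaller structural point: your $\Pi_{0,i}:=\Pi_{i,v_0}$ are a priori only tempered, not discrete series or supercuspidal; the paper obtains the inducing data from the local classification of the tempered representation $\Pi_0$ furnished by Lemma~\ref{temperedness}, not from the global isobaric summands.)

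Second, and more seriously, your treatment of conjugate self-duality, distinctness and the Asai dichotomy is global where the paper is local, and the decisive mechanism is missing. Placing $\Pi_{0,i}$ in a Siegel Levi only shows, via Corollary~\ref{RSAsaiL}, that exactly one of $L(s,\Pi_{0,i},r_{\mathcal{A}})$ and $L(s,\Pi_{0,i}\otimes\eta_{E/F},r_{\mathcal{A}})$ can have a pole at $s=0$; it cannot tell you which, and the descent from a global Asai pole at $s=1$ is precisely the step you concede you cannot carry out. The paper's point is to use the non-Siegel maximal Levi ${\rm Res}\,{\rm GL}_{m_i}\times{\rm U}_N$ inside ${\rm U}_{2m_i+N}$ with inducing data $\tilde{\Pi}_{0,i}\otimes\tilde{\pi}_0$: Proposition~\ref{Lreducibility} applied there converts the pole of $L(s,\pi_0\times\tilde{\Pi}_{0,i})=L(s,\tilde{\Pi}_{0,i}\otimes\tilde{\pi}_0,r_1)$ at $s=0$ into irreducibility together with $w_0$-invariance of the inducing data, which is exactly $\Pi_{0,i}\cong\tilde{\Pi}_{0,i}^\theta$ (no appeal to global multiplicity one or to a permutation of isobaric summands is needed); and, since only one of $L(s,\cdot,r_1)$ and $L(2s,\cdot,r_2)$ may carry the pole and $r_1$ already does, it rules out the pole for $r_2$, which is the plain or twisted Asai representation according to the parity of $N$, so the other Asai factor must have the pole at $s=0$. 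Pairwise inequivalence then falls out of the simplicity of the pole of $L(s,\pi_0\times\tilde{\Pi}_{0,j})$ at $s=0$. Without this local input your argument does not close, and your global route also risks leaning on facts (e.g.\ at most simple poles of the global $L$-functions on the unitary group, or the strong Base Change) that are established only later.
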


\begin{proof}
Let $k$ be a global function field with $k_{v_0} = F$. From Lemma~\ref{hllemma}, there exists a globally generic cuspidal automorphic representation   $\pi = \otimes \pi_v$ of ${\rm U}_N(\mathbb{A}_k)$ such that $\pi_0 = \pi_{v_0}$. Then, Remark~\ref{localconstBC} gives the existence of a unique base change $\Pi_0 = {\rm BC}(\pi_0)$.

By Lemma~\ref{temperedness}, $\Pi_0$ is a unitary tempered representation of ${\rm GL}_m(E)$. Hence, we have that 
\begin{equation*}
   \Pi_0 = {\rm Ind} (\Pi_{0,1} \otimes \cdots \otimes \Pi_{0,d}),
\end{equation*}
with each $\Pi_{0,i}$ a discrete series representation. Via \eqref{zclass}, each $\Pi_i$ is the generic constituent of
\begin{equation*}
   {\rm Ind} (\rho_i\nu^{-\frac{t_i-1}{2}} \otimes \cdots \otimes \rho_i\nu^{\frac{t_i-1}{2}}),
\end{equation*}
where $\rho_i$ is a supercuspidal representation of ${\rm GL}_{m_i}(E)$, $m_i \vert m$, and $t_i$ is an integer.

We look at a fixed $\Pi_{0,j}$. Due to the fact that all of the representations involved are tempered, we have
\begin{equation*}
   L(s,\Pi_0 \times \tilde{\Pi}_{0,j}) = \prod_{i=1}^d L(s,\Pi_{0,i} \times \tilde{\Pi}_{0,j}) = \prod_{i=1}^d  \prod_{l=0}^{t_i} \prod_{r=0}^{t_j} L(s + l + r  - \dfrac{t_i-1}{2} - \dfrac{t_j-1}{2},\rho_l \times \tilde{\rho}_r).
\end{equation*}
The $L$-function $L(s +t_j -1,\rho_j \times \tilde{\rho}_j)$ on the right hand side gives that the product has a pole at $s = 1-t_j$. Now, the tempered $L$-function
\begin{equation*}
   L(s,\Pi_0 \times \tilde{\Pi}_{0,j}) = L(s,\pi_0 \times \tilde{\Pi}_{0,j})
\end{equation*}
is holomorphic for $\Re(s) > 0$. This contradicts the fact that $L(s,\Pi_0 \times \tilde{\Pi}_{0,j})$ has a pole at $s = 1-t_j$, unless $t_j =1$. This forces $\Pi_{0,j} = \rho_j$, in addition to $L(s,\pi_0 \times \tilde{\Pi}_{0,j})$ having a pole at $s=0$. The argument proves that our fixed $\Pi_{0,j}$ is supercuspidal.

Now, let ${\bf P} = {\bf M}{\bf N}$ be the parabolic subgroup of ${\rm U}_{2m+N}$ with Levi ${\bf M} \cong {\rm Res}\,{\rm GL}_m \times {\rm U}_N$. Then Proposition~\ref{Lreducibility} tells us that $L(s,\pi_0 \times \tilde{\Pi}_{0,j})$ has a pole at $s=0$ if and only if
\begin{equation*}
   {\rm ind}_P^{{\rm U}_{2m+N}(F)} (\tilde{\Pi}_{0,j} \otimes \tilde{\pi}_0)
\end{equation*}
is irreducible and $\tilde{\Pi}_{0,j} \otimes \tilde{\pi}_0= w_0(\tilde{\Pi}_{0,j} \otimes \tilde{\pi}_0) \cong \Pi_{0,j}^\theta \otimes \tilde{\pi}_0$. This gives, for each $j$, that $\Pi_{0,j} \cong \tilde{\Pi}_{0,j}^\theta$. Furthermore, we have
\begin{equation*}
   L(s,\pi_0 \times \tilde{\Pi}_{0,j}) = L(s,\Pi_0 \times \tilde{\Pi}_{0,j}) = \prod_{i=0}^d L(s,\Pi_{0,i} \times \tilde{\Pi}_{0,j}).
\end{equation*}
Each $L$-function in the product of the right hand side has a pole at $s=0$ whenever $\Pi_{0,i} \cong \Pi_{0,j}$. However, the pole of $L(s,\pi_0 \times \tilde{\Pi}_{0,j})$ at $s=0$ being simple, forces $\Pi_{0,i} \ncong \Pi_{0,j}$ for $i \neq j$.

From the fact that $\Pi_{0,i} \cong \tilde{\Pi}_{0,i}^\theta$ and Proposition~\ref{RSAsaiL}, we have
\begin{equation*}
   L(s,\Pi_{0,i} \times \tilde{\Pi}_{0,i}) = L(s,\Pi_{0,i},r_\mathcal{A}) L(s,\Pi_{0,i} \otimes \eta_{E/F},r_\mathcal{A}).
\end{equation*}
Then, one and only one of $L(s,\Pi_{0,i},r_\mathcal{A})$ or $L(s,\Pi_{0,i} \otimes \eta_{E/F},r_\mathcal{A})$ has a simple pole at $s=0$. With the notation of \S~\ref{LSGLunitary}, we have $(E/F,\tilde{\Pi}_{0,i} \otimes \tilde{\pi}_0,\psi_E) \in \mathfrak{ls}({\bf G},{\bf M},p)$ for ${\bf M} = {\rm Res}\,{\rm GL}_{m_i} \times {\rm U}_{N}$ and ${\bf G} = {\rm U}_{2m_i+N}$. By Proposition~\ref{Lreducibility}, the product
\begin{equation*}
   L(s,\tilde{\Pi}_{0,i} \otimes \tilde{\pi}_0,r_1) L(2s,\tilde{\Pi}_{0,i},r_2)
\end{equation*}
has a simple pole at $s=0$. Using $\Pi_{0,i} \cong \tilde{\Pi}_{0,i}^\theta$ and Proposition~\ref{RSAsaiL}, we have that
\begin{equation*}
   L(s,\Pi_{0,i},r_2) = L(s,\tilde{\Pi}_{0,i},r_2) = \left\{ \begin{array}{ll} L(s,\Pi_{0,i},r_\mathcal{A}) & \text{if } N = 2n  \\
   								   				    L(s,\Pi_{0,i} \otimes \eta_{E/F},r_\mathcal{A}) & \text{if } N =2n+1	\end{array} \right. .
\end{equation*}
Since we showed above that $L(s,\pi_0 \times \tilde{\Pi}_{0,i}) = L(s,\tilde{\Pi}_{0,i} \otimes \tilde{\pi}_0,r_1)$ has a simple pole at $s=0$, then $L(2s,\tilde{\Pi}_{0,i},r_2)$ cannot.
Thus, depending on wether $N$ is even or odd, the other one between $L(s,\Pi_{0,i},r_\mathcal{A})$ and $L(s,\Pi_{0,i} \otimes \eta_{E/F},r_\mathcal{A})$ must have a pole at $s=0$.
\end{proof}

\subsection{Discrete series, tempered representations} Thanks to the work of M\oe glin and Tadi\'c \cite{MoTa2002}, we have the classification of generic discrete series representations for the unitary groups. Their work allows us to obtain a generic discrete series representation $\xi$ of ${\rm U}_N(F)$ as a subrepresentation as follows
\begin{equation}\label{mtdsU}
   \xi \hookrightarrow {\rm Ind}(\delta_1 \otimes \cdots \otimes \delta_d \otimes \delta_1' \otimes \cdots \delta_e' \otimes \pi_0).
\end{equation}
Here, for $1 \leq i \leq d$ and $1 \leq j \leq e$, we have essentially square integrable representations $\delta_i$ of ${\rm GL}_{l_i}(E)$ and $\delta_j'$ of ${\rm GL}_{m_j}(E)$. The representation $\pi_0$ is a generic supercuspidal of ${\rm U}_{N_0}(F)$, with $N_0$ of the same parity as $N$. We refer to \cite{GaLoJEMS} for a discussion including non-generic representations and the basic assumption (BA) that is made in \cite{MoTa2002}.

We can apply the Zelevinsky classification \cite{Ze1980}, to the essentially discrete representations of general linear groups appearing in the decomposition~\eqref{mtdsU}. They are obtained via segments of the form
\begin{equation*}
   \Delta = \left[ \rho\nu^{-b}, \rho\nu^a \right],
\end{equation*}
where $a,b \in \frac{1}{2} \mathbb{Z}$, $a > b > 0$, and $\rho$ is a supercuspidal representation of ${\rm GL}_{f}(E)$, $f \vert l_i$ or $m_j$, respectively.

The M\oe glin-Tadi\'c classification involves a further refinement of the segments corresponding to each $\delta_i$ and $\delta_j'$. More precisely, let $a_i > b_i > 0$ now be integers of the same parity. Then we have
\begin{equation}\label{mtdsUeq1}
   \delta_i = {\rm Ind} \left(\rho_i \nu^{-\frac{b_i-1}{2}} \otimes \cdots \otimes \rho_i \nu^{\frac{a_i-1}{2}}\right),
\end{equation}
where $\rho_i$ is a supercuspidal representation of ${\rm GL}_f(E)$, $f \vert l_i$. Furthermore, we have $\rho_i \cong \tilde{\rho}_i^\theta$. Next, let $a_j'$ be a positive integer. We set $\epsilon_j = 1/2$ if $a_j'$ is even and $\epsilon_j = 1$ if $a_j'$ is odd. Then we have
\begin{equation}\label{mtdsUeq2}
   \delta_j' = {\rm Ind} \left( \rho'_j \nu^{\epsilon_j} \otimes \cdots \otimes {\rho_j'} \nu^{\frac{a_j'-1}{2}} \right),
\end{equation}
with $\rho_j' \cong (\tilde{\rho}'_j)^{\theta}$ a supercuspidal representation of ${\rm GL}_{f'}(E)$, $f' \vert m_j$. Furthermore, the integer $a_j'$ will be odd if $L(s,\pi_0 \times \rho')$ has a pole at $s = 0$, and $a_j'$ will be even otherwise. This is due to (BA) of \cite{MoTa2002} concerning the reducibility of the induced representation ${\rm Ind}(\rho_j' \nu^s \otimes \pi_0)$ at $s = 1/2$ or $1$, which is addressed in \S~\ref{BAcomplimentary} for generic representations.

\begin{proposition}\label{dslift} Let $\xi_0$ be a generic discrete series representation of ${\rm U}_N(F)$, which we can write as
\begin{equation}
   \xi_0 \hookrightarrow {\rm Ind}(\delta_1 \otimes \cdots \otimes \delta_d \otimes \delta_1' \otimes \cdots \otimes \delta_e' \otimes \pi_0)
\end{equation}
with $\pi_0$ a generic supercuspidal of ${\rm U}_{N_0}(F)$ and $\delta_i$, $\delta_j'$ as in \eqref{mtdsU}. Then $\xi_0$ has a uniquely determined base change
\begin{equation*}
   \Xi_0 = {\rm BC}(\xi_0),
\end{equation*}
which is a tempered generic representation of ${\rm GL}_N(E)$ satisfying $\Xi_0 \cong \tilde{\Xi}_0^\theta$. The central character $\omega_{\Xi_0}$ of $\Xi_0$ is the character of $E^\times$ obtained from $\omega_{\xi_0}$ of $E^1$ via Hilbert's theorem~90. Moreover, the lift $\Xi_0$ is the generic constituent of an induced representation:
\begin{equation*}
   \Xi_0 \hookrightarrow {\rm Ind}\left(\delta_1 \otimes \cdots \otimes \delta_d \otimes \delta_1' \otimes \cdots \otimes \delta_e' \otimes \Pi_0 \otimes \tilde{\delta}_e'{}^\theta \otimes \cdots \otimes \tilde{\delta}_1'{}^\theta \otimes \tilde{\delta}_d^\theta \otimes \cdots \otimes \tilde{\delta}_1^\theta \right),
\end{equation*}
The representation $\Pi_0$ is the local Langlands functorial lift of $\pi_0$ of Proposition~\ref{supercusplift}.
\end{proposition}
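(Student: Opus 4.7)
The plan is to construct $\Xi_0$ explicitly as the generic constituent of the displayed induced representation on $\mathrm{GL}_N(E)$ and verify the defining property of local base change by a matching calculation of $\gamma$-factors on both sides. All analytic machinery we need has been assembled in the earlier parts of Section~9, so the argument should be essentially formal once the candidate is identified. First I would set
\begin{equation*}
   \Xi_0 = \text{generic constituent of } \mathrm{Ind}\bigl( \delta_1 \otimes \cdots \otimes \delta_d \otimes \delta_1' \otimes \cdots \otimes \delta_e' \otimes \Pi_0 \otimes \tilde\delta_e'{}^\theta \otimes \cdots \otimes \tilde\delta_1'{}^\theta \otimes \tilde\delta_d^\theta \otimes \cdots \otimes \tilde\delta_1^\theta\bigr),
\end{equation*}
where $\Pi_0 = \mathrm{BC}(\pi_0)$ is the supercuspidal lift produced by Proposition~\ref{supercusplift}. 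Since every inducing factor is essentially square integrable, the Zelevinsky classification guarantees that this parabolic induction has a unique generic subquotient, so $\Xi_0$ is well defined.

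Next, for any supercuspidal $\tau$ of $\mathrm{GL}_m(E)$, I would apply multiplicativity of $\gamma$-factors for the unitary group (Theorem~\ref{mainthmU}, Property~(iv)) to the embedding \eqref{mtdsU} defining $\xi_0$. This expresses $\gamma(s,\xi_0\times\tau,\psi_E)$ as a product of Rankin-Selberg $\gamma$-factors involving the $\delta_i,\delta_j'$ and their $\theta$-conjugate duals, times $\gamma(s,\pi_0\times\tau,\psi_E)$. Multiplicativity for $\mathrm{GL}_N$ Rankin-Selberg $\gamma$-factors applied to the induced representation defining $\Xi_0$ yields the same product, but with $\gamma(s,\pi_0\times\tau,\psi_E)$ replaced by $\gamma(s,\Pi_0\times\tau,\psi_E)$. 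By the defining property of $\Pi_0$ in Proposition~\ref{supercusplift} these two factors agree, so $\gamma(s,\xi_0\times\tau,\psi_E) = \gamma(s,\Xi_0\times\tau,\psi_E)$ for every supercuspidal $\tau$. This identifies $\Xi_0$ as a local base change of $\xi_0$; uniqueness is then immediate from Lemma~\ref{supercusptogenericGL}, and temperedness of $\Xi_0$ from Lemma~\ref{temperedness}, since $\xi_0$ is a discrete series hence tempered.

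The self-duality $\Xi_0\cong\tilde\Xi_0^\theta$ follows from the palindromic shape of the inducing string together with $\Pi_0\cong\tilde\Pi_0^\theta$ (which is part of Proposition~\ref{supercusplift}): the multiset of inducing discrete series is preserved under the involution $\sigma \mapsto \tilde\sigma^\theta$, so both $\Xi_0$ and $\tilde\Xi_0^\theta$ are generic constituents of the same induced representation and must coincide. For the central character, I would globalize $\xi_0$ to a globally generic cuspidal automorphic $\pi = \otimes_v' \pi_v$ of $\mathrm{U}_N(\mathbb{A}_k)$ with $\pi_{v_0}\cong\xi_0$ (via a discrete series variant of Lemma~\ref{hllemma} supplied by the forthcoming globalization of \cite{GaLoJEMS}), apply Theorem~\ref{weakBC} to get $\Pi = \mathrm{BC}(\pi)$, and invoke Proposition~\ref{globallocal} to read off both the local identification $\Pi_{v_0} = \Xi_0$ and the central character relation $\omega_{\Xi_0} = \omega_{\xi_0}\circ\mathfrak{h}$.

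The main obstacles I anticipate are two pieces of bookkeeping rather than new analytic input: first, verifying that the displayed induced representation really has a unique generic constituent given the precise Moeglin-Tadi\'c parities on the $\delta_i$ and $\delta_j'$, which requires translating the Moeglin-Tadi\'c segments on the unitary side into honest Zelevinsky segments on the $\mathrm{GL}_N$ side and checking that no linkage produces a non-generic collision; and second, invoking a globalization of a discrete series (not just a supercuspidal) from Lemma~\ref{hllemma}, which is why the proof relies on the Moeglin-Tadi\'c embedding to push the problem down to the supercuspidal $\pi_0$, where Proposition~\ref{supercusplift} is available and everything else is purely multiplicative.
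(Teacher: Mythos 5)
Your core argument is the same as the paper's: take $\Xi_0$ to be the generic constituent of the displayed induced representation, match $\gamma(s,\xi_0\times\rho,\psi_E)$ with $\gamma(s,\Xi_0\times\rho,\psi_E)$ for all supercuspidal $\rho$ by multiplicativity on both sides together with $\gamma(s,\pi_0\times\rho,\psi_E)=\gamma(s,\Pi_0\times\rho,\psi_E)$ from Proposition~\ref{supercusplift}, and conclude uniqueness from Lemma~\ref{supercusptogenericGL}. You diverge on two secondary points. For temperedness you invoke Lemma~\ref{temperedness}; that is a legitimate shortcut once $\Xi_0$ is known to be the lift of the tempered $\xi_0$, whereas the paper instead pairs each $\delta_i$ with $\tilde{\delta}_i^\theta$ and uses the M\oe glin--Tadi\'c segment data \eqref{mtdsUeq1}, \eqref{mtdsUeq2} and $\rho_i\cong\tilde{\rho}_i^\theta$ to rewrite $\Xi_0$ as induced from explicitly tempered blocks $\tau_i$, $\tau_j'$ and $\Pi_0$ --- the paper's route yields the explicit tempered support of $\Xi_0$, which is reused later, while yours only yields temperedness. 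For the central character you propose globalizing $\xi_0$ itself; this is an unnecessary detour that moreover leans on a globalization of discrete series not provided by Lemma~\ref{hllemma} (which is stated for supercuspidals only). The claim follows directly from the inducing data: $\omega_{\Xi_0}(x)=\prod_i\omega_{\delta_i}(x)\,\omega_{\tilde{\delta}_i^\theta}(x)\prod_j\omega_{\delta_j'}(x)\,\omega_{\tilde{\delta}_j'^\theta}(x)\,\omega_{\Pi_0}(x)$, and each pair contributes $\omega_{\delta_i}(x\bar{x}^{-1})$ while $\omega_{\Pi_0}(x)=\omega_{\pi_0}(x\bar{x}^{-1})$ by Proposition~\ref{supercusplift}, so $\omega_{\Xi_0}=\omega_{\xi_0}\circ\mathfrak{h}$ with no global input. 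With that replacement your argument is complete and matches the paper's.
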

\begin{proof}
Let $\xi_0$ be as in the statement of the Proposition, and consider quadruples $(E/F,\xi_0,\rho,\psi) \in \mathfrak{ls}({\rm U}_N,{\rm Res}\,{\rm GL}_m,p)$ such that $\rho$ is an arbitrary supercuspidal representation. To $\pi_0$ there corresponds a $\Pi_0$ via Proposition~\ref{supercusplift}. With $\Xi_0$ as in the Proposition, we have $(E/F,\Xi_0,\rho,\psi) \in \mathfrak{ls}({\rm Res}\,{\rm GL}_N,{\rm Res}\,{\rm GL}_m,p)$. Then we can use multiplicativity of $\gamma$-factors to obtain 
\begin{align*}
   \gamma(s,\xi_0 \times \rho,\psi_E) &= \gamma(s,\pi_0 \times \rho,\psi_E) \prod_{i=1}^d \prod_{j=1}^ e \gamma(s,\delta_i \times \rho,\psi_E) \gamma(s,\delta_j' \times \rho,\psi_E) \\
   						     &= \gamma(s,\Pi_0 \times \rho,\psi_E) \prod_{i=1}^d \prod_{j=1}^ e \gamma(s,\delta_i \times \rho,\psi_E) \gamma(s,\delta_j' \times \rho,\psi_E) \\
						     &= \gamma(s,\Xi_0 \times \rho,\psi_E).
\end{align*}
From Lemma~\ref{supercusptogenericGL}, we have that $\Xi_0$ is the unique local Langlands lift of $\xi_0$. It satisfies $\Xi_0 \cong \tilde{\Xi}_0^\theta$ and has the right central character.

For each $i$, $1 \leq i \leq d$, let $\tau_i$ be the generic constituent of ${\rm Ind}(\delta_i \otimes \tilde{\delta}_i^\theta)$. After rearranging the factors coming from equation~\eqref{mtdsUeq1}, we can see that $\tau_i$ is isomorphic to the generic constituent of
\begin{equation*}
   {\rm Ind}\left( (\rho_i \nu^{-\frac{a_i-1}{2}} \otimes \cdots \otimes \rho_i \nu^{\frac{a_i-1}{2}}) \otimes
   			(\rho_i \nu^{-\frac{b_i-1}{2}} \otimes \cdots \otimes \rho_i \nu^{\frac{b_i-1}{2}}) \right).
\end{equation*}
Recall that $\rho_i \cong \tilde{\rho}_i^\theta$. Hence, each $\tau_i$ is tempered and satisfies $\tau_i \cong \tilde{\tau}_i^\theta$. We proceed similarly with $\tau_j'$, $1 \leq j \leq e$, the generic constituent of ${\rm Ind}(\delta_j' \otimes \tilde{\delta}_j'{}^\theta)$, now with the aid of equation~\eqref{mtdsUeq2}. We conclude that $\tau_j'$ is tempered and satisfies $\tau_j' \cong \tilde{\tau}_j'{}^\theta$. We then rearrange the inducing data for $\Xi_0$ to obtain the form
\begin{equation*}
   \Xi_0 \hookrightarrow {\rm Ind}\left(\tau_1 \otimes \cdots \otimes \tau_d \otimes \tau_1' \otimes \cdots \otimes \tau_e' \otimes \Pi_0 \right).
\end{equation*}
Each $\tau_i$, $\tau_i'$ and $\Pi_0$ being tempered, we conclude that $\Xi_0$ is also tempered.
\end{proof}

We now turn to the tempered case, which is crucial, since $L$-functions and $\varepsilon$-factors are defined via $\gamma$-factors in this case. The proofs of the remaining two results are now similar to the case of a discrete series, thanks to Lemma~\ref{supercusptogenericGL}.

\begin{proposition}\label{templift}
Let $\tau_0$ be a generic tempered representation of ${\rm U}_N(F)$, which we can write as
\begin{equation*}
   \tau_0 \hookrightarrow {\rm Ind} \left( \delta_1 \otimes \cdots \otimes \delta_d \otimes \xi_0 \right),
\end{equation*}
with each $\delta_i$ a discrete series representation of ${\rm GL}_{n_i}(E)$ and $\xi_0$ is one of ${\rm U}_{N_0}(F)$. Then $\tau_0$ has a uniquely determined base change
\begin{equation*}
   T_0 = {\rm BC}(\tau_0),
\end{equation*}
which is a tempered generic representation ${\rm GL}_N(E)$ satisfying $T_0 \cong \tilde{T}_0^\theta$. The central character $\omega_{T_0}$ obtained from $\omega_{\tau_0}$ via Hilbert's theorem~90. Specifically, the lift $T_0$ is of the form
\begin{equation*}
   T_0 = {\rm Ind} \left( \delta_1 \otimes \cdots \otimes \delta_d \otimes \Xi_0 \otimes \tilde{\delta}_d{}^\theta \otimes \cdots \otimes \tilde{\delta}_1^\theta \right).
\end{equation*}
The representation $\Xi_0$ is the base change lift of Proposition~\ref{dslift}.
\end{proposition}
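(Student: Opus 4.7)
\medskip

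\noindent\textbf{Proof proposal.} The plan is to mimic the strategy used for Proposition~\ref{dslift}, with the tempered generic representation $\xi_0$ of ${\rm U}_{N_0}(F)$ playing here the role played there by the supercuspidal $\pi_0$, and with Proposition~\ref{dslift} itself replacing Proposition~\ref{supercusplift} as the induction hypothesis. First I would set
\begin{equation*}
   T_0 = {\rm Ind}\left(\delta_1 \otimes \cdots \otimes \delta_d \otimes \Xi_0 \otimes \tilde{\delta}_d^\theta \otimes \cdots \otimes \tilde{\delta}_1^\theta\right),
\end{equation*}
where $\Xi_0 = {\rm BC}(\xi_0)$ is the tempered generic representation of ${\rm GL}_{N_0}(E)$ given by Proposition~\ref{dslift}. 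Since the inducing data consists of irreducible tempered representations of general linear groups, parabolic induction to ${\rm GL}_N(E)$ is irreducible, and so $T_0$ is an irreducible tempered generic representation of ${\rm GL}_N(E)$. Its central character, computed from the explicit form above using $\omega_{\Xi_0}$ (which by Proposition~\ref{dslift} is obtained from $\omega_{\xi_0}$ via Hilbert's theorem~90) and the fact that $\omega_{\delta_i} \omega_{\tilde{\delta}_i^\theta} = 1$ on the image of the norm, matches the prediction coming from $\omega_{\tau_0}$ and \eqref{hilbert90}.

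Next I would verify the self-duality $T_0 \cong \tilde{T}_0^\theta$. Applying the involution $\pi \mapsto \tilde{\pi}^\theta$ to each factor of the inducing data reverses the order and sends $\delta_i$ to $\tilde{\delta}_i^\theta$ (and conversely), while fixing $\Xi_0$ by Proposition~\ref{dslift}. Since the resulting inducing datum differs from the original only by reversal, and parabolic induction on ${\rm GL}_N$ from tempered data is independent of the ordering (the induced module remains the same irreducible tempered representation), the isomorphism $T_0 \cong \tilde{T}_0^\theta$ follows.

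The key step is then to check the defining $\gamma$-factor identity: for every supercuspidal representation $\rho$ of ${\rm GL}_m(E)$ and every positive integer $m$,
\begin{equation*}
   \gamma(s,\tau_0 \times \rho,\psi_E) = \gamma(s,T_0 \times \rho,\psi_E).
\end{equation*}
Exactly as in the proof of Proposition~\ref{dslift}, multiplicativity of $\gamma$-factors for the pair $({\rm U}_{2m+N},{\rm Res}\,{\rm GL}_m \times {\rm U}_N)$ allows one to write
\begin{equation*}
   \gamma(s,\tau_0 \times \rho,\psi_E) = \gamma(s,\xi_0 \times \rho,\psi_E) \prod_{i=1}^d \gamma(s,\delta_i \times \rho,\psi_E).
\end{equation*}
Substituting $\gamma(s,\xi_0 \times \rho,\psi_E) = \gamma(s,\Xi_0 \times \rho,\psi_E)$ from Proposition~\ref{dslift}, and then applying multiplicativity of Rankin-Selberg $\gamma$-factors on ${\rm GL}_N \times {\rm GL}_m$ to reassemble the right-hand side against the explicit inducing datum of $T_0$, one recovers $\gamma(s,T_0 \times \rho,\psi_E)$. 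The bookkeeping here — matching the unitary-group factorization, which treats each pair $(\delta_i, \tilde{\delta}_i^\theta)$ symmetrically via the longest Weyl element, with the ${\rm GL}_N$-side factorization, which reads the inducing datum linearly — is where the care is required and is the main obstacle to writing out the argument cleanly. Once the identity holds for all supercuspidal $\rho$, Lemma~\ref{supercusptogenericGL} guarantees that $T_0$ is the unique local base change lift of $\tau_0$, concluding the proof.
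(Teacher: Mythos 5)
Your argument is correct and follows essentially the same route as the paper's (very terse) proof: define $T_0$ by the displayed induced representation, combine multiplicativity of $\gamma$-factors on both sides with the identity $\gamma(s,\xi_0\times\rho,\psi_E)=\gamma(s,\Xi_0\times\rho,\psi_E)$ supplied by Proposition~\ref{dslift}, and invoke Lemma~\ref{supercusptogenericGL} to get uniqueness, $T_0\cong\tilde{T}_0^\theta$ and the central character. The additional checks you spell out (irreducibility and temperedness of the induced representation, the order-reversal argument for self-duality, and the bookkeeping that matches the unitary-group block decomposition with the linear ${\rm GL}_N$-side factorization) are precisely the details the paper compresses into the phrase ``along the lines of Proposition~\ref{dslift}.''
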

\begin{proof}
The proof is now along the lines of Proposition~\ref{dslift}, where we use multiplicativity of $\gamma$-factors and the fact that $\Xi_0$ is the local Langlands lift of the discrete series $\xi_0$. This way, we obtain equality of $\gamma$-factors to apply Lemma~\ref{supercusptogenericGL} and conclude that $T_0 \cong \tilde{T}_0^\theta$ and has the correct central character.
\end{proof}

\subsection{The generic local Langlands Conjecture} Theorem~\ref{genericBC} summarizes our main local result. Local base change being recursively defined via the tempered, discrete series and supercuspidal cases of Propostions~\ref{templift}, \ref{dslift} and \ref{supercusplift}, respectively.

In general, we have the Langlands quotient \cite{BoWa,Si1978}. The work of Mui\'c on the standard module conjecture \cite{Mu2001} helps us to realize a general generic representation $\pi_0$ of ${\rm U}_N(F)$ as the unique irreducible generic quotient of an induced representation. More precisely, $\pi_0$ is the Langlands quotient of
\begin{equation}\label{standardmodule}
   {\rm Ind} \left( \tau_1' \otimes \cdots \otimes \tau_d' \otimes \tau_0 \right),
\end{equation}
with each $\tau_i'$ a quasi-tempered representation of ${\rm GL}_{n_i}(E)$ and $\tau_0$ a tempered representation of ${\rm U}_{N_0}(F)$. We can write $\tau_i' = \tau_{i,0} \nu^{t_i}$ with $\tau_{i,0}$ tempered and the Langlands parameters have $0 \leq t_1 \leq \cdots \leq t_d$.

\begin{theorem}\label{genericBC}
Let $\pi_0$ be a generic representation of ${\rm U}_N(F)$. Write $\pi_0$ as the Langlands quotient of
\begin{equation*}
   {\rm Ind} \left( \tau_1' \otimes \cdots \otimes \tau_d' \otimes \tau_0 \right),
\end{equation*}
as in \eqref{standardmodule}. Then $\pi_0$ has a unique generic local base change 
\begin{equation*}
\Pi_0 = {\rm BC}(\pi_0),
\end{equation*}
which is a generic representation of ${\rm GL}_N(E)$ satisfying $\Pi_0 \cong \tilde{\Pi}_0^\theta$. The central character $\omega_{\Pi_0}$ obtained from $\omega_{\pi_0}$ via Hilbert's theorem~90. Specifically, the lift $\Pi_0$ is the Langlands quotient of
\begin{equation*}
   {\rm Ind} \left( \tau_1' \otimes \cdots \otimes \tau_d' \otimes T_0 \otimes \tilde{\tau}_d'^\theta \otimes \cdots \otimes \tilde{\tau}_1'^\theta \right),
\end{equation*}
with $T_0$ the Langlands functorial lift of the tempered representation $\tau_0$. Given $(E/F,\pi_0,\tau,\psi) \in \mathfrak{ls}({\rm U}_N,{\rm Res}\,{\rm GL}_m,p)$, we have equality of local factors
\begin{align*}
   \gamma(s,\pi_0 \times \tau,\psi_E) &= \gamma(s,\Pi_0 \times \tau,\psi_E) \\
   L(s,\pi_0 \times \tau) &= L(s,\Pi_0 \times \tau) \\
   \varepsilon(s,\pi_0 \times \tau,\psi_E) &= \varepsilon(s,\Pi_0 \times \tau,\psi_E).
\end{align*}
\end{theorem}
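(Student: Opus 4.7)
The plan is to construct $\Pi_0$ explicitly from the Langlands data, verify its structural properties, and then reduce the equality of local factors to the tempered case already handled in Proposition~\ref{templift}. By Mui\'c's standard module conjecture \cite{Mu2001}, the Langlands quotient $\pi_0$ of the standard module ${\rm Ind}(\tau_1' \otimes \cdots \otimes \tau_d' \otimes \tau_0)$ coincides with its unique generic constituent, so Langlands-Shahidi multiplicativity applies directly. I let $T_0 = {\rm BC}(\tau_0)$ be the tempered lift supplied by Proposition~\ref{templift} and define $\Pi_0$ to be the Langlands quotient (equivalently, the unique generic constituent) of
\begin{equation*}
   \Sigma = {\rm Ind}\left(\tau_1' \otimes \cdots \otimes \tau_d' \otimes T_0 \otimes \tilde{\tau}_d'^\theta \otimes \cdots \otimes \tilde{\tau}_1'^\theta \right)
\end{equation*}
on ${\rm GL}_N(E)$. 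Writing $\tau_i' = \tau_{i,0}\,\nu^{t_i}$ with each $\tau_{i,0}$ tempered and $0 \le t_1 \le \cdots \le t_d$, and noting that $\tilde{\tau}_i'^\theta = \widetilde{\tau_{i,0}}^{\,\theta}\,\nu^{-t_i}$, the real parts of the inducing exponents are arranged in the standard decreasing-to-increasing pattern centered at the tempered block $T_0$, so $\Sigma$ is genuinely in the Langlands situation on ${\rm GL}_N(E)$.

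The self-duality $\Pi_0 \cong \tilde{\Pi}_0^\theta$ is immediate from the symmetry of $\Sigma$: applying the involution $\widetilde{\cdot}^{\,\theta}$ to the inducing data reverses the order of the outer factors while fixing $T_0$ by Proposition~\ref{templift}, so the two standard modules are isomorphic and their Langlands quotients agree. The central character of $\Pi_0$ is the product of central characters of the inducing data; each pair $\{\tau_i', \tilde{\tau}_i'^\theta\}$ contributes $\omega_{\tau_i'} \cdot (\omega_{\tau_i'})^{-\theta}$, whose restriction to $F^\times$ is trivial under $\mathrm{N}_{E/F}$, so the remaining contribution $\omega_{T_0}$ is obtained from $\omega_{\tau_0}$ via Hilbert's theorem~90 \eqref{hilbert90} by Proposition~\ref{templift}; this yields the stated compatibility for $\omega_{\Pi_0}$ and $\omega_{\pi_0}$.

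For the equality of $\gamma$-factors with an arbitrary supercuspidal $\tau$ of ${\rm GL}_m(E)$, I would apply multiplicativity on both sides. Property~(iv) of Theorem~\ref{mainthmU} on the unitary side, in the same spirit as the decompositions used in the proofs of Propositions~\ref{dslift} and \ref{templift}, yields
\begin{equation*}
   \gamma(s,\pi_0 \times \tau,\psi_E) = \gamma(s,\tau_0 \times \tau,\psi_E) \prod_{i=1}^d \gamma(s,\tau_i' \times \tau,\psi_E)\, \gamma(s,\tilde{\tau}_i'^\theta \times \tau,\psi_E),
\end{equation*}
while Rankin-Selberg multiplicativity for ${\rm GL}_N(E)\times {\rm GL}_m(E)$, applied to the generic constituent $\Pi_0$ of $\Sigma$, produces the identical product but with the tempered factor $\gamma(s,T_0 \times \tau,\psi_E)$ in place of $\gamma(s,\tau_0 \times \tau,\psi_E)$. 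Proposition~\ref{templift} together with Lemma~\ref{supercusptogenericGL} equates these two tempered factors, giving $\gamma(s,\pi_0 \times \tau,\psi_E) = \gamma(s,\Pi_0 \times \tau,\psi_E)$. Uniqueness of $\Pi_0$ then follows from Lemma~\ref{supercusptogenericGL} via Henniart's theorem \cite{He1993}. For the $L$- and $\varepsilon$-factor equalities, I invoke Lemma~\ref{Leequality} when $\tau$ is unitary generic, and reduce the general case via the Langlands classification on ${\rm GL}_m$ combined with Properties~(ix) and (x) of Theorem~\ref{mainthmU} on both sides, which brings matters back to the discrete series case already in hand.

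The main obstacle is pinning down the precise unitary-side multiplicativity formula, namely, that each quasi-tempered factor $\tau_i'$ contributes both $\gamma(s,\tau_i' \times \tau,\psi_E)$ and its ``twin'' $\gamma(s,\tilde{\tau}_i'^\theta \times \tau,\psi_E)$ arising from the conjugate coupling in the adjoint action. It is precisely this matching that makes the unitary and Rankin-Selberg decompositions cancel term by term, leaving only the tempered comparison $\tau_0 \leftrightarrow T_0$, which is supplied by the previously established tempered base change.
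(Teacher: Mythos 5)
Your proposal is correct and follows essentially the same route as the paper's (very terse) proof: define $\Pi_0$ via the symmetric standard module built from $T_0={\rm BC}(\tau_0)$, match the unitary-side multiplicativity (with the twin factors $\gamma(s,\tau_i'\times\tau)\gamma(s,\tilde\tau_i'^\theta\times\tau)$) against Rankin--Selberg multiplicativity on ${\rm GL}_N(E)$, and conclude uniqueness and the $L$- and $\varepsilon$-equalities from Lemmas~\ref{supercusptogenericGL} and \ref{Leequality} together with Properties~(ix) and (x). You simply supply the details (Mui\'c's standard module theorem, the central character bookkeeping, the term-by-term cancellation) that the paper leaves implicit by citing Proposition~\ref{templift}.
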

\begin{proof}
   We reason as in the case of a tempered representation, Proposition~\ref{templift}. Equality of local factors follows from the definition of base change and Lemmas~\ref{supercusptogenericGL} and \ref{Leequality}, after incorporating twists by unramified characters.
\end{proof}

\subsection{Strong Base Change}\label{BC} Base change is now refined in such a way that it agrees with the local functorial lift of Theorem~\ref{genericBC} at every place. Let
\begin{equation*}
   \mathfrak{h} : {\rm U}_1(\mathbb{A}_k) \rightarrow {\rm GL}_1(\mathbb{A}_K)
\end{equation*}
be the global reciprocity map such that $\mathfrak{h}_v$ is the map given by Hilbert's Theorem~90 at every place $v$ of $k$, as in equation~\eqref{hilbert90}. We also have global twists by the unramified character $\nu$ of a general linear group obtained via the determinant, as in the local theory.

In the case of number fields, the method of descent is used in \cite{So2005} to show how to obtain the strong lift from the weak lift. Over function fields, we can now give a self contained proof with the results of this article combined with those of \cite{LoRationality}.

\begin{theorem}\label{strongBC}
Let $\pi$ be a cuspidal globally generic automorphic representation of ${\rm U}_N(\mathbb{A}_k)$. Then $\pi$ has a unique Base Change to an automorphic representation of ${\rm GL}_N(\mathbb{A}_K)$, denoted by 
\begin{equation*}
   \Pi = {\rm BC}(\pi).
\end{equation*}
The central character of $\Pi$ is given by $\omega_{\Pi} = \omega_{\pi} \circ \mathfrak{h}$ and is unitary. Furthermore, $\Pi \cong \tilde{\Pi}^\theta$ and there is an expression as an isobaric sum
\begin{equation*}
   \Pi = \Pi_1 \boxplus \cdots \boxplus \Pi_d,
\end{equation*}
where each $\Pi_i$ is a unitary cuspidal automorphic representation of ${\rm GL}_{N_i}(\mathbb{A}_K)$ such that $\tilde{\Pi}_i \cong \Pi_i^\theta$ and $\Pi_i \ncong \Pi_j$, for $i \neq j$. At every place $v$ of $k$, we have that 
\begin{equation*}
   \Pi_v = {\rm BC}(\pi_v)
\end{equation*}
is the local Base Change of Theorem~\ref{genericBC} preserving local factors.
\end{theorem}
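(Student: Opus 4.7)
The proof assembles the local theory of Theorem~\ref{genericBC}, the weak Base Change of Theorem~\ref{weakBC}, and the analytic properties of automorphic $L$-functions over function fields. I would begin with the globally generic automorphic representation $\Pi$ furnished by Theorem~\ref{weakBC}, which already comes equipped with an isobaric decomposition $\Pi = \Pi_1 \boxplus \cdots \boxplus \Pi_d$ into cuspidal summands $\Pi_i$ of ${\rm GL}_{N_i}(\mathbb{A}_K)$. Proposition~\ref{globallocal} directly identifies $\Pi_v$ as the uniquely determined local base change of $\pi_v$ at every place $v$ of $k$, and it also yields the central character identity $\omega_\Pi = \omega_\pi \circ \mathfrak{h}$; Theorem~\ref{genericBC} then upgrades this to equality of local $L$-functions, $\gamma$-factors and root numbers at every place.

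Next, to establish conjugate self-duality $\Pi \cong \tilde{\Pi}^\theta$, I would use that Theorem~\ref{genericBC} gives $\Pi_v \cong \tilde{\Pi}_v^\theta$ at every place. Since both $\Pi$ and $\tilde{\Pi}^\theta$ are globally generic representations of ${\rm GL}_N(\mathbb{A}_K)$, strong multiplicity one (Piatetski-Shapiro) forces the global identity $\Pi \cong \tilde{\Pi}^\theta$. Uniqueness of the isobaric decomposition (Jacquet--Shalika) then produces a permutation $\sigma$ of $\{1,\ldots,d\}$ with $\Pi_{\sigma(i)} \cong \tilde{\Pi}_i^\theta$.

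The main analytic step is to show that each $\Pi_i$ is unitary, i.e. writing $\Pi_i = \Pi_{i,0}\nu^{t_i}$ with $\Pi_{i,0}$ cuspidal unitary, that $t_i = 0$, that the $\Pi_{i,0}$ are pairwise non-isomorphic, and that $\sigma$ is trivial. For each $j$ the equality of local factors at every place yields
\begin{equation*}
   L^S(s,\pi \times \tilde{\Pi}_{j,0}) = L^S(s,\Pi \times \tilde{\Pi}_{j,0}) = \prod_{i=1}^{d} L^S(s + t_i, \Pi_{i,0} \times \tilde{\Pi}_{j,0}).
\end{equation*}
By Jacquet--Piatetski-Shapiro--Shalika together with L. Lafforgue's Ramanujan theorem for cuspidals of ${\rm GL}_n$ over function fields, each factor on the right admits at most a simple pole at $s = 1 - t_i$, occurring precisely when $\Pi_{i,0} \cong \Pi_{j,0}$. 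The induction step of the $\mathcal{LS}$ method for $({\rm U}_N, {\rm Res}\,{\rm GL}_m)$ recalled in \S~\ref{LSGLunitary}, together with Proposition~\ref{Lreducibility} and the analytic continuation and rationality of Eisenstein series (Theorem~\ref{ESrationality}, Corollary~\ref{esLpartial}) via the results of \cite{LoRationality}, bound the pole structure of the left-hand side: the only possible pole in $\Re(s) \geq 1/2$ is a simple pole at $s = 1$ corresponding to a conjugate self-dual $\Pi_{j,0}$. Matching poles forces every $t_i = 0$, rules out repetitions among the $\Pi_{i,0}$, and collapses $\sigma$ to the identity, whence $\tilde{\Pi}_i^\theta \cong \Pi_i$ individually. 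Unitarity of $\omega_\Pi$ follows from that of $\omega_\pi$ and the construction of $\mathfrak{h}$.

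Uniqueness of $\Pi$ is then immediate: any second candidate automorphic Base Change would agree locally with $\Pi$ at every place by Theorem~\ref{genericBC}, hence globally by strong multiplicity one. The hard part will be executing the pole-matching argument rigorously, in particular reconciling the pole structure of the $\mathcal{LS}$ $L$-functions $L^S(s,\pi \times \tilde{\Pi}_{j,0})$ obtained from the Eisenstein series side with the Rankin--Selberg pole structure on the ${\rm GL}_N \times {\rm GL}_m$ side, and using conjugate self-duality to pair up any potentially non-trivial orbits of $\sigma$ into equations that force the twisting parameters to vanish.
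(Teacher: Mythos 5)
Your proposal follows essentially the same route as the paper: start from the weak lift of Theorem~\ref{weakBC}, apply Proposition~\ref{globallocal} for the local identifications and the central character, and then match the pole structure of $L(s,\pi\times\tilde{\Pi}_{j,0})$ (holomorphic for $\Re(s)>1$ with at most a simple pole at $s=1$, by the results of \cite{LoRationality}) against the Rankin--Selberg factorization $\prod_i L(s+t_i,\Pi_{i,0}\times\tilde{\Pi}_{j,0})$ to force $t_i=0$, pairwise non-isomorphism, and conjugate self-duality of each summand. The only cosmetic differences are that the paper does not need L.~Lafforgue's Ramanujan theorem at this step (the Jacquet--Shalika pole result for Rankin--Selberg products of unitary cuspidals suffices), and it obtains each $\Pi_j\cong\tilde{\Pi}_j^\theta$ directly from Corollary~4.2 of \cite{LoRationality} applied to ${\rm T}_j=\Pi_j\otimes\pi$ rather than by collapsing a permutation of the summands.
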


\begin{proof}

Let $\Pi'$ be the automorphic representation of ${\rm Res}_{K/k}{\rm GL}_N(\mathbb{A}_k) \cong {\rm GL}_N(\mathbb{A}_K)$ obtained from $\pi$ via the weak Base Change of Theorem~\ref{weakBC}. For all $v \notin S$, $\Pi'$ has the property that $\Pi_v'$ is the unramified lift of $\pi_v$ of \S~\ref{unramifiedBC} or the split Base Change of \S~\ref{splitBC}. By Proposition~\ref{globallocal}, we know that $\Pi = {\rm BC}(\pi)$ has unitary central character
\begin{equation*}
   \omega_\Pi = \omega_\pi \circ \mathfrak{h}.
\end{equation*}

From the Jacquet-Shalika classification \cite{JaSh1981}, the functorial lift $\Pi$ decomposes as an isobaric sum
\begin{equation*}
   \Pi = \Pi_1 \boxplus \cdots \boxplus \Pi_d.
\end{equation*}
More precisely, each cuspidal automorphic representation $\Pi_i$ of ${\rm GL}_{N_i}(\mathbb{A}_K)$ can be written in the form
\begin{equation*}
   \Pi_i = \Xi_i \nu^{t_i},
\end{equation*}
with $\Xi_i$ a unitary cuspidal automorphic representation of ${\rm GL}_{N_i}(\mathbb{A}_K)$. Reordering if necessary, we can assume $t_1 \leq \cdots \leq t_d$. We wish to prove that each $t_i = 0$. Notice that if $t_{i} < 0$ for some $i$, then there is a $j > i$ such that $t_j > 0$, due to the fact that $\Pi$ is unitary. Also, we cannot have $t_1 > 0$. To obtain a contradiction, suppose there exists a $t_{j_0}$ which is the smallest with the property $t_{j_0} < 0$.

Consider $(K/k,\pi,\Xi_{j_0},\psi) \in \mathcal{LS}({\rm U}_N,{\rm Res}\,{\rm GL}_{N_{j_0}},p)$. Theorem~6.5 of \cite{LoRationality} tells us that $L(s,\pi \times \tilde{\Xi}_{j_0})$ is holomorphic for $\Re(s)>1$. However, if we consider $(K,\Pi,\Xi_{j_0},\psi) \in \mathcal{LS}({\rm Res}\,{\rm GL}_N,{\rm Res}\,{\rm GL}_{N_{j_0}},p)$, we have that 
\begin{equation*}
   L(s,\Pi \times \tilde{\Xi}_{j_0}) = \prod_{i=1}^d L(s + t_i,\Xi_i \times \tilde{\Xi}_{j_0}).
\end{equation*}
And, by Theorem~3.6 of \cite{JaSh1981} (part~II) the $L$-function
\begin{equation*}
   L(s,\Xi_{j_0} \times \tilde{\Xi}_{j_0})
\end{equation*}
has a simple pole at $s=1$. This carries over to a pole at $s = 1 - t_{j_0} > 1$ for
\begin{equation*}
   L(s,\pi \times \tilde{\Xi}_{j_0}) = L(s,\Pi \times \tilde{\Xi}_{j_0}).
\end{equation*}
This causes a contradiction unless there exists no such $t_{j_0}$. Hence, all $t_i$ must be zero. This proves that each $\Pi_i$ is a unitary cuspidal automorphic representation.

We now have that each $\Pi_i = \Xi_i$ and
\begin{equation*}
   L(s,\Pi \times \tilde{\Pi}_{l_0}) = \prod_{i=1}^d L(s,\Pi_i \times \tilde{\Pi}_{l_0}),
\end{equation*}
where $l_0$ is a fixed ranging from $1 \leq l_0 \leq d$. On the right hand side we have a pole at $s = 1$ every time that $\Pi_{l_0} \cong \Pi_i$, by Proposition~3.6 of \cite{JaSh1981} part II. However, on the left hand side, from Proposition~6.5 of the \cite{LoRationality}, we have that $L(s,\Pi \times \tilde{\Pi}_{l_0}) = L(s,\pi \times \tilde{\Pi}_{l_0})$ has only a simple pole at $s=1$. Hence, $\Pi_{l_0} \cong \Pi_i$ can only occur if $l_0 = i$.

It remains to show that, for each $j$, we have $\Pi_j \cong \tilde{\Pi}_j^\theta$. For this, let ${\rm T}_j = \Pi_j \otimes \pi$. From Corollary~4.2 of \cite{LoRationality}, $L(s,\pi \times \tilde{\Pi}_j)$ is a Laurent polynomial if $\tilde{w}_0({\rm T}_j) \ncong {\rm T}_j$. If this we the case, it would have no poles. However
\begin{equation*}
   L(s,\pi \times \tilde{\Pi}_j) = \prod_i^d L(s,\Pi_i \times \tilde{\Pi}_j)
\end{equation*}
has only a simple pole at $s = 1$, reasoning as before. Thus, it must be the case that $\tilde{w}_0({\rm T}_j) \cong {\rm T}_j$, giving the desired $\Pi_j \cong \tilde{\Pi}_j^\theta$.

The compatibility of global to local base change is addressed by Proposition~\ref{globallocal}\,(i). Indeed, we have that
\begin{equation*}
   \Pi_v = {\rm BC}(\pi_v)
\end{equation*}
is unique and must be given by Theorem~\ref{genericBC} at every place. Notice that $\Pi_v \cong \tilde{\Pi}_v^\theta$ for every $v \notin S$. By multiplicity one for ${\rm GL}_N$, we globally have $\Pi \cong \tilde{\Pi}^\theta$. 
\end{proof}

\section{Ramanujan Conjecture and Riemann Hypothesis}\label{RHRC}

Let $K/k$ be a quadratic extension of global function fields of characteristic $p$.  We can now complete the proof of the existence of extended $\gamma$-factors, $L$-functions and root numbers in order to include products of two unitary groups. Note that the Base Change map of Theorem~\ref{weakBC} was strengthened in Theorem~\ref{strongBC} so that it agrees with the local functorial lift of Theorem~\ref{genericBC} at every place $v$ of $k$. In this way, we can prove our main application involving $L$-functions for the unitary groups. The Riemann Hypothesis for $L$-functions associated to products of cuspidal automorphic representations of general linear groups was proved by Laurent Lafforgue in \cite{LaL}.

\begin{theorem}\label{lsrhU} Let $\gamma$, $L$ and $\varepsilon$ be rules on $\mathfrak{ls}(p)$ satisfying the ten axioms of Theorem~\ref{mainthmU}. Given $(K/k,\pi,\tau,\psi) \in \mathcal{LS}(p)$, define
\begin{equation*}
   L(s,\pi \times \tau) = \prod_v L(s,\pi_v \times \tau_v) \quad \text{and} \quad \varepsilon(s,\pi \times \tau,\psi) = \prod_v \varepsilon(s,\pi_v \times \tau_v,\psi_v).
\end{equation*} 
Automorphic $L$-functions on $\mathcal{LS}(p)$ satisfy the following:
   \begin{enumerate}
      \item[(i)] \emph{(Rationality).} $L(s,\pi \times \tau)$ converges absolutely on a right half plane and has a meromorphic continuation to a rational function on $q^{-s}$.
      \item[(ii)] \emph{(Functional equation).} $L(s,\pi \times \tau) = \varepsilon(s,\pi \times \tau) L(1-s,\tilde{\pi} \times \tilde{\tau})$.
      \item[(iii)] \emph{(Riemann Hypothesis).} The zeros of $L(s,\pi \times \tau)$ are contained in the line $\Re(s) = 1/2$.
   \end{enumerate}
\end{theorem}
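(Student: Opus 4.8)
The plan is to reduce Theorem~\ref{lsrhU} to the Riemann Hypothesis for Rankin--Selberg $L$-functions of general linear groups over function fields, due to L.~Lafforgue \cite{LaL}, by transporting the problem to ${\rm GL}$ via the strong Base Change of Theorem~\ref{strongBC}. First I would dispose of the factors of type ${\rm Res}\,{\rm GL}$: through the identification ${\rm Res}_{K/k}{\rm GL}_M(\mathbb{A}_k) \cong {\rm GL}_M(\mathbb{A}_K)$ such a representation is already a cuspidal automorphic representation of a general linear group over $K$. When $\mathbf{G}_1 = {\rm U}_M$, Theorem~\ref{strongBC} produces the Base Change $\Pi = {\rm BC}(\pi) = \Pi_1 \boxplus \cdots \boxplus \Pi_a$, an isobaric sum of unitary cuspidal automorphic representations $\Pi_i$ of ${\rm GL}_{M_i}(\mathbb{A}_K)$, with $\Pi_v = {\rm BC}(\pi_v)$ at every place $v$ of $k$; the same is done for $\tau$, giving $T = T_1 \boxplus \cdots \boxplus T_b$. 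The key point is then the place-by-place identity $L(s,\pi \times \tau) = L(s,\Pi \times T)$, and likewise for $\varepsilon$: at a place $v$ inert in $K$ this is Theorem~\ref{genericBC}, applied once to replace $\pi_v$ by $\Pi_v$ and, since the local lifts are generic, once more (using Lemma~\ref{Leequality} together with Langlands' classification) to replace $\tau_v$ by $T_v$; at a split place, where $K_v \cong k_v \times k_v$, it is the explicit formula of \S\ref{split} matched against the split lift ${\rm BC}(\pi_v) = \pi_v \otimes \tilde\pi_v$ of \S\ref{splitBCsection}. Expanding the isobaric sums then yields
\[
   L(s,\pi \times \tau) = \prod_{i=1}^{a}\prod_{j=1}^{b} L(s,\Pi_i \times T_j), \qquad \varepsilon(s,\pi \times \tau) = \prod_{i=1}^{a}\prod_{j=1}^{b} \varepsilon(s,\Pi_i \times T_j).
\]

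Next I would feed each factor $L(s,\Pi_i \times T_j)$ — a Rankin--Selberg $L$-function of two unitary cuspidal automorphic representations of general linear groups over the function field $K$ — into the work of L.~Lafforgue \cite{LaL}: it is a rational function of $q_K^{-s}$, hence of $q^{-s}$ since the field of constants of $K$ is a power of $q$; it satisfies the functional equation $L(s,\Pi_i\times T_j) = \varepsilon(s,\Pi_i\times T_j)\,L(1-s,\tilde\Pi_i\times\tilde T_j)$; and, because under \cite{LaL} each $\Pi_i$ and $T_j$ corresponds to an irreducible $\ell$-adic Galois representation pure of weight $0$ (the Ramanujan property for ${\rm GL}$), the tensor product is pure of weight $0$, so that by Grothendieck's cohomological formalism and Deligne's purity theorem all zeros of $L(s,\Pi_i \times T_j)$ lie on the line $\Re(s) = 1/2$.

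Assembling the finite product then gives the three assertions. Part~(i) holds since $L(s,\pi\times\tau)$ is a finite product of rational functions of $q^{-s}$, absolutely convergent on a right half plane by the Ramanujan bounds for the $\Pi_i$, $T_j$ at the unramified places; part~(ii) follows by multiplying the factor-wise functional equations and using contragredience; and for part~(iii) the order of vanishing of the product at any point is the sum of the orders of the factors, so every zero of $L(s,\pi\times\tau)$ is a zero of some $L(s,\Pi_i \times T_j)$ and therefore lies on $\Re(s) = 1/2$ — the only poles, which occur when $\Pi_i \cong \tilde T_j$, are located at $s \in \{0,1\}$ and cause no interference on the critical line. (One could alternatively derive (iii) from Theorem~\ref{RamRie:thm}, but the route through Theorem~\ref{strongBC} is the self-contained one.)

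I expect the only genuine work to be the bookkeeping of local factors across all cases of $\mathcal{LS}(p,\mathbf{G}_1,\mathbf{G}_2)$: justifying the two successive Base Changes — first of $\pi$, then of $\tau$, the latter twisted against the already-lifted generic representation $\Pi_v$ — and checking carefully that the split-place contributions reproduce exactly the factors of \S\ref{split}. Once $L(s,\pi\times\tau)$ has been rewritten as a product of ${\rm GL}\times{\rm GL}$ Rankin--Selberg $L$-functions over $K$, Theorem~\ref{lsrhU} is a formal corollary of \cite{LaL}; all the substantive difficulty has already been absorbed into Theorems~\ref{weakBC}, \ref{genericBC} and \ref{strongBC}.
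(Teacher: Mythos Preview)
Your proposal is correct and follows essentially the same route as the paper: split into the cases ${\rm GL}\times{\rm GL}$, ${\rm U}\times{\rm GL}$, and ${\rm U}\times{\rm U}$, apply the strong Base Change of Theorem~\ref{strongBC} to each unitary factor, and reduce $L(s,\pi\times\tau)$ to a finite product $\prod_{i,j} L(s,\Pi_i\times T_j)$ of Rankin--Selberg $L$-functions over $K$, where Lafforgue's theorem gives rationality, the functional equation, and the Riemann Hypothesis. The only remark is that in the ${\rm U}_M\times{\rm U}_N$ case the place-by-place identity $L(s,\pi_v\times\tau_v)=L(s,\Pi_v\times T_v)$ is immediate from the \emph{definition} of the extended local factors in \S\ref{extfactors}, so your ``two successive Base Changes'' argument via Lemma~\ref{Leequality} is more than you need there; it is, however, exactly what is used in the ${\rm U}_N\times{\rm Res}\,{\rm GL}_m$ case.
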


\subsection{Extended local factors}\label{extfactors} Let us complete the definition of extended local factors of Theorem~\ref{mainthmU}. In this section for generic representations and in the next in general under a certain assumption. The case of a unitary group and a general linear group for generic representations already addressed in \S~\ref{lsU}. An exposition, within the Langlands-Shahidi method of the case of two general linear groups can be found in \cite{HeLo2013a}. We now focus on the new case of ${\bf G}_1 = {\rm U}_M$ and ${\bf G}_2 = {\rm U}_N$.

\begin{definition}
Given $(E/F,\pi_0,\tau_0,\psi) \in \mathfrak{ls}({\rm U}_M,{\rm U}_N,p)$ generic, let 
\begin{equation*}
   \Pi_0 = {\rm BC}(\pi_0) \text{ and } T_0 = {\rm BC}(\tau_0)
\end{equation*}
be the corresponding base change maps of Theorem~\ref{genericBC}. We define
\begin{align*}
   \gamma(s,\pi_0 \times \tau_0,\psi_E) &\mathrel{\mathop:}= \gamma(s,\Pi_0 \times T_0,\psi_E) \\
   L(s,\pi_0 \times \tau_0) &\mathrel{\mathop:}= L(s,\Pi_0 \times T_0) \\
   \varepsilon(s,\pi_0 \times \tau_0,\psi_E) &\mathrel{\mathop:}= \varepsilon(s,\Pi_0 \times T_0,\psi_E).
\end{align*}
\end{definition}

The defining Properties~(vii)--(x) of Theorem~\ref{mainthmU} allow us to construct $L$-functions and root numbers from $\gamma$-factors in the tempered case. This is compatible with the decomposition of $\pi_0$ and $\tau_0$ of Theorem~\ref{genericBC}. The rules $\gamma$, $L$ and $\varepsilon$ then satisfy all of the local properties of Theorem~\ref{mainthmU}. The remaining property, the global functional equation, is part~(ii) of Theorem~\ref{lsrhU}, addressed in \S~\ref{temperedLpacket}.

\subsection{Non-generic representations and local factors}\label{temperedLpacket} Consider an irreducible admissible representation $\pi$ of ${\rm U}_N(F)$ that is not necessarily generic. We first look at the case when $\pi$ is tempered. If ${\rm char}(F) = 0$, the tempered $L$-packet conjecture, Conjecture~10.3 below, is known (see Theorem~2.5.1 of \cite{Mo2015}). Furthermore, it is part of the work of Ganapathy-Varma \cite{GaVaJIMJ} on the local Langlands correspondence for the split classical groups if ${\rm char}(F) = p$. For the unitary groups ${\rm U}_N(F)$, ${\rm char}(F) = p$, it is thus reasonable to work under the assumption that Conjecture~\ref{tempLconj} holds.

Let $\Phi$ be the set of all Langlands parameters
\begin{equation*}
   \phi: \mathcal{W}_F' \rightarrow {}^L{\rm U}_N.
\end{equation*}
A parameter $\phi$ is tempered if its image on ${\rm GL}_N(\mathbb{C})$ is bounded. For any tempered $L$-parameter $\phi$, there is an $L$-packet $\Pi_\phi$ which is a finite multi-set. We consider only tempered $L$-packets in this section, which agree with tempered Arthur packets.

\begin{conjecture}\label{tempLconj}
If an $L$-packet contains a tempered element, then all of its elements are tempered. Every tempered $L$-packet $\Pi_\phi$ of ${\rm U}_N(F)$ contains a representation $\pi_0$ which is generic.
\end{conjecture}

In fact, we can be more precise. Let $({\bf B},\psi)$ be Whittaker datum with ${\bf B} = {\bf T}{\bf U}$ a fixed Borel subgroup of ${\bf G} = {\rm U}_N$ and $\psi: U \rightarrow \mathbb{C}^\times$ non-degenerate. Every tempered $L$-packet $\Pi_\phi$ of ${\rm U}_N(F)$ contains exactly one representation $\pi_0$ which is generic with respect to $({\bf B},\psi)$. In this article, we take the Borel subgroup of ${\rm U}_N$ consisting of upper triangular matrices.

Furthermore, $L$-functions are independent on how the non-degenerate character varies. Let ${\bf G} = {\rm U}_N$ and let $\widetilde{\bf G}$ be as in Proposition~\ref{vary}, sharing the same derived group as ${\bf G}$. Then $L$-functions are independent up to ${\rm Ad}(g)$ by elements of $\widetilde{\bf G}(F)$. And there is a formula that keeps track of how $\gamma$-factors and root numbers behave as the additive character $\psi$ varies.

The importance of this conjecture is that it reduces the study of $\gamma$-factors, $L$-functions and $\varepsilon$-factors to the case of generic representations. Hence, under the assumption that Conjecture~\ref{tempLconj} is valid, we complete the existence part of Theorem~\ref{mainthmU} for tempered representations with the following definition.

\begin{definition}
Let  $(E/F,\pi,\tau,\psi) \in \mathfrak{ls}(p)$ be tempered. Let $\Pi_{\phi_1}$ and $\Pi_{\phi_2}$ be tempered $L$-packets with $\pi \in \Pi_{\phi_1}$ and $\tau \in \Pi_{\phi_2}$. Let $\pi_0 \in \Pi_{\phi_1}$ and $\tau_0 \in \Pi_{\phi_2}$ be generic. Then we have
\begin{align*}
   \gamma(s,\pi \times \tau,\psi_E) &\mathrel{\mathop:}= \gamma(s,\pi_0 \times \tau_0,\psi_E) \\
   L(s,\pi \times \tau) &\mathrel{\mathop:}= L(s,\pi_0 \times \tau_0) \\
   \varepsilon(s,\pi \times \tau,\psi_E) &\mathrel{\mathop:}= \varepsilon(s,\pi_0 \times \tau_0,\psi_E).
\end{align*}
\end{definition}

To prove Theorem~\ref{mainthmU} in general, we can use Langlands classification to write
\begin{equation*}
   \pi \hookrightarrow {\rm Ind}(\sigma_1 \otimes \chi_1)
\end{equation*}
and
\begin{equation*}
   \tau \hookrightarrow {\rm Ind}(\sigma_2 \otimes \chi_2)
\end{equation*}
with $\sigma_1$, $\sigma_2$ tempered and $\chi_1 \in X_{\rm nr}({\bf M}_1)$, $\chi_2 \in X_{\rm nr}({\bf M}_2)$ in the Langlands situation, as in \cite{BoWa, Si1978}. With tempered $L$-functions and corresponding local factors defined, then Properties (vii)--(x) of Theorem~\ref{mainthmU} can now be used to define $L$-functions and related local factors on $\mathfrak{ls}({\bf G}_1,{\bf G}_2,p)$ in general.

\begin{remark}
We refer to \S\S~7 and 8 of \cite{GaLoJEMS} for a further discussion on $L$-parameters and the local Langlands correspondence for the classical groups, including the unitary groups. Written under certain working hypothesis, we address the local Langlands correspondence in characteristic $p$. First for supercuspidal representations, then for discrete series and tempered $L$-parameters, to end with general admissible representations. 
\end{remark}

\subsection{Proof of Theorem~\ref{lsrhU}}\label{proofthmU}
The case of two general linear groups, i.e., for $(K/k,\pi,\tau,\psi) \in \mathcal{LS}({\rm Res}_{K/k}{\rm GL}_M,{\rm Res}_{K/k}{\rm GL}_N,p)$, is already well understood. Properties~(i) and (ii) of Theorem~\ref{lsrhU} are attributed to Piatetski-Shapiro \cite{PS1976}. They can be proved in a self contained way via the Langlands-Shahidi method over function fields, see \cite{HeLo2013a,Lo2016}. The Riemann Hypothesis in this case was proved by Laurent Lafforgue in \cite{LaL}.

The case of $(K/k,\pi,\tau,\psi) \in \mathcal{LS}({\rm U}_N,{\rm Res}_{K/k}{\rm GL}_m,p)$ is included in Theorem~\ref{mainthm} by taking ${\bf M} = {\rm Res}\,{\rm GL}_m \times {\rm U}_N$ as a maximal Levi subgroup of ${\bf G} = {\bf U}_{N+2m}$ and forming the globally generic representation $\tau \otimes \tilde{\pi}$ of ${\bf M}(\mathbb{A}_k)$. Property~(i) in this situation is Theorem~1.2 of \cite{LoRationality}. Property~(ii) is the functional equation of \S~\ref{globalFE}. To prove the Riemann Hypothesis, we let
\begin{equation*}
   {\rm BC}(\pi) = \Pi = \Pi_1 \boxplus \cdots \boxplus \Pi_d
\end{equation*}
be the Base Change lift of Theorem~\ref{strongBC}. Then
\begin{equation*}
   L(s,\pi \times \tau) = L(s,\Pi \times \tau) = \prod_{i=1}^d L(s,\Pi_i \times \tau),
\end{equation*}
with each $(K/k,\Pi_i,\tau,\psi) \in \mathcal{LS}({\rm Res}_{K/k}{\rm GL}_{n_i},{\rm Res}_{K/k}{\rm GL}_{m},p)$. This reduces the problem to the Rankin-Selberg case, established by L. Lafforgue. Alternatively, we have Theorem~\ref{RamRie:thm} and we establish the Ramanujan Conjecture in the next section.

Now, given $(K/k,\pi,\tau,\psi) \in \mathcal{LS}({\rm U}_M,{\rm U}_N,p)$, let
\begin{equation*}
   {\rm BC}(\pi) = \Pi = \Pi_1 \boxplus \cdots \boxplus \Pi_d \quad \text{and} \quad
   {\rm BC}(\tau) = T = T_1 \boxplus \cdots \boxplus T_e
\end{equation*}
be the Base Change maps of Theorem~\ref{strongBC}. Then
\begin{align*}
   L(s,\pi \times \tau) = L(s,\Pi \times T) = \prod_{i,j} L(s,\Pi_i \times T_j).
\end{align*}
For each $(K/k,\Pi_i,T_j,\psi) \in \mathcal{LS}({\rm Res}_{K/k}{\rm GL}_{m_i},{\rm Res}_{K/k}{\rm GL}_{n_i},p)$, $1 \leq i \leq d$, $1 \leq j \leq e$, we have rationality, the functional equation
\begin{equation*}
   L(s,\Pi_i \times T_j) = \varepsilon(s,\Pi_i \times T_i) L(1-s,\tilde{\Pi}_i \times \tilde{T}_j),
\end{equation*}
and the Riemann Hypothesis. Hence the $L$-function $L(s,\pi \times \tau)$ also satisfies Properties~(i)--(iii) of Theorem~\ref{lsrhU}. \qed

\subsection{The Ramanujan Conjecture}

Base Change over function fields also allows us to transport the Ramanujan conjecture from the unitary groups to ${\rm GL}_N$. The Ramanujan conjecture for cuspidal representations of general linear groups, being a theorem of L. Lafforgue \cite{LaL}.

\begin{theorem}\label{ramanujan}
Let $\pi = \otimes' \pi_v$ be a globally generic cuspidal automorphic representation of ${\rm U}_N(\mathbb{A}_k)$. Then every $\pi_v$ is tempered. Whenever $\pi_v$ is unramified, its Satake parameters satisfy
\begin{equation*}
   \left| \alpha_{j,v} \right|_{k_v} = 1, \quad 1 \leq j \leq n.
\end{equation*}
\end{theorem}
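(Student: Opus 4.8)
The strategy is to reduce the Ramanujan conjecture for ${\rm U}_N$ to the known case of ${\rm GL}_N$ via the strong Base Change of Theorem~\ref{strongBC}. First I would take the base change lift ${\rm BC}(\pi) = \Pi = \Pi_1 \boxplus \cdots \boxplus \Pi_d$, where by Theorem~\ref{strongBC} each $\Pi_i$ is a unitary cuspidal automorphic representation of ${\rm GL}_{N_i}(\mathbb{A}_K)$. By the Ramanujan conjecture for general linear groups over function fields, due to L.~Lafforgue \cite{LaL}, each component $\Pi_{i,w}$ is tempered at every place $w$ of $K$; in particular, at unramified places the Satake parameters of each $\Pi_{i,w}$ have absolute value $1$.

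Next, for a place $v$ of $k$ that remains inert in $K$ and at which $\pi_v$ is unramified, Theorem~\ref{strongBC} says $\Pi_v = {\rm BC}(\pi_v)$ is the unramified local base change of Definition~\ref{unramifiedBC}. From the explicit description in \S~\ref{unramifiedBCsection}, if $\pi_v$ has Satake parameters $\alpha_{1,v}, \ldots, \alpha_{n,v}$ then ${\rm BC}(\pi_v)$ has Satake parameters $\alpha_{1,v}, \ldots, \alpha_{n,v}, (1,) \alpha_{n,v}^{-1}, \ldots, \alpha_{1,v}^{-1}$, which are precisely the Satake parameters of $\Pi_v$. Since $\Pi_v$ is a constituent of $\Pi$ and hence its Satake parameters are among those of the $\Pi_{i,v}$ (using that at split places a similar compatibility holds and at inert places $\Pi_v$ is the unramified constituent), temperedness of all the $\Pi_i$ forces $\left| \alpha_{j,v} \right|_{k_v} = 1$ for all $j$. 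For split places $v$ with $K_v = k_v \times k_v$, the situation is handled by the split Base Change of \S~\ref{splitBCsection}: the lift is $\pi_v \otimes \tilde{\pi}_v$, whose unramified components again have bounded Satake parameters by Lafforgue, giving the same bound on those of $\pi_v$.

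For the statement that \emph{every} $\pi_v$ is tempered (not just at unramified places), I would combine the local Langlands functorial lift of Theorem~\ref{genericBC}, which gives $\Pi_v = {\rm BC}(\pi_v)$ preserving local $L$-functions, $\gamma$-factors and root numbers, with the global fact that each $\Pi_i$ is unitary cuspidal and hence tempered at every place. The key mechanism is Lemma~\ref{temperedness} read in reverse: since $\Pi_v$ is tempered (being a constituent of tempered local components of cuspidal $\Pi_i$'s) and the local base change preserves all local factors, the holomorphy properties of tempered $L$-functions, Property~(vii) of Theorem~\ref{mainthmU}, together with the reducibility/pole analysis of Propositions~\ref{Lreducibility} and \ref{complimentary}, force $\pi_v$ itself to be tempered. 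More directly, one runs the argument of Lemma~\ref{temperedness}: if $\pi_v$ were non-tempered, its Langlands data would contain a factor $\tau' = \tau_0 \nu^{t}$ with $t > 0$, and then $L(s,\pi_v \times \tilde{\rho})$ for a suitable discrete series $\rho$ would have a pole in $\Re(s) > 0$, contradicting that this $L$-function equals $L(s,\Pi_v \times \tilde{\rho})$ with $\Pi_v$ tempered.

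\textbf{Main obstacle.} The delicate point is the passage from the global temperedness of the isobaric components $\Pi_i$ to local temperedness of $\pi_v$ at the ramified places. At unramified places everything is explicit via the Satake correspondence, but at ramified places one must genuinely use that local base change preserves the full list of local factors (Theorem~\ref{genericBC}) and then invoke the analytic fine structure of tempered $L$-functions and standard modules; this is where the standard module conjecture of Mui\'c \cite{Mu2001} and the reducibility results of \S~\ref{BAcomplimentary} enter. For non-generic representations the argument is only complete under Conjecture~\ref{tempLconj}, but since the theorem as stated concerns globally generic $\pi$, every local component $\pi_v$ is generic and the generic theory of \S\S~\ref{llcU}--\ref{RHRC} suffices.
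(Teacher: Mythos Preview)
Your proposal is correct and follows essentially the same route as the paper: lift via strong Base Change (Theorem~\ref{strongBC}), invoke Lafforgue's Ramanujan for each cuspidal isobaric factor $\Pi_i$, and then at a given place compare the two expressions for $L(s,\pi_v\times\tilde{\tau}_{j,v})=L(s,\Pi_v\times\tilde{\tau}_{j,v})$ to force all Langlands exponents $t_{j,v}$ to vanish, with the unramified and split cases handled exactly as you describe. The only cosmetic adjustments are that the twist should be by the tempered Langlands-data component $\tau_{j,v}$ itself (not an auxiliary discrete series $\rho$), and Propositions~\ref{Lreducibility} and~\ref{complimentary} are not actually needed here.
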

\begin{proof}
Fix a place $v$ of $k$, which remains inert in $K$. We can write $\pi_v$ as the generic constituent of
\begin{equation*}
   {\rm Ind} \left( \tau_{1,v}' \otimes \cdots \otimes \tau_{d,v}' \otimes \tau_{0,v} \right),
\end{equation*}
as in \eqref{standardmodule}, with each $\tau_{i,v}'$ quasi-tempered and $\tau_{0,v}$ tempered. Furthermore, we can write $\tau_{i,v}' = \tau_{i,v} \nu^{t_{i,v}}$ with $\tau_{i,v}$ tempered and Langlands parameters $0 \leq t_{1,v} \leq \cdots \leq t_{d,v}$.

Now, let $\Pi = {\rm BC}(\pi)$ be the base change lift of Theorem~\ref{strongBC}. From Theorem~\ref{strongBC}, $\Pi_v$ is the local Langlands functorial lift of $\pi_v$. By Theorem~\ref{genericBC}, $\Pi_v$ is the generic constituent of
\begin{equation}\label{ramanujaneq1}
   {\rm Ind} \left( \tau_{1,v}' \otimes \cdots \otimes \tau_{d,v}' \otimes T_{0,v} \otimes \tilde{\tau}_{d,v}'^\theta \otimes \cdots \otimes \tilde{\tau}_{1,v}'^\theta \right),
\end{equation}
with $T_{0,v}$ the Langlands functorial lift of the tempered representation $\tau_{0,v}$. The representation $T_{0,v}$ is also tempered by Lemma~\ref{temperedness}.

On the other hand, the base change lift can be expressed as an isobaric sum
\begin{equation*}
   \Pi = \Pi_1 \boxplus \cdots \boxplus \Pi_e,
\end{equation*}
with each $\Pi_i$ a cuspidal unitary automorphic representation of ${\rm GL}_{m_i}(\mathbb{A}_K)$. Hence, $\Pi_v$ is obtained from
\begin{equation}\label{ramanujaneq2}
   {\rm Ind}(\Pi_{1,v} \otimes \cdots \otimes \Pi_{e,v}).
\end{equation}
Then, thanks to Th\'eor\`eme~VI.10 of \cite{LaL}, each $\Pi_{i,v}$ is tempered. 

We then look at a fixed $\tilde{\tau}_{j,v}$ from \eqref{ramanujaneq1}, where we now have
\begin{align*}
   L(s,\pi_v \times \tilde{\tau}_{j,v}) &= L(s,\Pi_v \times \tilde{\tau}_{j,v}) \\
   						     &= L(s,\tau_{0,v} \times \tilde{\tau}_{j,v}) \prod_{i=1}^d L(s + t_{i,v},\tau_{i,v} \times \tilde{\tau}_{j,v})  L(s - t_{i,v},\tau_{i,v} \times \tilde{\tau}_{j,v}).
\end{align*}
The $L$-function $L(s - t_{j,v},\tau_{j,v} \times \tilde{\tau}_{j,v})$ appearing on the right hand side has a pole at $s = t_{j,v}$.
However, from \eqref{ramanujaneq2} we have
\begin{align*}
   L(s,\Pi_v \times \tilde{\tau}_{j,v}) &= \prod_{i=1}^e L(s,\Pi_{i,v} \times \tilde{\tau}_{j,v}).
\end{align*}
Notice that each representation involved in the product on the right hand side is tempered. Then each $L(s,\Pi_{i,v} \times \tilde{\tau}_{j,v})$ is holomorphic for $\Re(s) > 0$. Hence, so is $L(s,\Pi_v \times \tilde{\tau}_{j,v})$. This causes a contradiction unless $t_{j,v} = 0$.

If $v$ is split, we have that ${\rm BC}(\pi_v) = \pi_v \otimes \tilde{\pi}_v$ from \S~\ref{splitBCsection}. For the places $w_1$ and $w_2$ of $K$ lying above $v$, we have that the representation of ${\rm GL}_N(k_v)$
\begin{equation*}
   \pi_v = {\rm Ind}(\Pi_{1,w_1} \otimes \cdots \otimes \Pi_{e,w_1})
\end{equation*}
is tempered. Hence, so is
\begin{equation*}
   \tilde{\pi}_v = {\rm Ind}(\tilde{\Pi}_{1,w_2} \otimes \cdots \otimes \tilde{\Pi}_{e,w_2}).
\end{equation*}

Now, if $v$ is inert and $\pi_v$ is unramified, we have from \S~\ref{unramifiedBCsection} that the unramified Base Change ${\rm BC}(\pi_v) = \Pi_v$ corresponds to a semisimple conjugacy class given by
\begin{equation*}
\Phi_v({\rm Frob}_v) = \left\{ \begin{array}{ll}  {\rm diag}(\alpha_{1,v}, \ldots,\alpha_{n,v},1,
										\alpha_{n,v}^{-1},\ldots,\alpha_{1,v}^{-1}) & \text{ if } N = 2n+1\\
   			     						{\rm diag}(\alpha_{1,v}, \ldots,\alpha_{n,v},
										\alpha_{n,v}^{-1},\ldots,\alpha_{1,v}^{-1}) & \text{ if } N = 2n
			      		\end{array} \right. .
\end{equation*}
Each $\alpha_{j,v}$ or $\alpha_{j,v}^{-1}$ is a Satake parameter for one of the representations $\Pi_{i,v}$, which are unramified. Since we are in the case of ${\rm GL}_{m_i}(K_v)$, we conclude that
\begin{equation*}
   \left| \alpha_{j,v} \right|_{k_v} = 1.
\end{equation*}
\end{proof}

\end{document}